\documentclass[11pt]{amsart}

\usepackage{amsmath, amscd,hyperref,amssymb}
\usepackage{amsfonts}
\usepackage{amsthm, upref}
\usepackage{graphicx}
\usepackage{enumerate}
\usepackage[dvipsnames]{xcolor}
\usepackage{tikz}
\usepackage{pgf}
\usetikzlibrary{matrix,arrows, decorations.markings}
\usepackage{accents}
\usepackage{bm}
\usepackage{mathtools,centernot}
\usepackage{color,soul}

\input xy
\xyoption{all}

\numberwithin{equation}{section}

\def\E{{\mathbb{E}}}
\def\R{{\mathbb{R}}}
\def\SS{{\mathcal{S}}}
\def\N{{\mathbb{N}}}
\def\PP{{\mathbb{P}}}

\def\P{{\mathcal{P}}}

\def\B{{\mathcal{B}}}

\def\K{{\mathcal{K}}}

\def\LL{{\mathcal{L}}}
\def\Law{{\mathrm {Law}}}

\def\J{{\mathcal{J}}}

\def\D{{\mathcal{D}}}
\def\W{{\mathcal{W}}}
\def\Z{{\mathbb{Z}}}

\def\F{{\mathcal{F}}}
\def\FF{{\mathbb{F}}}
\def\cH{{\mathcal{H}}}

\def\Var{\mathrm{Var}}

\def\Re{\mathrm{Re}}
\def\Im{\mathrm{Im}}

\def\dd{\mathrm{d}}

\DeclareMathOperator*{\esssup}{ess\,sup}

\newcommand{\hxi}{\widehat{\xi}}

\newcommand{\wh}{\widehat}
\newcommand{\cC}{\mathcal{C}}
\newcommand{\nll}{\centernot{\ll}}

\theoremstyle{plain}
\newtheorem{theorem}{Theorem}[section]
\newtheorem{lemma}[theorem]{Lemma}

\newtheorem{proposition}[theorem]{Proposition}

\theoremstyle{definition}
\newtheorem{example}[theorem]{Example}
\newtheorem{definition}[theorem]{Definition}
\newtheorem{assumption}[theorem]{Assumption}
\newtheorem{remark}[theorem]{Remark}

\title[Universal Central Limit Theorem for non-exchangeable  diffusions]{Universal Central Limit Theorem for non-exchangeable interacting diffusions}
\author{Mykhaylo Shkolnikov}
\address{Department of Mathematical Sciences and Center for Nonlinear Analysis, Carnegie Mellon University, Pittsburgh, PA 15213}
\email{mshkolni@gmail.com}
\author{Lane Chun Yeung}
\address{Department of Applied Mathematics, Illinois Institute of Technology, Chicago, IL 60616}
\email{lyeung2@illinoistech.edu}
\thanks{M. Shkolnikov is partially supported by the National Science Foundation grant DMS-2342349.}

\begin{document}

\begin{abstract}
	We study non-exchangeable interacting diffusions with pairwise interaction strengths encoded by a sequence of matrices. Under suitable structural and denseness conditions on these matrices, we prove a universal Central Limit Theorem for the global fluctuation field. As the number of particles $n$ becomes large, it converges in distribution to the unique solution of a stochastic partial differential equation (SPDE), the same Gaussian limit as in the exchangeable mean field case. The result applies, for instance, to scaled adjacency matrices of $m_n$-regular graphs when $m_n/\sqrt{n}\to\infty$. A spatial interaction model shows that the $n^{-1/2}$ denseness threshold is sharp. The proof proceeds with an analysis in negative Sobolev spaces, building on  sharp quantitative propagation of chaos results together with functional inequalities.
\end{abstract}
	\maketitle
	
	\section{Introduction}
	
	We consider a system of interacting diffusions $(X^1,\dots,X^n)$ on $\R^d$. For $i=1,\dots,n$,
	\begin{align} \label{eq:X}
		\dd X^i_t = \Big(b_0(t, X^i_t) + \sum_{j=1}^{n} \xi_{ij} b(t, X^i_t, X^j_t)\Big) \, \dd t + \sigma  \, \dd B^i_t, \quad X^i_0 \stackrel{\text{i.i.d.}}{\sim} \mu_0.
	\end{align}
	Here $B^1,\dots,B^n$ are independent standard Brownian motions and $\sigma>0$ is constant. The coefficient $b_0$ describes the \emph{self-interaction} of each particle, while $b$ describes the \emph{pairwise interaction}. The matrix $\xi=(\xi_{ij})_{i,j=1}^n$ specifies the interaction strengths between particles. We assume that $\xi$ has nonnegative entries and zero diagonal. Thus $\xi_{ij}$ quantifies the influence of particle $j$ on particle $i$.
	
	\subsection{The exchangeable mean field case}
	
	A central question for systems of the form \eqref{eq:X} is to understand their large-$n$ behavior. At the Law of Large Numbers level, one studies the limiting behavior of a typical particle. When the interaction strengths are homogeneous, namely $\xi_{ij}=1/(n-1)$ for all $i\neq j$, the interactions in \eqref{eq:X} are of \emph{mean field} type. In this case, the particle system $(X^1_t,\dots,X^n_t)$ is exchangeable. Under suitable regularity assumptions on $b_0$ and $b$, the empirical measure $\mu^n_t := \frac{1}{n} \sum_{i=1}^{n} \delta_{X^i_t}$ is expected to converge to a deterministic limiting probability measure $\mu_t$, characterized by the \emph{McKean-Vlasov equation}
	\begin{align} \label{eq:MKV}
		\dd Y_t =
		\Big(
		b_0(t, Y_t)
		+
		\int_{\R^d} b(t, Y_t, y) \, \mu_t(\dd y)
		\Big)\dd t
		+ \sigma \, \dd B_t,
		\quad
		\mu_t = \Law(Y_t),
		\quad
		Y_0 \sim \mu_0.
	\end{align}
	This phenomenon is known as \emph{propagation of chaos}. Since the seminal works of McKean and Sznitman \cite{mckean1967propagation}, \cite{sznitman2006topics}, propagation of chaos has been established for many mean field models. We refer to the surveys \cite{chaintron2021propagation,chaintron2022propagation} for detailed accounts.
	
	The focus of this paper is the next-order behavior beyond propagation of chaos. Once the empirical measure is known to converge to $\mu_t$, it is natural to study the fluctuations around it. This leads to the global fluctuation field
	\begin{align} \label{eq:def.fluc.process}
		\eta^n_t := \sqrt{n}\big(\mu^n_t-\mu_t\big)
		= \frac{1}{\sqrt{n}}\sum_{i=1}^{n}(\delta_{X^i_t}-\mu_t),
		\qquad t\in[0,T].
	\end{align}
	In the exchangeable mean field case, one expects $\eta^n$ to converge to a Gaussian random field $\eta$, formally characterized as the solution of the linear stochastic partial differential equation (SPDE)
	\begin{align} \label{eq:SPDE.intro}
		\begin{split}
			\partial_t \eta_t(x)
			&= \frac{\sigma^2}{2}\Delta \eta_t(x)
			- \nabla\cdot\big(b_0(t,x)\eta_t(x)\big)
			- \nabla\cdot\Big(\eta_t(x)\int_{\R^d} b(t,x,y)\,\mu_t(\dd y)\Big) \\
			&\quad
			- \nabla\cdot\Big(\mu_t(x)\int_{\R^d} b(t,x,y)\,\eta_t(\dd y)\Big)
			- \sigma\nabla\cdot\big(\sqrt{\mu_t(x)}\,\dot{W}(t,x)\big).
		\end{split}
	\end{align}
	Here $\dot{W}$ denotes the $\R^d$-valued space-time white noise. Fluctuation results for mean field interacting diffusions go back to  \cite{tanaka1981central}, \cite{tanaka1984limit}, \cite{hitsuda1986tightness}, \cite{fernandez1997hilbertian}, and we refer to Subsection \ref{sec:literature} for a more comprehensive review.
\subsection{The non-exchangeable setting}

In many applications, the homogeneous interaction assumption $\xi_{ij}=1/(n-1)$ is too crude. A richer description is obtained by placing the particles on a weighted graph, whose edge weights specify the interaction strengths. Such network-based models arise in opinion and collective dynamics \cite{ayi2021mean}, \cite{duteil2021mean}, \cite{ben2024mean}, systemic risk, interbank markets, and financial contagion \cite{allen2000financial}, \cite{boss2004network}, \cite{cont2013network}, \cite{elliott2014financial}, and neuroscience \cite{jabin2025mean}, \cite{brunel2000dynamics}. This motivates the general framework in \eqref{eq:X}, where the matrix $\xi$ encodes the underlying connection structure. 

A general interaction matrix $\xi$ breaks the exchangeability of the particle system, and such heterogeneous systems have been studied intensively over the past decade. A natural question is one of \emph{universality}. At the level of propagation of chaos, one asks for which sequences of $n\times n$ interaction matrices $\xi$ the empirical measure of the particle system converges to the usual McKean-Vlasov limit \eqref{eq:MKV} as $n\to\infty$. Early results in this direction include \cite{bhamidi2019weakly}, \cite{delattre2016note}, \cite{coppini2020law}, which treat Erdős--Rényi graphs and related random graph models, with the interaction matrix normalized by the expected degree. The work \cite{oliveira2019interacting} reaches the optimal denseness condition $np_n\to\infty$ for the hydrodynamic limit in the Erdős--Rényi setting, in an \emph{annealed} sense. In contrast, \cite{coppini2023central} proves a \emph{quenched} Law of Large Numbers for the global empirical measure under the same optimal condition. More generally, when $\xi$ is sufficiently dense and has row sums close to $1$, one still expects the empirical measure to converge to the usual McKean--Vlasov limit. When the row sums equal $1$, the work \cite{lacker2024quantitative} quantifies how close \eqref{eq:X} is to this limit in terms of the interaction matrix $\xi$. When $\xi$ is dense but its row sums are not close to $1$, \eqref{eq:MKV} is not the correct large-$n$ limit, and alternative limits have been derived using the theory of dense graph limits \cite{lovasz2012large}, for example in \cite{medvedev2014nonlinear}, \cite{chiba2016mean}, \cite{medvedev2018continuum}, \cite{bayraktar2022stationarity}, \cite{bayraktar2023graphon}, \cite{bet2024weakly}, \cite{jabin2025mean}. 

The same universality question can be asked at the level of fluctuations. Namely, for which sequences of interaction matrices $\xi$ does the fluctuation field \eqref{eq:def.fluc.process} converge to the mean field SPDE limit \eqref{eq:SPDE.intro}?
	
	\subsection{Summary of main result}
	Our main result, Theorem \ref{thm:fluctuation.clt} below, establishes a universal Central Limit Theorem for the interacting particle system \eqref{eq:X}, valid for a class of interaction matrices $\xi$ satisfying suitable structural and denseness conditions. Here, universality means that for every $\xi$ in the class, the fluctuation field \eqref{eq:def.fluc.process} converges to the same Gaussian limit, the solution of the SPDE \eqref{eq:SPDE.intro} that arises in the exchangeable mean field case.
	
Our assumptions are deterministic and make no use of the randomness of an underlying graph. As discussed in Examples \ref{ex:graphs} and \ref{ex:LS}, they cover the scaled adjacency matrix of any $m_n$-regular graph when $\sqrt{n}/m_n\to0$, the Erd\H{o}s--R\'enyi graph $G(n,p_n)$ under the analogous threshold $\sqrt{n}/(np_n)\to0$, and the spatial interaction models of \cite{lucon2016transition} in the subcritical regime and in the absence of disorder variables.
	In these examples, the condition amounts to the requirement that the largest entry of $\xi$ be $o(n^{-1/2})$, and this $n^{-1/2}$ threshold is sharp for the spatial interaction models, where the rescaled fluctuations in the supercritical regime converge to a deterministic limit (see Example \ref{ex:LS}).
	
\subsection{Overview of proof strategy}
Our approach for proving Theorem \ref{thm:fluctuation.clt} is the tightness-limit-uniqueness route. We first establish tightness of the sequence of fluctuation measures $(\eta^n)_{n \in \N}$, then identify any subsequential limit point as a solution of the SPDE \eqref{eq:SPDE.intro}, and finally prove uniqueness for the SPDE.

The challenge in proving tightness is the simultaneous presence of the heterogeneous interactions and the $\sqrt{n}$ scaling. We briefly illustrate how the sharp quantitative propagation of chaos result in \cite{lacker2024quantitative} and functional inequalities can be used to overcome this challenge. For any nice test function $\varphi : \R^d \to \R$, we observe the decomposition
\begin{align*}
	\E\big[|\big\langle \eta_t^n,\varphi\big\rangle|^2\big]
	= 
	\E\bigg[\bigg(\frac{1}{\sqrt{n}}\sum_{i=1}^n\big(\varphi(X_t^i)-\E[\varphi(X_t^i)]\big)\bigg)^{2}\bigg]
	+
	\bigg| \frac{1}{\sqrt{n}} \sum_{i=1}^{n} \big\langle \Law(X^i_t) - \mu_t,\varphi\big\rangle \bigg|^{2}.
\end{align*}
The first expectation is $n \Var\left(\frac{1}{n}\sum_{i=1}^{n}\varphi(X_t^i)\right)$. The diffusion $(X_t^1,\dots,X_t^n)$ satisfies a \emph{Poincar\'e inequality} with a constant independent of $t$ and $n$ (see Proposition \ref{lem:preliminaries}\eqref{lem:preliminaries:iii}), so this variance is $O(1/n)$, hence the first term is $O(1)$. For the second term, the Cauchy-Schwarz inequality, Kantorovich duality, and the transport inequality for $\mu_t$ (see Proposition \ref{lem:preliminaries}\eqref{lem:preliminaries:v}) give
\begin{align*}
	\bigg| \frac{1}{\sqrt{n}} \sum_{i=1}^{n} \big\langle \Law(X^i_t) - \mu_t,\varphi\big\rangle \bigg|^{2}
	&\le \sum_{i=1}^{n}\big|\big\langle \Law(X^i_t) - \mu_t,\varphi\big\rangle\big|^2 \\
	&\lesssim \sum_{i=1}^{n} \W_2^2 (\Law(X^i_t), \mu_t)
	\lesssim \sum_{i=1}^{n} H(\Law(X^i_t) \, \vert \, \mu_t),
\end{align*}
where $\W_2$ denotes the $2$-Wasserstein distance and $H$ the relative entropy. Throughout the paper, we write $A \lesssim B$ to mean that there exists a constant $C>0$, independent of $n$, such that $A \le C B$. Using the sharp propagation of chaos estimates in \cite{lacker2024quantitative} (see Lemma \ref{lem:max.ent.POC} and Remark \ref{rem:data.processing}), this entropy sum is of the order $\sum_{i=1}^{n} \big(\sum_{j=1}^{n} (\xi_{ij}^2 + \xi_{ji}^2)\big)^2$.
It is instructive to consider scaled adjacency matrices of simple undirected graphs (see Example \ref{ex:graphs}). For an $m_n$-regular graph, $\xi_{ij}=\mathbf{1}_{\{i\sim j\}}/m_n$, the entropy sum is of order $O(n/m_n^2)$, so $\liminf_{n \to \infty} m_n /\sqrt{n} > 0$ is sufficient for tightness, with an analogous threshold $\liminf_{n\to\infty} np_n/\sqrt{n}>0$ for the Erd\H{o}s--R\'enyi graph $G(n,p_n)$.   These bounds first control $\eta_t^n$ in the negative Sobolev space $\cH^{-k}$, which gives tightness of the time marginals, and the Aldous criterion then upgrades this to tightness of $(\eta^n)_{n \in \N}$ in $C([0,T];\cH^{-k})$.

After tightness has been obtained, it remains to identify the subsequential limits and to prove uniqueness. For the identification, we use the semimartingale decomposition for $\langle \eta_t^n,\varphi\rangle$ and pass to the limit in each term. The martingale part is identified through its quadratic variation, which gives the Gaussian noise term in \eqref{eq:SPDE.intro}. The finite-variation part contains the linearized mean field drift and additional error terms caused by the heterogeneity of $\xi$. This mean field drift, the same as in the classical exchangeable case, converges to the linear drift in \eqref{eq:SPDE.intro}, but the noncompact state space makes this step delicate. The functions appearing in the drift are bounded but need not decay at infinity, whereas $\eta^n$ converges only in the unweighted space $C([0,T];\cH^{-k})$. We localize each such function with a cutoff, pass to the limit on the compact part, and control the tail by uniform weighted moment estimates. The remaining heterogeneous error terms are shown to vanish using functional inequalities and the quantitative propagation of chaos result of \cite{lacker2024quantitative}. Every subsequential limit is therefore a weak solution (both in the probabilistic and distributional sense)  of \eqref{eq:SPDE.intro}.

For uniqueness, we adapt the energy inequality argument of \cite[Sections 2.5 and 3.4]{rozovsky2018stochastic}, used also in \cite{kurtz2004stochastic},\cite{delarue2019masterclt}. If $\eta$ and $\eta'$ are two solutions driven by the same noise and with the same initial condition, then the noise cancels in the difference $\delta_t:=\eta_t-\eta'_t$, and $\delta$ solves a deterministic linear equation. Estimating $\delta$ in a weighted negative Sobolev norm and applying Gronwall's inequality gives $\delta\equiv 0$. Hence, the limiting SPDE enjoys pathwise uniqueness, and therefore uniqueness in law.

	\subsection{Related literature}\label{sec:literature}
	
	\subsubsection{Fluctuations for mean field systems}
	Fluctuation theory for mean field systems has a long history. In deterministic settings, Vlasov-type limits and the associated fluctuations have been studied in \cite{braun1977vlasov}, \cite{lancellotti2009fluctuations}. For stochastic interacting mean field diffusions, Central Limit Theorems have been established in  \cite{tanaka1981central}, \cite{ito1983distribution}, \cite{tanaka1984limit}, \cite{fouque1984convergence}, \cite{sznitman1985fluctuation}, \cite{shiga1985central}, \cite{hitsuda1986tightness}, \cite{mitoma1985infinite}, \cite{meleard1996asymptotic}, \cite{fernandez1997hilbertian}.
	Fluctuation results have also been obtained for moderately interacting diffusions \cite{oelschlager1987fluctuation,jourdain1998propagation}, particle representations of nonlinear  SPDEs \cite{kurtz2004stochastic}, rank-based models \cite{kolli2018spde}, and mean field games \cite{delarue2019masterclt}.
	
	More recent work addresses quantitative estimates and variants of the fluctuation problem. In \cite{jourdain2021central}, a Central Limit Theorem is proved for nonlinear observables of the empirical measure, while \cite{cecchin2025convergence} obtains convergence rates for fluctuations of mean field diffusions. Finite-dimensional and finite-state analogues for nonlinear Markov and mean field game models have been obtained in \cite{kolokoltsov2010nonlinear,kolokoltsov2011markov,cecchin2019convergence}. Uniform-in-time quantitative CLTs for mean field fluctuation processes have recently been proven in \cite{bourguin2026uniform}. See also \cite[Theorem 1.4]{bernou2026uniform} for the Langevin and overdamped settings.
	
	There has also been substantial progress for singular interactions. Gaussian fluctuation limits for such systems have been proved in \cite{wang2023gaussian}, with common-noise variants in \cite{shao2025fluctuation}, \cite{nikolaev2025fluctuation}. Related equilibrium fluctuation results have been obtained in \cite{bodineau1999stationary}, \cite{grotto2020central}, \cite{geldhauser2021limit}, \cite{leble2018fluctuations}, \cite{serfaty2023gaussian}, \cite{gu2024quantitative}.
	
	\subsubsection{Fluctuations for non-exchangeable systems}
	
	Fluctuations for non-exchangeable systems are much less studied. Finite-type and multi-species models are treated in \cite{budhiraja2016fluctuation}, \cite{chen2016fluctuation}. In \cite{budhiraja2016fluctuation}, the particles are divided into finitely many populations. The full system is not exchangeable, but particles within the same population are. Without a common stochastic factor, the centered path-space fluctuation fields converge to a centered Gaussian field. When a common factor is present, the corresponding randomly centered fluctuations have a Gaussian-mixture limit. The work \cite{chen2016fluctuation} treats a two-species system of reflected diffusions interacting through partial annihilation near an interface.
	
	The work \cite{lucon2016transition} treats spatially extended mean field diffusions, in which each particle $\theta^i$ is attached to a site $x_i$ of a periodic one-dimensional lattice and the interaction strength between $\theta^i$ and $\theta^j$ decays like $|x_i-x_j|^{-\alpha}$ for $\alpha\in[0,1)$. Building on the Law of Large Numbers of \cite{lucon2014mean}, they exhibit a phase transition governed by the decay exponent $\alpha$. In the subcritical regime $\alpha\in[0,1/2)$, they prove a Central Limit Theorem. In the supercritical regime $\alpha\in(1/2,1)$, the relevant scaling is instead $n^{1-\alpha}$, and the rescaled fluctuations converge to a deterministic limit. In the absence of disorder variables, the Central Limit Theorem in the subcritical regime is recovered as a special case of Theorem \ref{thm:fluctuation.clt}, while the supercritical regime shows that the $n^{-1/2}$ threshold in our denseness assumption on the matrix \eqref{eq:sufficient.matrix.max} is sharp. We refer to Example \ref{ex:LS} for a more detailed discussion.
	
The same $\sqrt{n}$ denseness threshold also appears in static models. For Ising models on approximately $m_n$-regular graphs with a spectral gap, \cite{deb2023fluctuations} proves that the fluctuations of the magnetization are universal and match the Curie-Weiss model when $\sqrt{n}/m_n\to0$. This threshold is exactly the condition in our regular graph example (Example \ref{ex:graphs}\eqref{rem.b}), and the authors show it to be tight outside the high-temperature regime.
	
	Closer to the present work are fluctuation results on random graphs. For a possibly time-varying, dense inhomogeneous random graph model, \cite[Theorem 4.2]{bhamidi2019weakly} establishes a path-space empirical-measure Central Limit Theorem, under a nondegeneracy condition that keeps the edge probabilities bounded away from zero uniformly in time. For a time-independent Erd\H{o}s--R\'enyi graph $G(n,p_n)$, this requires $\liminf_{n\to\infty}p_n>0$, so the graph must remain dense.
	
	The closest result to the present work is \cite{coppini2023central}, which proves Central Limit Theorems for both the global and the local empirical measures of diffusions on the one-dimensional torus interacting through a possibly sparse Erd\H{o}s--R\'enyi graph $G(n,p_n)$, with the adjacency matrix normalized by the expected degree, that is $\xi_{ij} = \mathbf{1}_{\{i\sim j\}}/(np_n)$. For the global fluctuations, under the denseness condition $np_n^4\to\infty$, they recover the universal mean field fluctuation SPDE when the initial data, which may be non-i.i.d., are independent of the graph and the corresponding quenched or annealed initial fluctuation assumptions hold. When the initial data are allowed to depend on the graph, they exhibit an example for which the limiting field is no longer given by this universal SPDE. Their proof relies on the independence of the edges and extensions of Grothendieck's inequality. By comparison, Theorem \ref{thm:fluctuation.clt} produces the same universal mean field SPDE. When the adjacency matrix is normalized by the degree, $\xi_{ij}=\mathbf{1}_{\{i\sim j\}}/\mathrm{deg}(i)$, it gives a quenched Central Limit Theorem under the weaker threshold $\liminf_{n\to\infty} np_n/\sqrt{n}=\infty$, as explained in Example \ref{ex:graphs}\eqref{rem.c}. We expect this threshold to be sharp, as it is exactly the condition \eqref{eq:sufficient.matrix.max}, where $\max_{i,j}\xi_{ij}\sim(np_n)^{-1}$, and, in the absence of disorder variables, this $n^{-1/2}$ scaling is already sharp in the spatial interaction model of \cite{lucon2016transition} (Example \ref{ex:LS}). More fundamentally, our assumptions on $\xi$ are deterministic and make no use of the randomness of the graph, and instead our proof builds on the quantitative propagation of chaos shown in  \cite{lacker2024quantitative}.
	
	Finally, a special non-exchangeable setting, that of \emph{sequential} interacting diffusions, is initiated in \cite{du2023sequential} and studied further in \cite{wang2026quantitative}. Here the interaction matrix $\xi$ is lower triangular, so particle $i$ is influenced only by a weighted average of its predecessors $j<i$, a structure motivated by computational considerations. A notable special case is the uniform one, $\xi_{ij}=\mathbf{1}_{\{j<i\}}/(i-1)$. The paper \cite{du2023sequential} proves quantitative propagation of chaos estimates, and \cite{wang2026quantitative} establishes a Gaussian fluctuation theorem. Our results do not cover this model, as the column sums diverge, so \eqref{eq:column.sum.assump} of Assumption \ref{assum.main} fails.
	
	\subsection{Organization of the paper}
	
	The rest of the paper is structured as follows. Section \ref{sec:assump.result}  fixes our notation and states our assumptions and main result, Theorem \ref{thm:fluctuation.clt}. Section \ref{sec:prelim} introduces further notation for the proofs along with preliminaries used throughout the paper. Sections \ref{sec:tightness}, \ref{sec:conv}, and \ref{sec:unique} establish tightness, convergence, and uniqueness, respectively. Section  \ref{sec:proof.main} combines these to prove Theorem \ref{thm:fluctuation.clt}. The appendices contain some auxiliary results, along with several  deferred proofs.
	\section{Assumptions and main result} \label{sec:assump.result}
	\subsection{Notations}
	
	We fix throughout the paper a dimension $d \in \N$ and a time horizon $T > 0$. For $n \in \N$, write $[n] = \{1, \dots, n\}$. Given any topological space $E$, let $\P(E)$ denote the space of Borel probability measures on $E$. For $\mu\in\P(E)$ and a Borel measurable function $\phi$ on $E$, we write $\langle \mu,\phi\rangle$ for the integral $\int_{E}\phi\,\dd \mu$ when it is well-defined. For a real-valued function $f$ on a set $E$, we write $\Vert f \Vert_\infty := \sup_{x \in E} |f(x)|$.
	
	We use standard multi-index notation. For $\bm{\alpha}=(\alpha_1,\dots,\alpha_d)\in\N^d$, let $|\bm{\alpha}|:=\alpha_1+\cdots+\alpha_d$ and
	\begin{align*}
	D^{\bm{\alpha}} f
	:=
	\partial_{x_1}^{\alpha_1}\cdots\partial_{x_d}^{\alpha_d} f.
	\end{align*}
	For $k\in\N$, let $C^k(\R^d)$ be the space of functions $f:\R^d\to\R$ such that $D^{\bm{\alpha}} f$ exists and is continuous for every multi-index $\bm{\alpha}$ with $0 \le |\bm{\alpha}|\le k$. We write $C_b^k(\R^d)\subset C^k(\R^d)$ for the subspace of functions for which all derivatives up to order $k$ are bounded, and we use the norm
	\begin{align*}
	\Vert f\Vert_{C_b^k}
	:=
	\sum_{0 \le |\bm{\alpha}|\le k}\Vert D^{\bm{\alpha}} f\Vert_\infty.
	\end{align*}
	The same derivative notation is used componentwise for vector-valued functions.
	
	\subsubsection{Distances between probability measures}
	For any $\mu,\nu\in \P(E)$, the relative entropy is defined as usual by
	\begin{align*}
	H(\nu \, |\, \mu) := \int_E \frac{\dd \nu}{\dd \mu } \log \frac{\dd \nu}{\dd \mu } \, \dd \mu \text{ if } \nu \ll \mu, \quad H(\nu \, |\, \mu) =\infty \ \text{if } \nu \nll \mu.
	\end{align*}
	For $E = \R^k$, the quadratic Wasserstein distance is defined by
	\begin{align*}
		\W_2(\mu, \nu) := \inf_\pi \left(\int_{\R^k \times \R^k} |x - y|^2 \pi (\dd x, \dd y)\right) ^{1/2},
	\end{align*}
	where the infimum is taken over all $\pi \in \P(\R^k \times \R^k)$ with marginals $\mu$ and $\nu$.  We denote the total variation norm by $\Vert\cdot\Vert_{\mathrm{TV}}$.

	\subsubsection{Operators}
	For each $t \in [0,T]$, $\nu \in \P(\R^d)$, and each test function $f \in C^\infty_c(\R^d)$, define the operator $\LL_{t,\nu}$ by
	\begin{align}\label{def.generator}
		\begin{split}
			\LL_{t,\nu} f(x)
			:=& \,
			\frac{\sigma^2}{2}\Delta f(x)
			+ b_0(t,x)\cdot \nabla f(x) 
			+ \int_{\R^d} b(t,x,x')\cdot \nabla f(x)\,\nu(\dd x') \\
			&
			+ \int_{\R^d} b(t,x',x)\cdot \nabla f(x')\,\mu_t(\dd x').
		\end{split}
	\end{align}

	\subsubsection{Sobolev spaces} \label{sec:sob.space}
	For each $k \in \N$, let $\cH^k$ be the (unweighted) Sobolev space on $\R^d$, defined as the completion of $C_c^\infty(\R^d)$ with respect to the norm
	\begin{align*}
		\|f\|_{\cH^k}^2
		:=
		\sum_{0 \le |\bm{\alpha}|\le k}\int_{\R^d} |D^{\bm{\alpha}} f(x)|^2\,\dd x.
	\end{align*}
	We denote by $\cH^{-k}$ its dual space, and by $\langle \cdot, \cdot \rangle_{\cH^{-k}, \cH^k}$ the dual pairing. We also introduce the weighted spaces. We set
	\begin{align*}
		\lambda_d = \lfloor d/2\rfloor + 1.
	\end{align*}
	Define the weight
	\begin{align} \label{eq:w.def}
		w(x)=1+|x|^{2\lambda_d},\qquad x\in\R^d.
	\end{align}
	For each $k\in\N$, define the weighted Sobolev space $\cH^{k}_w$ as the completion of $C^\infty_c(\R^d)$ with respect to the norm
	\begin{align*} 
		\|f\|_{\cH^{k}_w}^2
		:=
		\sum_{0 \le |\bm{\alpha}|\le k}\int_{\R^d} \frac{|D^{\bm{\alpha}} f(x)|^2}{w(x)}\,\dd x.
	\end{align*}
	We denote by $\cH^{-k}_w$ its dual space, and by $\langle \cdot, \cdot\rangle_{\cH^{-k}_w,\cH^{k}_w}$ the duality bracket between $\cH^{k}_w$ and $\cH^{-k}_w$.
	
	\subsection{Assumptions}
	In our main assumption, we make use of the following notation. For each $n \in \N$, let $\hxi$ be the $n \times n$ matrix with entries
	\begin{align}\label{hatxi.def}
		\hxi_{ij} := (n-1)\xi_{ij} - 1, \qquad i,j = 1, \dots, n.
	\end{align}
	This matrix captures the deviation of the interaction matrix $\xi $ from the mean field case, in which $\xi_{ij} = \mathbf{1}_{\{i \ne j\}}/(n-1)$ and $\hxi_{ij} = - \mathbf{1}_{\{i = j\}}$.
	\begin{assumption} \label{assum.main}
		\begin{enumerate}[(i)]
			\item 			\label{assum:mat} The matrix $\xi = (\xi_{ij})_{i,j = 1}^n$ has nonnegative entries, zero diagonal entries, row sums equal to one: 
			\begin{align} \label{eq:row.sum.assump}
				\sum_{j=1}^{n} \xi_{ij} =  1, \qquad \text{for all} 
				\quad i = 1, \dots, n,\qquad \tag{rows}
			\end{align}
			and bounded column sums: There exists $C < \infty$ independent of $n$ such that
			\begin{align} \label{eq:column.sum.assump}
				\max_{1 \le j \le n} \sum_{i=1}^{n} \xi_{ij} \le C, \quad \text{and} \quad \limsup_{n\to \infty} \frac{1}{n} \sum_{j=1}^{n} \Big(\sum_{i=1}^{n} \xi_{ij} \Big)^2 \le 1. \tag{columns}
			\end{align}
			As $n \to \infty$,  we  assume that
			\begin{align}
				\frac{1}{n^{3/2}}\Big(\sum_{i,j=1}^n|\hxi_{ij}|\Big)\max_{i,j\in[n]}\xi_{ij} \to 0 \label{assum:mat.3},
			\end{align} 
			as well as 
			\begin{align}
				\limsup_{n\to\infty}\sum_{i=1}^n\Big(\sum_{j=1}^n(\xi_{ij}^2+\xi_{ji}^2)\Big)^2 < \infty.
				\label{eq:xi_row_sq}
			\end{align}
			
			\item \label{assum.drift} Assume $ b_0 \in L^\infty([0,T]; C_b^{k+1}(\R^d;\R^d)) $, i.e.,
			\begin{align*}
				\max_{0 \le |\bm{\alpha}|\le k+1}\sup_{t\in[0,T]}\sup_{x\in\R^d}\big|D^{\bm{\alpha}}b_0(t,x)\big|<\infty,
			\end{align*}
			and assume that $ b \in L^\infty([0,T]; C_b^{k+1}(\R^d \times \R^d;\R^d)) $.
			
			\item \label{assum.init.transport}The initial distribution $\mu_0$ admits finite second moments, and the following transport inequality holds: There exists $0 \le \gamma_0 < \infty$ such that
			\begin{align} \label{eq:init.transport}
				\W_2^2(\nu, \mu_0) \le \gamma_0 H(\nu \, |\, \mu_0), \quad   \nu \in \P(\R^d).
			\end{align}
		\end{enumerate}
	\end{assumption}
	
	\begin{remark}\label{remark.sufficient.matrix}
		First, if $\xi$ is symmetric, \eqref{eq:column.sum.assump} follows from \eqref{eq:row.sum.assump}. Second, without assuming symmetry, \eqref{assum:mat.3} follows from the nonnegativity of $\xi$, \eqref{eq:row.sum.assump}, and
		\begin{align}
			\max_{i,j\in[n]} \xi_{ij}=o(n^{-1/2}),
			\label{eq:sufficient.matrix.max}
		\end{align}
		while \eqref{eq:xi_row_sq} follows from the nonnegativity of $\xi$, \eqref{eq:row.sum.assump}, the first assertion in \eqref{eq:column.sum.assump}, and \eqref{eq:sufficient.matrix.max}. In fact, under these assumptions, the left-hand side of \eqref{eq:xi_row_sq} tends to zero. See Appendix \ref{sec:proof.remark} for a proof. We explain in Example \ref{ex:LS} that the $n^{-1/2}$ threshold in \eqref{eq:sufficient.matrix.max} is sharp, as can be seen from a class of spatial interaction models.
	\end{remark}
	Concrete examples of interaction matrices satisfying Assumption \ref{assum.main}, including regular and Erd\H{o}s--R\'enyi graphs, are given in Subsection \ref{sec:examples}.

	\subsection{Main result}
	\begin{definition}
		\label{def:SPDE.limit}
		Let $k \ge \lambda_d+2$ be an integer. On a filtered probability space $(\Omega, \F, \FF = (\F_t)_{t \in [0, T]}, \PP)$, let $\zeta_0$ be an $\cH^{-k}$-valued $\F_0$-measurable random variable. A pair $ (\eta,W)\in C([0,T];\cH^{-k})\times C([0,T];\cH^{-k-1})$ is called a solution of the fluctuation SPDE with initial condition $\zeta_0$ if
		\begin{enumerate}[(i)]
\item \label{SPDE.def.integrability} $\eta\in L^2([0,T];\cH^{-(k-1)}_w)$ $\PP$-a.s. By Lemma \ref{lem:sob}\eqref{embed.k1.k2}, this implies that $\eta\in L^2([0,T];\cH^{-k}_w)$ $\PP$-a.s. Therefore, the drift term  in \eqref{eq: SPDE.integral} is well-defined.
			\item \label{SPDE.def.dym} $W$ is a continuous $\FF$-adapted $\cH^{-k-1}$-valued centered Gaussian process with covariance given, for all $f_1,f_2 \in \cH^{k+1}$, by
			\begin{align} \label{def.SPDE.cov}
				\E\big[	\langle W_s,f_1\rangle_{\cH^{-k-1},\cH^{k+1}}\,\langle W_t,f_2\rangle_{\cH^{-k-1},\cH^{k+1}}\big]
				=
				\sigma^2\int_0^{s\wedge t}\big\langle \mu_u,\,\nabla f_1\cdot\nabla f_2\big\rangle\,\dd u.
			\end{align}
			\item \label{SPDE.def.weak} for every $f\in C_c^\infty(\R^d)$ and every $t\in[0,T]$,
			\begin{align}  \label{eq: SPDE.integral}
				\langle \eta_t,f\rangle_{\cH^{-k},\cH^{k}}
				&=
				\langle \zeta_0,f\rangle_{\cH^{-k},\cH^{k}}
				+
				\int_0^t \big\langle \eta_s, \LL_{s,\mu_s} f\big\rangle_{\cH^{-k}_w,\cH^{k}_w}\,\dd s
				+
				\langle W_t,f\rangle_{\cH^{-k-1},\cH^{k+1}}.
			\end{align}
		\end{enumerate}
	\end{definition}

	We can now state the main result. 
	
	\begin{theorem}
		\label{thm:fluctuation.clt}
		Suppose Assumption \ref{assum.main} holds, with $k \ge \lambda_d+2$ in part \eqref{assum.drift} therein.  Then: 
		
		\begin{enumerate}[(i)]
			
			\item \label{main.thm.part.0} We have $\eta^n_0 \stackrel{d}{\to} \eta_0^\ast$ in $\cH^{-k}$, where $\eta_0^\ast$ is an $\cH^{-k}$-valued centered Gaussian random variable characterized by
			\begin{align*} 
				\E\big[
				\langle \eta_0^\ast,f_1\rangle_{\cH^{-k},\cH^k}
				\langle \eta_0^\ast,f_2\rangle_{\cH^{-k},\cH^k}
				\big]
				&=
				\langle \mu_0,f_1 f_2\rangle
				-
				\langle \mu_0,f_1\rangle \langle \mu_0,f_2\rangle,
				\quad
				f_1,f_2\in C_c^\infty(\R^d).
			\end{align*}

			\item \label{main.thm.part.1} There exists a solution pair $(\eta,W)$ of the fluctuation SPDE with initial condition $\eta_0$ in the sense of Definition \ref{def:SPDE.limit}, where $\eta_0\stackrel{d}{=}\eta_0^\ast$. Moreover, the fluctuation SPDE enjoys pathwise uniqueness: If $(\eta,W)$ and $(\widetilde{\eta},W)$ are two solutions on the same filtered probability space with the same initial condition $\eta_0$ and the same driving process $W$, then $\eta=\widetilde{\eta}$ $\PP$-a.s. in $C([0,T];\cH^{-k})$. Consequently, the first component of the solution pair is unique in law once the joint law of the initial condition and the driving process is fixed, i.e.,  if $(\eta^1,W^1)$ and $(\eta^2,W^2)$ are two solutions, possibly defined on different probability spaces, with initial conditions $\eta_0^1$ and $\eta_0^2$, and if $\Law(\eta_0^1,W^1)=\Law(\eta_0^2,W^2)$, then $\Law(\eta^1)=\Law(\eta^2)$.
			
			\item \label{main.thm.part.2} We have $\eta^n \stackrel{d}{\to} \eta$ in $C([0,T];\cH^{-k})$, where $\eta$ is the unique-in-law first component, in the sense of part \eqref{main.thm.part.1}, of any solution pair $(\eta,W)$ with initial condition $\eta_0$ such that $ \Law(\eta_0,W)=\Law(\eta_0^\ast)\otimes\Law(W^\ast)$,
				and where $W^\ast$ is a centered Gaussian process in $C([0,T];\cH^{-k-1})$ satisfying the covariance formula \eqref{def.SPDE.cov}.
		\end{enumerate}
	\end{theorem}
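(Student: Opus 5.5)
The proof follows the tightness--identification--uniqueness route outlined in the introduction, and the three parts of Theorem \ref{thm:fluctuation.clt} are proved in the order (i), tightness, identification, (ii), (iii).

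\textbf{Part (i).} At time zero the $X^i_0$ are i.i.d.\ with law $\mu_0$ and $\xi$ plays no role. For each $f\in C_c^\infty(\R^d)$ the classical multivariate CLT (with the Cram\'er--Wold device) gives convergence of the finite-dimensional distributions of $\langle \eta^n_0,f\rangle$ to the stated Gaussian covariance. To upgrade this to convergence in $\cH^{-k}$, we bound $\E\|\eta^n_0\|_{\cH^{-k'}}^2$ uniformly for some $\lambda_d<k'<k$. Expanding in an orthonormal basis $(e_m)$ of $\cH^{k'}$ gives
\begin{align*}
\E\|\eta^n_0\|_{\cH^{-k'}}^2=\sum_m\Var(e_m(X_0^1))\le \E\sum_m e_m(X^1_0)^2,
\end{align*}
and the right-hand side is finite by Sobolev embedding since $k'>d/2$. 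Tightness in $\cH^{-k}$ then follows from the compactness of bounded sets of $\cH^{-k'}$ in $\cH^{-k}$; in the unweighted setting we would pass through the weighted spaces $\cH^{-k'}_w$ together with moment bounds to recover this compactness.

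\textbf{Tightness.} Using It\^o's formula, we write the semimartingale decomposition
\begin{align*}
\langle\eta^n_t,f\rangle=\langle\eta^n_0,f\rangle+\int_0^t\langle\eta^n_s,\LL_{s,\mu_s}f\rangle\,\dd s+R^n_t(f)+M^n_t(f).
\end{align*}
Here $M^n_t(f)=n^{-1/2}\sum_i\int_0^t\sigma\nabla f(X^i_s)\cdot \dd B^i_s$, and $R^n$ collects the heterogeneous error terms. One part of $R^n$ is $n^{-1/2}\sum_{i,j}(\xi_{ij}-\frac1{n-1})\nabla f(X^i)\cdot b(X^i,X^j)$, which we rewrite via $\hxi$. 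The other is the quadratic term $n^{1/2}\langle \mu^n-\mu,\cdot\rangle^{\otimes 2}$, which we rescale as $n^{-1/2}\langle\eta^n\otimes\eta^n,\cdot\rangle$. The uniform bound $\sup_{t}\E\|\eta^n_t\|_{\cH^{-k}}^2\lesssim1$ is obtained from the variance/bias decomposition in the introduction. The variance piece is controlled by the Poincar\'e inequality of Proposition \ref{lem:preliminaries}\eqref{lem:preliminaries:iii}. The bias piece is controlled by the transport inequality, Proposition \ref{lem:preliminaries}\eqref{lem:preliminaries:v}, combined with the entropy bound of Lemma \ref{lem:max.ent.POC} and assumption \eqref{eq:xi_row_sq}. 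Both are summed over a basis as in part (i). With these bounds, the Aldous criterion applied to the increments of the drift and martingale terms gives tightness in $C([0,T];\cH^{-k})$, and continuity of the paths comes from the absence of jumps.

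\textbf{Identification.} The quadratic variation of $M^n(f)$ equals $\sigma^2\int_0^t\langle\mu^n_s,|\nabla f|^2\rangle\,\dd s$. By propagation of chaos this converges to the expression in \eqref{def.SPDE.cov}, so the martingale central limit theorem identifies $W$. Joint convergence with $\eta_0$ gives independence of $W$ from $\eta_0$, since $M^n$ is asymptotically independent of $\F_0$.

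The drift term $\langle\eta^n_s,\LL_{s,\mu_s}f\rangle$ involves coefficients that are bounded but not compactly supported. We handle it by cutting off with $\chi_R$, passing to the limit on the compactly supported part, and controlling the tail uniformly in $n$ with weighted moment bounds, which give $\eta^n$ bounded in $\cH^{-(k-1)}_w$.

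The error $R^n$ must vanish in $L^1$. For the $\hxi$ part, we center both $\nabla f(X^i)$ and $b(X^i,X^j)$ around the independent product law and use the covariance decay coming from the sharp chaos estimates. This is the step where \eqref{assum:mat.3} enters, through the $\max\xi_{ij}$ factor. The quadratic term is $O(n^{-1/2})$ by the second-moment bound. We regard this identification of $R^n\to0$ as the main obstacle: the $\sqrt n$ scaling leaves essentially no slack, and one has to exploit pairwise covariance bounds between particles $i$ and $j$, not merely marginal entropies. The first thing we would try is to bound $\mathrm{Cov}(g(X^i),h(X^j))$ by $\W_2$ of the two-particle marginal against $\mu^{\otimes2}$, via Lemma \ref{lem:max.ent.POC} applied to the two-particle marginals.

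\textbf{Part (ii) and part (iii).} Existence in part (ii) follows from any subsequential limit of the identification step. For pathwise uniqueness, we take two solutions and set $\delta=\eta-\widetilde\eta$. Then $\delta$ solves $\partial_t\delta=\LL_{t,\mu_t}^*\delta$ with $\delta_0=0$. We estimate $\|\delta_t\|_{\cH^{-k}_w}^2$ by mollifying and using that $\LL^*$ is a second-order operator with $C_b^{k+1}$ coefficients plus a nonlocal term that is bounded on $\cH^{-k}_w$. This yields $\frac{\dd}{\dd t}\|\delta_t\|^2\lesssim\|\delta_t\|^2$, and Gronwall's inequality gives $\delta\equiv0$. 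Yamada--Watanabe then gives uniqueness in law. Finally, part (iii) follows because tightness together with uniqueness of the limit law forces convergence of the full sequence.
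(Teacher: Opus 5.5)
Your outline has the same architecture as the paper. The step you single out as the main obstacle, however, is not resolved by the tool you propose, and it is needed twice. A uniform bound on $\E\big[\langle \wh{\eta}^n_s,G_s\rangle^2\big]$, with $G_s(x,y)=b(s,x,y)\cdot\nabla f(x)$, is required for Aldous' criterion, and its vanishing is required for the identification.

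Your treatment of the \emph{mean} is fine. Comparing with the product law costs $\W_2(P^{ij}_s,\mu_s^{\otimes 2})\lesssim\max_{i,j}\xi_{ij}$, and \eqref{assum:mat.3} gives exactly $n^{-3/2}\sum_{i\ne j}|\hxi_{ij}|\max_{i,j}\xi_{ij}\to0$ (term $\mathrm{II}$ in Lemma~\ref{lem:tightness.prelim}).

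The \emph{fluctuation} part cannot be handled by termwise covariance bounds. Expanding the second moment of $n^{-3/2}\sum_{i,j}\hxi_{ij}G_s(X^i_s,X^j_s)$ gives a sum over quadruples $(i,j,k,l)$ weighted by $|\hxi_{ij}||\hxi_{kl}|$. Correlations of size $\max_{i,j}\xi_{ij}$, which is what the entropy bounds deliver through Pinsker or transport, then leave $n^{-3}\big(\sum_{i,j}|\hxi_{ij}|\big)^2\max_{i,j}\xi_{ij}$. For an $m_n$-regular graph with $m_n=o(n)$ this is of order $n/m_n\to\infty$. Splitting by rows and using the triangle inequality in $L^1$ is no better: it gives order $\sqrt{n/m_n}$.

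The missing idea is to apply the Poincar\'e inequality of Proposition~\ref{lem:preliminaries}\eqref{lem:preliminaries:iii} to the whole function $H(x_1,\dots,x_n)=\sum_{i,j}\hxi_{ij}g(x_i,x_j)$, not only to $\langle\eta^n_t,\varphi\rangle$. This bounds the variance by $\sum_i\E\big|\sum_j\hxi_{ij}\nabla_1 g(X^i,X^j)\big|^2$ plus its column analogue, which is effectively a triple sum. Lemma~\ref{lem:cancellation} then controls it by comparison with i.i.d.\ McKean--Vlasov copies:
\begin{enumerate}[(a)]
\item there $\hxi_{ii}=-1$, $\sum_{j\neq i}\hxi_{ij}=0$ and conditional independence produce the main term $\sum_{i,j}\hxi_{ij}^2$;
\item the comparison costs only $\max_{i,j}\xi_{ij}\sum_i\big(\sum_j|\hxi_{ij}|\big)^2$, via the three-particle entropy bound.
\end{enumerate}
The column analogue leaves an extra term $\sum_i\big(\sum_j\hxi_{ji}\big)^2$. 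This is $o(n^3)$ only because of the second condition in \eqref{eq:column.sum.assump} (Lemma~\ref{lem:assum_i_implies_mat}). Your proposal never uses that condition, yet it is necessary. Without it, the component $n^{-3/2}\sum_j\big(\sum_i\hxi_{ij}\big)\langle\mu_s,G_s(\cdot,X^j_s)\rangle$ of $\langle\wh{\eta}^n_s,G_s\rangle$ has variance of order $\frac1n\sum_j\big(\sum_i\xi_{ij}-1\big)^2$ and in general does not vanish.

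There are two smaller points. First, the compactness problem you flag at $t=0$ recurs at every $t$. A bound on $\sup_t\E\|\eta^n_t\|^2_{\cH^{-k}}$ does not give tightness of the marginals on $\R^d$; you need a bound in a space compactly embedded in $\cH^{-k}$. The paper uses the weighted Fourier-side bound together with Lemma~\ref{lem:compact.embed}. For part (i), the Hilbert-space CLT for the i.i.d.\ square-integrable $\delta_{X_0^i}-\mu_0$ suffices directly.

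Second, for uniqueness, an energy estimate in $\cH^{-k}_w$ pairs $\delta_s\in\cH^{-(k-1)}_w$ with $\LL_{s,\mu_s}$ applied to the $\cH^k_w$-Riesz representative of $\delta_s$. That image a priori lies only in $\cH^{k-2}_w$, not $\cH^{k-1}_w$. Justifying the pairing needs weighted elliptic regularity, or commutator estimates for the mollifier in weighted negative norms. The paper avoids this by measuring $\delta$ in $\cH^{-(k+1)}_w$. The Riesz representative then lies in $\cH^{k+1}_w$, and Proposition~\ref{prop:L.bound} maps it into $\cH^{k-1}_w$. That is exactly the dual of the space $\cH^{-(k-1)}_w$ supplied by Definition~\ref{def:SPDE.limit}\eqref{SPDE.def.integrability}.
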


	\begin{remark} \label{rmk:unique.in.law}
		The uniqueness-in-law statement in Theorem \ref{thm:fluctuation.clt}\eqref{main.thm.part.1} follows from existence and pathwise uniqueness by a natural extension of the Yamada-Watanabe Theorem, see, e.g., \cite[Proposition 5.3.20 and Corollary 5.3.23]{karatzas2014brownian}. The same theorem also implies that $\eta$ is a strong solution of \eqref{eq: SPDE.integral} in the probabilistic sense: There exists a function
		\begin{align*}
			h:\cH^{-k}\times C([0,T];\cH^{-k-1}) \to C([0,T];\cH^{-k})
		\end{align*}
		which is measurable from $\B(\cH^{-k}\times C([0,T];\cH^{-k-1}))$ to $\B(C([0,T];\cH^{-k}))$, and, for every $t\in[0,T]$, measurable from $\B(\cH^{-k})\otimes \wh{\B}_t$ to $\sigma(\pi_s:s\in[0,t])$, such that
		\begin{align*}
			\eta = h(\eta_0,W),
		\end{align*}
		$\PP$-a.s. Here $\wh{\B}_t$ denotes the augmentation of $\sigma(\rho_s:s\in[0,t])$ by the null sets of the law of $W$, where $\rho_s(\omega)=\omega(s)$ for $\omega\in C([0,T];\cH^{-k-1})$ and $\pi_s(\omega)=\omega(s)$ for $\omega\in C([0,T];\cH^{-k})$.
	\end{remark}
	
	\subsection{Examples of interaction matrices} \label{sec:examples}
We now give several classes of interaction matrices that satisfy Assumption \ref{assum.main}\eqref{assum:mat}, and hence to which our Central Limit Theorem \ref{thm:fluctuation.clt} applies.

\begin{example}\label{ex:graphs}
	
		\begin{enumerate}[(a)]
			\item \label{rem.b}
			Assumption \ref{assum.main}\eqref{assum:mat} holds when
			$\xi_{ij} = \mathbf{1}_{\{i\sim j\}}/m_n$
			is the scaled adjacency matrix of a simple undirected $m_n$-regular graph and
			$\sqrt{n}/m_n \to 0$. 
			\item  \label{rem.c}
			Assumption \ref{assum.main}\eqref{assum:mat} also holds, in the \emph{quenched} sense, for the
			Erd\H{o}s--R\'enyi graph $G(n,p_n)$ when $\liminf_{n\to\infty} np_n/\sqrt{n}=\infty$, where
			$\xi_{ij}=\mathbf{1}_{\{i\sim j\}}/\text{deg}(i)$ and $\text{deg}(i)$ denotes the degree of
			vertex $i$. By the multiplicative Chernoff bound and the Borel-Cantelli lemma,
			$\max_i|\text{deg}(i)/np_n-1|\to0$ almost surely. On this event, $\min_i\text{deg}(i)\sim np_n$,
			where $a_n\sim b_n$ means $a_n/b_n\to1$, so the maximum entry satisfies
			$\max_{i,j}\xi_{ij}=1/\min_i\text{deg}(i)\sim(np_n)^{-1}=o(n^{-1/2})$, while the column sums
			converge to $1$ uniformly and \eqref{eq:column.sum.assump} holds. The matrix $\xi$ is also
			nonnegative with zero diagonal and satisfies \eqref{eq:row.sum.assump}. Consequently, by
			Remark \ref{remark.sufficient.matrix}, $\xi$ satisfies
			Assumption \ref{assum.main}\eqref{assum:mat} for all sufficiently large $n$, and
			Theorem \ref{thm:fluctuation.clt} holds in the quenched sense, almost surely with respect to
			the randomness of the graph.
		\end{enumerate}
	
\end{example}

\begin{example} \label{ex:LS}
	For the spatial interaction models studied in \cite{lucon2014mean,lucon2016transition} and in the absence of disorder variables,
	Theorem \ref{thm:fluctuation.clt} recovers their  Central Limit Theorem in the subcritical regime,
	and the supercritical regime shows that the $n^{-1/2}$ threshold in \eqref{eq:sufficient.matrix.max}
	is sharp. Let $\mathcal{X}:=\R/\Z$ be the
	circle and let $\Lambda_N:=\{-N,\dots,N\}$, with $-N$ and $N$ identified,
	so that $n:=|\Lambda_N|=2N$. Each particle $\theta^i$, $i\in\Lambda_N$, takes values in
	$\R^d$ and is located at the site $x_i:=\tfrac{i}{2N}\in\mathcal{X}$. The spatial weight is
	the singular, polynomially decaying kernel $\Psi(x,y):=d(x,y)^{-\alpha}\bm{1}_{\{x\neq y\}}$,
	where $d(\cdot,\cdot)$ is the distance on $\mathcal{X}$ and $\alpha\in[0,1)$. In the absence of disorder variables, the model considered in
	\cite{lucon2014mean,lucon2016transition} is
	\begin{align}
		\dd\theta_t^i = c(\theta_t^i)\,\dd t + \frac1n\sum_{j\in\Lambda_N}\Psi(x_i,x_j)\,
		\Gamma(\theta_t^i,\theta_t^j)\,\dd t + \dd B_t^i, \quad i\in\Lambda_N.
		\label{eq:LS.spatial.model}
	\end{align}
	While it does not directly fit into our setting, we are able to recover the Central Limit
	Theorem for \eqref{eq:LS.spatial.model} in the subcritical regime $\alpha<1/2$ by considering
	the system
	\begin{align}
		\dd X_t^i = c(X_t^i)\,\dd t + A_\alpha\sum_{j\in\Lambda_N}\xi_{ij}\,\Gamma(X_t^i,X_t^j)\,
		\dd t + \dd B_t^i, \quad i\in\Lambda_N,
		\label{eq:LS.fixed.coefficient.system}
	\end{align}
	where
	\begin{align*}
		\xi_{ij} := \frac{\Psi(x_i,x_j)}{\sum_{k\in\Lambda_N}\Psi(x_i,x_k)}, \quad
		A_\alpha := \int_{\mathcal{X}}\Psi(0,y)\,\dd y = \frac{2^\alpha}{1-\alpha}.
	\end{align*}
	The matrix
	$\xi$ is nonnegative and symmetric, has zero diagonal, and satisfies \eqref{eq:row.sum.assump}.
	Moreover, for sufficiently large $n$, it is straightforward to see from
	\cite[Lemma 3.4]{lucon2016transition} that $\max_{i,j\in\Lambda_N}\xi_{ij}\asymp n^{\alpha-1}$,
	so \eqref{eq:sufficient.matrix.max} holds as long as $\alpha<1/2$. Hence, by Remark \ref{remark.sufficient.matrix}, Assumption \ref{assum.main}\eqref{assum:mat} is satisfied.
	Thus, provided Assumption \ref{assum.main}\eqref{assum.drift}, \eqref{assum.init.transport} on the
	drifts and initial conditions hold, our Central Limit Theorem \ref{thm:fluctuation.clt} applies to
	$\mu_t^{n,X}:=n^{-1}\sum_{i\in\Lambda_N}\delta_{X_t^i}$.
	
	To transfer this Central Limit Theorem to
	$\mu_t^{n,\theta}:=n^{-1}\sum_{i\in\Lambda_N}\delta_{\theta_t^i}$, we couple
	\eqref{eq:LS.spatial.model} and \eqref{eq:LS.fixed.coefficient.system} with the same Brownian
	motions $B^i$ and initial conditions. The Lipschitz and boundedness assumptions on $c$ and
	$\Gamma$, together with \eqref{eq:row.sum.assump}, then give by Gronwall's inequality
	\begin{align}
		\max_{i\in\Lambda_N}\sup_{0\le t\le T}|\theta_t^i-X_t^i| \lesssim \max_{i\in\Lambda_N}
		\Big|\frac1n\sum_{j\in\Lambda_N}\Psi(x_i,x_j)-A_\alpha\Big| \asymp n^{\alpha-1},
		\label{eq:LS.Gronwall.bound}
	\end{align}
	where the last estimate is \cite[Lemma 3.4]{lucon2016transition}. Since $k\ge\lambda_d+2$, so that $k>d/2+1$, the Sobolev Embedding Theorem, see, e.g.,
	\cite[Theorem 4.12, Case A with $\Omega=\R^d$, $n=d$, $p=2$, $j=1$, and
	$m=k-1$]{adams2003sobolev}, yields $\cH^{k}\hookrightarrow C_b^1(\R^d)$. In particular, this implies
	\begin{align*}
		\sup_{0\le t\le T}\sqrt{n}\,\|\mu_t^{n,\theta}-\mu_t^{n,X}\|_{\cH^{-k}} \lesssim
		\sqrt{n}\,\max_{i\in\Lambda_N}\sup_{0\le t\le T}|\theta_t^i-X_t^i| \lesssim n^{\alpha-1/2}\to0.
	\end{align*}
	The Central Limit Theorem for $\mu^{n,X}$ therefore transfers to $\mu^{n,\theta}$, recovering
	the  Central Limit Theorem \cite[Theorem 2.7]{lucon2016transition} in the subcritical regime $\alpha < 1/2$.

	In fact, our assumptions on $c$ and $\Gamma$ are  weaker. We
	assume only $c,\Gamma\in C_b^{k+1}$ (Assumption \ref{assum.main}\eqref{assum.drift}) and allow
	them to depend on time, whereas \cite{lucon2016transition} requires bounded
	derivatives up to order $3d+12$, together with an $L^1$-integrability
	condition requiring $\Gamma$ and its derivatives up to order $2d+5$ to be integrable in one
	state variable, uniformly in the other. The assumptions on the initial law are not directly
	comparable. We require the
	quadratic transport inequality \eqref{eq:init.transport}, whereas \cite{lucon2016transition}
	requires the initial distribution to be absolutely continuous with density in $L^p(\R^d)$ for
	some $p>d$ and to have finite higher-order moments. Neither condition implies the other. The
	transport inequality \eqref{eq:init.transport} accommodates some singular laws such as Dirac masses, while their condition allows tails heavier than Gaussian.
	
	For the supercritical regime $\alpha>1/2$, \cite[Theorem 2.8]{lucon2016transition} shows that
	$n^{1-\alpha}(\mu_t^{n,\theta}-\mu_t)$ converges to a nontrivial deterministic limit, where
	$\mu_t$ is the mean field limit of \eqref{eq:LS.spatial.model} and, by \eqref{eq:LS.Gronwall.bound},
	also of \eqref{eq:LS.fixed.coefficient.system}.
	Therefore, 
	$\sqrt{n}\,(\mu_t^{n,\theta}-\mu_t)$ diverges.
	Since
	$\max_{i,j\in\Lambda_N}\xi_{ij}\asymp n^{\alpha-1}$, the condition \eqref{eq:sufficient.matrix.max}
	holds when $\alpha<1/2$ and fails when $\alpha>1/2$. 
	Thus, the $n^{-1/2}$
	threshold in \eqref{eq:sufficient.matrix.max} is sharp for this class of spatial interaction models.

\end{example}

	\section{Additional notations and preliminaries}\label{sec:prelim}
	\subsection{Additional notations for proofs} \subsubsection{Measures}\label{subsubsec:measures}
	For each $t\in[0,T]$, let $P_t\in\P((\R^d)^n)$ denote the law of $(X_t^1,\dots,X_t^n)$, where $(X^1,\dots,X^n)$ is a weak solution of \eqref{eq:X}. For any $m\in\N$ and any distinct indices $i_1,\dots,i_m\in[n]$, we write $P_t^{i_1\cdots i_m}\in\P((\R^d)^m)$ for the law of $(X_t^{i_1},\dots,X_t^{i_m})$. In particular, $P_t^i$, $P_t^{ij}$, and $P_t^{ij\ell}$ correspond to the laws of $X_t^i$, $(X_t^i,X_t^j)$, and $(X_t^i,X_t^j,X_t^\ell)$, respectively. Finally, for $\mu\in\P(\R^d)$ and $\ell\in\N$, we write $\mu^{\otimes \ell}\in\P((\R^d)^\ell)$ for the $\ell$-fold product measure.

	\subsubsection{Weighted empirical measures}To streamline our proofs, we adopt the following notations similar to those in  \cite[Section 1.3]{coppini2023central}.  
	Recall the definition of $\hxi$ given in \eqref{hatxi.def}. For $t\in[0,T]$, let $\wh{\eta}^n_t$ be the (rescaled) weighted empirical (signed) measure on $(\R^d)^2$, defined by
	\begin{align}\label{eq:nuhat}
		\wh{\eta}^n_t
		:=
		\frac1{n^{3/2}}\sum_{i,j=1}^{n}\hxi_{ij}\,
		\delta_{(X^{i}_{t},X^{j}_{t})}.
	\end{align}
	\subsubsection{Fourier transforms} \label{sec:Fourier}
	We use $\SS(\R^d)$ and $\SS'(\R^d)$ to denote the Schwartz space and the space of tempered distributions, respectively. For $\phi\in\SS(\R^d)$, we define the Fourier and inverse Fourier transforms by
	\begin{align} \label{eq:FT.function}
		\F[\phi](u)
		&=\int_{\R^d} e^{-2\pi i\langle u,x\rangle}\phi(x)\,\dd x,\quad
		\F^{-1}[\phi](x)
		=\int_{\R^d} e^{2\pi i\langle u,x\rangle}\phi(u)\,\dd u.
	\end{align}
	For any $\nu\in\SS'(\R^d)$, the Fourier and inverse Fourier transforms are defined by duality as
	\begin{align*}
		\langle \F[\nu],\phi\rangle
		&:=\langle \nu,\F[\phi]\rangle,\quad
		\langle \F^{-1}[\nu],\phi\rangle
		:=\langle \nu,\F^{-1}[\phi]\rangle,\quad \phi\in\SS(\R^d).
	\end{align*}
	We will also use the Fourier characterization of the $\cH^{-k}$-norm (see, for example, \cite[7.62]{adams2003sobolev}): For any finite (signed) measure $\nu$ and any integer $k\ge \lambda_d + 2$,
	\begin{align}\label{eq:Hminus.k.Fourier}
		\|\nu\|_{\cH^{-k}}^2
		\asymp
		\int_{\R^d}(1+|u|^2)^{-k}\,\big|\F[\nu](u)\big|^2\,\dd u.
	\end{align}
	Here $a\asymp b$ means that there exist constants $0 < c \le C < \infty$, depending only on $d$ and $k$, such that $cb\le a\le Cb$.
	\subsubsection{Bessel potentials, Bessel kernels, and Bessel functions}\label{subsubsec:bessel}
	Finally, we introduce the Bessel potentials. For any $s\in\R$, define $J^{s}:\SS'(\R^d)\to\SS'(\R^d)$ by
	\begin{align}\label{eq:Js.definition}
		J^{s}\nu
		:= \F^{-1}\big[(1+|u|^{2})^{s/2}\F[\nu]\big].
	\end{align}
	Under the Fourier convention \eqref{eq:FT.function}, we have $	\F[\Delta \phi](u)=-4\pi^2|u|^2\F[\phi](u) $ for $\phi \in \SS(\R^d)$.
	Hence, if $k = 2 \ell$ is an even integer, then
	\begin{align*} 
		J^k = (I - \Delta/(4\pi^2))^{\ell}
	\end{align*}
	on $\SS(\R^d) $, and therefore also on $\SS'(\R^d)$ by duality.

	For $k\in\N$, by \eqref{eq:Js.definition}, Plancherel Theorem, and the  equivalence between Sobolev and Bessel-potential norms, see \cite[Section~7.62]{adams2003sobolev}, the dual pairing between $\cH^{-k}$ and $\cH^k$ satisfies
	\begin{align}
		\langle u,v\rangle_{\cH^{-k},\cH^{k}}
		=
		\langle J^{-k}u,J^{k}v\rangle_{L^{2}},
		\quad u\in\cH^{-k},\ v\in\cH^{k}.
		\label{eq:Hk.duality.bessel}
	\end{align}

For the weighted spaces, we prove in Appendix \ref{sec:pf.hminus-kw} that, whenever $u\in\SS'(\R^d)$ satisfies $J^{-k}u\in L^2(w(x)\,\dd x)$, one has $u\in\cH_w^{-k}$ and
		\begin{align}\label{eq:Hminus.kw.characterization}
			\|u\|_{\cH_w^{-k}}
			\lesssim
			\|J^{-k}u\|_{L^2(w(x)\,\dd x)}.
	\end{align} 
	
For $s>0$, let $G_s$ denote the Bessel kernel defined by
		\begin{align}  \label{eq:Gs.definition}
			G_s:=\F^{-1}\big[(1+|u|^{2})^{-s/2}\big],
		\end{align}
		so that, for every $f\in\SS(\R^d)$, one has $J^{-s}f=G_s*f$. Moreover, by \cite[equations (2.7), (2.10)]{aronszajn1961theory}, $G_s$ admits the radial representation
	\begin{align} \label{eq:bessel:radial.form}
		G_s(x)=c_s\,|x|^{\alpha}K_{\alpha}(|x|), \quad \alpha=\frac{s-d}{2},
	\end{align}
	where $K_\alpha$ is the modified Bessel function of the second kind, defined in \cite[equation (3.1)]{aronszajn1961theory}, and $c_s>0$ is a constant. Moreover,
	using \cite[equation (3.7)]{aronszajn1961theory} with $\nu=-\alpha$ and $m=1$, and the symmetry
	\begin{align} \label{eq:K.symmetry}
		K_{-\nu}=K_{\nu}
	\end{align}
	from \cite[equation (3.3)]{aronszajn1961theory}, we obtain the derivative identity 
	\begin{align}\label{eq:bessel:derivative.identity}
		\frac{\dd}{\dd r}\big(r^{\alpha}K_{\alpha}(r)\big)=-\,r^{\alpha}K_{\alpha-1}(r).
	\end{align}
	
	Finally, by \cite[equation (3.4)]{aronszajn1961theory}, as $r\downarrow 0$, we have the small-argument asymptotics
	\begin{align}
		K_{\nu}(r)
		&\sim 2^{\nu-1}\Gamma(\nu)\,r^{-\nu},
		\quad \nu>0, \label{eq:bessel:Knu.small.r}\\
		K_{0}(r)
		&\sim \log(1/r). \label{eq:bessel:K0.small.r}
	\end{align}
	Here $\sim$ denotes asymptotic equivalence, that is, the ratio of the two sides tends to $1$. Moreover, by \cite[equation (3.5)]{aronszajn1961theory}, as $r\to\infty$, we have the large-argument asymptotic
	\begin{align}
		K_\nu(r)\sim \sqrt{\frac{\pi}{2r}}\,e^{-r}, \quad \nu \in \R. \label{eq:bessel:Knu.large.r}
	\end{align}

	\subsection{Preliminaries}
	The proof of the following proposition is deferred to Appendix  \ref{sec:proof-lem-preliminaries}. 
	\begin{proposition}
		\label{lem:preliminaries}
		Suppose Assumption \ref{assum.main} holds with $k \ge 0$. Then:
		\begin{enumerate}[(i)]
			\item\label{lem:preliminaries:i} \emph{(Well-posedness)} For every $n\in\N$, the system \eqref{eq:X} admits a unique strong solution on $[0,T]$. Moreover, the McKean-Vlasov equation \eqref{eq:MKV} admits a unique strong solution $(Y_t)_{t\in[0,T]}$. In particular, the measure flow $(\mu_t)_{t\in[0,T]}$ is uniquely determined.

				\item\label{lem:preliminaries:iv} \emph{(Uniform moment bounds)}
				For every $p\in[1,\infty)$, we have
				\begin{align} \label{eq:uniform.moment.bound.X.Y}
					\sup_{n\in\N}\max_{i\in[n]} \,
					\E\Big[\sup_{t\in[0,T]} |X_t^i|^p\Big]<\infty,
					\quad \text{and} \quad
					\E\Big[\sup_{t\in[0,T]} |Y_t|^p\Big]<\infty.
				\end{align}
				In particular, taking $p=2\lambda_d$,
				\begin{align} \label{eq:w.momet.statement}
					\sup_{n\in\N}\max_{i\in[n]} \,
					\E\Big[\sup_{t\in[0,T]}w(X_t^i)\Big]<\infty,
					\quad \text{and} \quad
					\E\Big[\sup_{t\in[0,T]}w(Y_t)\Big]<\infty.
				\end{align}

			\item\label{lem:preliminaries:ii} \emph{(Convergence of empirical measures)} As $n\to\infty$,
			\begin{align*}
				\sup_{t\in[0,T]}\E\bigg[\W_2^2\Big(\frac{1}{n}\sum_{i=1}^n\delta_{X_t^i},\mu_t\Big)\bigg]\to 0.
			\end{align*} 
			\item\label{lem:preliminaries:iii} \emph{(Poincar\'e inequality)} There exists $C< \infty$, independent of $t\in[0,T]$ and $n \in \N$, such that for any $g\in W^{1,2}(P_t)$,
			\begin{align*}
				\Var_{P_t}(g)
				:=
				\int_{(\R^d)^n} g^2\,\dd P_t
				-
				\Big(\int_{(\R^d)^n} g\,\dd P_t\Big)^2
				\le
				C\int_{(\R^d)^n} |\nabla g|^2\,\dd P_t,
			\end{align*}
			where $W^{1,2}(P_t)$ denotes the set of $g\in L^2(P_t)$ whose weak gradient $\nabla g$ belongs to $L^2(P_t)$.

			\item\label{lem:preliminaries:v} \emph{(Transport inequality for $\mu_t$)} There exists $\gamma_T<\infty$ such that for all $t\in[0,T]$ and all $\nu\in\P(\R^d)$,
			\begin{align}\label{eq:mu_t_transport}
				\W_2^2(\nu,\mu_t)\le \gamma_T\,H(\nu\,|\,\mu_t).
			\end{align}
		\end{enumerate}
	\end{proposition}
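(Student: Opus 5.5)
We treat the five parts of Proposition \ref{lem:preliminaries} in order; each is standard given bounded Lipschitz coefficients and the row-sum condition \eqref{eq:row.sum.assump}.

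\emph{Well-posedness and moments.} For \eqref{lem:preliminaries:i}, the drift of \eqref{eq:X} is bounded and globally Lipschitz in $(x^1,\dots,x^n)$, since $b_0,b\in L^\infty([0,T];C_b^1)$ and $\sum_j\xi_{ij}=1$. Hence classical It\^o theory gives a unique strong solution. For \eqref{eq:MKV}, we run the standard Sznitman fixed-point argument on $C([0,T];\P_1)$ with a Wasserstein metric, using that $b$ is bounded and Lipschitz. For \eqref{lem:preliminaries:iv}, the drifts are bounded by $\|b_0\|_\infty+\|b\|_\infty$ uniformly in $n$. So $\sup_t|X^i_t|\le |X^i_0|+CT+\sigma\sup_t|B^i_t|$. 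Moments of $\sup_t|B_t|$ are finite, and the transport inequality \eqref{eq:init.transport} gives Gaussian concentration of $\mu_0$, so all moments of $\mu_0$ are finite. The same bound holds for $Y$, and the weighted bound \eqref{eq:w.momet.statement} follows with $p=2\lambda_d$.

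\emph{Transport inequality and Poincar\'e inequality.} For \eqref{lem:preliminaries:v}, we write $\mu_t$ as the time-$t$ law of a diffusion with bounded drift $\beta(t,x)=b_0(t,x)+\langle\mu_t,b(t,x,\cdot)\rangle$, which is Lipschitz in $x$, started from $\mu_0$. A quadratic transport inequality $T_2$ is stable under pushforward by $C$-Lipschitz maps, with constant multiplied by $C^2$. The flow map $x\mapsto$ solution is Lipschitz with constant $e^{LT}$. Conditioning on the Brownian path and then mixing requires care, so we instead invoke the known result that laws of SDEs with Lipschitz drift and constant diffusion satisfy $T_2$ when the initial law does (Djellout--Guillin--Wu, or the Bakry--\'Emery/perturbation route). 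This yields $\gamma_T\lesssim e^{2LT}(\gamma_0+\sigma^2T)$. Part \eqref{lem:preliminaries:iii} is the analogous statement for the $n$-particle system with constant independent of $n$. The drift $x\mapsto(\dots)$ on $(\R^d)^n$ has Lipschitz constant in the $\ell^2$ norm bounded by $\|\nabla b\|_\infty(1+\|\xi\|_{\mathrm{op}})$. Since $\|\xi\|_{\mathrm{op}}\le\sqrt{\|\xi\|_{1\to1}\|\xi\|_{\infty\to\infty}}\le\sqrt{C}$ by \eqref{eq:row.sum.assump} and \eqref{eq:column.sum.assump}, this Lipschitz constant is uniform in $n$. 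The initial law $\mu_0^{\otimes n}$ satisfies $T_2$ with constant $\gamma_0$ by tensorization, hence a Poincar\'e inequality with constant $\gamma_0/2$ (linearizing $T_2$, by Otto--Villani). Propagating the Poincar\'e inequality along the flow is done by the same synchronous-coupling/Lipschitz-flow argument, or via the semigroup commutation $|\nabla P_{s,t}g|\le e^{L(t-s)}P_{s,t}|\nabla g|$. This yields $C\le e^{2LT}(\gamma_0/2+\sigma^2 T)$. The main obstacle is this dimension-free propagation, and the key point is the uniform operator-norm bound on $\xi$.

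\emph{Convergence of empirical measures.} For \eqref{lem:preliminaries:ii}, we couple $X^i$ with i.i.d.\ copies $Y^i$ of \eqref{eq:MKV} driven by the same Brownian motions and initial data. Then we split
\[
\W_2^2(\mu^n_t,\mu_t)\le 2\W_2^2(\mu^n_t,\bar\mu^n_t)+2\W_2^2(\bar\mu^n_t,\mu_t),
\qquad \bar\mu^n_t:=\frac1n\sum_{i=1}^n\delta_{Y^i_t}.
\]
The second term tends to zero by the law of large numbers in $\W_2$ for i.i.d.\ samples with finite higher moments, uniformly in $t$ by moment bounds and equicontinuity. The first term is bounded by $\frac1n\sum_i\E|X^i_t-Y^i_t|^2$. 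By Gronwall, this is controlled by
\[
\frac1n\sum_i\E\Big|\sum_j\xi_{ij}\big(b(Y^i,Y^j)-\langle\mu,b(Y^i,\cdot)\rangle\big)\Big|^2\lesssim\frac1n\sum_{i,j}\xi_{ij}^2 .
\]
Using \eqref{eq:xi_row_sq} together with Cauchy--Schwarz, $\frac1n\sum_{i,j}\xi_{ij}^2\le n^{-1/2}\big(\sum_i(\sum_j\xi_{ij}^2)^2\big)^{1/2}\to0$. This term therefore vanishes as well.
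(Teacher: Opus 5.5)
Your proposal is correct and follows essentially the same route as the paper. The ingredients match part by part: standard Lipschitz well-posedness, the crude pathwise bound plus sub-Gaussianity of $\mu_0$ for the moments, synchronous coupling with i.i.d.\ McKean--Vlasov copies for (iii), an $n$-independent Lipschitz constant from \eqref{eq:row.sum.assump}--\eqref{eq:column.sum.assump} combined with tensorized $T_2\Rightarrow$ Poincar\'e for $\mu_0^{\otimes n}$ and propagation along the flow for (iv), and a cited SDE transport inequality pushed forward to time $t$ for (v). The deviations are minor: you bound $\frac1n\sum_{i,j}\xi_{ij}^2$ through \eqref{eq:xi_row_sq} and Cauchy--Schwarz instead of \eqref{eq:xi_sq_avg}, obtain uniformity in $t$ by equicontinuity rather than a path-space law of large numbers, and sketch the Poincar\'e propagation via $|\nabla P_{s,t}g|\le e^{L(t-s)}P_{s,t}|\nabla g|$ where the paper cites Cattiaux--Guillin (the Gronwall step in (iii) silently uses the same column-sum bound as your operator-norm estimate).
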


		The next preliminary result concerns quantitative propagation of chaos for the particle system \eqref{eq:X}. It controls the convergence toward i.i.d. copies of the McKean-Vlasov equation \eqref{eq:MKV} in relative entropy, in a maximum sense for the $3$-particle marginals and in an average sense for the $2$-particle marginals. The proof of this lemma follows  from the results in \cite{lacker2024quantitative} and is  deferred to Appendix \ref{sec.poc.pf}.

		\begin{lemma}[Entropy bounds]\label{lem:max.ent.POC}
			Suppose Assumption \ref{assum.main} holds with $k \ge 0$. Then, for each $t\in[0,T]$:
			\begin{enumerate}[(i)]
				\item\label{lem:max.ent.POC:i} \emph{(Maximum three-particle entropy.)}
				\begin{align*} 
					\max_{\substack{i,j,k\in[n]\\ i\neq j,\ i\neq k,\ j\neq k}}
					H(P_t^{ijk}\,|\, \mu_t^{\otimes 3})
					\lesssim
					\max_{i, j \in [n]}\xi_{ij}^2.
				\end{align*}
				\item\label{lem:max.ent.POC:ii} \emph{(Average two-particle entropy.)}
				\begin{align*}
					\frac{1}{n(n-1)}\sum_{\substack{i, j \in [n] \\ i \neq j}} H(P_t^{ij}\,|\, \mu_t^{\otimes 2})
					\lesssim
					\frac{1}{n}\sum_{i=1}^n\Big(\sum_{j=1}^n(\xi_{ij}^2+\xi_{ji}^2)\Big)^2.
				\end{align*}
			\end{enumerate}
		\end{lemma}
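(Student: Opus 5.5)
The plan is to derive both parts of Lemma \ref{lem:max.ent.POC} from the path-space entropy estimates of \cite{lacker2024quantitative}, and then pass to time marginals by data processing. The first step is to check that the hypotheses of \cite{lacker2024quantitative} hold under Assumption \ref{assum.main} with $k\ge0$. These are: bounded interaction drift $b$ (true since $b\in L^\infty([0,T];C_b^{0})$), $b_0$ bounded and Lipschitz so that \eqref{eq:X} and \eqref{eq:MKV} are well posed (Proposition \ref{lem:preliminaries}\eqref{lem:preliminaries:i}), nonnegative $\xi$ with zero diagonal and row sums equal to one \eqref{eq:row.sum.assump}, and i.i.d.\ initial data. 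Because the initial data are i.i.d.\ $\mu_0$, every initial entropy vanishes, so the source terms in the hierarchy come only from the drift discrepancy.

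Let $P^v_{[0,t]}$ denote the law of $(X^i)_{i\in v}$ on $C([0,t];\R^d)^{v}$, and let $\mu_{[0,t]}$ denote the law of $Y$ on $[0,t]$. By data processing (Remark \ref{rem:data.processing}), $H(P^v_t\,|\,\mu_t^{\otimes v})\le H(P^v_{[0,t]}\,|\,\mu_{[0,t]}^{\otimes v})$, so it suffices to bound the path-space entropies $H^v_t:=H(P^v_{[0,t]}\,|\,\mu_{[0,t]}^{\otimes v})$.

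Next I would recall the mechanism of \cite{lacker2024quantitative}. Girsanov's theorem combined with projection of the drift onto the coordinates in $v$ yields a linear differential inequality hierarchy of the form
\begin{align*}
\frac{\dd}{\dd t}H^v_t\lesssim \sum_{i\in v}\sum_{j\notin v}\xi_{ij}\big(H^{v\cup\{j\}}_t-H^v_t\big)+\sum_{i\in v}\Big(\sum_{j\in v}\xi_{ij}\Big)^2+\sum_{i\in v}\sum_{j\notin v}\xi_{ij}^2 .
\end{align*}
The first term arises from replacing the conditional laws of outside particles by $\mu$, controlled through a transport or Pinsker step together with the bound $\|b\|_\infty$. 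The second term is the self-interaction inside $v$, and the third is the variance term. Iterating this hierarchy along the Markov chain on index sets that jumps $v\to v\cup\{j\}$ at rate $\xi_{ij}$ gives the first-passage-percolation-type bounds of \cite{lacker2024quantitative}. In particular I would use the following two of their results:
\begin{enumerate}[(a)]
\item For $|v|=\ell$ fixed, $\max_{|v|=\ell}H^v_T\lesssim_\ell \max_{i,j}\xi_{ij}^2$. Here the key point is that the crude linear term $\sum_j\xi_{ij}^2\le\delta$ is improved to $\delta^2$ by their refined (second-order) estimate.
\item The average two-particle bound $\frac{1}{n(n-1)}\sum_{i\ne j}H^{ij}_T\lesssim \frac1n\sum_i\big(\sum_j(\xi_{ij}^2+\xi_{ji}^2)\big)^2$.
\end{enumerate}
Taking $\ell=3$ in (a) gives part \eqref{lem:max.ent.POC:i}, and (b) gives part \eqref{lem:max.ent.POC:ii}, after applying data processing in both cases.

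The main obstacle is matching our setting precisely to the statements of \cite{lacker2024quantitative}. Two points need checking. First, time-dependent $b_0,b$ should cause no difficulty, since only the uniform bound on $b$ and the existence of the limiting flow enter the argument. Second, I must confirm that the constants depend only on $T$, $\|b\|_\infty$, $\sigma$ and the fixed marginal size, and not on $n$; this is needed for the $\lesssim$ in both parts. If the averaged estimate (b) is stated there only for symmetric $\xi$ or in terms of row quantities alone, I would re-run their iteration for $|v|=2$ directly. Concretely, I would sum the hierarchy inequality over pairs, bound the three-particle terms using (a) and the column-sum bound \eqref{eq:column.sum.assump}, and close the resulting estimate with Gronwall, tracking both the $\xi_{ij}^2$ and $\xi_{ji}^2$ contributions that produce the symmetric expression on the right-hand side.
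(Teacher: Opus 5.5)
Your route is the paper's: apply \cite[Theorems 2.8 and 2.11]{lacker2024quantitative} on path space with $|v|=3$ and $|v|=2$, then pass to time-$t$ marginals by data processing. However, you quote results those theorems do not literally give, and you skip the steps needed to apply them. There are three such steps.

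(1) The reference law in \cite{lacker2024quantitative} is the \emph{independent projection} $Q$, not $\mu^{\otimes v}$, so you must check that $Q_t^i=\mu_t$. This holds because i.i.d.\ copies of \eqref{eq:MKV} solve the independent-projection SDE: its drift $b_0+\sum_j\xi_{ij}\langle\mu_t,b(t,x,\cdot)\rangle$ is the McKean--Vlasov drift by \eqref{eq:row.sum.assump}, and the initial laws are $\mu_0$. Uniqueness of that system then identifies $Q_t^i=\mu_t$.

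(2) The hypothesis mismatch for Theorem 2.11 is not symmetry. It is the requirement that column sums be at most $1$. Here \eqref{eq:column.sum.assump} only bounds them by $C$, and you need \cite[Remark 2.1]{lacker2024quantitative} to see that this only changes constants.

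(3) Set $\delta:=\max_{i,j}\xi_{ij}$. Theorem 2.8 gives the bound $(3\delta+1)(3\delta)^2$. Theorem 2.11 gives $(1+2\delta)\big(\frac{4}{n^2}\sum_{i,j}\xi_{ij}^2+\frac{2}{n}\sum_i(\sum_j(\xi_{ij}^2+\xi_{ji}^2))^2\big)$. You need $\delta\le 1$, which follows from nonnegativity and row sums one. You must also absorb the first term of the second bound. Zero diagonal, row sums one and Cauchy--Schwarz give $\sum_j\xi_{ij}^2\ge 1/(n-1)$, hence $\frac{1}{n-1}\sum_j(\xi_{ij}^2+\xi_{ji}^2)\le(\sum_j(\xi_{ij}^2+\xi_{ji}^2))^2$. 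Summing over $i$ yields $\frac{1}{n^2}\sum_{i,j}\xi_{ij}^2\le\frac1n\sum_i(\sum_j(\xi_{ij}^2+\xi_{ji}^2))^2$. Without this step, your (b) is not a correct citation.

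Your sketch of the hierarchy, and therefore your fallback, is off. There is no source term $\sum_{i\in v}\sum_{j\notin v}\xi_{ij}^2$. Outside particles enter the projected drift only through the conditional mean $\sum_{j\notin v}\xi_{ij}\langle P^{j|v}_{t,x_v}-\mu_t,b(t,x_i,\cdot)\rangle$, so no variance appears. Cauchy--Schwarz with weights $\xi_{ij}$, Pinsker, and the chain rule for entropy turn this term entirely into $\sum_{j\notin v}\xi_{ij}(H^{v\cup\{j\}}-H^v)$. The only source is $\sum_{i\in v}(\sum_{j\in v}\xi_{ij})^2=O(|v|^3\delta^2)$, and that is where $\delta^2$ comes from; no refinement of an $O(\delta)$ term is involved. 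With your extra term, the hierarchy would only give $O(\delta)$. For pairs it would only give an average of order $\frac1n\sum_{i,j}\xi_{ij}^2$, which is $1/m_n$ rather than $1/m_n^2$ for $m_n$-regular graphs.

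Even with the correct hierarchy, closing the $|v|=2$ level by bounding three-particle entropies with (a) gives only an average of order $\delta^2$. That is not dominated by $\frac1n\sum_i(\sum_j(\xi_{ij}^2+\xi_{ji}^2))^2$ in general. For example, take $\xi_{12}=1/2$ and all other entries of order $1/n$: then $\delta^2$ is of order one while the target is $O(1/n)$. The averaged bound genuinely needs the percolation analysis behind Theorem 2.11, so your fallback is not a substitute. Once point (2) is handled, Theorem 2.11 applies directly and no fallback is needed.
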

		
		\begin{remark}\label{rem:data.processing}
			We will  use the data processing inequality for relative entropy: For any probability measures $\nu,\nu'$ on a common measurable space and any measurable map $f$:
			\begin{align*}
				H(\nu\circ f^{-1}\,|\, \nu'\circ f^{-1}) \le H(\nu\,|\, \nu').
			\end{align*}
			Applied with coordinate projections, this yields for $t\in[0,T]$,
			\begin{align}
				H(P_t^{ij}\,|\, \mu_t^{\otimes 2}) &\le H(P_t^{ijk}\,|\, \mu_t^{\otimes 3}),
				\quad i\neq j,\ i\neq k,\ j\neq k,
				\label{eq:DPI.3to2}\\
				H(P_t^{i}\,|\, \mu_t) &\le H(P_t^{ij}\,|\, \mu_t^{\otimes 2}),
				\quad i\neq j.
				\label{eq:DPI.2to1}
			\end{align}
			Averaging \eqref{eq:DPI.2to1} over $j\neq i$ and then over $i$ yields, for $n\ge 2$,
			\begin{align}
				\frac{1}{n}\sum_{i=1}^n H(P_t^{i}\,|\, \mu_t)
				\le
				\frac{1}{n(n-1)}\sum_{\substack{i, j \in [n] \\ i \neq j}} H(P_t^{ij}\,|\, \mu_t^{\otimes 2}) \lesssim
				\frac{1}{n}\sum_{i=1}^n\Big(\sum_{j=1}^n(\xi_{ij}^2+\xi_{ji}^2)\Big)^2.
				\label{eq:DPI.avg.1to0}
			\end{align}
			Similarly, combining \eqref{eq:DPI.3to2} with Lemma~\ref{lem:max.ent.POC}(\ref{lem:max.ent.POC:i}) yields, when $n\ge 3$,
			\begin{align}
				\max_{\substack{i, j \in [n] \\ i \neq j}} H(P_t^{ij}\,|\, \mu_t^{\otimes 2})
				\lesssim
				\max_{i, j \in [n]}\xi_{ij}^2.
				\label{eq:DPI.maxpair}
			\end{align}
		\end{remark}
		
		The final preliminary result we need is the weak semimartingale decomposition of $\eta^n$.
		\begin{lemma}\label{lem:eta.dym}
			Let $f\in C_b^2(\R^d)$. For $t \in [0,T]$,
			\begin{align}\label{eq:eta.semimartingale}
				\begin{split}
					\langle \eta^n_t, f \rangle
					&= \langle \eta^n_0, f \rangle
					+ \int_0^t \big\langle \eta^n_s, \LL_{s,\mu^n_s} f\big\rangle \,\dd s 
					+ \frac{n}{n-1} \int_0^t \big\langle \wh{\eta}^n_s(\dd x, \dd y), b(s,x,y)\cdot \nabla f(x)\big\rangle \,\dd s \\
					&\quad
					+\frac{\sqrt{n}}{n-1}\int_0^t \big\langle \mu_s^n(\dd x)\mu_s^n(\dd y), b(s,x,y)\cdot \nabla f(x)\big\rangle \,\dd s
					+ \frac{\sigma}{\sqrt{n}} \sum_{i=1}^{n} \int_0^t \nabla f(X^i_s)\cdot \dd B^i_s .
				\end{split}
			\end{align}
		\end{lemma}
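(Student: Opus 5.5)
The identity is a bookkeeping consequence of It\^o's formula, applied once to the particles and once (in expectation) to the McKean--Vlasov process. The only real work is rearranging the interaction term via \eqref{hatxi.def}. Throughout, write $g_s(x,y):=b(s,x,y)\cdot\nabla f(x)$, which is bounded and continuous by Assumption \ref{assum.main}\eqref{assum.drift} and $f\in C_b^2$. Although $\LL_{s,\nu}$ was defined on $C_c^\infty$, its formula \eqref{def.generator} makes sense verbatim for $f\in C_b^2$, and we use it that way.

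\emph{Step 1 (particles).} Apply It\^o's formula to $f(X^i_t)$ using \eqref{eq:X}, sum over $i$, and divide by $\sqrt n$:
\begin{align*}
\frac{1}{\sqrt n}\sum_{i=1}^n f(X^i_t)
&=\frac{1}{\sqrt n}\sum_{i=1}^n f(X^i_0)
+\sqrt n\int_0^t\Big\langle \mu^n_s,\tfrac{\sigma^2}{2}\Delta f+b_0(s,\cdot)\cdot\nabla f\Big\rangle\dd s \\
&\quad+\frac{1}{\sqrt n}\int_0^t\sum_{i,j=1}^n\xi_{ij}\,g_s(X^i_s,X^j_s)\,\dd s
+\frac{\sigma}{\sqrt n}\sum_{i=1}^n\int_0^t\nabla f(X^i_s)\cdot\dd B^i_s .
\end{align*}
Now substitute $\xi_{ij}=(\hxi_{ij}+1)/(n-1)$ for all $i,j$, including $i=j$. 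This is legitimate because $\xi_{ii}=0$ and $\hxi_{ii}=-1$. The double sum then splits as
\begin{align*}
\frac{1}{\sqrt n}\sum_{i,j}\xi_{ij}g_s(X^i_s,X^j_s)
=\frac{n}{n-1}\langle\wh\eta^n_s,g_s\rangle+\frac{n^{3/2}}{n-1}\langle\mu^n_s\otimes\mu^n_s,g_s\rangle ,
\end{align*}
using \eqref{eq:nuhat} for the first piece and $\sum_{i,j}g_s(X^i_s,X^j_s)=n^2\langle\mu^n_s\otimes\mu^n_s,g_s\rangle$ for the second.

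\emph{Step 2 (limit flow).} Apply It\^o's formula to $f(Y_t)$ from \eqref{eq:MKV} and take expectations. All coefficients and $\nabla f$ are bounded, so the stochastic integral is a true martingale, and we get the weak Fokker--Planck identity
\begin{align*}
\langle\mu_t,f\rangle=\langle\mu_0,f\rangle+\int_0^t\Big\langle\mu_s,\tfrac{\sigma^2}{2}\Delta f+b_0\cdot\nabla f\Big\rangle\dd s+\int_0^t\langle\mu_s\otimes\mu_s,g_s\rangle\,\dd s .
\end{align*}
Multiply this by $\sqrt n$ and subtract it from Step 1.

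\emph{Step 3 (matching).} The Laplacian and $b_0$ terms combine directly into $\sqrt n\langle\mu^n_s-\mu_s,\cdot\rangle$, which is the corresponding part of $\langle\eta^n_s,\LL_{s,\mu^n_s}f\rangle$. For the last two terms of $\LL_{s,\mu^n_s}f$ paired with $\eta^n_s$, expand using Fubini:
\begin{itemize}
\item the third term of \eqref{def.generator} contributes $\sqrt n\big(\langle\mu^n_s\otimes\mu^n_s,g_s\rangle-\langle\mu_s\otimes\mu^n_s,g_s\rangle\big)$;
\item the fourth term contributes $\sqrt n\big(\langle\mu_s\otimes\mu^n_s,g_s\rangle-\langle\mu_s\otimes\mu_s,g_s\rangle\big)$.
\end{itemize}
These telescope to $\sqrt n\big(\langle\mu^n_s\otimes\mu^n_s,g_s\rangle-\langle\mu_s\otimes\mu_s,g_s\rangle\big)$. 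Comparing with Step 1, the leftover coefficient on $\langle\mu^n_s\otimes\mu^n_s,g_s\rangle$ is
\begin{align*}
\frac{n^{3/2}}{n-1}-\sqrt n=\frac{\sqrt n}{n-1},
\end{align*}
which is exactly the fourth term of \eqref{eq:eta.semimartingale}. The $\wh\eta^n$ term and the martingale term carry over unchanged, and this yields \eqref{eq:eta.semimartingale}.

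No step is a genuine obstacle. The only points needing care are the diagonal convention $\hxi_{ii}=-1$ in Step 1, which lets the full double sum (diagonal included) be written as an integral against $\mu^n\otimes\mu^n$, and the justification of the Fokker--Planck identity for $f\in C_b^2$ rather than $C_c^\infty$ in Step 2, which the boundedness of the coefficients handles.
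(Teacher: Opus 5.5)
Your proposal is correct and follows essentially the same route as the paper: It\^o's formula for the particles, the substitution $\xi_{ij}=(\hxi_{ij}+1)/(n-1)$ (using $\xi_{ii}=0$, $\hxi_{ii}=-1$), subtraction of $\sqrt n$ times the weak McKean--Vlasov equation, and the same bilinear telescoping of the two interaction terms of $\LL_{s,\mu^n_s}$. The only differences are cosmetic: you work directly at the $\sqrt n$ scale rather than first computing $\dd\langle\mu^n_t,f\rangle$, and you write out the telescoping and the coefficient identity $\tfrac{n^{3/2}}{n-1}-\sqrt n=\tfrac{\sqrt n}{n-1}$ that the paper states without detail.
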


		\begin{proof}
			Fix $i\in[n]$. It\^{o}'s formula yields
			\begin{align*}
				\dd \langle \mu^n_t,f\rangle
				&=
				\Big\langle \mu^n_t,
				\frac{\sigma^2}{2}\Delta f
				+ b_0(t,\cdot)\cdot \nabla f
				\Big\rangle \, \dd t \nonumber\\
				&\quad
				+ \frac1n\sum_{i,j=1}^n \xi_{ij}\, b(t,X^i_t,X^j_t)\cdot \nabla f(X^i_t)\,\dd t
				+ \frac{\sigma}{n}\sum_{i=1}^n \nabla f(X^i_t)\cdot \dd B^i_t .
			\end{align*}
			Using the definition of $\wh{\eta}^n_t$ in \eqref{eq:nuhat} together with $ 	\xi_{ij} = (\hxi_{ij}+1)/(n-1)$,
			we rewrite
			\begin{align*}
				\frac1n\sum_{i,j=1}^n \xi_{ij}\, b(t,X^i_t,X^j_t)\cdot \nabla f(X^i_t)
				&=
				\frac{n}{n-1}
				\Big\langle \mu^n_t(\dd x),
				\big\langle \mu^n_t(\dd y), b(t,x,y)\big\rangle \cdot \nabla f(x)
				\Big\rangle \nonumber\\
				&\quad
				+ \frac{\sqrt{n}}{n-1}
				\big\langle \wh{\eta}^n_t,
				b(t,x,y)\cdot \nabla f(x)
				\big\rangle .
			\end{align*}
			Therefore,
			\begin{align}
				\begin{split}
				\dd \langle \mu^n_t,f\rangle
				&=
				\Big\langle \mu^n_t,
				\frac{\sigma^2}{2}\Delta f
				+ b_0(t,\cdot)\cdot \nabla f
				\Big\rangle \, \dd t
				+
				\frac{n}{n-1}
				\Big\langle \mu^n_t(\dd x),
				\big\langle \mu^n_t(\dd y), b(t,x,y)\big\rangle \cdot \nabla f(x)
				\Big\rangle \, \dd t \\
				&\quad
				+ \frac{\sqrt{n}}{n-1}
				\big\langle \wh{\eta}^n_t,
				b(t,x,y)\cdot \nabla f(x)
				\big\rangle \, \dd t
				+ \frac{\sigma}{n}\sum_{i=1}^n \nabla f(X^i_t)\cdot \dd B^i_t .
				\end{split}
				\label{eq:mun.dym}
			\end{align}
			Next, let $(Y_t)_{t\in[0,T]}$ solve \eqref{eq:MKV} (recall Proposition \ref{lem:preliminaries}\eqref{lem:preliminaries:i}) and let $\mu_t$ denote the law of $Y_t$. Applying It\^{o}'s formula to $f(Y_t)$ and taking expectations yields
			\begin{align}\label{eq:mu.dym}
				\dd \langle \mu_t,f\rangle
				&=
				\Big\langle \mu_t,
				\frac{\sigma^2}{2}\Delta f
				+ b_0(t,\cdot)\cdot \nabla f
				\Big\rangle \, \dd t 
				+
				\Big\langle \mu_t(\dd x),
				\big\langle \mu_t(\dd y), b(t,x,y)\big\rangle \cdot \nabla f(x)
				\Big\rangle \, \dd t .
			\end{align}
			Subtracting \eqref{eq:mu.dym} from \eqref{eq:mun.dym}, multiplying by $\sqrt{n}$, and using $\eta_t^n=\sqrt{n}(\mu_t^n-\mu_t)$ gives
			\begin{align*}
				\dd\langle \eta_t^n, f\rangle
				=&
				\Big\langle \eta^n_t,
				\frac{\sigma^2}{2}\Delta f + b_0(t,\cdot)\cdot \nabla f
				\Big\rangle \,  \dd t +  \sqrt{n} \Big\langle \mu^n_t(\dd x),
				\big\langle \mu^n_t(\dd y), b(t,x,y)\big\rangle\cdot \nabla f(x)
				\Big\rangle \, \dd t\nonumber\\
				&
				-\sqrt{n}
				\Big\langle \mu_t(\dd x),
				\big\langle \mu_t(\dd y), b(t,x,y)\big\rangle\cdot \nabla f(x)
				\Big\rangle
				\,  \dd t \nonumber\\
				&
				+\frac{\sqrt{n}}{n-1}
				\Big\langle \mu^n_t(\dd x),
				\big\langle \mu^n_t(\dd y), b(t,x,y)\big\rangle\cdot \nabla f(x)
				\Big\rangle \,  \dd t \\
				&
				+\frac{n}{n-1}\big\langle \wh{\eta}^n_t, b(t,x,y)\cdot \nabla f(x)\big\rangle \,  \dd t
				+\frac{\sigma}{\sqrt{n}}\sum_{i=1}^n \nabla f(X^i_t)\cdot \dd B^i_t .
			\end{align*}
			Using
			\begin{align*}
				&\sqrt{n}\bigg[
				\Big\langle \mu^n_t(\dd x),
				\big\langle \mu^n_t(\dd y), b(t,x,y)\big\rangle\cdot \nabla f(x)
				\Big\rangle
				-\Big\langle \mu_t(\dd x),
				\big\langle \mu_t(\dd y), b(t,x,y)\big\rangle\cdot \nabla f(x)
				\Big\rangle
				\bigg] \\
				=\,&
				\bigg\langle \eta_t^n, \int_{\R^d} b(t,\cdot,y)\cdot\nabla f(\cdot)\,\mu_t^n(\dd y)\bigg\rangle 
				+\bigg\langle \eta_t^n,  \int_{\R^d} b(t,y,\cdot)\cdot\nabla f(y)\,\mu_t(\dd y)\bigg\rangle,
			\end{align*}
			we obtain
			\begin{align*}
				\dd\langle \eta_t^n, f\rangle
				&=
				\big\langle \eta_t^n,\LL_{t,\mu_t^n}f\big\rangle \,\dd t
				+\frac{\sqrt{n}}{n-1}
				\Big\langle \mu^n_t(\dd x),
				\big\langle \mu^n_t(\dd y), b(t,x,y)\big\rangle\cdot \nabla f(x)
				\Big\rangle \,\dd t \\
				&\quad
				+\frac{n}{n-1}\big\langle \wh{\eta}^n_t, b(t,x,y)\cdot \nabla f(x)\big\rangle \,\dd t
				+\frac{\sigma}{\sqrt{n}}\sum_{i=1}^n \nabla f(X^i_t)\cdot \dd B^i_t .
			\end{align*}
			Integrating in time yields the claimed decomposition.
		\end{proof}

		\section{Tightness}\label{sec:tightness}
		Our proof of the main Theorem \ref{thm:fluctuation.clt} begins with establishing the tightness of $(\eta^n)$.  As a preparation, we prove the following weighted second moment bound. (Recall the definition of $\hxi$ given in \eqref{hatxi.def}.)
		\begin{lemma}\label{lem:cancellation}
			Suppose Assumption \ref{assum.main} holds with $k \ge 0$. Then, for each $t \in [0,T]$, and each bounded measurable $G : \R^d \times \R^d\to \R^d$,
			\begin{align}
				\sum_{i=1}^n \E\Big[\Big|\sum_{j=1}^n \hxi_{ij}\,G(X_t^i,X_t^j)\Big|^2\Big]
				&\lesssim \Vert G\Vert_\infty^2\Big( \sum_{i,j=1}^n \hxi_{ij}^2
				+ \sum_{i=1}^n\Big(\sum_{j=1}^n|\hxi_{ij}|\Big)^2\max_{i,j\in[n]}\xi_{ij}\Big), \label{lem:cancellation.row}
			\end{align}
			and
			\begin{align}
				\begin{split}
					\sum_{i=1}^n \E\Big[\Big|\sum_{j=1}^n \hxi_{ji}\,G(X_t^j,X_t^i)\Big|^2\Big]
					\lesssim \Vert G\Vert_\infty^2&\Big(  \sum_{i,j=1}^n \hxi_{ij}^2 + \sum_{i=1}^n\Big(\sum_{j=1}^n|\hxi_{ji}|\Big)^2\max_{i,j\in[n]}\xi_{ij} \\
					&
					+ \sum_{i=1}^n\Big(\sum_{j=1}^n\hxi_{ji}\Big)^2\Big).
				\end{split}
				\label{lem:cancellation.col}
			\end{align}
			Also, for each $g_t \in C_b^1(\R^d\times\R^d)$ satisfying $ 		\E[g_t(Y_t^1,Y_t^2)] = 0$,
			where $Y^1$ and $Y^2$ are independent copies of the  McKean-Vlasov solution from Proposition \ref{lem:preliminaries}\eqref{lem:preliminaries:i}, we have
			\begin{align} \label{lem:cancellation.unweighted}
				\E \bigg[\Big( \sum_{i,j=1}^{n} g_t(X_t^i,X_t^j)\Big)^2 \bigg]
				\lesssim
				n^2 \|g_t\|_\infty^2 + n^3 \|\nabla g_t\|_\infty^2.
			\end{align}
		\end{lemma}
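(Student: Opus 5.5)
Expanding the squares reduces all three bounds to controlling expectations of products of $G$ (or $g_t$) evaluated at two, three or four particles. Diagonal or near-diagonal terms are bounded crudely by $\Vert G\Vert_\infty^2$. Off-diagonal terms are compared with their values under $\mu_t^{\otimes m}$, and the relevant entropies are controlled by Lemma \ref{lem:max.ent.POC} and Remark \ref{rem:data.processing}.

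\textbf{Row bound \eqref{lem:cancellation.row}.} Fix $i$ and expand
\[
\Big|\sum_j \hxi_{ij}G(X^i_t,X^j_t)\Big|^2=\sum_{j,\ell}\hxi_{ij}\hxi_{i\ell}\,G(X^i_t,X^j_t)\cdot G(X^i_t,X^\ell_t).
\]
The terms with $j=\ell$, or with $j$ or $\ell$ equal to $i$, are bounded by $\Vert G\Vert_\infty^2\sum_j\hxi_{ij}^2$. Note $\hxi_{ii}=-1$, so the terms with $j=i$ or $\ell=i$ contribute at most $2\Vert G\Vert_\infty^2\sum_j|\hxi_{ij}|$, which is $\lesssim \sum_j\hxi_{ij}^2+1$. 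This is absorbed because $\sum_{i,j}\hxi_{ij}^2\ge n$ via the diagonal entries.

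For distinct $i,j,\ell$, we do not need a cancellation: the target only requires the term to be of size $\Vert G\Vert_\infty^2\max\xi$. We compare $\E[G(X^i,X^j)\cdot G(X^i,X^\ell)]$ with the integral of the same function against $P^{ij\ell}_t$ replaced by $\mu_t^{\otimes3}$. By Pinsker's inequality and Lemma \ref{lem:max.ent.POC}\eqref{lem:max.ent.POC:i}, the difference is at most
\[
\Vert G\Vert_\infty^2\sqrt{2H(P^{ij\ell}_t|\mu_t^{\otimes3})}\lesssim\Vert G\Vert_\infty^2\max\xi_{ij}.
\]
Under $\mu_t^{\otimes 3}$ the integral does not vanish in general, so this is the delicate point. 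The resolution is to apply the whole argument to the centered kernel $\bar G(x,y):=G(x,y)-\int G(x,y')\mu_t(\dd y')$. For $\bar G$, the integral under $\mu_t^{\otimes3}$ vanishes by independence of $y$ and $y'$ given $x$. Summing $|\hxi_{ij}\hxi_{i\ell}|$ over $j,\ell$ then gives the term $(\sum_j|\hxi_{ij}|)^2\max\xi$.

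The remaining piece is $\sum_j\hxi_{ij}\,\bar g(X^i)$ with $\bar g(x)=\int G(x,y)\mu_t(\dd y)$. Here \eqref{eq:row.sum.assump} gives $\sum_j\hxi_{ij}=(n-1)-n=-1$. Hence this piece is just $-\bar g(X^i)$, which is $O(\Vert G\Vert_\infty^2)$ per $i$ and therefore absorbed.

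\textbf{Column bound \eqref{lem:cancellation.col}.} The same scheme applies with the roles of the two arguments swapped. We centre $G(x,y)$ in its first argument, $\tilde G(x,y)=G(x,y)-\int G(x',y)\mu_t(\dd x')$, and the cross terms in $j,\ell$ for fixed $i$ are handled exactly as before. The leftover is $\sum_j\hxi_{ji}\,\tilde g(X^i)$. Column sums of $\hxi$ need not equal $-1$, so this leftover is bounded simply by $\Vert G\Vert_\infty^2(\sum_j\hxi_{ji})^2$. This is precisely the extra term in \eqref{lem:cancellation.col}.

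\textbf{Unweighted bound \eqref{lem:cancellation.unweighted}.} Here the three-particle entropy bound is unusable: it would produce $n^4\max\xi$ rather than $n^3$. Instead we use the Poincar\'e inequality, Proposition \ref{lem:preliminaries}\eqref{lem:preliminaries:iii}, for $F=\sum_{i,j}g_t(x^i,x^j)$. Its gradient in $x^i$ is $\sum_j[\nabla_1 g_t(x^i,x^j)+\nabla_2 g_t(x^j,x^i)]$, which has size at most $2n\Vert\nabla g_t\Vert_\infty$. Hence $|\nabla F|^2\le 4n^3\Vert\nabla g_t\Vert_\infty^2$, which gives $\Var(F)\lesssim n^3\Vert\nabla g_t\Vert_\infty^2$.

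It remains to bound the mean squared, $(\E F)^2$:
\begin{itemize}
\item The diagonal terms contribute at most $n\Vert g_t\Vert_\infty$.
\item For the off-diagonal terms we use $\E[g_t(Y^1,Y^2)]=0$ together with the Kantorovich–Rubinstein duality and Proposition \ref{lem:preliminaries}\eqref{lem:preliminaries:v} (tensorized to $\mu_t^{\otimes2}$). This gives $|\E g_t(X^i,X^j)|\lesssim\Vert\nabla g_t\Vert_\infty\sqrt{H(P^{ij}_t|\mu_t^{\otimes2})}$.
\item Summing over $i\neq j$ and applying Cauchy–Schwarz together with Lemma \ref{lem:max.ent.POC}\eqref{lem:max.ent.POC:ii} gives $(\sum_{i\ne j}|\E g_t(X^i,X^j)|)^2\lesssim n^2\Vert\nabla g_t\Vert_\infty^2\cdot n\cdot \frac1n\sum_i(\cdots)^2$. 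By \eqref{eq:xi_row_sq} the factor $\sum_i(\cdots)^2$ is bounded, so this is $\lesssim n^3\Vert\nabla g_t\Vert_\infty^2$.
\end{itemize}

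The main obstacle is the choice of centering in the first two bounds. The row/column sum identities for $\hxi$ must absorb the uncentred parts, and only the fully centred three-particle terms may be passed to the entropy estimate.
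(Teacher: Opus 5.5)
Your centering route is sound in principle, but one step in the row and column bounds fails as written. The problem is the terms in which exactly one of $j,\ell$ equals $i$. You bound them crudely by $2\Vert G\Vert_\infty^2\sum_j|\hxi_{ij}|$ and then assert $\sum_j|\hxi_{ij}|\lesssim\sum_j\hxi_{ij}^2+1$. That inequality is false: the row-sum condition only gives $\sum_{j\neq i}\hxi_{ij}=0$. Suppose $(n-1)\xi_{ij}=1\pm 2n^{-1/2}$, with the signs split evenly over $j\neq i$. Then $\sum_j|\hxi_{ij}|\approx 2\sqrt n$ while $\sum_j\hxi_{ij}^2\approx 5$. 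Take every row of this form, with a symmetric sign pattern, so that Assumption \ref{assum.main}\eqref{assum:mat} holds with $\max_{i,j}\xi_{ij}\approx 1/n$. Then the entire right-hand side of \eqref{lem:cancellation.row} is $O(n)\Vert G\Vert_\infty^2$, whereas your crude bound on these terms is of order $n^{3/2}\Vert G\Vert_\infty^2$. AM--GM only yields $\sum_j|\hxi_{ij}|\lesssim\sum_j\hxi_{ij}^2+n$, and the resulting $n^2$ after summing over $i$ is not absorbed. The column bound has the same defect.

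The fix is to treat these two-particle terms like the three-particle ones. Since $\bar G$ is centred in its second argument, $\int\bar G(x,x)\cdot\bar G(x,y)\,\mu_t^{\otimes2}(\dd x,\dd y)=0$. Pinsker's inequality and \eqref{eq:DPI.maxpair} then bound each such expectation by $\lesssim\Vert G\Vert_\infty^2\max_{p,q}\xi_{pq}$. Their sum is therefore $\lesssim\Vert G\Vert_\infty^2\max_{p,q}\xi_{pq}\,(\sum_\ell|\hxi_{i\ell}|)^2$, because $\sum_\ell|\hxi_{i\ell}|\ge|\hxi_{ii}|=1$; the column case is analogous with $\tilde G$. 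Only the $j=\ell$ terms may be bounded crudely.

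With this change your argument is correct and is essentially the paper's in different form. The paper keeps $G$ uncentred and computes the second moment for i.i.d.\ McKean--Vlasov copies exactly, via the conditional variance and conditional mean given $Y^i_t$; the mean is killed by $\sum_{j\ne i}\hxi_{ij}=0$. It then compares every index configuration, coincident or not, with the $Y$-system via Pinsker and the maximal entropy bound. That is why it never needs a crude estimate on the $j=i\neq\ell$ terms.

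Your proof of \eqref{lem:cancellation.unweighted} follows the paper; bounding the diagonal mean directly by $n\Vert g_t\Vert_\infty$ is a harmless shortcut. Note, however, that the Cauchy--Schwarz count is $n(n-1)\sum_{i\neq j}H(P_t^{ij}\,|\,\mu_t^{\otimes2})\lesssim n^2\cdot n$. The product you display only gives $n^2$, although your final $n^3$ is correct.
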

		
		\begin{proof}
			We first prove \eqref{lem:cancellation.row}. Let $Y^1,\dots,Y^n$ be i.i.d. copies of the solution $Y$ to the McKean-Vlasov equation \eqref{eq:MKV} (recall Proposition \ref{lem:preliminaries}\eqref{lem:preliminaries:i}). For each $i \in [n]$, we have
			\begin{align} \label{pf.variance.decomp}
				\begin{split}
					\E\bigg[\Big|\sum_{j=1}^n \hxi_{ij}\,G(Y_t^i,Y_t^j)\,\Big|^2\, \bigg | \, Y_t^i\bigg]
					=&\, \Var\Big(\sum_{j\neq i}\hxi_{ij} G(Y_t^i,Y_t^j)\,\Big|\,Y_t^i\Big) \\
					&+\bigg|\E\Big[\sum_{j=1}^n \hxi_{ij}\, G(Y_t^i,Y_t^j)\,\Big|\,Y_t^i\Big]\bigg|^2.
				\end{split}
			\end{align}
			For the conditional variance term, note that the random vectors $\{ G(Y_t^i,Y_t^j)\}_{j \neq i}$ are conditionally i.i.d. given $Y_t^i$. Therefore
			\begin{align} \label{pf.cancellation.variance}
				\Var\Big(\sum_{j\neq i}\hxi_{ij} G(Y_t^i,Y_t^j)\,\Big|\,Y_t^i\Big)
				&=
				\sum_{j\neq i}\hxi_{ij}^2\,\Var\big( G(Y_t^i,Y_t^j)\,\big|\,Y_t^i\big)
				\le
				\Vert  G\Vert_\infty^2\sum_{j\neq i}\hxi_{ij}^2.
			\end{align}
			For the conditional mean term, from Assumption \ref{assum.main}\eqref{assum:mat} and the definition of $\hxi$ given in \eqref{hatxi.def}, we deduce that $\hxi_{ii} = -1$ and $\sum_{j \neq i} \hxi_{ij} = 0$, which implies
			\begin{align} \label{pf.cancellation.moment}
				\begin{split}
					\bigg|\E\Big[\sum_{j=1}^n \hxi_{ij} G(Y_t^i,Y_t^j)\,\Big|\,Y_t^i\Big]\bigg|
					&=\Big|- G(Y_t^i,Y_t^i)+\sum_{j\ne i}\hxi_{ij}\,\E[ G(Y_t^i,Y_t^j)\,|\, Y_t^i]\Big| \\
					&=  |G(Y^i_t, Y^i_t) | 
					\le \| G\|_\infty.
				\end{split}
			\end{align}
			Putting \eqref{pf.cancellation.variance} and \eqref{pf.cancellation.moment} back into \eqref{pf.variance.decomp}, taking expectations, and summing over $i$ yields
			\begin{align} \label{Ybound}
				\sum_{i=1}^n \E\Big[\Big|\sum_{j=1}^n \hxi_{ij}\, G(Y_t^i,Y_t^j)\Big|^2\Big]
				&\le  \| G\|_\infty^2\Big(n+\sum_{i=1}^n \sum_{j\neq i}\hxi_{ij}^2\Big)
				= \| G\|_\infty^2\sum_{i,j=1}^n \hxi_{ij}^2.
			\end{align}
			
			Next,  expanding the squares and applying the triangle inequality, we get for each $i \in [n]$,
			\begin{align}
				\begin{split}
				&\bigg|\E\Big[\Big|\sum_{j=1}^n \hxi_{ij}\, G(X_t^i,X_t^j)\Big|^2\Big]
				-\E\Big[\Big|\sum_{j=1}^n \hxi_{ij}\, G(Y_t^i,Y_t^j)\Big|^2\Big]\bigg|
				\\
				&=
				\bigg|\sum_{j,k=1}^n \hxi_{ij}\hxi_{ik}
				\Big(\E\big[ G(X^i_t,X^j_t)\cdot G(X^i_t,X^k_t)\big]
				-\E\big[ G(Y^i_t,Y^j_t)\cdot G(Y^i_t,Y^k_t)\big]\Big)\bigg|
				\\
				&\le
				\sum_{j,k=1}^n |\hxi_{ij}|\,|\hxi_{ik}|\,
				\Big|\E\big[ G(X^i_t,X^j_t)\cdot G(X^i_t,X^k_t)\big]
				-\E\big[ G(Y^i_t,Y^j_t)\cdot G(Y^i_t,Y^k_t)\big]\Big|.
				\end{split}
				\label{eq:hatxi.square.triangle}
			\end{align}
			
			Recalling the notations in Section~\ref{subsubsec:measures}, if the indices $i,j,k$ are pairwise distinct, then $\Law(Y_t^i,Y_t^j,Y_t^k)=\mu_t^{\otimes 3}$. By Pinsker's inequality,
			\begin{align*}
				\Big|\E\big[ G(X^i_t,X^j_t) \cdot  G(X^i_t,X^k_t)\big]
				-\E\big[ G(Y^i_t,Y^j_t) \cdot  G(Y^i_t,Y^k_t)\big]\Big|
				&\lesssim
				\|G\|_\infty^2\,\sqrt{H(P_t^{ijk}\,|\, \mu_t^{\otimes 3})}.
			\end{align*}
			If some indices coincide, we apply the same argument to the relevant $2$-particle or $1$-particle marginal. By the data processing inequalities in Remark~\ref{rem:data.processing}, namely \eqref{eq:DPI.3to2} and \eqref{eq:DPI.2to1}, the corresponding lower-order entropy is bounded by a $3$-particle entropy. Therefore, by Lemma~\ref{lem:max.ent.POC}(\ref{lem:max.ent.POC:i}),
			\begin{align}
				\Big|\E\big[ G(X^i_t,X^j_t) \cdot  G(X^i_t,X^k_t)\big]
				-\E\big[ G(Y^i_t,Y^j_t) \cdot  G(Y^i_t,Y^k_t)\big]\Big|
				&\lesssim
				\|G\|_\infty^2\,\max_{i,j\in[n]}\xi_{ij}.
				\label{eq:hatxi.term.bound}
			\end{align}
			
			Plugging \eqref{eq:hatxi.term.bound} into \eqref{eq:hatxi.square.triangle} gives
			\begin{align*}
				\bigg|\E\Big[\Big|\sum_{j=1}^n \hxi_{ij}\, G(X_t^i,X_t^j)\Big|^2\Big]
				-\E\Big[\Big|\sum_{j=1}^n \hxi_{ij}\, G(Y_t^i,Y_t^j)\Big|^2\Big]\bigg|
				&\lesssim
				\|G\|_\infty^2\,\max_{p,q\in[n]}\xi_{pq}\,
				\Big(\sum_{j=1}^n|\hxi_{ij}|\Big)^2.
			\end{align*}
			Summing over $i$ and combining with \eqref{Ybound} proves \eqref{lem:cancellation.row}.
			
			The proof of \eqref{lem:cancellation.col} is similar. One replaces $\sum_{j=1}^n \hxi_{ij}G(X_t^i,X_t^j)$ by $\sum_{j=1}^n \hxi_{ji}G(X_t^j,X_t^i)$ throughout. The conditional variance term is handled in the same way: For each $ i \in [n]$,
			\begin{align}
				\Var\Big(\sum_{j\neq i}\hxi_{ji}\,G(Y_t^j,Y_t^i)\,\Big|\,Y_t^i\Big)
				&=
				\sum_{j\neq i}\hxi_{ji}^2\,\Var\big(G(Y_t^j,Y_t^i)\,\big|\,Y_t^i\big)
				\le
				\|G\|_\infty^2\sum_{j\neq i}\hxi_{ji}^2.
				\label{eq:cancellation.col.var}
			\end{align}
			The comparison between $(X^i_t)_{i \in [n]}$ and $(Y^i_t)_{i \in [n]}$ is also unchanged after swapping the indices $i$ and $j$: For each $ i \in [n]$,
			\begin{align}
				\bigg|\E\Big[\Big|\sum_{j=1}^n \hxi_{ji}\,G(X_t^j,X_t^i)\Big|^2\Big]
				-
				\E\Big[\Big|\sum_{j=1}^n \hxi_{ji}\,G(Y_t^j,Y_t^i)\Big|^2\Big]\bigg|
				&\lesssim
				\|G\|_\infty^2\,\max_{p,q\in[n]}\xi_{pq}\,
				\Big(\sum_{j=1}^n|\hxi_{ji}|\Big)^2.
				\label{eq:cancellation.col.comp}
			\end{align}
			The only place where the argument differs is the conditional mean term. Specifically, for each $i$, by the independence of $(Y_t^i)_{i\in[n]}$,
			\begin{align*}
				\E\bigg[\sum_{j=1}^n \hxi_{ji}G(Y_t^j,Y_t^i)\,\Big|\,Y_t^i\bigg]
				&=
				-\,G(Y_t^i,Y_t^i)
				+
				\Big(\sum_{j\ne i}\hxi_{ji}\Big)\int_{\R^d} G(y,Y_t^i)\,\mu_t(\dd y),
			\end{align*}
			where we used $\hxi_{ii}=-1$. Therefore,
			\begin{align*}
				\sum_{i=1}^n
				\bigg|\E\bigg[\sum_{j=1}^n \hxi_{ji}G(Y_t^j,Y_t^i)\,\bigg|\,Y_t^i\bigg]\bigg|^2
				&\lesssim
				\|G\|_\infty^2\bigg(
				n+\sum_{i=1}^n\Big(\sum_{j=1}^n\hxi_{ji}\Big)^2
				\bigg).
			\end{align*}
			Combining this with \eqref{eq:cancellation.col.var} yields
			\begin{align*}
				\sum_{i=1}^n \E\Big[\Big|\sum_{j=1}^n \hxi_{ji}\,G(Y_t^j,Y_t^i)\Big|^2\Big]
				&\lesssim
				\|G\|_\infty^2\bigg(
				\sum_{i,j=1}^n \hxi_{ij}^2
				+
				\sum_{i=1}^n\Big(\sum_{j=1}^n\hxi_{ji}\Big)^2
				\bigg),
			\end{align*}
			since $\hxi_{ii}=-1$ for every $i$.
			Together with \eqref{eq:cancellation.col.comp}, this proves \eqref{lem:cancellation.col}.
			
			\medskip
			
			In the proof of \eqref{lem:cancellation.unweighted},  we use $g:=g_t$ for simplicity of notation. Let
			\begin{align*}
				H(x_1,\dots,x_n):=\sum_{i,j=1}^n g(x_i,x_j).
			\end{align*}
			Then,
			\begin{align*}
				\E \bigg[\Big( \sum_{i,j=1}^{n} g(X_t^i,X_t^j)\Big)^2  \bigg]
				=
				\Var\big(H(X_t^1,\dots,X_t^n)\big)
				+
				\big|\E[H(X_t^1,\dots,X_t^n)]\big|^2.
			\end{align*}
			
			We first estimate the variance term. By the Poincaré inequality in Proposition
			\ref{lem:preliminaries}\eqref{lem:preliminaries:iii},
			\begin{align*}
				\Var\big(H(X_t^1,\dots,X_t^n)\big)
				&\lesssim
				\sum_{k=1}^n \E\big[|\nabla_{x_k}H(X_t^1,\dots,X_t^n)|^2\big].
			\end{align*}
			For each $k\in[n]$, $ \nabla_{x_k}H(x_1,\dots,x_n)
			=
			\sum_{i=1}^n \nabla_1 g(x_k,x_i)
			+
			\sum_{j=1}^n \nabla_2 g(x_j,x_k)$.
			Hence,
			\begin{align*}
				|\nabla_{x_k}H(x_1,\dots,x_n)|^2
				&\lesssim
				\Big(\sum_{i=1}^n |\nabla_1 g(x_k,x_i)|\Big)^2
				+
				\Big(\sum_{j=1}^n |\nabla_2 g(x_j,x_k)|\Big)^2
				\lesssim
				n^2\|\nabla g\|_\infty^2.
			\end{align*}
			Summing over $k$ yields
			\begin{align}
				\Var\big(H(X_t^1,\dots,X_t^n)\big)
				\lesssim
				n^3\|\nabla g\|_\infty^2.
				\label{eq:unweighted.var.bound}
			\end{align}
			
			Let $Y^1,\dots,Y^n$ be i.i.d. copies of the solution $Y$ to the McKean-Vlasov equation \eqref{eq:MKV}. Since $\E[g(Y_t^1,Y_t^2)]=0$, we have
			\begin{align}
				\bigg| \E\Big[\sum_{i,j=1}^n g(Y_t^i,Y_t^j)\Big] \bigg|
				&=
				\Big| \sum_{i\neq j}\E[g(Y_t^i,Y_t^j)]
				+
				\sum_{i=1}^n \E[g(Y_t^i,Y_t^i)]   \Big|
				=
				n\,  \big| \E[g(Y_t^1,Y_t^1)]  \big| \le 	n\|g\|_\infty.
				\label{eq:unweighted.mean.Y}
			\end{align}
			
			Now, we compare the expectations under the $X$-system and the $Y$-system:
			\begin{align}
				\begin{split}
				&\bigg|
				\E\Big[\sum_{i,j=1}^n g(X_t^i,X_t^j)\Big]
				-
				\E\Big[\sum_{i,j=1}^n g(Y_t^i,Y_t^j)\Big]
				\bigg|
				\\
				&\le
				\sum_{i\neq j}
				\big|\E[g(X_t^i,X_t^j)]-\E[g(Y_t^i,Y_t^j)]\big|
				+
				\sum_{i=1}^n
				\big|\E[g(X_t^i,X_t^i)]-\E[g(Y_t^i,Y_t^i)]\big|.
				\end{split}
				\label{eq:unweighted.mean.compare.split}
			\end{align}
			By the definition of $\W_2$ and Jensen's inequality,
			\begin{align*}
				\big|\E[g(X_t^i,X_t^j)]-\E[g(Y_t^i,Y_t^j)]\big|
				\le
				\|\nabla g\|_\infty \W_2(P_t^{ij},\mu_t^{\otimes 2}),
				\quad i\neq j,
			\end{align*}
			and similarly
			\begin{align*}
				\big|\E[g(X_t^i,X_t^i)]-\E[g(Y_t^i,Y_t^i)]\big|
				&\lesssim
				\|\nabla g\|_\infty \W_2(P_t^i,\mu_t).
			\end{align*}
			Using the Cauchy-Schwarz inequality, the quadratic transport inequality for $\mu_t$ from Proposition \ref{lem:preliminaries}\eqref{lem:preliminaries:v}, its tensorized version for $\mu_t^{\otimes 2}$ (see, e.g., \cite[Proposition 1.9]{gozlan2010transport}), and the entropy bound from Lemma \ref{lem:max.ent.POC}\eqref{lem:max.ent.POC:ii}, we obtain
			\begin{align*}
				\sum_{i\neq j}\W_2(P_t^{ij},\mu_t^{\otimes 2})
				&\le
				\sqrt{n(n-1)}
				\Big(\sum_{i\neq j}\W_2(P_t^{ij},\mu_t^{\otimes 2})^2\Big)^{1/2}
				\notag\\
				&\lesssim
				\sqrt{n(n-1)}
				\Big(\sum_{i\neq j} H(P_t^{ij}\,|\,\mu_t^{\otimes 2})\Big)^{1/2}
				\notag\\
				&\lesssim
				\sqrt{n(n-1)}
				\Big(
				n\sum_{i=1}^n \Big(\sum_{j=1}^n(\xi_{ij}^2+\xi_{ji}^2)\Big)^2
				\Big)^{1/2}.
			\end{align*}
			Together with \eqref{eq:xi_row_sq} in Assumption \ref{assum.main}\eqref{assum:mat}, this implies
			\begin{align*}
				\sum_{i\neq j}\W_2(P_t^{ij},\mu_t^{\otimes 2})
				\lesssim
				n^{3/2}.
			\end{align*}
			Similarly, by \eqref{eq:DPI.avg.1to0} of Remark \ref{rem:data.processing} and \eqref{eq:xi_row_sq},
			\begin{align*}
				\sum_{i=1}^n \W_2(P_t^i,\mu_t)
				&\le
				\sqrt{n} \Big(\sum_{i=1}^n \W_2(P_t^i,\mu_t)^2\Big)^{1/2}
				\lesssim
				\sqrt{n} \Big(\sum_{i=1}^n H(P_t^i\,|\,\mu_t)\Big)^{1/2}
				\lesssim
				\sqrt{n}.
			\end{align*} 
			Plugging these bounds into \eqref{eq:unweighted.mean.compare.split}, we obtain
			\begin{align}
				\bigg|
				\E\Big[\sum_{i,j=1}^n g(X_t^i,X_t^j)\Big]
				-
				\E\Big[\sum_{i,j=1}^n g(Y_t^i,Y_t^j)\Big]
				\bigg|
				\lesssim
				n^{3/2}\|\nabla g\|_\infty.
				\label{eq:unweighted.mean.compare.final}
			\end{align}
			Combining \eqref{eq:unweighted.mean.Y} and \eqref{eq:unweighted.mean.compare.final},
			\begin{align}
				\E[H(X_t^1,\dots,X_t^n)]^2
				&\lesssim
				n^2\|g\|_\infty^2 + n^3\|\nabla g\|_\infty^2.
				\label{eq:unweighted.mean.bound}
			\end{align}
			Finally, combining \eqref{eq:unweighted.var.bound} and \eqref{eq:unweighted.mean.bound}
			gives \eqref{lem:cancellation.unweighted}.
		\end{proof}

		Recall the definitions of $(\eta^n)$ from \eqref{eq:def.fluc.process} and $(\wh{\eta}^n)$ from \eqref{eq:nuhat}. We next establish second moment bounds for the  pairings $ \langle \eta_t^n,\varphi\rangle$ and $\langle  \wh{\eta}_t^n,g\rangle$, which are used in the tightness argument.

		\begin{lemma}\label{lem:tightness.prelim}
			Suppose Assumption \ref{assum.main} holds with $k \ge 0$, and fix $t\in[0,T]$. Then:
			\begin{enumerate}[(i)]
				\item\label{tightness.prelim.eta}
				For every $\varphi \in C^1(\R^d)$ such that $ M(\varphi):= \big\|(1+|\cdot|)^{-1}\nabla\varphi\big\|_\infty<\infty$,
				we have
				\begin{align} \label{eq:eta.second.moment}
					\E \big[\langle \eta_t^n,\varphi\rangle^2\big]
					\lesssim
					M(\varphi)^2
					\Big(
					1
					+
					\sum_{i=1}^n \Big(\sum_{j=1}^n \big(\xi_{ij}^2 + \xi_{ji}^2\big) \Big)^2
					\Big).
				\end{align}
				
				\item\label{tightness.prelim.eta.hat}
				For every $g \in C^1_b(\R^d \times \R^d)$, we have
				\begin{align*}
					\E\big[\langle \wh{\eta}_t^n,g\rangle^2\big]
					\lesssim&\,
					\frac{\|\nabla g\|_\infty^2}{n^3}\bigg(
					\sum_{i,j=1}^n \hxi_{ij}^2
					+
					\max_{i,j\in[n]}\xi_{ij}
					\sum_{i=1}^n
					\bigg[
					\Big(\sum_{j=1}^n|\hxi_{ij}|\Big)^2
					+
					\Big(\sum_{j=1}^n|\hxi_{ji}|\Big)^2
					\bigg]
					\\
					&
					+
					\sum_{i=1}^n\Big(\sum_{j=1}^n\hxi_{ji}\Big)^2
					+
					\max_{i,j\in[n]}\xi_{ij}^2
					\Big(\sum_{i \neq j}|\hxi_{ij}|\Big)^2
					\bigg)
					+
					\frac{\big(|g(0,0)|+\|\nabla g\|_\infty\big)^2}{n}.
				\end{align*}
				\item\label{tightness.prelim.integrand}
				For every $\varphi\in C^2_b(\R^d)$, we have
				\begin{align*}
					\E\Big[
					\Big\langle \eta_t^n,\Big(\LL_{t,\mu_t^n}-\frac{\sigma^2}{2}\Delta\Big)\varphi\Big\rangle
					^2\Big]
					&\lesssim 
					\big(\|\nabla\varphi\|_\infty^2+\|\nabla^2\varphi\|_\infty^2\big)
					\Big(
					1
					+
					\sum_{i=1}^n \Big(\sum_{j=1}^n (\xi_{ij}^2+\xi_{ji}^2)\Big)^2
					\Big).
				\end{align*}
				
			\end{enumerate}
		\end{lemma}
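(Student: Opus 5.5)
Part (i) follows the decomposition sketched in the introduction. We write $\E[\langle\eta^n_t,\varphi\rangle^2]=n\Var(\langle\mu^n_t,\varphi\rangle)+n\,|\langle \frac1n\sum_i P^i_t-\mu_t,\varphi\rangle|^2$.

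\emph{Variance term.} We apply the Poincar\'e inequality of Proposition \ref{lem:preliminaries}\eqref{lem:preliminaries:iii} to $g(x_1,\dots,x_n)=n^{-1/2}\sum_i\varphi(x_i)$. This requires $\varphi$ in $W^{1,2}(P_t)$, which we ensure by truncating $\varphi$ with a cutoff and passing to the limit using the moment bounds \eqref{eq:uniform.moment.bound.X.Y}. It gives $n\Var\lesssim \frac1n\sum_i\E|\nabla\varphi(X^i_t)|^2\le M(\varphi)^2\max_i\E(1+|X^i_t|)^2\lesssim M(\varphi)^2$, again by \eqref{eq:uniform.moment.bound.X.Y}.

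\emph{Bias term.} Cauchy--Schwarz gives $\le \sum_i|\langle P^i_t-\mu_t,\varphi\rangle|^2$. Since $\varphi$ has linearly growing gradient, we cannot use Kantorovich duality with a Lipschitz constant directly. Instead, take an optimal $\W_2$ coupling $(U,V)$ of $P^i_t$ and $\mu_t$ and use the mean value theorem: $|\varphi(U)-\varphi(V)|\le M(\varphi)(1+|U|+|V|)|U-V|$. Cauchy--Schwarz then bounds the bias by $M(\varphi)^2(1+\E|U|^2+\E|V|^2)\W_2^2(P^i_t,\mu_t)$, and the second moments are uniformly bounded. Then the transport inequality \eqref{eq:mu_t_transport} and \eqref{eq:DPI.avg.1to0} give $\sum_i\W_2^2\lesssim\sum_iH(P^i_t|\mu_t)\lesssim\sum_i(\sum_j(\xi_{ij}^2+\xi_{ji}^2))^2$, which proves \eqref{eq:eta.second.moment}.

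For (iii), the integrand is $\psi(x)=b_0\cdot\nabla\varphi(x)+\langle\mu^n_t(\dd y),b(t,x,y)\rangle\cdot\nabla\varphi(x)+\int b(t,x',x)\cdot\nabla\varphi(x')\mu_t(\dd x')$. The first and last pieces are deterministic $C^1_b$ functions with $\|\cdot\|_{C^1_b}\lesssim\|\nabla\varphi\|_\infty+\|\nabla^2\varphi\|_\infty$, so (i) applies with bounded $M$. The middle piece is random because it depends on $\mu^n_t$. We write it as $\langle\eta^n_t,\psi_2\rangle=\frac1{\sqrt n}\sum_{i}\big(\frac1n\sum_j h(X^i,X^j)-\int\frac1n\sum_jh(x,X^j)\mu_t(\dd x)\big)$ with $h(x,y)=b(t,x,y)\cdot\nabla\varphi(x)$. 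Then we redo the argument of (i) on the function $F(x_1,\dots,x_n)=n^{-1/2}\sum_i\big(\frac1n\sum_jh(x_i,x_j)-\frac1n\sum_j\int h(x,x_j)\mu_t(\dd x)\big)$. Each partial gradient $\nabla_{x_k}F$ is $O(n^{-1/2}\|h\|_{C^1_b})$, so the Poincar\'e inequality bounds the variance by $O(\|h\|_{C^1_b}^2)$. The mean is $n^{-3/2}\sum_{i,j}\E[\tilde g(X^i,X^j)]$ with $\tilde g(x,y)=h(x,y)-\int h(x',y)\mu_t(\dd x')$, which satisfies $\E\tilde g(Y^1,Y^2)=0$. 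The $Y$-system mean is then $O(n\|h\|_\infty)$ from the diagonal terms. The comparison with $X$ goes through pair $\W_2$ distances exactly as in the proof of \eqref{lem:cancellation.unweighted}, giving $O(n^{3/2}\|\nabla h\|_\infty)$, but this is too crude for the square by one factor. So we bound $\big(\sum_{i\ne j}\W_2(P^{ij}_t,\mu_t^{\otimes2})\big)^2$ by $n^2\sum_{i\ne j}H(P^{ij}_t|\mu_t^{\otimes2})\lesssim n^3\sum_i(\sum_j(\xi_{ij}^2+\xi_{ji}^2))^2$ via Lemma \ref{lem:max.ent.POC}\eqref{lem:max.ent.POC:ii}. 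After multiplying by $n^{-3}$, this yields exactly the claimed factor.

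For (ii), we write $\E\langle\wh\eta^n_t,g\rangle^2=\Var+(\E)^2$.

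\emph{Variance.} Apply the Poincar\'e inequality to $F=n^{-3/2}\sum_{i,j}\hxi_{ij}g(x_i,x_j)$. Then $\nabla_{x_k}F=n^{-3/2}\big(\sum_j\hxi_{kj}\nabla_1g(x_k,x_j)+\sum_i\hxi_{ik}\nabla_2g(x_i,x_k)\big)$, and its expected square is controlled by \eqref{lem:cancellation.row} and \eqref{lem:cancellation.col} with $G=\nabla_1g$ and $G=\nabla_2g$. This produces the first three groups of terms times $\|\nabla g\|_\infty^2/n^3$.

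\emph{Mean.} The diagonal $i=j$ contributes $-n^{-3/2}\sum_i\E g(X^i,X^i)$. Since $|g(x,x)|\le|g(0,0)|+2\|\nabla g\|_\infty|x|$ and moments are uniform, this is $O\big(n^{-1/2}(|g(0,0)|+\|\nabla g\|_\infty)\big)$, giving the last term. Off the diagonal, we replace $P^{ij}_t$ by $\mu_t^{\otimes2}$: the replaced sum vanishes because $\sum_{j\ne i}\hxi_{ij}=0$. The error is $\le\|\nabla g\|_\infty\sum_{i\ne j}|\hxi_{ij}|\W_2(P^{ij}_t,\mu_t^{\otimes2})$. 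By the tensorized transport inequality and \eqref{eq:DPI.maxpair}, each $\W_2\lesssim\max\xi_{ij}$, which gives the $\max\xi_{ij}^2(\sum|\hxi_{ij}|)^2$ term.

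The main obstacle I expect is the random-integrand term in (iii). There one must organize the computation so that the centering $\E\tilde g(Y^1,Y^2)=0$ is used and the entropy sum appears squared correctly, rather than getting a crude $O(n^{3/2})$ bound.
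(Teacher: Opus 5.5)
Your proof is correct, and its overall route is the paper's: a Poincar\'e bound for the fluctuation, and a $\W_2$/entropy comparison for the mean. That comparison uses \eqref{eq:DPI.avg.1to0} in (i) and (iii), and \eqref{eq:DPI.maxpair} together with $\sum_{j\neq i}\hxi_{ij}=0$ in (ii). Part (i) is identical to the paper, except that the truncation is unnecessary: $\varphi$ grows at most quadratically and $\nabla\varphi$ at most linearly, so $g\in W^{1,2}(P_t)$ directly by \eqref{eq:uniform.moment.bound.X.Y}.

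\textbf{Part (ii).} You bound the diagonal term $-n^{-3/2}\sum_i\E[g(X_t^i,X_t^i)]$ directly, using the Lipschitz bound and uniform moments. The paper instead splits it into an entropy comparison with the $Y$-system ($\mathrm{II}_2$) plus the $Y$-system term ($\mathrm{III}$). Your version is shorter and loses nothing.

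\textbf{Part (iii).} The paper writes the integrand as $(\LL_{t,\mu_t}-\frac{\sigma^2}{2}\Delta)\varphi$ plus the correction $n^{-1/2}\langle\eta^n_t\otimes\eta^n_t,b\cdot\nabla\varphi\rangle$. It applies (i) to the former and \eqref{lem:cancellation.unweighted} with the doubly centered kernel $g_t^\varphi$ to the latter. You instead keep the whole $\mu^n_t$-dependent piece $\langle\eta^n_t,\int b(t,\cdot,y)\cdot\nabla\varphi\,\mu^n_t(\dd y)\rangle$ together. You use a kernel $\tilde g$ centered only in its first variable and redo the Poincar\'e/comparison argument inline. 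This works because the argument only needs $\E[\tilde g(Y_t^1,Y_t^2)]=0$.

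\textbf{One correction to your commentary.} The $O(n^{3/2})$ bound on the mean comparison is not ``too crude.'' After the $n^{-3/2}$ normalization and squaring it gives $O(1)$, which is exactly how the paper concludes via \eqref{eq:xi_row_sq}. Keeping $\sum_i(\sum_j(\xi_{ij}^2+\xi_{ji}^2))^2$ explicit, as you do, only buys a proof of (iii) that does not invoke \eqref{eq:xi_row_sq}. The place where the centering of $\tilde g$ actually matters is the $Y$-system mean: it drops from order $n^2$ to order $n$. Without that, the normalized mean would be of order $\sqrt n$.
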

		
		\begin{proof}
			\noindent 	\textit{(i)}. 
			We note that 
			\begin{align*}
				|\varphi(x)-\varphi(0)|
				&\le
				\int_0^1 |\nabla\varphi(rx)\cdot x|\,\dd r
				\le
				M(\varphi)\int_0^1 (1+r|x|)|x|\,\dd r
				\lesssim
				M(\varphi)(1+|x|^2),
			\end{align*}
			so all expectations below are finite by Proposition \ref{lem:preliminaries}\eqref{lem:preliminaries:iv}. Define $F: (\R^d)^n \to \R$ by
			\begin{align*}
				F(x_1,\dots,x_n)
				:=
				\frac1{\sqrt{n}}\sum_{i=1}^n\big(\varphi(x_i)-\langle \mu_t,\varphi\rangle\big).
			\end{align*}
			Then we can write $\langle \eta_t^n,\varphi\rangle = F(X_t^1,\dots,X_t^n)$, and so
			\begin{align*}
				\E \big[\big|\langle \eta_t^n,\varphi\rangle\big|^2\big]
				=
				\Var(F(X_t^1,\dots,X_t^n))
				+
				\E[ F(X_t^1,\dots,X_t^n)]^2.
			\end{align*}
			For the variance term, the Poincar\'e inequality in Proposition \ref{lem:preliminaries}\eqref{lem:preliminaries:iii} implies
			\begin{align*}
				\Var(F(X_t^1,\dots,X_t^n))
				\lesssim
				\E\bigg[\sum_{i=1}^n|\nabla_{x_i}F(X_t^1,\dots,X_t^n)|^2 \bigg] 
				=
				\frac1n\sum_{i=1}^n\E\big[|\nabla\varphi(X_t^i)|^2\big]
				\lesssim
				M(\varphi)^2,
			\end{align*}
			where in the last step we used  $|\nabla\varphi(x)|\le M(\varphi)(1+|x|)$ together with Proposition \ref{lem:preliminaries}\eqref{lem:preliminaries:iv}. 
			
			For the mean term, note that
			\begin{align*}
				|\varphi(x)-\varphi(y)|
				&\le
				|x-y|\int_0^1 |\nabla\varphi(y+r(x-y))|\,\dd r \notag\\
				&\le
				M(\varphi) |x-y|\int_0^1 \bigl(1+|y+r(x-y)|\bigr)\,\dd r \notag\\
				&\le
				M(\varphi) (1+|x|+|y|)|x-y|. 
			\end{align*}
			Let $Y$ be the solution to the McKean-Vlasov equation \eqref{eq:MKV} (recall Proposition \ref{lem:preliminaries}\eqref{lem:preliminaries:i}).  Then, for any coupling $\pi$ between $P_t^i$ and $\mu_t$, we have
			\begin{align*}
				\big|\E[\varphi(X_t^i)]-\E[\varphi(Y_t)] \big|
				&=
				\bigg|\int_{\R^d\times\R^d} \varphi(x)-\varphi(y)\,\pi(\dd x,\dd y) \bigg| \\
				&\le \int_{\R^d\times\R^d} \big|\varphi(x)-\varphi(y)\big|\,\pi(\dd x,\dd y)\\
				&\le
				M(\varphi)\int_{\R^d\times\R^d}(1+|x|+|y|)|x-y|\,\pi(\dd x,\dd y).
			\end{align*}
			Applying the Cauchy-Schwarz inequality, we find
			\begin{align*}
				&\int_{\R^d\times\R^d}(1+|x|+|y|)|x-y|\,\pi(\dd x,\dd y)\\
				&\le
				\bigg(\int_{\R^d\times\R^d}(1+|x|+|y|)^2\,\pi(\dd x,\dd y)\bigg)^{1/2}
				\bigg(\int_{\R^d\times\R^d}|x-y|^2\,\pi(\dd x,\dd y)\bigg)^{1/2}.
			\end{align*}
			By Proposition \ref{lem:preliminaries}\eqref{lem:preliminaries:iv}, the first factor is bounded uniformly, and therefore
			\begin{align*}
				\big|\E[\varphi(X_t^i)]-\E[\varphi(Y_t)]\big|
				&\lesssim
				M(\varphi)\bigg(\int_{\R^d\times\R^d}|x-y|^2\,\pi(\dd x,\dd y)\bigg)^{1/2}.
			\end{align*}
			Since $\pi$ was arbitrary, we obtain
			\begin{align*}
				\big|\E[\varphi(X_t^i)]-\E[\varphi(Y_t)]\big|
				&\lesssim
				M(\varphi)\W_2(P_t^i,\mu_t).
			\end{align*}
			Therefore,
			\begin{align*}
				\big|\E [F(X_t^1,\dots,X_t^n)]\big|
				&\lesssim
				\frac{M(\varphi)}{\sqrt{n}}\sum_{i=1}^n \W_2(P_t^i,\mu_t)
				\le
				\frac{M(\varphi)}{\sqrt{n}}\sum_{i=1}^n\sqrt{\gamma_T\,H(P_t^i\,|\,\mu_t)},
			\end{align*}
			where in the second inequality we used Proposition \ref{lem:preliminaries}(\ref{lem:preliminaries:v}). Applying the Cauchy-Schwarz inequality and the average entropy bound in  \eqref{eq:DPI.avg.1to0} of Remark \ref{rem:data.processing}, we have
			\begin{align*}
				\E [F(X_t^1,\dots,X_t^n)]^2
				&\lesssim
				M(\varphi)^2\sum_{i=1}^n H(P_t^i\,|\,\mu_t) \lesssim
				M(\varphi)^2
				\sum_{i=1}^n \Big(\sum_{j=1}^n \big(\xi_{ij}^2 + \xi_{ji}^2\big)\Big)^2.
			\end{align*}
			Combining the variance and mean bounds yields \eqref{eq:eta.second.moment}.
			
			\medskip
			\noindent
			\textit{(ii)}. Let $Y^1,\dots,Y^n$ be i.i.d. copies of the solution to the McKean-Vlasov equation \eqref{eq:MKV} (recall Proposition \ref{lem:preliminaries}\eqref{lem:preliminaries:i}). We write
			\begin{align*}
				\langle \wh{\eta}_t^n,g\rangle
				&=
				\frac1{n^{3/2}}\sum_{i,j=1}^n\hxi_{ij}\big(g(X_t^i,X_t^j)-\E[g(X_t^i,X_t^j)]\big)
				\notag\\
				&\quad+
				\frac1{n^{3/2}}\sum_{i,j=1}^n\hxi_{ij}\big(\E[g(X_t^i,X_t^j)]-\E[g(Y_t^i,Y_t^j)]\big)
				\notag\\
				&\quad+
				\frac1{n^{3/2}}\sum_{i,j=1}^n\hxi_{ij}\E[g(Y_t^i,Y_t^j)]
				=: \mathrm{I}+\mathrm{II}+\mathrm{III}.
			\end{align*}
			
			For $\mathrm{I}$, let $	H(x_1,\dots,x_n)
			:=
			\sum_{i,j=1}^n\hxi_{ij}g(x_i,x_j) $.
			Then,
			\begin{align*}
				\nabla_{x_i}H(x_1,\dots,x_n)
				&=
				\sum_{j=1}^n\hxi_{ij}\nabla_1 g(x_i,x_j)
				+
				\sum_{j=1}^n\hxi_{ji}\nabla_2 g(x_j,x_i).
			\end{align*}
			Hence, by the Poincar\'e inequality in Proposition \ref{lem:preliminaries}(\ref{lem:preliminaries:iii}),
			\begin{align*}
				\E[\mathrm{I}^2]
				&=
				\frac1{n^3}\Var\big(H(X_t^1,\dots,X_t^n)\big) \lesssim
				\frac1{n^3}\sum_{i=1}^n\E \big[ \big|\nabla_{x_i}H(X_t^1,\dots,X_t^n)\big|^2\big].
			\end{align*}
			Therefore,
			\begin{align*}
				\E[\mathrm{I}^2]
				&\lesssim
				\frac1{n^3}\sum_{i=1}^n\E\bigg[\Big|\sum_{j=1}^n\hxi_{ij}\nabla_1 g(X_t^i,X_t^j)\Big|^2\bigg]
				+
				\frac1{n^3}\sum_{i=1}^n\E\bigg[\Big|\sum_{j=1}^n\hxi_{ji}\nabla_2 g(X_t^j,X_t^i)\Big|^2\bigg].
			\end{align*}
			Applying \eqref{lem:cancellation.row} and   \eqref{lem:cancellation.col}  of Lemma  \ref{lem:cancellation}  to the first and second terms, respectively,  we obtain
			\begin{align*}
				\E[\mathrm{I}^2]
				\lesssim
				\frac{\|\nabla g\|_\infty^2}{n^3}&\bigg(
				\sum_{i,j=1}^n\hxi_{ij}^2
				+
				\sum_{i=1}^n\Big(\sum_{j=1}^n|\hxi_{ij}|\Big)^2\max_{i,j\in[n]}\xi_{ij}
				\notag\\
				&
				+
				\sum_{i=1}^n\Big(\sum_{j=1}^n|\hxi_{ji}|\Big)^2\max_{i,j\in[n]}\xi_{ij}
				+
				\sum_{i=1}^n\Big(\sum_{j=1}^n\hxi_{ji}\Big)^2
				\bigg).
			\end{align*}
			
			For $\mathrm{II}$, write $\mathrm{II}=\mathrm{II}_1+\mathrm{II}_2$, where
			\begin{align*}
				\mathrm{II}_1
				&:=
				\frac1{n^{3/2}}\sum_{i\ne j}\hxi_{ij}\big(\E[g(X_t^i,X_t^j)]-\E[g(Y_t^i,Y_t^j)]\big) \\
				\mathrm{II}_2
				&:=
				\frac1{n^{3/2}}\sum_{i=1}^n\hxi_{ii}\big(\E[g(X_t^i,X_t^i)]-\E[g(Y_t^i,Y_t^i)]\big).
			\end{align*}
			
			For $\mathrm{II}_1$, by Jensen's inequality,
			\begin{align*}
				\big|\E[g(X_t^i,X_t^j)]-\E[g(Y_t^i,Y_t^j)]\big|
				&\le
				\|\nabla g\|_\infty\,\W_2(P_t^{ij},\mu_t^{\otimes 2}), \quad i \neq j.
			\end{align*}
			Since $\mu_t$ satisfies \eqref{eq:mu_t_transport} in Proposition \ref{lem:preliminaries}(\ref{lem:preliminaries:v}), the product measure $\mu_t^{\otimes 2}$  satisfies the quadratic transport inequality with the same constant by tensorization (see, e.g., \cite[Proposition 1.9]{gozlan2010transport}). In conjunction with the maximum entropy bound in  \eqref{eq:DPI.maxpair} of Remark \ref{rem:data.processing},
			\begin{align*}
				\W_2(P_t^{ij},\mu_t^{\otimes 2})
				&\le
				\sqrt{\gamma_T\,H(P_t^{ij}\,|\,\mu_t^{\otimes 2})}
				\lesssim
				\max_{i,j\in[n]}\xi_{ij}, \quad i \neq j.
			\end{align*}
			Hence,
			\begin{align*}
				|\mathrm{II}_1|
				&\lesssim
				\frac{\|\nabla g\|_\infty}{n^{3/2}}
				\Big(\sum_{i \neq j}|\hxi_{ij}|\Big)\max_{i,j\in[n]}\xi_{ij}.
			\end{align*}
			
			For $\mathrm{II}_2$, define $h:\R^d\to\R$ by $h(x)=g(x,x)$. Then $h$ is Lipschitz and $\|\nabla h\|_\infty\lesssim \|\nabla g\|_\infty$. Hence, by Jensen's inequality and \eqref{eq:DPI.avg.1to0},
			\begin{align*}
				\big|\E[h(X_t^i)]-\E[h(Y_t^i)]\big|
				&\lesssim
				\|\nabla g\|_\infty\,\W_2(P_t^i,\mu_t)
				\le
				\|\nabla g\|_\infty\sqrt{\gamma_T\,H(P_t^i\,|\,\mu_t)}.
			\end{align*}
			Therefore, by $\hxi_{ii} = - 1$,  the Cauchy-Schwarz inequality,  and Lemma \ref{lem:max.ent.POC}(\ref{lem:max.ent.POC:ii}),
			\begin{align*}
				|\mathrm{II}_2|^2
				&\lesssim
				\frac{\|\nabla g\|_\infty^2}{n^2}\sum_{i=1}^n H(P_t^i\,|\,\mu_t)\lesssim
				\frac{\|\nabla g\|_\infty^2}{n^2}
				\sum_{i=1}^n\Big(\sum_{j=1}^n(\xi_{ij}^2+\xi_{ji}^2)\Big)^2.
			\end{align*}
			Also, by Assumption \ref{assum.main}\eqref{assum:mat},
			\begin{align*}
				\sum_{j=1}^n(\xi_{ij}^2+\xi_{ji}^2)
				&\le
				\max_{p,q\in[n]}\xi_{pq}\sum_{j=1}^n\xi_{ij}
				+
				\max_{p,q\in[n]}\xi_{pq}\sum_{j=1}^n\xi_{ji}
				\lesssim
				1,
			\end{align*}
			so $|\mathrm{II}_2|^2\lesssim \|\nabla g\|_\infty^2/n$. Combining the bounds for $\mathrm{II}_1$ and $\mathrm{II}_2$, we obtain
			\begin{align*}
				|\mathrm{II}|^2
				&\lesssim
				\frac{\|\nabla g\|_\infty^2}{n^3}
				\Big(\sum_{i \neq j}|\hxi_{ij}|\Big)^2\max_{i,j\in[n]}\xi_{ij}^2
				+
				\frac{\|\nabla g\|_\infty^2}{n}.
			\end{align*}
			
			For $\mathrm{III}$, using Assumption \ref{assum.main}\eqref{assum:mat} and the definition of $\hxi$ given in \eqref{hatxi.def}, we deduce that $\hxi_{ii}=-1$ and $\sum_{j\ne i}\hxi_{ij}=0$, and so
			\begin{align*}
				\mathrm{III}
				&=
				-
				\frac1{n^{3/2}}\sum_{i=1}^n\E[g(Y_t^i,Y_t^i)] =
				-
				\frac1{\sqrt{n}}\E[g(Y_t^1,Y_t^1)].
			\end{align*}
			Since $g$ is Lipschitz,
			\begin{align*}
				\big|\E[g(Y_t^1,Y_t^1)]\big|
				&\le
				|g(0,0)|+\|\nabla g\|_\infty\,\E\big[\big|(Y_t^1,Y_t^1)\big|\big]
				\lesssim
				|g(0,0)|+\|\nabla g\|_\infty.
			\end{align*}
			In particular,
			\begin{align*}
				|\mathrm{III}|^2
				\lesssim
				\frac{\big(|g(0,0)|+\|\nabla g\|_\infty\big)^2}{n}.
			\end{align*}
			
			Combining these bounds with $		\E\big[\big|\langle \wh{\eta}_t^n,g\rangle\big|^2\big]
			\lesssim
			\E[\mathrm{I}^2]+|\mathrm{II}|^2+|\mathrm{III}|^2$
			yields the desired estimate.

			\medskip
			\noindent
			\textit{(iii)}.
			We let
			\begin{align*}
				\psi_t(x):=\Big(\LL_{t,\mu_t}-\frac{\sigma^2}{2}\Delta\Big)\varphi(x)
				=& \,\,
				b_0(t,x)\cdot \nabla\varphi(x)
				+ \int_{\R^d} b(t,x,y)\cdot \nabla \varphi(x)\,\mu_t(\dd y)\\
				&
				+ \int_{\R^d} b(t,y,x)\cdot \nabla \varphi(y)\,\mu_t(\dd y)
			\end{align*}
			and first estimate $ M(\psi_t):=\big\|(1+|\cdot|)^{-1}\nabla \psi_t\big\|_\infty$.
			For each $x\in\R^d$, Assumption \ref{assum.main}\eqref{assum.drift} implies that we can differentiate under the integral sign, and thus
			\begin{align*}
				|\nabla \psi_t(x)|
				&\le
				|(\nabla b_0)(t,x)|\,|\nabla\varphi(x)| + |b_0(t,x)|\,|\nabla^2\varphi(x)| \notag\\
				&\quad
				+
				\int_{\R^d}
				\Big(
				|(\nabla_x b)(t,x,y)|\,|\nabla \varphi(x)|
				+
				|b(t,x,y)|\,|\nabla^2\varphi(x)|
				\Big)\,\mu_t(\dd y) \notag\\
				&\quad
				+
				\int_{\R^d}
				|(\nabla_{x'} b)(t,y,x)|\,|\nabla \varphi(y)|\,\mu_t(\dd y).
			\end{align*}
			Therefore,
			\begin{align*}
				(1+|x|)^{-1}|\nabla \psi_t(x)|
				&\lesssim
				\|\nabla\varphi\|_\infty
				+
				\|\nabla^2\varphi\|_\infty
			\end{align*}
			because $(1+|x|)^{-1}\le 1$ and $\mu_t$ is a probability measure. Taking the supremum over $x \in \R^d$ yields
			\begin{align*}
				M(\psi_t)
				\lesssim
				\|\nabla\varphi\|_\infty+\|\nabla^2\varphi\|_\infty.
			\end{align*}
			Applying part (i) with $\psi_t$ in place of $\varphi$, we obtain
			\begin{align}
				\begin{split}
				\E\Big[\Big|\Big\langle \eta_t^n,\Big(\LL_{t,\mu_t}-\frac{\sigma^2}{2}\Delta\Big)\varphi\Big\rangle\Big|^2\Big]
				&\lesssim
				M(\psi_t)^2
				\Big(
				1
				+
				\sum_{i=1}^n\Big(\sum_{j=1}^n(\xi_{ij}^2+\xi_{ji}^2)\Big)^2
				\Big) \\
				&\lesssim
				\big(\|\nabla\varphi\|_\infty^2+\|\nabla^2\varphi\|_\infty^2\big)
				\Big(
				1
				+
				\sum_{i=1}^n\Big(\sum_{j=1}^n(\xi_{ij}^2+\xi_{ji}^2)\Big)^2
				\Big).
				\end{split}
				\label{eq:part3.firstterm.corrected}
			\end{align}
			
			Moreover, using $\mu_t^n-\mu_t=\frac{1}{\sqrt{n}}\eta_t^n$, we have
			\begin{align*}
				\Big\langle \eta_t^n,\Big(\LL_{t,\mu_t^n}-\frac{\sigma^2}{2}\Delta\Big)\varphi\Big\rangle
				=& \,
				\Big\langle \eta_t^n,\Big(\LL_{t,\mu_t}-\frac{\sigma^2}{2}\Delta\Big)\varphi\Big\rangle \\
				&+
				\frac{1}{\sqrt{n}}\big\langle \eta_t^n(\dd x) \, \eta_t^n(\dd y),\, b(t,x,y)\cdot \nabla\varphi(x)\big\rangle.
			\end{align*}
			Hence, by $ |a+b|^2\lesssim |a|^2+|b|^2 $,
			\begin{align}
				\begin{split}
				&\,\E\Big[
				\Big\langle \eta_t^n,\Big(\LL_{t,\mu_t^n}-\frac{\sigma^2}{2}\Delta\Big)\varphi\Big\rangle
				^2\Big] \\
				&\lesssim
				\E\Big[\Big\langle \eta_t^n,\Big(\LL_{t,\mu_t}-\frac{\sigma^2}{2}\Delta\Big)\varphi\Big\rangle^2\Big]
				+
				\frac{1}{n}	\E\Big[\big\langle \eta_t^n(\dd x)\eta_t^n(\dd y),\, b(t,x,y)\cdot\nabla\varphi(x)\big\rangle^2\Big].
				\end{split}
				\label{eq:part3.split.corrected}
			\end{align}
			
			For the second term, let
			\begin{align*}
				g_t^{\varphi}(x,y)
				:=
				\int_{\R^d} \int_{\R^d} 
				b(t,z,z')\cdot\nabla\varphi(z)\,(\delta_x-\mu_t)(\dd z)\,(\delta_y-\mu_t)(\dd z').
			\end{align*}
			Then we can write 
			\begin{align*}
				\frac{1}{\sqrt{n}}\big\langle \eta_t^n(\dd x)\eta_t^n(\dd y),\, b(t,x,y)\cdot \nabla\varphi(x)\big\rangle
				=
				\frac{1}{n^{3/2}}\sum_{i,j=1}^n g_t^{\varphi}(X_t^i,X_t^j).
			\end{align*}
			Also, expanding $g_t^\varphi$, we have
			\begin{align} \label{eq:gvarphi}
				\begin{split}
					g_t^\varphi(x,y)
					=&
					\, \,b(t,x,y)\cdot\nabla\varphi(x)
					-
					\int_{\R^d} b(t,x,z')\cdot\nabla\varphi(x)\,\mu_t(\dd z') \\
					&
					-
					\int_{\R^d} b(t,z,y)\cdot\nabla\varphi(z)\,\mu_t(\dd z)+
					\int_{\R^d} \int_{\R^d} 
					b(t,z,z')\cdot\nabla\varphi(z)\,\mu_t(\dd z)\mu_t(\dd z').
				\end{split}
			\end{align}
			Moreover, since $b$, $\nabla_x b$, and $\nabla_y b$ are bounded by Assumption \ref{assum.main}\eqref{assum.drift} and $\mu_t$ is a probability measure,
			\begin{align*}
				\|g_t^\varphi\|_\infty+\|\nabla g_t^\varphi\|_\infty
				\lesssim
				\|\nabla\varphi\|_\infty+\|\nabla^2\varphi\|_\infty.
			\end{align*}
			Additionally, we see from \eqref{eq:gvarphi} that if $Y^1$ and $Y^2$ are independent copies of the McKean-Vlasov solution, then $ \E\big[g_t^{\varphi}(Y_t^1,Y_t^2)\big]=0$.
			Therefore, applying \eqref{lem:cancellation.unweighted}  of Lemma \ref{lem:cancellation} with $g_t^\varphi$, we obtain
			\begin{align*}
				\E \bigg[\Big(\sum_{i,j=1}^n g_t^{\varphi}(X_t^i,X_t^j)\Big)^2\bigg]
				&\lesssim
				n^2 \|g_t^{\varphi}\|_\infty^2 + n^3 \|\nabla g_t^{\varphi}\|_\infty^2.
			\end{align*}
			Hence,
			\begin{align}
				\begin{split}
					\E\Big[\Big|\frac{1}{\sqrt{n}}\big\langle \eta_t^n(\dd x)\eta_t^n(\dd y),\, b(t,x,y)\cdot\nabla\varphi(x)\big\rangle\Big|^2\Big]
				&=
				\frac{1}{n^3}\E \bigg[\Big(\sum_{i,j=1}^n g_t^{\varphi}(X_t^i,X_t^j)\Big)^2\bigg] \\
				&\lesssim
				\frac{1}{n}\|g_t^{\varphi}\|_\infty^2+\|\nabla g_t^{\varphi}\|_\infty^2 \lesssim
				\|\nabla\varphi\|_\infty^2+\|\nabla^2\varphi\|_\infty^2.
				\end{split}
				\label{eq:part3.secondterm.corrected}
			\end{align}
			
			Combining \eqref{eq:part3.firstterm.corrected}, \eqref{eq:part3.split.corrected}, and \eqref{eq:part3.secondterm.corrected}, 
			we conclude that
			\begin{align*}
				\E\Big[\Big|
				\Big\langle \eta_t^n,\Big(\LL_{t,\mu_t^n}-\frac{\sigma^2}{2}\Delta\Big)\varphi\Big\rangle
				\Big|^2\Big]
				&\lesssim
				\big(\|\nabla\varphi\|_\infty^2+\|\nabla^2\varphi\|_\infty^2\big)
				\Big(
				1
				+
				\sum_{i=1}^n \Big(\sum_{j=1}^n (\xi_{ij}^2+\xi_{ji}^2)\Big)^2
				\Big),
			\end{align*}
			as claimed.
		\end{proof}
		
		We first prove the tightness of the one-dimensional marginals $(\eta^n_t)_{n \in \N}$ in $\cH^{-k}$.
		
		\begin{proposition}\label{prop:tight.marginal}
			Suppose Assumption \ref{assum.main} holds with $k \ge \lambda_d + 2$. Then for each $t \in [0,T]$, $(\eta^n_t)_{n \in \N}$  is tight in $\cH^{-k}$.
		\end{proposition}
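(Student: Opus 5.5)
Tightness in the Hilbert space $\cH^{-k}$ will follow from a uniform bound on $\E\|\eta^n_t\|^2_{\cH^{-k'}}$ for some $\lambda_d+1\le k'<k$, combined with a uniform weighted moment bound. The embedding $\cH^{-k'}\hookrightarrow\cH^{-k}$ is not compact on all of $\R^d$. So we also need a tail control, which we obtain from the weighted Sobolev spaces: the embedding $\cH^{-k'}_w\hookrightarrow \cH^{-k}$ with $w(x)=1+|x|^{2\lambda_d}$ is compact (Maurin-type/Rellich with decaying weight). The plan is therefore to show
\begin{align*}
\sup_n \E\big[\|\eta^n_t\|^2_{\cH^{-(k-1)}_w}\big]<\infty ,
\end{align*}
where the exponent $k-1\ge \lambda_d+1$. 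Chebyshev's inequality then gives that the balls of $\cH^{-(k-1)}_w$, which are relatively compact in $\cH^{-k}$, carry mass $\ge 1-\varepsilon$ uniformly in $n$.

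To bound the weighted norm we use a Hilbert–Schmidt/reproducing-kernel computation. Fix an orthonormal basis $(e_m)$ of $\cH^{k-1}_w$. Then $\|\eta^n_t\|^2_{\cH^{-(k-1)}_w}=\sum_m \langle \eta^n_t,e_m\rangle^2$, and Lemma \ref{lem:tightness.prelim}\eqref{tightness.prelim.eta} gives
\begin{align*}
\E\langle\eta^n_t,e_m\rangle^2\lesssim M(e_m)^2\Big(1+\sum_i\big(\sum_j(\xi_{ij}^2+\xi_{ji}^2)\big)^2\Big).
\end{align*}
The bracket is uniformly bounded by \eqref{eq:xi_row_sq}. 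The two steps that remain are the following.
\begin{enumerate}[(a)]
\item Replace $M(e_m)^2$ by a pointwise quantity. The cleanest route is to avoid $M$ and instead rerun the proof of Lemma \ref{lem:tightness.prelim}\eqref{tightness.prelim.eta} with $\varphi=e_m$ summed over $m$. The variance part becomes $\frac1n\sum_i\E\sum_m|\nabla e_m(X^i_t)|^2$. The mean part becomes $\sum_i \W_2$-type terms involving $\sum_m|\nabla e_m|^2$ along an interpolating path.
\item Prove the bound $\sum_m |\nabla e_m(x)|^2\lesssim w(x)$. This holds because the evaluation $f\mapsto \partial_l f(x)$ is a bounded functional on $\cH^{k-1}_w$ with norm squared $\lesssim w(x)$, by Sobolev embedding $k-1-1>d/2$ applied locally on a unit ball around $x$, where $w$ is comparable to $w(x)$.
\end{enumerate}

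With (a) and (b), the variance term is $\lesssim \frac1n\sum_i\E w(X^i_t)$, which is bounded by \eqref{eq:w.momet.statement}. For the mean term we write, with an optimal coupling $\pi_i$ of $P^i_t$ and $\mu_t$,
\begin{align*}
\sum_m\Big(\frac1{\sqrt n}\sum_i\int(e_m(x)-e_m(y))\,\dd\pi_i\Big)^2\le \sum_i \int\!\!\int_0^1 \sum_m|\nabla e_m(y+r(x-y))|^2\,\dd r\,|x-y|^2\,\dd\pi_i .
\end{align*}
Here we used Cauchy–Schwarz in $i$ (with the factor $1/\sqrt n$ absorbed by $n$ terms) and in the path variable.

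This yields $\sum_i\int w(\text{path})|x-y|^2\dd\pi_i$. This is the main obstacle, since the weight grows polynomially and only $\W_2$ is controlled by entropy. A first attempt is to replace $w(x)=1+|x|^{2\lambda_d}$ by a weaker weight $1+|x|^2$. If $\sum_m|\nabla e_m|^2\lesssim (1+|x|)^2$, which can be arranged by taking an intermediate weight, the mean term reduces exactly to the quantity $M(\varphi)^2\sum_iH(P^i_t|\mu_t)$ handled in part (i). The alternative fallback is to split the unweighted and weighted problems. Use unweighted $\cH^{-(k-1)}$ bounds via part (i) with $M(e_m)\le\|\nabla e_m\|_\infty$, where $\sum_m|\nabla e_m(x)|^2\lesssim1$ by Sobolev embedding. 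Then handle tightness at infinity separately through a cutoff: show $\E\|\chi_R\eta^n_t\|_{\cH^{-k}}^2\lesssim \sup_i\E[(1+|X^i_t|^2)\mathbf 1_{|X^i_t|>R}]+\ldots$, which tends to $0$ as $R\to\infty$ uniformly in $n$ by the moment bounds. Combined with compactness of $\cH^{-(k-1)}(B_R)\hookrightarrow\cH^{-k}$ for localized distributions, this gives the required uniform total boundedness.
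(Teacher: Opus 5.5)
Your overall scheme is sound. The embedding $\cH^{-(k-1)}_w\hookrightarrow\cH^{-k}$ is compact: it is dual to $\cH^k\hookrightarrow\cH^{k-1}_w$, which is compact by Rellich on balls together with $w^{-1}\to0$. The variance part is handled correctly by the Poincar\'e inequality. Step (b) is also right: $\sum_m|\partial_\ell e_m(x)|^2$ is the squared norm of $f\mapsto\partial_\ell f(x)$ on $\cH^{k-1}_w$, and $k-2\ge\lambda_d>d/2$.

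\textbf{The gap is the mean term.} You leave it open, and neither fallback closes it.

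\emph{First fallback.} With weight $1+|x|^2$, your Jensen step still produces $\int(1+|x|^2+|y|^2)|x-y|^2\,\dd\pi_i$. This is not controlled by $\W_2^2(P^i_t,\mu_t)$ and the moment bounds. Lemma \ref{lem:tightness.prelim}\eqref{tightness.prelim.eta} only controls $\big(\int(1+|x|+|y|)|x-y|\,\dd\pi_i\big)^2$, where the growth factor is split off by Cauchy--Schwarz \emph{before} squaring. Alternatively, invoking that lemma for each basis element and summing would need $\sum_m M(e_m)^2<\infty$ (or $\sum_m\|\nabla e_m\|_\infty^2<\infty$ in your second fallback). Neither holds for any orthonormal basis. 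Otherwise $f\mapsto(1+|\cdot|)^{-1}\nabla f$ (resp.\ $f\mapsto\nabla f$) would be a norm limit of finite-rank maps, hence compact into $L^\infty$, which suitably rescaled bumps translated to infinity rule out.

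\emph{Second fallback.} It hides exactly the bias term in the ``$\ldots$''.

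\textbf{The fix.} The obstacle comes from the order of operations. For each $m$, set $a_{i,m}:=\int\int_0^1\nabla e_m(\gamma_r)\cdot(x-y)\,\dd r\,\dd\pi_i$. Apply Cauchy--Schwarz on $[0,1]\times(\R^d)^2$ to separate the two factors, and only then sum over $m$:
\[
\sum_m a_{i,m}^2\le\Big(\int\int_0^1\sum_m|\nabla e_m(\gamma_r)|^2\,\dd r\,\dd\pi_i\Big)\W_2^2(P^i_t,\mu_t)\lesssim\Big(\int\big(w(x)+w(y)\big)\,\dd\pi_i\Big)\W_2^2(P^i_t,\mu_t)\lesssim H(P^i_t\,|\,\mu_t).
\]
This uses (b), $w(\gamma_r)\le w(x)+w(y)$, \eqref{eq:w.momet.statement}, and Proposition \ref{lem:preliminaries}\eqref{lem:preliminaries:v}. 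Combined with \eqref{eq:DPI.avg.1to0} and \eqref{eq:xi_row_sq}, this gives $\sup_n\E\|\eta^n_t\|^2_{\cH^{-(k-1)}_w}<\infty$ with the full weight $w$. No weight reduction or cutoff is needed.

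\textbf{Comparison with the paper.} With this fix, your route genuinely differs from the paper's. The paper works on the Fourier side and applies Lemma \ref{lem:tightness.prelim}\eqref{tightness.prelim.eta} only to $\varphi_u$ and $x_j\varphi_u$. Their $M$ is $\lesssim1+|u|$, exactly the one power of $|x|$ the lemma tolerates. It then uses the compact set of Lemma \ref{lem:compact.embed}, which is essentially ``$\xi\in\cH^{-(k-1)}$ and $x\xi\in\cH^{-k}$'', so no bases or reproducing kernels appear. Your version gives the stronger weighted bound directly. The paper obtains that bound only later, in Proposition \ref{prop.conv.int.eta} (Steps 2--4), using Bessel kernels and the weighted Pinsker inequality of Lemma \ref{lem:weightedPinsker} in place of $\W_2$.
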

		
		\begin{proof}
			Since $k>\frac d2$, the Sobolev Embedding Theorem, see, e.g., \cite[Theorem 4.12, Case A with $\Omega=\R^d$, $n=d$, $p=2$, $j=0$, and $m=k$]{adams2003sobolev}, yields $\cH^k\hookrightarrow C_b(\R^d)$. Therefore,
				\begin{align*}
					\|\varphi\|_{L^\infty}
					\lesssim
					\|\varphi\|_{\cH^k},
					\quad
					\varphi\in \cH^k.
				\end{align*}
				Fix $t\in[0,T]$. Since $\eta_t^n$ is a finite signed measure, for any $\varphi\in \cH^k$,
				\begin{align*}
					|\langle \eta_t^n,\varphi\rangle|
					=
					\left|\int_{\R^d}\varphi(x)\,\eta_t^n(\dd x)\right|
					\le
					|\eta_t^n|(\R^d)\,\|\varphi\|_{L^\infty}
					\lesssim
					|\eta_t^n|(\R^d)\,\|\varphi\|_{\cH^k}
					<
					\infty.
				\end{align*}
				Thus $\eta_t^n$ defines a bounded linear functional on $\cH^k$, and hence $\eta_t^n\in \cH^{-k}$ for every $n\in\N$.  
			
			For $u \in \R^d$, let $\varphi_u : \R^d \to \mathbb{C}$ be given by $\varphi_u(x) :=e^{-2\pi i  u \cdot x}$. We prove tightness by obtaining a uniform weighted $\cH^1$ bound for the map $u\mapsto \langle \eta_t^n,\varphi_u\rangle$, and then invoking Lemma \ref{lem:compact.embed}.  By Lemma \ref{lem:tightness.prelim}\eqref{tightness.prelim.eta} applied to the real and imaginary parts of $\varphi_u$, together with  \eqref{eq:xi_row_sq} of Assumption \ref{assum.main}\eqref{assum:mat}, we have
			\begin{align}
				\E\big[\big|\langle \eta_t^n,\varphi_u\rangle\big|^2\big]
				\lesssim
				|u|^2.
				\label{eq:tight.marginal.mode}
			\end{align}
			
			Note that  $\partial_{u_j}\varphi_u = -2\pi i\,x_j\varphi_u$, for $j \in [d]$, and define
			\begin{align}
				\psi_{u,j}(x)=x_j\varphi_u(x),
				\quad
				x\in\R^d.
				\label{eq:psi.u.j}
			\end{align}
			We claim that	\begin{align} \label{eq:tight.marginal.derivative.mode}
				\E\big[\big|\langle \eta_t^n,\psi_{u,j}\rangle\big|^2\big] \lesssim 1+|u|^2.
			\end{align}
			Indeed, the real and imaginary parts of $\psi_{u,j}$ are continuously differentiable, and the bound
			\begin{align*}
				|\nabla \psi_{u,j}(x)| \lesssim 1+|u|\,|x|
			\end{align*}
			implies
			\begin{align*}
				M(\Re\, \psi_{u,j}) + M(\Im\,\psi_{u,j}) \lesssim 1+|u|,
			\end{align*}
			where we recall that $M(\varphi):=\big\|(1+|\cdot|)^{-1}\nabla\varphi\big\|_\infty$.
			Therefore, Lemma \ref{lem:tightness.prelim}\eqref{tightness.prelim.eta}, applied to $\Re\,\psi_{u,j}$ and $\Im\, \psi_{u,j}$, yields
			\begin{align*}
				\E		\big[\big|\langle \eta_t^n,\psi_{u,j}\rangle\big|^2\big] = \E\big[\langle \eta_t^n,\Re\,\psi_{u,j}\rangle^2\big]
				+
				\E\big[\langle \eta_t^n,\Im\,\psi_{u,j}\rangle^2\big]
				\lesssim
				(1+|u|)^2
				\lesssim
				1+|u|^2,
			\end{align*}
			which is exactly \eqref{eq:tight.marginal.derivative.mode}.
			
			Since $\eta_t^n$ has finite first moment by Proposition \ref{lem:preliminaries}\eqref{lem:preliminaries:iv}, the map $u\mapsto \langle \eta_t^n,\varphi_u\rangle$ is $C^1$, and
			\begin{align*}
				\partial_{u_j}\langle \eta_t^n,\varphi_u\rangle
				=
				-2\pi i\,\langle \eta_t^n,\psi_{u,j}\rangle.
			\end{align*}
			Therefore, by \eqref{eq:tight.marginal.derivative.mode},
			\begin{align}
				\E\big[\big|\partial_{u_j}\langle \eta_t^n,\varphi_u\rangle\big|^2\big]
				=
				(2\pi)^2\E\big[\big|\langle \eta_t^n,\psi_{u,j}\rangle\big|^2\big]
				\lesssim
				1+|u|^2,
				\quad
				j=1,\dots,d.
				\label{eq:tight.marginal.gradient.mode}
			\end{align}
			Define
			\begin{align*}
				Z_n
				:=
				\int_{\R^d}(1+|u|^2)^{-k+1}
				\big|\langle \eta_t^n,\varphi_u\rangle\big|^2\,\dd u
				+
				\int_{\R^d}(1+|u|^2)^{-k}
				\big|\nabla_u\langle \eta_t^n,\varphi_u\rangle\big|^2\,\dd u.
			\end{align*}
			Then Tonelli's Theorem together with \eqref{eq:tight.marginal.mode}, \eqref{eq:tight.marginal.gradient.mode} and $k \ge \lambda_d + 2$ yield
			\begin{align}
				\sup_{n\in\N}
				\E[Z_n]
				<
				\infty.
				\label{eq:tight.marginal.weighted.H1}
			\end{align}
			
			For $M>0$, let $\K_{M,k}$ be as in Lemma \ref{lem:compact.embed}. By that lemma, $\K_{M,k}$ is compact in $L^2((1+|u|^2)^{-k}\,\dd u)$. By \eqref{eq:Hminus.k.Fourier}, its preimage under the Fourier transform is the compact set
			\begin{align*}
				K_{M,k}
				:=
				\bigg\{
				\xi\in \cH^{-k}:
				&\int_{\R^d}(1+|u|^2)^{-k+1}
				\big|\F[\xi](u)\big|^2\,\dd u \notag\\
				&+
				\int_{\R^d}(1+|u|^2)^{-k}
				\big|\nabla_u\F[\xi](u)\big|^2\,\dd u
				\le M
				\bigg\}.
			\end{align*}
			Since $\eta_t^n$ is a finite  measure, $\F[\eta_t^n](u)=\langle \eta_t^n,\varphi_u\rangle$. Hence, by Markov's inequality and \eqref{eq:tight.marginal.weighted.H1},
			\begin{align*}
				\sup_{n\in\N}\PP\big(\eta_t^n\notin K_{M,k}\big)
				&=
				\sup_{n\in\N}\PP(Z_n>M)
				\le
				\frac{1}{M}\sup_{n\in\N}\E[Z_n]
				\lesssim
				\frac{1}{M}.
			\end{align*}
			Thus, $(\eta^n_t)_{n \in \N}$  is tight in $\cH^{-k}$.
		\end{proof}
		It remains to upgrade the tightness of the time marginals to tightness in  $C([0,T];\cH^{-k})$.

		\begin{proposition}\label{prop:tight}
			Suppose Assumption \ref{assum.main} holds with $k \ge \lambda_d + 2$. Then $(\eta^n)_{n\in\N}$ is tight in $C([0,T];\cH^{-k})$.
		\end{proposition}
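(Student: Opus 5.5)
Since each $\eta^n$ has continuous paths in $\cH^{-k}$ (the particle paths are continuous and $\cH^k\hookrightarrow C_b^1$, so $t\mapsto\langle\eta^n_t,\varphi\rangle$ is continuous uniformly over the unit ball of $\cH^k$), I would use the Aldous criterion in the Hilbert space $\cH^{-k}$, combined with Proposition \ref{prop:tight.marginal}. Tightness of the marginals is not quite enough for the Aldous--Rebolledo/Joffe--Métivier version in a Hilbert space; the standard route is to show tightness of marginals in $\cH^{-k}$ and, for each stopping time $\tau_n\le T$ and $\delta_n\downarrow 0$, that $\|\eta^n_{\tau_n+\delta_n}-\eta^n_{\tau_n}\|_{\cH^{-(k+2)}}\to 0$ in probability. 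Tightness in $C([0,T];\cH^{-k})$ then follows. One can either use a slightly larger $k'$ in the marginal argument and interpolate, or, to stay in $\cH^{-k}$, combine the Aldous condition in a weaker norm with a uniform bound $\sup_n\E\sup_t\|\eta^n_t\|^2_{\cH^{-k+1}}<\infty$ and compactness of $\cH^{-k+1}\hookrightarrow\cH^{-k}$ on bounded sets localized by moments. The latter is what I would try, since Lemma \ref{lem:tightness.prelim}\eqref{tightness.prelim.eta} gives $\E|\langle\eta^n_t,\varphi_u\rangle|^2\lesssim|u|^2$, and $k\ge\lambda_d+2$ leaves room for one derivative.

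For the increments I would use Lemma \ref{lem:eta.dym} with $f$ ranging over an orthonormal basis $(f_p)$ of $\cH^{k+2}$, or equivalently Fourier modes $\varphi_u$ weighted by $(1+|u|^2)^{-(k+2)}$. The increment $\langle\eta^n_{\tau+\delta}-\eta^n_\tau,f\rangle$ splits into four parts. The first is the $\LL_{s,\mu^n_s}$ drift. Its $\frac{\sigma^2}{2}\Delta$ part is bounded by $\delta\sup_s|\langle\eta^n_s,\Delta f\rangle|$, and the rest is controlled through Lemma \ref{lem:tightness.prelim}\eqref{tightness.prelim.integrand}; by Cauchy--Schwarz in time, $\E|\int_\tau^{\tau+\delta}\cdot\,\dd s|^2\le\delta\int_0^T\E|\cdot|^2\dd s\lesssim\delta\,\|f\|^2_{C_b^2}$. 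The second is the $\wh\eta^n$ term, which is bounded the same way via Lemma \ref{lem:tightness.prelim}\eqref{tightness.prelim.eta.hat} with $g=b\cdot\nabla f(x)$. The remaining issue there is to verify that the $\hxi$-sums in that bound are $O(n^3)$; this is the step I expect to need care. I would use $\sum_{i,j}\hxi_{ij}^2\le n^2\sum_{i,j}\xi_{ij}^2+n\lesssim n^2\max\xi\cdot n$, together with \eqref{eq:column.sum.assump} for the column-sum term $\sum_i(\sum_j\hxi_{ji})^2=\sum_i((n-1)c_i-n)^2$. The latter is $O(n^3)$ by the first part of \eqref{eq:column.sum.assump} and gives the needed $\lesssim n^3$. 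The $|\hxi|$ sums times $\max\xi$ are handled by $\sum_j|\hxi_{ij}|\le 2n$, giving $n^3\max\xi$, and the last term is $o(n^3)$ by \eqref{assum:mat.3}. The third is the $\frac{\sqrt n}{n-1}$ term, which is trivially $O(\delta/\sqrt n)$. The fourth is the martingale, whose $\E|\cdot|^2$ is $\sigma^2\E\int_\tau^{\tau+\delta}\langle\mu^n_s,|\nabla f|^2\rangle\dd s\le\sigma^2\delta\|\nabla f\|_\infty^2$ by optional stopping.

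Summing over the basis, $\sum_p\|f_p\|_{C_b^2}^2<\infty$ holds when $\cH^{k+2}\hookrightarrow C_b^2$ is Hilbert--Schmidt. This holds since $k+2-2>d/2$, and it yields $\E\|\eta^n_{\tau+\delta}-\eta^n_\tau\|^2_{\cH^{-(k+2)}}\lesssim\delta$ uniformly in $n$ and $\tau$. For the $\Delta f$ term I would take $\sup_s$ after embedding, using $\E\sup_s|\langle\eta^n_s,\Delta f\rangle|^2$. To avoid needing a supremum bound, I would instead apply Lemma \ref{lem:tightness.prelim}\eqref{tightness.prelim.eta} pointwise in $s$ inside the time integral, which is the same Cauchy--Schwarz bound. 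Finally, I would pass from the $\cH^{-(k+2)}$ Aldous condition plus $\cH^{-k}$ marginal compactness (the sets $K_{M,k}$, which are in fact compact in $\cH^{-k}$) to tightness in $C([0,T];\cH^{-k})$. This uses the standard interpolation $\|x\|_{\cH^{-k}}\le\epsilon\|x\|_{\cH^{-k+1}}+C_\epsilon\|x\|_{\cH^{-(k+2)}}$ together with the uniform $\cH^{-k+1}$ second moment bound shown above.
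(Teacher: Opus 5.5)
\textbf{There is a genuine gap: the passage from your $\cH^{-(k+2)}$ increment estimate back to $\cH^{-k}$.} Tightness of the marginals in $\cH^{-k}$ plus an Aldous condition in the weaker norm $\cH^{-(k+2)}$ does not by itself give tightness in $C([0,T];\cH^{-k})$. A counterexample is $g_n(t)e_n$, where $g_n$ is a tent of width $2/n$ centred at a uniform random time, $\|e_n\|_{\cH^{-k}}=1$ and $\|e_n\|_{\cH^{-(k+2)}}\to0$. Both conditions hold and the finite-dimensional laws tend to $0$, yet $\sup_t\|g_n(t)e_n\|_{\cH^{-k}}=1$.

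So everything rests on your interpolation $\|x\|_{\cH^{-k}}\le\epsilon\|x\|_{\cH^{-(k-1)}}+C_\epsilon\|x\|_{\cH^{-(k+2)}}$. To use it inside Aldous you need $\|\eta^n_\tau\|_{\cH^{-(k-1)}}$ bounded in probability uniformly over stopping times $\tau$, for instance $\sup_n\E\sup_t\|\eta^n_t\|^2_{\cH^{-(k-1)}}<\infty$. This is not ``shown above.'' Lemma \ref{lem:tightness.prelim}\eqref{tightness.prelim.eta} is a fixed-time statement, since it comes from the Poincar\'e inequality for $P_t$, so it only gives $\sup_t\E\|\eta^n_t\|^2_{\cH^{-(k-1)}}<\infty$. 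If you try to move the supremum inside via the semimartingale decomposition, the term $2\pi^2\sigma^2|u|^2\langle\eta^n_s,\varphi_u\rangle$ costs $|u|^4\cdot|u|^2$. But $\int(1+|u|^2)^{-(k-1)}|u|^6\,\dd u=\infty$ at $k=\lambda_d+2$.

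\textbf{The missing idea is the one the paper's proof is built on.} Write each Fourier mode by variation of constants with the heat factor $e^{-2\pi^2\sigma^2|u|^2(t-s)}$, so that the dissipation absorbs the $|u|^2$ coming from $\Delta$. The paper uses this to get $\sup_t\E|\langle\eta^n_t,\varphi_u\rangle|^2\lesssim1$ uniformly in $u$. The Laplacian increment is then $\lesssim\delta|u|^4$ by pathwise Cauchy--Schwarz in time, and Aldous is verified directly in $\cH^{-k}$. No weaker space, no interpolation and no supremum bound are needed.

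Your route can be rescued with the same formula. Rewrite the stochastic convolution as $M_t-a\int_0^te^{-a(t-s)}M_s\,\dd s$ with $a=2\pi^2\sigma^2|u|^2$, and apply Doob's inequality. This gives $\E\sup_t|\langle\eta^n_t,\varphi_u\rangle|^2\lesssim1+|u|^2$, hence the $\cH^{-(k-1)}$ supremum bound whenever $k-2>d/2$. This is a genuinely additional step, though, and without it the proposal does not close.

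\textbf{Your orthonormal-basis variant is also wrong as stated.} On $\R^d$, no orthonormal basis $(f_p)$ of $\cH^{k+2}$ satisfies $\sum_p\|f_p\|^2_{C_b^2}<\infty$. That would make $\cH^{k+2}\hookrightarrow C_b^2$ compact, which translation invariance rules out. In addition, the $\Delta f$ term handled through Lemma \ref{lem:tightness.prelim}\eqref{tightness.prelim.eta} needs $\nabla\Delta f$, so $C_b^2$ norms would not suffice anyway.

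The Fourier-mode version you mention is the right way to get the $\cH^{-(k+2)}$ increment bound $\delta(|u|^2+|u|^4+|u|^6)$. Your matrix bookkeeping for Lemma \ref{lem:tightness.prelim}\eqref{tightness.prelim.eta.hat} there is fine.
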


		\begin{proof}
			With Proposition \ref{prop:tight.marginal} establishing the tightness of the one-dimensional marginals in $\cH^{-k}$, to conclude tightness in $C([0,T];\cH^{-k})$, it remains to verify Aldous' criterion, see, e.g., \cite[Lemma 23.12, Theorems 23.11, 23.9, 23.8]{kallenberg2021foundations}. Let $\delta>0$ and let $0\le \tau_1\le \tau_2\le T$ be stopping times such that $\tau_2-\tau_1\le \delta$ a.s. Recalling the definition of $\varphi_u(x):=e^{-2\pi i\,u\cdot x}$, and noting that $\eta_{\tau_2}^n-\eta_{\tau_1}^n$ is a finite signed measure, so that $ 	\F[\eta_{\tau_2}^n-\eta_{\tau_1}^n](u)
				=
				\langle \eta_{\tau_2}^n-\eta_{\tau_1}^n,\varphi_u\rangle$,
			the Fourier characterization of Sobolev norms in \eqref{eq:Hminus.k.Fourier} and Tonelli's Theorem yield
			\begin{align}
				\E\big[\|\eta^n_{\tau_2}-\eta^n_{\tau_1}\|_{\cH^{-k}}^2\big]
				&\asymp
				\int_{\R^d}(1+|u|^2)^{-k}\,
				\E\big[\big|\langle \eta^n_{\tau_2}-\eta^n_{\tau_1},\varphi_u\rangle\big|^2\big]\,\dd u.
				\label{eq:tight.fourier.reduction}
			\end{align}
			
			Fix $u\in\R^d$. Applying Lemma \ref{lem:eta.dym}  to the real and imaginary parts of $\varphi_u$, and using $\Delta\varphi_u=-4\pi^2|u|^2\varphi_u$, we obtain 
		\begin{align}
			\begin{split}
			\langle \eta_{\tau_2}^n-\eta_{\tau_1}^n,\varphi_u\rangle
			=&
			-2\pi^2\sigma^2|u|^2\int_{\tau_1}^{\tau_2}\langle \eta_s^n,\varphi_u\rangle\,\dd s
			+\int_{\tau_1}^{\tau_2}
			\Big\langle \eta_s^n,\Big(\LL_{s,\mu_s^n}-\frac{\sigma^2}{2}\Delta\Big)\varphi_u\Big\rangle\,\dd s \\
			&
			+\frac{n}{n-1}\int_{\tau_1}^{\tau_2}
			\big\langle \wh{\eta}_s^n(\dd x,\dd y),\, b(s,x,y)\cdot \nabla\varphi_u(x)\big\rangle\,\dd s \\
			&
			+\frac{\sqrt{n}}{n-1}\int_{\tau_1}^{\tau_2}\big\langle \mu_s^n(\dd x)\mu_s^n(\dd y),\, b(s,x,y)\cdot \nabla\varphi_u(x)\big\rangle\,\dd s \\
			&	+\frac{\sigma}{\sqrt{n}}\sum_{i=1}^n\int_{\tau_1}^{\tau_2}\nabla\varphi_u(X_s^i)\cdot \dd B_s^i.
			\end{split}
			\label{eq:tight.increment.mode}
		\end{align}
		
		By Lemma  \ref{lem:tightness.prelim}\eqref{tightness.prelim.integrand}, Lemma \ref{lem:tightness.prelim}\eqref{tightness.prelim.eta.hat}, Assumption \ref{assum.main}\eqref{assum:mat}, and Lemma \ref{lem:assum_i_implies_mat}, uniformly in $s\in[0,T]$,
		\begin{align}
			\begin{split}
			&\E\Big[\Big|
			\Big\langle \eta_s^n,\Big(\LL_{s,\mu_s^n}-\frac{\sigma^2}{2}\Delta\Big)\varphi_u\Big\rangle
			\Big|^2\Big]
			+
			\E\Big[\Big|
			\big\langle \wh{\eta}_s^n(\dd x,\dd y),\, b(s,x,y)\cdot \nabla\varphi_u(x)\big\rangle
			\Big|^2\Big]\\
			&\lesssim
			\|\nabla\varphi_u\|_\infty^2+\|\nabla^2\varphi_u\|_\infty^2
			\lesssim
			|u|^2 + |u|^4,
			\end{split}
			\label{eq:tight.mode.integrand.bound}
		\end{align}
		and, since $b$ is bounded while $|\nabla\varphi_u|\lesssim |u|$,
		\begin{align}
			\Big|\frac{\sqrt{n}}{n-1}\big\langle \mu_s^n(\dd x)\mu_s^n(\dd y),\, b(s,x,y)\cdot \nabla\varphi_u(x)\big\rangle\Big|^2
			\lesssim\frac{ |u|^2}{n}.
			\label{eq:tight.mode.remainder.bound}
		\end{align}
		Consequently, by the Cauchy-Schwarz inequality and the bound $\tau_2-\tau_1\le \delta$,
		\begin{align}
			\begin{split}
			&		\E\bigg[\bigg|\int_{\tau_1}^{\tau_2}
			\Big\langle \eta_s^n,\Big(\LL_{s,\mu_s^n}-\frac{\sigma^2}{2}\Delta\Big)\varphi_u\Big\rangle\,\dd s\bigg|^2\bigg] \\
			&		+
			\E\bigg[\bigg|\int_{\tau_1}^{\tau_2}
			\big\langle \wh{\eta}_s^n(\dd x,\dd y),\, b(s,x,y)\cdot \nabla\varphi_u(x)\big\rangle\,\dd s\bigg|^2\bigg] \lesssim
			\delta (|u|^2 + |u|^4).
			\end{split}
			\label{eq:tight.increment.transport}
		\end{align}
		Similarly, by \eqref{eq:tight.mode.remainder.bound},
		\begin{align*}
			\E\bigg[\bigg|\frac{\sqrt{n}}{n-1}\int_{\tau_1}^{\tau_2}
			\big\langle \mu_s^n(\dd x)\mu_s^n(\dd y),\, b(s,x,y)\cdot \nabla\varphi_u(x)\big\rangle\,\dd s\bigg|^2\bigg]
			&\lesssim
			\frac{\delta|u|^2}{n}.
		\end{align*}
		For the stochastic integral in \eqref{eq:tight.increment.mode}, Itô's isometry gives
		\begin{align}
			\E\bigg[\bigg|
			\frac{\sigma}{\sqrt{n}}\sum_{i=1}^n\int_{\tau_1}^{\tau_2}\nabla\varphi_u(X_s^i)\cdot \dd B_s^i
			\bigg|^2\bigg]
			&=
			\frac{\sigma^2}{n}\sum_{i=1}^n
			\E\bigg[\int_{\tau_1}^{\tau_2}|\nabla\varphi_u(X_s^i)|^2\,\dd s\bigg]
			\lesssim
			\delta |u|^2.
			\label{eq:tight.increment.mart}
		\end{align}
		
		Applying the variation-of-constants formula to the equation for $t\mapsto \langle \eta_t^n,\varphi_u\rangle$ given by Lemma \ref{lem:eta.dym}, we obtain, for every $t\in[0,T]$,
		\begin{align}
			\begin{split}
			\langle \eta_t^n,\varphi_u\rangle
			=&\,
			e^{-2\pi^2\sigma^2|u|^2 t}\langle \eta_0^n,\varphi_u\rangle
			+\int_0^t e^{-2\pi^2\sigma^2|u|^2(t-s)}
			\Big\langle \eta_s^n,\Big(\LL_{s,\mu_s^n}-\frac{\sigma^2}{2}\Delta\Big)\varphi_u\Big\rangle\,\dd s \\
			&
			+\frac{n}{n-1}\int_0^t e^{-2\pi^2\sigma^2|u|^2(t-s)}
			\big\langle \wh{\eta}_s^n(\dd x,\dd y),\, b(s,x,y)\cdot \nabla\varphi_u(x)\big\rangle\,\dd s \\
			&
			+\frac{\sqrt{n}}{n-1}\int_0^t e^{-2\pi^2\sigma^2|u|^2(t-s)}\big\langle \mu_s^n(\dd x)\mu_s^n(\dd y),\, b(s,x,y)\cdot \nabla\varphi_u(x)\big\rangle\,\dd s \\
			&
			+\frac{\sigma}{\sqrt{n}}\sum_{i=1}^n\int_0^t e^{-2\pi^2\sigma^2|u|^2(t-s)}\nabla\varphi_u(X_s^i)\cdot \dd B_s^i.
			\end{split}
			\label{eq:tight.mode.voc}
		\end{align}
		Since $X_0^1,\dots,X_0^n$ are i.i.d. with law $\mu_0$ and $|\varphi_u|=1$,
		\begin{align}
			\E\big[\big|\langle \eta_0^n,\varphi_u\rangle\big|^2\big]
			&=
			\E\Big[\Big|\frac1{\sqrt{n}}\sum_{i=1}^n
			\big(\varphi_u(X_0^i)-\langle \mu_0,\varphi_u\rangle\big)\Big|^2\Big]
			=
			\Var(\varphi_u(X^i_0))
			\le 1.
			\label{eq:tight.initial.mode}
		\end{align}
		Also,
		\begin{align}
			|u|^2\int_0^t e^{-2\pi^2\sigma^2|u|^2(t-s)}\,\dd s
			\lesssim 1.
			\label{eq:kernel.bound.1}
		\end{align}
		Similarly,
		\begin{align}
			|u|^2\int_0^t e^{-4\pi^2\sigma^2|u|^2(t-s)}\,\dd s
			&=
			\frac{1-e^{-4\pi^2\sigma^2|u|^2t}}{4\pi^2\sigma^2}
			\lesssim 1.
			\label{eq:kernel.bound.2}
		\end{align}
		Using \eqref{eq:tight.mode.voc} and the elementary inequality $ |a_1+\cdots+a_5|^2 \le 5\sum_{j=1}^5 |a_j|^2 $, we obtain
		\begin{align*}
			\E\big[\big|\langle \eta_t^n,\varphi_u\rangle\big|^2\big]
			\lesssim& \,
			\E\big[\big|\langle \eta_0^n,\varphi_u\rangle\big|^2\big]
			+
			\E\bigg[\bigg|\int_0^t e^{-2\pi^2\sigma^2|u|^2(t-s)}
			\Big\langle \eta_s^n,\Big(\LL_{s,\mu_s^n}-\frac{\sigma^2}{2}\Delta\Big)\varphi_u\Big\rangle\,\dd s\bigg|^2\bigg] \\
			&
			+
			\E\bigg[\bigg|\int_0^t e^{-2\pi^2\sigma^2|u|^2(t-s)}
			\big\langle \wh{\eta}_s^n(\dd x,\dd y),\, b(s,x,y)\cdot \nabla\varphi_u(x)\big\rangle\,\dd s\bigg|^2\bigg] \\
			&
			+
			\E\bigg[\bigg|\frac{\sqrt{n}}{n-1}\int_0^t e^{-2\pi^2\sigma^2|u|^2(t-s)}
			\big\langle \mu_s^n(\dd x)\mu_s^n(\dd y),\, b(s,x,y)\cdot \nabla\varphi_u(x)\big\rangle\,\dd s\bigg|^2\bigg] \\
			&
			+
			\E\bigg[\bigg|
			\frac{\sigma}{\sqrt{n}}\sum_{i=1}^n\int_0^t e^{-2\pi^2\sigma^2|u|^2(t-s)}
			\nabla\varphi_u(X_s^i)\cdot \dd B_s^i
			\bigg|^2\bigg].
		\end{align*}
		By \eqref{eq:tight.initial.mode}, the first term is at most $1$. For the second term, the Cauchy-Schwarz inequality and Tonelli's Theorem yield the bound
		\begin{align*}
			&
			\bigg(\int_0^t e^{-2\pi^2\sigma^2|u|^2(t-s)}\,\dd s\bigg)
			\int_0^t e^{-2\pi^2\sigma^2|u|^2(t-s)}
			\E\Big[\Big|
			\Big\langle \eta_s^n,\Big(\LL_{s,\mu_s^n}-\frac{\sigma^2}{2}\Delta\Big)\varphi_u\Big\rangle
			\Big|^2\Big] \,\dd s \lesssim 1,
		\end{align*}
		where we used \eqref{eq:tight.mode.integrand.bound} and \eqref{eq:kernel.bound.1}. The third and fourth terms are estimated in exactly the same way, using \eqref{eq:tight.mode.integrand.bound} and \eqref{eq:tight.mode.remainder.bound}, respectively, together  with \eqref{eq:kernel.bound.1}.
		For the fifth term, Itô's isometry gives
		\begin{align*}
			&	\,\E\bigg[\bigg|
			\frac{\sigma}{\sqrt{n}}\sum_{i=1}^n\int_0^t e^{-2\pi^2\sigma^2|u|^2(t-s)}
			\nabla\varphi_u(X_s^i)\cdot \dd B_s^i
			\bigg|^2\bigg] \\
			&=
			\frac{\sigma^2}{n}\sum_{i=1}^n
			\E\bigg[\int_0^t e^{-4\pi^2\sigma^2|u|^2(t-s)}
			|\nabla\varphi_u(X_s^i)|^2\,\dd s\bigg] \lesssim
			|u|^2\int_0^t e^{-4\pi^2\sigma^2|u|^2(t-s)}\,\dd s
			\lesssim 1,
		\end{align*}
		by $\|\nabla\varphi_u\|_\infty\lesssim |u|$ and \eqref{eq:kernel.bound.2}. Therefore,
		\begin{align*}
			\sup_{t\in[0,T]}\E\big[\big|\langle \eta_t^n,\varphi_u\rangle\big|^2\big]\lesssim 1.
		\end{align*}
		
		Consequently,
		\begin{align}
			\E\bigg[\bigg|
			2\pi^2\sigma^2|u|^2\int_{\tau_1}^{\tau_2}\langle \eta_s^n,\varphi_u\rangle\,\dd s
			\bigg|^2\bigg]
			&\lesssim
			|u|^4\,\delta\int_0^T
			\E\big[\big|\langle \eta_s^n,\varphi_u\rangle\big|^2\big]\,\dd s
			\lesssim
			\delta |u|^4.
			\label{eq:tight.increment.diffusion}
		\end{align}
		Together with \eqref{eq:tight.increment.mode}, \eqref{eq:tight.increment.transport}, \eqref{eq:tight.increment.mart}, and \eqref{eq:tight.increment.diffusion}, we conclude that
		\begin{align}
			\E\big[\big|\langle \eta_{\tau_2}^n-\eta_{\tau_1}^n,\varphi_u\rangle\big|^2\big]
			\lesssim
			\delta (|u|^2 + |u|^4).
			\label{eq:tight.increment.mode.bound}
		\end{align}
		Substituting \eqref{eq:tight.increment.mode.bound} into \eqref{eq:tight.fourier.reduction}, we obtain
		\begin{align*}	
			\E\big[\|\eta_{\tau_2}^n-\eta_{\tau_1}^n\|_{\cH^{-k}}^2\big]
			&\lesssim
			\delta\int_{\R^d}(1+|u|^2)^{-k} (|u|^2 + |u|^4)\,\dd u.
		\end{align*}
		The integral is finite since $k\ge \lambda_d+2$. Hence,
		\begin{align*}
			\lim_{\delta\downarrow 0}\,
			\limsup_{n\to\infty}\,
			\sup_{0\le \tau_1\le \tau_2\le T:\,\tau_2-\tau_1<\delta}
			\E\big[\|\eta_{\tau_2}^n-\eta_{\tau_1}^n\|_{\cH^{-k}}^2\big]
			&=
			0.
		\end{align*}
		This verifies Aldous' criterion.
	\end{proof}

	We also need tightness of the martingale parts as $\cH^{-k-1}$-valued processes. For each $n\in\N$, define
	\begin{align}\label{eq:Wn.Hvalued}
		W_t^n
		=
		-\frac{\sigma}{\sqrt{n}}\sum_{i=1}^n\sum_{\ell=1}^d
		\int_0^t \partial_\ell \delta_{X_s^i}\,\dd B_s^{i,\ell},
		\quad t\in[0,T].
	\end{align}
	Here, for $x\in\R^d$ and $\ell=1,\dots,d$, $\partial_\ell\delta_x$ denotes the distributional derivative of the Dirac mass at $x$, viewed as an element of $\cH^{-k-1}$, characterized by
	\begin{align}\label{eq:derivative.delta.pairing}
		\langle -\partial_\ell\delta_x,f\rangle_{\cH^{-k-1},\cH^{k+1}}
		=
		\partial_\ell f(x),
		\quad
		f\in \cH^{k+1}.
	\end{align}
	Since $k\ge \lambda_d$, equivalently $k>d/2$, the Sobolev Embedding Theorem, see, e.g., \cite[Theorem 4.12, Case A with $\Omega=\R^d$, $n=d$, $p=2$, $j=1$, and $m=k$]{adams2003sobolev}, yields $\cH^{k+1}\hookrightarrow C_b^1(\R^d)$. Therefore,
	\begin{align*}
		\sup_{x\in\R^d} |\nabla f(x)|
		\lesssim
		\|f\|_{\cH^{k+1}},
		\quad
		f\in \cH^{k+1}.
	\end{align*}
	Consequently, for each $\ell=1,\dots,d$,
	\begin{align}\label{eq:derivative.delta.bound}
		\sup_{x\in\R^d}\|\partial_\ell\delta_x\|_{\cH^{-k-1}}
		=
		\sup_{x\in\R^d}\sup_{\|f\|_{\cH^{k+1}}\le 1}
		\big|\langle \partial_\ell\delta_x,f\rangle_{\cH^{-k-1},\cH^{k+1}}\big|
		\lesssim 1.
	\end{align}
	Thus, the stochastic integral above is well defined as an $\cH^{-k-1}$-valued Itô integral.

	\begin{proposition} \label{prop:W.tight}
		Suppose Assumption \ref{assum.main} holds with  $k\ge \lambda_d+2$. Then, $(W^n)_{n\in\N}$ is tight in $C([0,T];\cH^{-(k+1)})$.
	\end{proposition}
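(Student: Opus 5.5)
The plan mirrors the proof of Proposition \ref{prop:tight}: first show tightness of each time marginal $W^n_t$ in $\cH^{-k-1}$, then verify Aldous' criterion. In both steps we work through the Fourier characterization \eqref{eq:Hminus.k.Fourier}, with $k$ replaced by $k+1$. Fix $u\in\R^d$ and recall $\varphi_u(x)=e^{-2\pi i u\cdot x}$. By \eqref{eq:derivative.delta.pairing} applied to the real and imaginary parts,
\begin{align*}
\F[W^n_t](u)=\langle W^n_t,\varphi_u\rangle=\frac{\sigma}{\sqrt n}\sum_{i=1}^n\int_0^t\nabla\varphi_u(X^i_s)\cdot\dd B^i_s .
\end{align*}
This identification is justified because the $\cH^{-k-1}$-valued It\^o integral commutes with pairing against a fixed test function. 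Strictly, $\varphi_u\notin\cH^{k+1}$, so I would establish it by approximation with $\varphi_u\chi_R$, where $\chi_R$ is a smooth cutoff, using the bound \eqref{eq:derivative.delta.bound}. Alternatively, one can compute $J^{-k-1}\partial_\ell\delta_x=\partial_\ell G_{k+1}(\cdot-x)$ directly.

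For the marginals, I would follow the route of Proposition \ref{prop:tight.marginal} and obtain a weighted $\cH^1$ bound in $u$ so as to apply Lemma \ref{lem:compact.embed} at level $k+1$. It\^o's isometry and $|\nabla\varphi_u|\lesssim|u|$ give $\E|\langle W^n_t,\varphi_u\rangle|^2\lesssim |u|^2$. Differentiating in $u$ gives $\partial_{u_j}\nabla\varphi_u(x)=-2\pi i(x_j\nabla\varphi_u(x)+e_j\varphi_u(x))$, so the integrand of $\partial_{u_j}\F[W^n_t](u)$ is bounded by $\lesssim 1+|u||x|$. Differentiation under the stochastic integral is legitimate by a Kolmogorov/dominated-convergence argument, or one can simply define the derivative as the stochastic integral of the differentiated integrand. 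It\^o's isometry together with the uniform moment bounds of Proposition \ref{lem:preliminaries}\eqref{lem:preliminaries:iv} then yields $\E|\partial_{u_j}\F[W^n_t](u)|^2\lesssim 1+|u|^2$. Since $k+1\ge\lambda_d+3$, the integrals $\int(1+|u|^2)^{-k}|u|^2\dd u$ and $\int(1+|u|^2)^{-k-1}(1+|u|^2)\dd u$ are finite. Markov's inequality and the compact sets $K_{M,k+1}$ then give tightness of $(W^n_t)_n$ in $\cH^{-k-1}$.

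For Aldous' criterion, take stopping times $\tau_1\le\tau_2\le\tau_1+\delta$. Exactly as in \eqref{eq:tight.increment.mart}, optional stopping and It\^o's isometry give $\E|\langle W^n_{\tau_2}-W^n_{\tau_1},\varphi_u\rangle|^2\le\sigma^2\delta(2\pi|u|)^2$. Integrating against $(1+|u|^2)^{-k-1}$ yields $\E\|W^n_{\tau_2}-W^n_{\tau_1}\|^2_{\cH^{-k-1}}\lesssim\delta$ uniformly in $n$. The same estimate could also be obtained abstractly from \eqref{eq:derivative.delta.bound} and the Hilbert-space It\^o isometry: $\E\|W^n_{\tau_2}-W^n_{\tau_1}\|^2\le\sigma^2 d\,\delta\sup_x\|\partial_\ell\delta_x\|^2$. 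Aldous' criterion, in the form cited in the proof of Proposition \ref{prop:tight}, then gives tightness in $C([0,T];\cH^{-k-1})$; path continuity holds since $W^n$ is a continuous Hilbert-valued martingale.

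The main obstacle is the marginal compactness step. Hilbert-space norm bounds alone do not give compactness on $\R^d$, which is why the $u$-derivative bound is needed, and this requires carefully justifying the interchange of $\partial_u$, the Fourier transform, and the stochastic integral. I would do this by establishing the formula for a truncated family $\varphi_u\chi_R$ and passing $R\to\infty$ in $L^2(\PP)$.
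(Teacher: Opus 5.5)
Your proposal is correct and follows essentially the same route as the paper. You get marginal tightness from the Fourier-side bounds $\E|\langle W^n_t,\varphi_u\rangle|^2\lesssim|u|^2$ and $\E|\partial_{u_j}\langle W^n_t,\varphi_u\rangle|^2\lesssim 1+|u|^2$ (via It\^o's isometry and the moment bounds), together with Lemma \ref{lem:compact.embed} at level $k+1$ and Markov's inequality, and you verify Aldous' criterion through the Hilbert-space It\^o isometry and \eqref{eq:derivative.delta.bound}. The only difference is that you also sketch a truncation argument justifying $\F[W^n_t](u)=\langle W^n_t,\varphi_u\rangle$ and the differentiation in $u$, which the paper simply asserts.
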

	
	\begin{proof}
		The proof follows the same approach as the proofs of Propositions \ref{prop:tight.marginal} and \ref{prop:tight}. We first prove tightness of the one-dimensional marginals. Fix $t\in[0,T]$. We have
		\begin{align*}
			\langle W_t^n,\varphi_u\rangle
			&=
			\frac{\sigma}{\sqrt{n}}\sum_{i=1}^n\int_0^t \nabla\varphi_u(X_s^i)\cdot \dd B_s^i = 
			-\frac{2\pi i\,\sigma}{\sqrt{n}}\sum_{i=1}^n\int_0^t \varphi_u(X_s^i)\,u\cdot \dd B_s^i.
		\end{align*}
		Therefore, by the Itô isometry,
		\begin{align}
			\E\big[\big|\langle W_t^n,\varphi_u\rangle\big|^2\big]
			=
			4\pi^2\sigma^2 t\,|u|^2
			\lesssim
			|u|^2.
			\label{eq:tight.marginal.mode.W}
		\end{align}
		Recalling the definition of $\psi_{u,j}$ in \eqref{eq:psi.u.j}, we now show that
		\begin{align}
			\E\big[\big|\langle W_t^n,\psi_{u,j}\rangle\big|^2\big]
			\lesssim
			1+|u|^2.
			\label{eq:tight.marginal.derivative.mode.W}
		\end{align}
		To prove this, write
		\begin{align*}
			\langle W_t^n,\psi_{u,j}\rangle
			=
			\frac{\sigma}{\sqrt{n}}\sum_{i=1}^n
			\int_0^t \nabla\psi_{u,j}(X_s^i)\cdot \dd B_s^i.
		\end{align*}
		Since
		\begin{align*}
			\nabla\psi_{u,j}(x)
			=
			\varphi_u(x) e_j
			-
			2\pi i\,x_j \,\varphi_u(x) u,
		\end{align*}
where $e_j$ denotes the $j$-th standard unit vector in $\R^d$, it follows that
		\begin{align*}
			\big|\nabla\psi_{u,j}(x)\big|
			\lesssim
			1+|u|\,|x|.
		\end{align*}
		Hence, by the Itô isometry and the uniform moment bound in Proposition \ref{lem:preliminaries}\eqref{lem:preliminaries:iv},
		\begin{align*}
			\E\big[\big|\langle W_t^n,\psi_{u,j}\rangle\big|^2\big]
			&\lesssim
			\frac1n\sum_{i=1}^n
			\E\bigg[\int_0^t \big(1+|u|^2|X_s^i|^2\big)\,\dd s\bigg] 
			\lesssim
			1+|u|^2.
		\end{align*}
		
		Note that the map $u\mapsto \langle W_t^n,\varphi_u\rangle$ is $C^1$, and
		\begin{align*}
			\partial_{u_j}\langle W_t^n,\varphi_u\rangle
			=
			-2\pi i\,\langle W_t^n,\psi_{u,j}\rangle.
		\end{align*}
		Therefore, by \eqref{eq:tight.marginal.derivative.mode.W},
		\begin{align}
			\E\big[\big|\partial_{u_j}\langle W_t^n,\varphi_u\rangle\big|^2\big]
			=
			(2\pi)^2\E\big[\big|\langle W_t^n,\psi_{u,j}\rangle\big|^2\big]
			\lesssim
			1+|u|^2,
			\quad
			j \in [d].
			\label{eq:tight.marginal.gradient.mode.W}
		\end{align}
		Define
		\begin{align*}
			Z_n
			=
			\int_{\R^d}(1+|u|^2)^{-k}
			\big|\langle W_t^n,\varphi_u\rangle\big|^2\,\dd u
			+
			\int_{\R^d}(1+|u|^2)^{-k-1}
			\big|\nabla_u\langle W_t^n,\varphi_u\rangle\big|^2\,\dd u.
		\end{align*}
		Then, Tonelli's Theorem together with \eqref{eq:tight.marginal.mode.W} and \eqref{eq:tight.marginal.gradient.mode.W} yields
		\begin{align}
			\sup_{n\in\N}\E[Z_n]<\infty.
			\label{eq:tight.marginal.weighted.H1.W}
		\end{align}
		
		For $M>0$, let $\K_{M,k+1}$ be as in Lemma \ref{lem:compact.embed}. By that lemma, $\K_{M,k+1}$ is compact in $L^2((1+|u|^2)^{-k-1}\,\dd u)$. By \eqref{eq:Hminus.k.Fourier}, its preimage under the Fourier transform is the compact
		\begin{align*}
			K_{M,k+1}
			:=
			\bigg\{
			\xi\in \cH^{-k-1}:
			& \int_{\R^d}(1+|u|^2)^{-k}
			\big|\F[\xi](u)\big|^2\,\dd u \\
			& +  
			\int_{\R^d}(1+|u|^2)^{-k-1}
			\big|\nabla_u\F[\xi](u)\big|^2\,\dd u
			\le M
			\bigg\}.
		\end{align*}
			Since $W_t^n$ is defined as an $\cH^{-(k+1)}$-valued stochastic integral of derivatives of Dirac masses, its Fourier transform satisfies $\F[W_t^n](u)=\langle W_t^n,\varphi_u\rangle$.
	Hence, by Markov's inequality and \eqref{eq:tight.marginal.weighted.H1.W},
		\begin{align*}
			\sup_{n\in\N}\PP\big(W_t^n\notin K_{M,k+1}\big)
			&=
			\sup_{n\in\N}\PP(Z_n>M)
			\le
			\frac{1}{M}\sup_{n\in\N}\E[Z_n]
			\lesssim
			\frac{1}{M}.
		\end{align*}
		Thus, $(W_t^n)_{n\in\N}$ is tight in $\cH^{-k-1}$ for each fixed $t\in[0,T]$.
		
		With tightness of the one-dimensional marginals now established, to conclude tightness in $C([0,T];\cH^{-k-1})$, it remains to verify Aldous' criterion. Let $\delta>0$, and let $0\le\tau_1\le\tau_2\le T$ be stopping times such that $\tau_2-\tau_1\le\delta$ almost surely. The Hilbert-space version of the  Itô isometry (see, e.g., \cite[Theorem 2.3]{gawarecki2010stochastic}) and \eqref{eq:derivative.delta.bound} give
		\begin{align*}
			\E\big[\|W_{\tau_2}^n-W_{\tau_1}^n\|_{\cH^{-k-1}}^2\big]
			&=
			\frac{\sigma^2}{n}\sum_{i=1}^n\sum_{\ell=1}^d
			\E\bigg[\int_{\tau_1}^{\tau_2}\|\partial_\ell\delta_{X_s^i}\|_{\cH^{-k-1}}^2\,\dd s\bigg] \lesssim
			\delta.
		\end{align*}
		Hence,
		\begin{align*}
			\lim_{\delta\downarrow 0}
			\limsup_{n\to\infty}
			\sup_{0\le\tau_1\le\tau_2\le T:\,\tau_2-\tau_1<\delta}
			\E\big[\|W_{\tau_2}^n-W_{\tau_1}^n\|_{\cH^{-k-1}}^2\big]
			=
			0.
		\end{align*}
		This verifies Aldous' criterion and completes the proof.
	\end{proof}

\section{Convergence} \label{sec:conv}
In this section, we show that any limit point of $(\eta^n)_{n \in \N}$ is a solution of the fluctuation SPDE in the sense of Definition \ref{def:SPDE.limit}.
\begin{proposition}\label{prop:convergence}
	Suppose Assumption \ref{assum.main} holds with $k \ge \lambda_d + 2$. Let $(\eta^n)_{n\in\N}$ be a subsequence, still denoted by $(\eta^n)_{n\in\N}$, such that $\eta^n \stackrel{d}{\to} \eta$ in $C([0,T];\cH^{-k})$
	for some $\eta\in C([0,T];\cH^{-k})$. Set $\eta_0:=\eta(0)$. Then, there exists a process $W\in C([0,T];\cH^{-k-1})$, independent of $\eta_0$, such that $(\eta,W)$ is a solution of the fluctuation SPDE with initial condition $\zeta_0:=\eta_0$ in the sense of Definition \ref{def:SPDE.limit}.
\end{proposition}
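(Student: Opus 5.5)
We will pass to the limit in the semimartingale decomposition of Lemma \ref{lem:eta.dym}, jointly with the martingale parts $W^n$ from \eqref{eq:Wn.Hvalued}. By Propositions \ref{prop:tight} and \ref{prop:W.tight}, the pairs $(\eta^n,W^n)$ are tight in $C([0,T];\cH^{-k})\times C([0,T];\cH^{-k-1})$. Along a further subsequence, $(\eta^n,W^n)\stackrel{d}{\to}(\eta,W)$, and by Skorokhod's representation theorem we may assume almost sure convergence on a common probability space. We take $\FF$ to be the filtration generated by $(\eta,W)$ and $\eta_0$.

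\textbf{Identifying $W$.} For $f_1,f_2\in C_c^\infty(\R^d)$, the processes $\langle W^n,f_\ell\rangle$ are continuous martingales with cross variation
\begin{align*}
\frac{\sigma^2}{n}\sum_i\int_0^t \nabla f_1(X^i_s)\cdot\nabla f_2(X^i_s)\,\dd s=\sigma^2\int_0^t\langle\mu^n_s,\nabla f_1\cdot\nabla f_2\rangle\,\dd s .
\end{align*}
By Proposition \ref{lem:preliminaries}\eqref{lem:preliminaries:ii}, this converges in $L^1$ to $\sigma^2\int_0^t\langle\mu_s,\nabla f_1\cdot\nabla f_2\rangle\,\dd s$. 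We have the uniform bound $\E[\sup_t|\langle W^n_t,f\rangle|^4]\lesssim\|\nabla f\|_\infty^4$ from BDG, which gives uniform integrability. Hence the limits are continuous martingales with respect to the joint filtration of $(\eta,W)$, and their cross variation is deterministic. By Lévy's characterization, $W$ is a centered Gaussian process with covariance \eqref{def.SPDE.cov}, and it has independent increments relative to $\F_0\ni\eta_0$. The independence of $W$ from $\eta_0$ follows because the martingale property holds with respect to a filtration containing $\eta_0$ and the covariance is deterministic: the characteristic function of $W$ conditional on $\F_0$ is deterministic. By density, the pairing identities extend to $f\in\cH^{k+1}$.

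\textbf{Drift and error terms.} Fix $f\in C_c^\infty$. In \eqref{eq:eta.semimartingale}, the $\mu^n\otimes\mu^n$ term is $O(n^{-1/2})$ deterministically. For the $\wh\eta^n$ term, we apply Lemma \ref{lem:tightness.prelim}\eqref{tightness.prelim.eta.hat} with $g=b(s,\cdot,\cdot)\cdot\nabla f$. Using $\sum_j\hxi_{ij}=0$ off-diagonal, Lemma \ref{lem:assum_i_implies_mat}, \eqref{eq:column.sum.assump}, and \eqref{assum:mat.3}, every bracketed term divided by $n^3$ vanishes. (For instance, $\sum_{ij}\hxi_{ij}^2\le n\cdot n\max\xi+n$ and $\max\xi\to 0$; the column-sum term uses the limsup $\le 1$ condition, giving $\frac1{n^3}\sum_i(\sum_j\hxi_{ji})^2=o(1)$ after expanding $\hxi_{ji}=(n-1)\xi_{ji}-1$.) This yields $L^2$ convergence to $0$ of $\int_0^t\langle\wh\eta^n_s,g_s\rangle\,\dd s$. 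Next we split $\LL_{s,\mu^n_s}f=\LL_{s,\mu_s}f+n^{-1/2}(\cdot)$; the correction is bounded in $L^2$ by \eqref{eq:part3.secondterm.corrected}. It remains to show
\begin{align*}
\int_0^t\langle\eta^n_s,\LL_{s,\mu_s}f\rangle\,\dd s\to\int_0^t\langle\eta_s,\LL_{s,\mu_s}f\rangle_{\cH^{-k}_w,\cH^{k}_w}\,\dd s .
\end{align*}

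\textbf{Main obstacle: noncompactness of the drift test function.} The function $\psi_s=\LL_{s,\mu_s}f$ is not compactly supported, because the term $\int b(s,x',x)\cdot\nabla f(x')\,\mu_s(\dd x')$ is merely $C_b^{k}$. We plan to use cutoffs $\chi_R$ equal to $1$ on $B_R$. On the compact part, $\chi_R\psi_s\in\cH^k$ uniformly in $s$, so $\int_0^t\langle\eta^n_s,\chi_R\psi_s\rangle\,\dd s$ converges almost surely by the convergence in $C([0,T];\cH^{-k})$. For the tail, $\E|\langle\eta^n_s,(1-\chi_R)\psi_s\rangle|$ is bounded by $\sqrt n$ times the difference of $\mu^n_s$ and $\mu_s$ against a function vanishing on $B_R$. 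We will use Lemma \ref{lem:tightness.prelim}\eqref{tightness.prelim.eta} with $\varphi=(1-\chi_R)\psi_s\,\theta_R$-type weights, or more directly a weighted bound: applying the Fourier/Poincaré argument of Proposition \ref{prop:tight.marginal} with moments from \eqref{eq:w.momet.statement}, we expect to get $\sup_n\E\int_0^T\|\eta^n_s\|^2_{\cH^{-(k-1)}_w}\,\dd s<\infty$. This also yields condition \eqref{SPDE.def.integrability} for the limit by Fatou and lower semicontinuity. The tail is then controlled by $\|(1-\chi_R)\psi_s\|_{\cH^{k}_{w'}}$ for a slightly stronger weight, which tends to $0$ as $R\to\infty$ uniformly in $n$. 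Alternatively, we could directly use the Poincaré-plus-entropy bound of Lemma \ref{lem:tightness.prelim}\eqref{tightness.prelim.eta} applied to $(1-\chi_R)\psi_s$ with a decay factor from moments. Putting the pieces together, we obtain \eqref{eq: SPDE.integral} almost surely for each $f$ and $t$, and by continuity and countable density, simultaneously for all of them.
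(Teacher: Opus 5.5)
Your overall plan matches the paper's: joint tightness of $(\eta^n,W^n)$, identifying $W$ through its quadratic variation, removing the $\wh{\eta}^n$ and $\mu^n_s\otimes\mu^n_s$ terms, and a cutoff argument backed by a uniform weighted bound. Your identification of $W$ is a valid alternative to the paper's conditional characteristic-function computation at level $n$: you pass the martingale property of $\langle W^n,f\rangle$ and of $\langle W^n,f\rangle^2-A^n$ to the limit and then apply L\'evy's characterization.

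\textbf{The gap: the term $\int_0^t\langle\eta^n_s,\LL_{s,\mu^n_s}f-\LL_{s,\mu_s}f\rangle\,\dd s$.}
The estimate \eqref{eq:part3.secondterm.corrected} that you cite controls the whole correction $\langle\eta^n_s,(\LL_{s,\mu^n_s}-\LL_{s,\mu_s})f\rangle=n^{-1/2}\langle\eta^n_s\otimes\eta^n_s,b(s,x,y)\cdot\nabla f(x)\rangle$, with the prefactor $n^{-1/2}$ already included. Its right-hand side is $\frac1n\|g\|_\infty^2+\|\nabla g\|_\infty^2=O(1)$, because the $n^3\|\nabla g\|_\infty^2$ term in \eqref{lem:cancellation.unweighted} exactly cancels the factor $n^{-3}$. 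So you only get boundedness, and $\Lambda^n_t\to0$ does not follow.

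The repair uses that $f$ has compact support.
\begin{itemize}
\item $(\LL_{s,\mu^n_s}-\LL_{s,\mu_s})f$ vanishes off a compact set $K$.
\item $D^{\bm{\alpha}}$ of it at $x$ equals $n^{-1/2}\langle\eta^n_s,\psi^{\bm{\alpha}}_{s,x}\rangle$, with $\psi^{\bm{\alpha}}_{s,x}(x')=D^{\bm{\alpha}}_x(b(s,x,x')\cdot\nabla f(x))$.
\item Lemma \ref{lem:tightness.prelim}\eqref{tightness.prelim.eta} bounds the second moment of $\langle\eta^n_s,\psi^{\bm{\alpha}}_{s,x}\rangle$ uniformly in $x\in K$.
\item Hence $\E\|(\LL_{s,\mu^n_s}-\LL_{s,\mu_s})f\|_{\cH^k}^2\lesssim 1/n$.
\item Cauchy--Schwarz against $\E\int_0^t\|\eta^n_s\|_{\cH^{-k}}^2\,\dd s\lesssim1$ then gives $O(n^{-1/2})$.
\end{itemize}

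\textbf{The weighted bound is still only a sketch.}
The estimate $\sup_n\E\int_0^T\|\eta^n_s\|^2_{\cH^{-(k-1)}_w}\,\dd s<\infty$, which you say you expect, is the technical core. It drives both tail estimates and item \eqref{SPDE.def.integrability} of Definition \ref{def:SPDE.limit}.

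Your Fourier route is viable, but Lemma \ref{lem:tightness.prelim}\eqref{tightness.prelim.eta} does not cover it as stated. Via \eqref{eq:Hminus.kw.characterization} and Plancherel, you need up to $\lambda_d$ derivatives in $u$ of $(1+|u|^2)^{-(k-1)/2}\F[\eta^n_s](u)$. That means test functions $x^{\bm{\gamma}}e^{-2\pi i u\cdot x}$ with $|\bm{\gamma}|\le\lambda_d$, whose gradients grow like $|u|\,|x|^{\lambda_d}$. Once $d\ge2$, these lie outside the class $M(\varphi)<\infty$.

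You would need to redo the Poincar\'e step and the coupling/entropy step with higher moments. This does give second moments $\lesssim 1+|u|^2$, and $\int(1+|u|^2)^{-(k-2)}\,\dd u<\infty$ because $k-2\ge\lambda_d>d/2$. Once written out, this route would replace the paper's argument by a more elementary computation. The paper instead writes $J^{-(k-1)}\eta^n_s=G_{k-1}*\eta^n_s$ and uses weighted $L^2$ bounds on $G_{k-1}$ and $\nabla G_{k-1}$, a mollification step, and a weighted Pinsker inequality.

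No stronger weight is needed for the tail. Since $\int w^{-1}\,\dd x<\infty$, you already have $\|(1-\chi_R)\psi_s\|_{\cH^k_w}\to0$.

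\textbf{A smaller error.} $\max_{i,j}\xi_{ij}\to0$ does not follow from Assumption \ref{assum.main}\eqref{assum:mat}; a single row may put all its mass on one neighbor. Replace your parenthetical bound on $\sum_{i,j}\hxi_{ij}^2$ by $\sum_{i,j}\hxi_{ij}^2\le(n\max_{i,j}\xi_{ij}+1)\sum_{i,j}|\hxi_{ij}|$ together with \eqref{assum:mat.3}, as in Lemma \ref{lem:assum_i_implies_mat}.
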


To prove Proposition \ref{prop:convergence}, we pass to the limit in the semimartingale decomposition of $\langle\eta^n_t,f\rangle$. The martingale part gives the Gaussian noise, and the finite-variation part gives the drift, which requires passing to the limit in the pairing $\langle\eta^n_s,\LL_{s,\mu_s}f\rangle$. This is delicate because the coefficient $b$ in $\LL_{s,\mu_s}$ produces  a bounded term that need not decay at infinity, so $\LL_{s,\mu_s}f$ lies in $\cH^k_w$ rather than $\cH^k$, whereas $\eta^n$ converges only in the unweighted space $\cH^{-k}$. We pass to the limit by a cutoff argument, using the unweighted convergence on the compact part and a uniform weighted tail bound, which we establish in   Proposition \ref{prop.conv.int.eta}. Its proof uses weighted square-integrability bounds for the Bessel kernel, which we prove in Lemma \ref{lem:bessel:weighted-L2} below.

Recall the definition of the weight $w$ in \eqref{eq:w.def} and the notation from Section \ref{subsubsec:bessel}.

	\begin{lemma}\label{lem:bessel:weighted-L2}
		\begin{enumerate}[(i)]
			\item\label{lem:bessel:weighted-L2:G}
			For all $\N \ni k\ge \lambda_d$, 
			\begin{align}\label{eq:Gk:weighted.L2}
				\int_{\R^d} |G_k(x)|^2\,w(x)\,\dd x<\infty.
			\end{align}
			\item\label{lem:bessel:weighted-L2:grad}
			If $\N \ni k\ge \lambda_d+1$, then
			\begin{align}\label{eq:gradGk:weighted.L2}
				\int_{\R^d} |\nabla G_k(x)|^2\,w(x)\,\dd x<\infty.
			\end{align}
		\end{enumerate}
	\end{lemma}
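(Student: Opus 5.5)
We prove both bounds by splitting $\R^d$ into $\{|x|\le 1\}$ and $\{|x|>1\}$. The radial representation \eqref{eq:bessel:radial.form} gives the behaviour of $G_k$ and $\nabla G_k$ near the origin through the small-argument asymptotics \eqref{eq:bessel:Knu.small.r}--\eqref{eq:bessel:K0.small.r}. At infinity we use the exponential decay \eqref{eq:bessel:Knu.large.r}. Recall $w(x)=1+|x|^{2\lambda_d}$, which is bounded on the unit ball and polynomial at infinity.

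\emph{Tail region $|x|>1$.} By \eqref{eq:bessel:radial.form}, $G_k(x)=c_k r^{\alpha}K_\alpha(r)$ with $r=|x|$ and $\alpha=(k-d)/2$. By \eqref{eq:bessel:Knu.large.r}, $|G_k(x)|\lesssim r^{\alpha-1/2}e^{-r}$ for $r\ge1$. The gradient is $\nabla G_k(x)=c_k\frac{\dd}{\dd r}(r^\alpha K_\alpha(r))\,x/r$, and the derivative identity \eqref{eq:bessel:derivative.identity} gives $|\nabla G_k(x)|=c_k r^{\alpha}|K_{\alpha-1}(r)|\lesssim r^{\alpha-1/2}e^{-r}$, again by \eqref{eq:bessel:Knu.large.r}. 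Both squared quantities times $w$ are therefore bounded by a polynomial times $e^{-2r}$, which is integrable on $\{r>1\}$ in polar coordinates. We also need continuity of $K_\nu$ on $(0,\infty)$, so that the asymptotics yield bounds on all of $[1,\infty)$ and on compact subintervals.

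\emph{Near the origin, part \eqref{lem:bessel:weighted-L2:G}.} Here $w\le2$, so it suffices to show $G_k\in L^2(B_1)$, i.e.\ $\int_0^1 r^{2\alpha}K_\alpha(r)^2 r^{d-1}\,\dd r<\infty$. We split by the sign of $\alpha$, using $K_{-\nu}=K_\nu$ from \eqref{eq:K.symmetry}.
\begin{itemize}
\item[] If $\alpha>0$ (i.e.\ $k>d$), then $r^\alpha K_\alpha(r)\to 2^{\alpha-1}\Gamma(\alpha)$, so $G_k$ is bounded near $0$.
\item[] If $\alpha=0$, then $G_k\sim\log(1/r)$, which is square integrable.
\item[] If $\alpha<0$, then $r^\alpha K_{|\alpha|}(r)\sim C r^{2\alpha}=Cr^{k-d}$, so the integrand is $\sim r^{2k-2d+d-1}=r^{2k-d-1}$. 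This is integrable iff $2k>d$, which holds since $k\ge\lambda_d>d/2$.
\end{itemize}

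\emph{Near the origin, part \eqref{lem:bessel:weighted-L2:grad}.} Now $|\nabla G_k|\asymp r^{\alpha}|K_{\alpha-1}(r)|$ with $\alpha-1=(k-d-2)/2$. We again split by the sign of $\alpha-1$, using $K_{\alpha-1}=K_{|\alpha-1|}$.
\begin{itemize}
\item[] If $\alpha-1>0$, then $r^\alpha K_{\alpha-1}(r)\sim Cr$, which is bounded.
\item[] If $\alpha=1$, then $|\nabla G_k|\sim r\log(1/r)$, which is bounded.
\item[] If $\alpha-1<0$, then $r^\alpha K_{1-\alpha}(r)\sim C r^{2\alpha-1}=Cr^{k-d-1}$, so the squared integrand is $r^{2k-2d-2+d-1}=r^{2k-d-3}$. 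This is integrable iff $2k-d-2>0$, i.e.\ $k>d/2+1$, which holds since $k\ge\lambda_d+1>d/2+1$.
\end{itemize}

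The main obstacle is this case bookkeeping near the origin, in particular handling the logarithmic borderline cases and checking that the condition $k\ge\lambda_d$, respectively $k\ge\lambda_d+1$, gives exactly the strict inequality $2k>d$, respectively $2k>d+2$. This holds because $\lambda_d=\lfloor d/2\rfloor+1>d/2$. The gradient formula itself is legitimate for $x\neq0$, where $G_k$ is smooth; since $\{0\}$ is a null set and the singularity at $0$ is locally integrable, this pointwise gradient coincides with the distributional gradient.
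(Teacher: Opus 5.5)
Your proof is correct and follows essentially the same route as the paper's. Both arguments split into a neighbourhood of the origin, an intermediate region where $G_k$ and $\nabla G_k$ are smooth (you handle this via continuity of $K_\nu$), and an exterior region controlled by exponential decay. Near the origin both use the same three-way case split: $k<d$, $k=d$, $k>d$ for $G_k$ and $k<d+2$, $k=d+2$, $k>d+2$ for $\nabla G_k$, which are exactly your sign cases for $\alpha$ and $\alpha-1$. Your closing remark identifying the pointwise and distributional gradients goes beyond what the paper checks and is fine here, though in $d=1$ it rests on the continuity of $G_k$ at $0$ (valid for $k\ge 2$), not merely on $\{0\}$ being a null set.
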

	
	\begin{proof}
		We prove \eqref{lem:bessel:weighted-L2:G} and \eqref{lem:bessel:weighted-L2:grad} simultaneously. Fix $0<r_0<R_0$ and decompose
		\begin{align*}
			\R^d = B_{r_0}\cup(B_{R_0}\setminus B_{r_0})\cup(\R^d\setminus B_{R_0}).
		\end{align*}
		On  $B_{R_0}\setminus B_{r_0}$, $w$ is bounded, and both $G_k$ and $\nabla G_k$ are smooth. Hence the integrands in \eqref{eq:Gk:weighted.L2} and \eqref{eq:gradGk:weighted.L2} are integrable there.
		
		On $B_{r_0}$,  $w$ is bounded above by a  constant, so it is enough to consider the corresponding unweighted integrals.
		For \eqref{eq:Gk:weighted.L2}, the radial form \eqref{eq:bessel:radial.form}, the symmetry \eqref{eq:K.symmetry}, and the small-argument asymptotics \eqref{eq:bessel:Knu.small.r} and \eqref{eq:bessel:K0.small.r} imply that, as $|x| \to 0$,
		\begin{align}\label{eq:Gk:small.r}
			|G_k(x)|
			\asymp
			\begin{cases}
				|x|^{k-d}, & k<d,\\
				\log(1/|x|), & k=d,\\
				1, & k>d.
			\end{cases}
		\end{align}
		If $k<d$, then polar coordinates give
		\begin{align*}
			\int_{B_{r_0}} |G_k(x)|^2\,\dd x
			\asymp
			\int_0^{r_0} r^{d-1}r^{2(k-d)}\,\dd r
			=
			\int_0^{r_0} r^{2k-d-1}\,\dd r,
		\end{align*}
		which is finite provided $2k>d$, that is, $k\ge \lambda_d$. If $k=d$, then \eqref{eq:Gk:small.r} yields
		\begin{align*}
			\int_{B_{r_0}} |G_k(x)|^2\,\dd x
			\asymp
			\int_0^{r_0} r^{d-1}\log(1/r)^2\,\dd r
			=
			\int_{\log(1/r_0)}^\infty u^2 e^{-du}\,\dd u
			<
			\infty,
		\end{align*}
		where we used the change of variables $u=\log(1/r)$. If $k>d$, then $G_k$ is bounded near the origin by \eqref{eq:Gk:small.r}. This gives the local integrability needed for \eqref{eq:Gk:weighted.L2}.
		
		For \eqref{eq:gradGk:weighted.L2}, the radial form \eqref{eq:bessel:radial.form} and the derivative identity \eqref{eq:bessel:derivative.identity} give, for $x\neq 0$,
		\begin{align}\label{eq:gradGk:representation}
			|\nabla G_k(x)|
			=
			c_k\,|x|^{(k-d)/2}K_{(k-d-2)/2}(|x|).
		\end{align}
		Using again the small-argument asymptotics \eqref{eq:bessel:Knu.small.r} and \eqref{eq:bessel:K0.small.r}, as $|x|\downarrow 0$,
		\begin{align}\label{eq:gradGk:small.r}
			|\nabla G_k(x)|
			\asymp
			\begin{cases}
				|x|^{k-d-1}, & k<d+2,\\
				|x|\log(1/|x|), & k=d+2,\\
				|x|, & k>d+2.
			\end{cases}
		\end{align}
		If $k<d+2$, then polar coordinates yield
		\begin{align*}
			\int_{B_{r_0}} |\nabla G_k(x)|^2\,\dd x
			\asymp
			\int_0^{r_0} r^{d-1}r^{2(k-d-1)}\,\dd r
			=
			\int_0^{r_0} r^{2k-d-3}\,\dd r,
		\end{align*}
		which is finite provided $2k > d+2$, that is, $k \ge \lambda_d + 1$.
		If $k\ge d+2$, then $\nabla G_k$ is bounded near the origin by \eqref{eq:gradGk:small.r}. This gives the local integrability needed for \eqref{eq:gradGk:weighted.L2}.
		
		Finally, on $\R^d\setminus B_{R_0}$, the large-argument asymptotics \eqref{eq:bessel:Knu.large.r}, together with \eqref{eq:bessel:radial.form} and \eqref{eq:gradGk:representation}, shows that both $G_k$ and $\nabla G_k$ decay exponentially as $|x|\to\infty$. Since $w$ has polynomial growth, both weighted tail integrals are finite. Combining the three regions proves \eqref{eq:Gk:weighted.L2} and \eqref{eq:gradGk:weighted.L2}.
	\end{proof}
	The next proposition makes use of Lemma \ref{lem:bessel:weighted-L2} to prove a basic convergence statement for pairings with deterministic time-dependent test functions. 
	
	\begin{proposition}\label{prop.conv.int.eta}
		Suppose Assumption \ref{assum.main} holds with $k\ge \lambda_d+2$. Let $(\eta^n)_{n\in\N}$ be a subsequence, still denoted by $(\eta^n)_{n\in\N}$, such that $\eta^n \stackrel{d}{\to} \eta$ in $C([0,T];\cH^{-k})$. Then:
		\begin{enumerate}[(i)]
			\item \label{weighted.square.int} $\eta\in L^2([0,T];\cH_w^{-(k-1)})$ a.s.
			\item \label{determin.conv.pair}
			For every deterministic  $g \in L^\infty([0,T]; C_b^{k}(\R^d))$,
			\begin{align} \label{eq: conv.int.eta}
				\int_0^T \langle \eta_t^n, g_t\rangle_{\cH^{-k}_w,\cH^{k}_w}\,\dd t
				\stackrel{d}{\to}
				\int_0^T \langle \eta_t, g_t\rangle_{\cH^{-k}_w,\cH^{k}_w}\,\dd t,
			\end{align}
			with the integral on the right-hand side being finite a.s.
		\end{enumerate}
	\end{proposition}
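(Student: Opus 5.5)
We prove (i) through a uniform weighted bound at the level of $n$ and then pass it to the limit. By \eqref{eq:Hminus.kw.characterization}, for $m=k-1$ it is enough to control $\E\|J^{-(k-1)}\eta^n_t\|^2_{L^2(w\,\dd x)}$ uniformly in $n$ and $t$. Since $\eta^n_t$ is a finite signed measure, $J^{-(k-1)}\eta^n_t(x)=\langle \eta^n_t, G_{k-1}(x-\cdot)\rangle$. Tonelli then gives
\begin{align*}
\E\|J^{-(k-1)}\eta_t^n\|^2_{L^2(w)}=\int_{\R^d} w(x)\,\E\big[\langle\eta^n_t,G_{k-1}(x-\cdot)\rangle^2\big]\,\dd x .
\end{align*}
We bound the inner expectation with Lemma \ref{lem:tightness.prelim}\eqref{tightness.prelim.eta} and \eqref{eq:xi_row_sq}. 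The difficulty is that this lemma is stated in terms of $M(\varphi)$, which controls a supremum of $\nabla\varphi$, whereas we need an $x$-dependent square-integrable bound.

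We therefore do not apply the lemma as a black box but rerun its proof. The Poincar\'e part gives $\frac1n\sum_i\E|\nabla G_{k-1}(x-X^i_t)|^2$. The mean part is bounded by $\sum_i H(P^i_t|\mu_t)$ times a Lipschitz-type constant, and we handle it as in the lemma through $\W_2$ and a coupling, keeping the integrand under the coupling. Integrating in $x$ against $w$, and using $w(x)\lesssim w(x-y)(1+|y|^{2\lambda_d})$ together with the uniform moments \eqref{eq:w.momet.statement}, the variance contribution is bounded by
\begin{align*}
C\int |\nabla G_{k-1}|^2 w\,\dd z,
\end{align*}
which is finite by Lemma \ref{lem:bessel:weighted-L2}\eqref{lem:bessel:weighted-L2:grad} since $k-1\ge\lambda_d+1$. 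For the mean term, with an optimal coupling $\pi_i$ of $P^i_t$ and $\mu_t$, we write $G(x-a)-G(x-b)=\int_0^1\nabla G(x-b-r(a-b))\cdot(a-b)\,\dd r$. Cauchy--Schwarz in $(\pi_i,r)$, followed by integration in $x$ with the weight shift, bounds this term by
\begin{align*}
\W_2^2\cdot\int|\nabla G_{k-1}|^2w\cdot(\text{moments}),
\end{align*}
except that the moments appear at a higher power, so we pair the coupling moments with the $\W_2$ factor through H\"older. This is the step I expect to be most delicate. If it fails, the fallback is to use the mean bound only through $\sum_i H(P^i_t|\mu_t)\lesssim 1$ and the weaker estimate $|\langle P_t^i-\mu_t,\varphi\rangle|\le \|\varphi\|_\infty\sqrt{2H}$ for the part $G_{k-1}$, which is bounded when $k-1>d/2$... but that is not available at $k-1$ for the unweighted $L^2$. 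So we keep the gradient route. The outcome is $\sup_n\E\int_0^T\|\eta^n_t\|^2_{\cH^{-(k-1)}_w}\,\dd t<\infty$.

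To pass to the limit, note that $\nu\mapsto\int_0^T\|\nu_t\|^2_{\cH^{-(k-1)}_w}\,\dd t$, defined as $+\infty$ when not finite, is lower semicontinuous on $C([0,T];\cH^{-k})$. Indeed, it is a supremum of $\sum_j$ squared pairings with $C_c^\infty$ test functions, which are continuous. By Skorokhod representation and Fatou, $\E\int_0^T\|\eta_t\|^2_{\cH^{-(k-1)}_w}\,\dd t<\infty$, which proves (i).

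For (ii), we fix a cutoff $\chi_R\in C_c^\infty$ with $\chi_R=1$ on $B_R$ and with derivatives bounded uniformly in $R$, and split $g_t=\chi_Rg_t+(1-\chi_R)g_t$. Since $\chi_Rg_t\in L^\infty([0,T];\cH^k)$, the map $\nu\mapsto\int_0^T\langle\nu_t,\chi_Rg_t\rangle\,\dd t$ is continuous on $C([0,T];\cH^{-k})$, so the compact part converges in distribution by the continuous mapping theorem. For the tail we use the weighted duality and the embedding from Lemma \ref{lem:sob}:
\begin{align*}
|\langle\nu_t,(1-\chi_R)g_t\rangle|\le\|\nu_t\|_{\cH_w^{-k}}\|(1-\chi_R)g_t\|_{\cH^k_w}.
\end{align*}
We have $\|(1-\chi_R)g_t\|^2_{\cH^k_w}\lesssim\|g\|^2_{C^k_b}\int_{|x|>R}w^{-1}\,\dd x\to0$, since $2\lambda_d>d$ makes $w^{-1}$ integrable. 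Combining this with the uniform $n$-bound and with (i) gives $\lim_R\sup_n\E|\text{tail}_n|=0$ and $\text{tail}\to0$ a.s. A standard approximation argument then yields \eqref{eq: conv.int.eta}, and finiteness of the limit integral follows from (i).
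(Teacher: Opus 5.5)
Your proposal is correct and shares the paper's skeleton. The fluctuation part of $J^{-(k-1)}\eta^n_t(x)$ is handled by Poincar\'e via $\nabla G_{k-1}\in L^2(w)$, the bias part is summed over $i$ and bounded by $\sum_i H(P^i_t\,|\,\mu_t)$, lower semicontinuity passes the bound to the limit, and a cutoff handles (ii). The real difference is how you treat the bias term.

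The step you flag as delicate closes directly. After Cauchy--Schwarz in $(\pi_i,r)$, the factor $\int|a-b|^2\,\dd\pi_i=\W_2^2(P^i_t,\mu_t)$ no longer depends on $x$. Integrating the other factor against $w(x)\,\dd x$ gives $\|\nabla G_{k-1}\|^2_{L^2(w)}$ times $\int\!\int_0^1 w(c_r)\,\dd r\,\dd\pi_i\le\E[w(X^i_t)]+\E[w(Y_t)]$. This is a first-order weight moment, bounded by \eqref{eq:w.momet.statement}. No H\"older step is needed, and one would actually hurt: mixing the weights into $|a-b|^2$ leaves a power of $H(P^i_t\,|\,\mu_t)$ below one, whose sum over $i$ is not controlled by \eqref{eq:xi_row_sq}. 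With Proposition \ref{lem:preliminaries}\eqref{lem:preliminaries:v} you then get $\sum_i I_i\lesssim\sum_i H(P^i_t\,|\,\mu_t)\lesssim 1$.

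The paper instead takes a Pinsker route close to the fallback you abandoned. It applies it to the $L^2(w\,\dd x)$-valued map $z\mapsto G_{k-1}(\cdot-z)$ rather than pointwise, so boundedness of $G_{k-1}$ is never needed. Minkowski's integral inequality bounds $\sqrt{I_i}$ by $\|G_{k-1}\|_{L^2(w)}\int\sqrt{w}\,\dd|P^i_t-\mu_t|$. The weighted Pinsker inequality of Lemma \ref{lem:weightedPinsker} then bounds the square of that integral by $H(P^i_t\,|\,\mu_t)$. The paper's route needs only $G_{k-1}\in L^2(w)$, with no coupling or transport inequality. Yours reuses $\nabla G_{k-1}\in L^2(w)$, which the variance term requires anyway, and trades the weighted Pinsker lemma for the transport inequality for $\mu_t$.

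Your lower semicontinuity argument is a valid shortcut for the paper's weak-compactness argument. It writes the $\cH^{-(k-1)}_w$ norm as a supremum of continuous pairings with $C_c^\infty$ test functions, then applies Fatou in $t$. Likewise, a.s. vanishing of the tail for $\eta$ is enough for the final approximation step.

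One technical point you gloss over: for $k-1<d$, $G_{k-1}$ is singular at the origin. Applying the Poincar\'e inequality, and the fundamental theorem of calculus along segments, to $G_{k-1}(x-\cdot)$ therefore needs an approximation argument. The paper mollifies $G_{k-1}$ and passes to the limit with Lemma \ref{lem:weighted-L2-mollification}.
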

	
	\begin{proof}
		Since $\lambda_d>d/2$, we have
		\begin{align*}
			\int_{\R^d} w(x)^{-1}\,\dd x<\infty.
		\end{align*}
		Hence, $g \in L^\infty([0,T];\cH^k_w)$. Once \eqref{weighted.square.int} is established, the embedding $\cH_w^{-(k-1)}\hookrightarrow \cH_w^{-k}$ implies that the integral on the right-hand side of \eqref{eq: conv.int.eta} is finite.
		
	We need the following additional notation. For any $R>0$, let $\phi_R\in C_c^\infty(\R^d)$ be a smooth cutoff function such that $\phi_R\equiv 1$ on $B_R$, $\phi_R\equiv 0$ on $\R^d\setminus B_{R+1}$, and, for every $m\in\N$,
		\begin{align*}
			\sup_{R>0}\|\phi_R\|_{C_b^m}<\infty.
			\nonumber
		\end{align*}
		
\noindent\textit{Step 1.}
		We write $g_t=g_{t,R}+h_{t,R}$ with $g_{t,R}=g_t\,\phi_R$ and $h_{t,R}=g_t(1-\phi_R)$. Since $g_{t,R}\in \cH^{k}$ for Lebesgue almost every $t \in [0,T]$ and $\esssup_{t\in[0,T]}\|g_{t,R}\|_{\cH^{k}}<\infty$ for each fixed $R$, the functional
		\begin{align*}
			\alpha \mapsto 
			\int_0^T \langle \alpha_t, g_{t,R}\rangle_{\cH^{-k},\cH^{k}}\,\dd t
		\end{align*}
		is well-defined and continuous on $C([0,T];\cH^{-k})$. Therefore, by the Continuous Mapping Theorem,
		\begin{align}\label{eq:compat.part.conv}
			\int_0^T\langle \eta_t^n,g_{t,R}\rangle_{\cH^{-k},\cH^{k}}\,\dd t
			\stackrel{d}{\to}
			\int_0^T\langle \eta_t,g_{t,R}\rangle_{\cH^{-k},\cH^{k}}\,\dd t.
		\end{align}
		
		\medskip\noindent\textit{Step 2.}
		Recalling the Bessel kernel from \eqref{eq:Gs.definition}, define
		\begin{align}\label{eq:F_x.def}
			F_x(z)=G_{k-1}(x-z).
		\end{align}
		We next show, for any $t  \in  [0, T]$, that
		\begin{align}\label{varianceJKfinal}
			\int_{\R^d}
			n\,\Var \bigg(\frac{1}{n}\sum_{i=1}^n F_x(X_t^i)\bigg) w(x)\,\dd x
			\lesssim 1.
		\end{align}
		Let $(\rho_\varepsilon)_{\varepsilon>0}$ be an even mollifier on $\R^d$ with $\rho_\varepsilon\ge0$, $\int_{\R^d}\rho_\varepsilon(x)\,\dd x=1$, and $\operatorname{supp}(\rho_\varepsilon)\subset B_\varepsilon$.
		Let $F_x^\varepsilon:=F_x*\rho_\varepsilon$. Then $F_x^\varepsilon\in C_b^\infty(\R^d)$, so the Poincar\'e inequality in Proposition \ref{lem:preliminaries}\eqref{lem:preliminaries:iii} applies to the function
		$(x_1,\dots,x_n)\mapsto \frac{1}{n}\sum_{i=1}^n F_x^\varepsilon(x_i)$. Moreover, by
		Lemma \ref{lem:bessel:weighted-L2} and Lemma \ref{lem:weighted-L2-mollification},
		\begin{align} \label{eq:int.F.eps.minus.F}
			G_{k-1}*\rho_\varepsilon\to G_{k-1},
			\quad
			\nabla(G_{k-1}*\rho_\varepsilon)\to\nabla G_{k-1}
			\quad\text{in }L^2(w(x)\,\dd x).
		\end{align}
		Since $\rho_\varepsilon$ is even, we have
		$F_x^\varepsilon(y)=(G_{k-1}*\rho_\varepsilon)(x-y)$. Hence, using the change of variable $z = x-y$, the inequality $w(z+y)\lesssim w(z)w(y)$, and Proposition
		\ref{lem:preliminaries}\eqref{lem:preliminaries:iv}, we have for each $i\in[n]$,
		\begin{align}
			\begin{split}
			&\int_{\R^d}\int_{\R^d}
			|F_x^\varepsilon(y)-F_x(y)|^2w(x)\,\dd x\,P_t^i(\dd y)
			\\
			&
			\lesssim
			\bigg(\int_{\R^d}w(y)\,P_t^i(\dd y)\bigg)
			\int_{\R^d}|(G_{k-1}*\rho_\varepsilon)(z)-G_{k-1}(z)|^2w(z)\,\dd z
			\to0,
			\end{split}
			\label{eq:int.grad.F.eps.minus.grad.F}
		\end{align}
		as $\varepsilon \downarrow 0$.
		Similarly, using $\nabla_yF_x^\varepsilon(y)=-\nabla(G_{k-1}*\rho_\varepsilon)(x-y)$ and $\nabla_yF_x(y)=-\nabla G_{k-1}(x-y)$, we obtain as $\varepsilon \downarrow 0$,
		\begin{align*}
			\int_{\R^d}\int_{\R^d}
			|\nabla F_x^\varepsilon(y)-\nabla F_x(y)|^2w(x)\,\dd x\,P_t^i(\dd y)
			\to0.
		\end{align*}
		Consequently, by Jensen's inequality and \eqref{eq:int.F.eps.minus.F}, \eqref{eq:int.grad.F.eps.minus.grad.F}, we have as $\varepsilon \downarrow 0$,
		\begin{align} \label{eq:exp.F.eps.minus.F}
			\int_{\R^d}
			\E\bigg[
			\bigg(
			\frac{1}{n}\sum_{i=1}^n \big(F_x^\varepsilon-F_x\big)(X_t^i)
			\bigg)^2\bigg]w(x)\,\dd x
			\to 0
		\end{align}
		and
		\begin{align} \label{eq:exp.grad.F.eps.minus.grad.F}
			\int_{\R^d}
			\E \bigg[
			\bigg|
			\frac{1}{n}\sum_{i=1}^n \big(\nabla F_x^\varepsilon-\nabla F_x\big)(X_t^i)
			\bigg|^2\bigg]w(x)\,\dd x
			\to 0.
		\end{align}
		Therefore, applying the Poincar\'e inequality to the function
		$(x_1,\dots,x_n)\mapsto \frac{1}{n}\sum_{i=1}^n F_x^\varepsilon(x_i)$, integrating against $w(x)\,\dd x$, and letting
		$\varepsilon\downarrow0$ by using \eqref{eq:exp.F.eps.minus.F}, \eqref{eq:exp.grad.F.eps.minus.grad.F}, we obtain
		\begin{align*}
			\int_{\R^d}
			n\,\Var\bigg(\frac{1}{n}\sum_{i=1}^n F_x(X_t^i)\bigg)w(x)\,\dd x
			&\lesssim
			\frac{1}{n}\sum_{i=1}^n
			\int_{\R^d}\int_{\R^d}|\nabla F_x(y)|^2w(x)\,\dd x\,P_t^i(\dd y).
		\end{align*}
		Applying Tonelli’s theorem and the change of variables $z=x-y$ to the right-hand side,
		\begin{align}\label{eq:variance:tonelli}
			\int_{\R^d}
			n\,\Var\bigg(\frac{1}{n}\sum_{i=1}^n F_x(X_t^i)\bigg) w(x)\,\dd x
			&\lesssim
			\frac{1}{n}\sum_{i=1}^n\int_{\R^d}\int_{\R^d}|\nabla G_{k-1}(z)|^2\,w(z+y)\,\dd z\,P_t^i(\dd y).
		\end{align}
		Using the inequality $w(z+y)\lesssim  w(z)w(y)$, Proposition \ref{lem:preliminaries}\eqref{lem:preliminaries:iv}, and Lemma \ref{lem:bessel:weighted-L2}\eqref{lem:bessel:weighted-L2:grad}, it follows that the right-hand side of \eqref{eq:variance:tonelli} is $\lesssim 1$, which proves \eqref{varianceJKfinal}.
		
		\medskip\noindent\textit{Step 3.}
		Recall the definition of $F_x$ in \eqref{eq:F_x.def}, fix $t \in [0,T]$, and for $i \in [n]$ define
		\begin{align*}
			I_i
			&=
			\int_{\R^d}\left(\E[F_x(X_t^i)]-\E[F_x(Y_t)]\right)^2 w(x)\,\dd x =
			\int_{\R^d}\left|\int_{\R^d}G_{k-1}(x-z)\,(P_t^i-\mu_t)(\dd z)\right|^2 w(x)\,\dd x.
		\end{align*}
		We next show that
		\begin{align}\label{transportJKfinal}
			\sum_{i=1}^n I_i \lesssim 1.
		\end{align}
		By the definition of the total variation measure and Minkowski's integral inequality,
		\begin{align}
			\begin{split}
			\sqrt{I_i}
			&=
			\bigg(
			\int_{\R^d}
			\left(
			\int_{\R^d} G_{k-1}(x-z)\,(P_t^i-\mu_t)(\dd z)
			\right)^2
			w(x)\,\dd x
			\bigg)^{1/2} \\
			&\le
			\bigg(
			\int_{\R^d}
			\left(
			\int_{\R^d} |G_{k-1}(x-z)|\,|P_t^i-\mu_t|(\dd z)
			\right)^2
			w(x)\,\dd x
			\bigg)^{1/2} \\
			&\le
			\int_{\R^d}
			\left(
			\int_{\R^d}|G_{k-1}(x-z)|^2\,w(x)\,\dd x
			\right)^{1/2}
			\,|P_t^i-\mu_t|(\dd z).
			\end{split}
			\label{eq:transport:minkowski}
		\end{align}
		Using the change of variables $u=x-z$ and the inequality $w(u+z)\lesssim w(u)w(z)$,
		\begin{align}\label{eq:transport:inner.bound}
			\int_{\R^d}|G_{k-1}(x-z)|^2\,w(x)\,\dd x
			\lesssim
			w(z)\int_{\R^d}|G_{k-1}(u)|^2\,w(u)\,\dd u.
		\end{align}
		The last integral is finite by Lemma \ref{lem:bessel:weighted-L2}\eqref{lem:bessel:weighted-L2:G}. Substituting \eqref{eq:transport:inner.bound} into \eqref{eq:transport:minkowski} yields
		\begin{align}\label{eq:transport:reduce}
			\sqrt{I_i}
			\lesssim
			\left(\int_{\R^d}|G_{k-1}(u)|^2\,w(u)\,\dd u\right)^{1/2}
			\int_{\R^d}\sqrt{w(z)}\,|P_t^i-\mu_t|(\dd z).
		\end{align}
		The weighted Pinsker inequality in Lemma \ref{lem:weightedPinsker}, together with the uniform moment bound in Proposition \ref{lem:preliminaries}\eqref{lem:preliminaries:iv}, gives
		\begin{align}\label{eq:transport:pinsker}
			\left(\int_{\R^d}\sqrt{w(z)}\,|P_t^i-\mu_t|(\dd z)\right)^2
			\lesssim
			H(P_t^i\,|\,\mu_t).
		\end{align}
		Combining \eqref{eq:transport:reduce} and \eqref{eq:transport:pinsker} yields
		\begin{align*}
			I_i
			\lesssim
			\left(\int_{\R^d}|G_{k-1}(u)|^2\,w(u)\,\dd u\right)
			H(P_t^i\,|\,\mu_t).
		\end{align*}
		Summing over $i \in [n]$, and using \eqref{eq:DPI.avg.1to0} of Remark \ref{rem:data.processing} and \eqref{eq:xi_row_sq} of Assumption \ref{assum.main}\eqref{assum:mat}, gives \eqref{transportJKfinal}.
		
		\medskip
		\noindent\textit{Step 4.}
		Recall the definition of the Bessel operator $J$ in \eqref{eq:Js.definition}. We next prove that
		\begin{align}\label{weighted.bound.Jkminusone}
			\E \bigg[\int_0^T \int_{\R^d}\big(J^{-(k-1)}\eta_t^n(x)\big)^2\,w(x)\,\dd x\,\dd t
			\bigg]\lesssim 1.
		\end{align}
		To see this, note that for each $x \in \R^d$,
		\begin{align*}
			J^{-(k-1)}\eta_t^n(x)
			=
			(G_{k-1} * \eta_t^n)(x)
			=
			\sqrt{n}\bigg(\frac1n\sum_{i=1}^n F_x(X_t^i) - (\mu_t * G_{k-1})(x)\bigg).
		\end{align*}
		Therefore,
		\begin{align}\label{eq:Jk:pointwise.decomp}
			\E \big[(J^{-(k-1)}\eta_t^n(x))^2\big]
			&=
			n\,\Var\bigg(\frac{1}{n}\sum_{i=1}^n F_x(X_t^i)\bigg)
			+
			n\bigg(\frac{1}{n}\sum_{i=1}^n\big(\E[F_x(X_t^i)]-\E[F_x(Y_t)]\big)\bigg)^2.
		\end{align}
		Therefore, integrating \eqref{eq:Jk:pointwise.decomp} against $w(x)\,\dd x$ and using Jensen’s inequality on the second term on the right-hand side, we get
		\begin{align}
			\begin{split}
			\int_{\R^d}\E\big[(J^{-(k-1)}\eta_t^n(x))^2\big]\,w(x)\,\dd x
			&\le
			\int_{\R^d}
			n\,\Var\bigg(\frac{1}{n}\sum_{i=1}^n F_x(X_t^i)\bigg) w(x)\,\dd x \\
			&\quad
			+\sum_{i=1}^{n} \int_{\R^d} \left(\E[F_x(X_t^i)]-\E[F_x(Y_t)]\right)^2 w(x)\,\dd x.
			\end{split}
			\label{eq:decomJk}
		\end{align}
		Combining \eqref{eq:decomJk} with Steps 2 and 3, and then integrating over $t\in[0,T]$, we obtain \eqref{weighted.bound.Jkminusone}. Equivalently, by \eqref{eq:Hminus.kw.characterization},
		\begin{align*}
			\sup_{n\in\N}
				\E\bigg[\int_0^T
				\|\eta_t^n\|_{\cH_w^{-(k-1)}}^2\,\dd t
				\bigg]<\infty.
		\end{align*}
		Since $\cH_w^{-(k-1)}\hookrightarrow \cH_w^{-k}$, it follows that
		\begin{align}\label{eq:uniform.norm.bound.eta.n}
			\sup_{n\in\N}
				\E\bigg[\int_0^T
				\|\eta_t^n\|_{\cH_w^{-k}}^2\,\dd t
				\bigg]<\infty.
		\end{align}
		
		\medskip
		\noindent\textit{Step 5.}
		Next, we prove the tail estimate
		\begin{align}\label{L1.conv.R}
			\lim_{R\to\infty}\sup_{n\in\N}\E\bigg[\bigg|\int_0^T \langle \eta_t^n,h_{t,R}\rangle_{\cH_w^{-k},\cH_w^k}\,\dd t\bigg|\bigg]=0.
		\end{align}
		Fix $R>0$. We first show that the pairing in \eqref{L1.conv.R} is well-defined. Since $h_{t,R}=g_t(1-\phi_R)$, the Leibniz rule, the uniform $C_b^k$-bounds on $g$, and the construction of $\phi_R$ imply that, for each $0\le |\boldsymbol{\alpha}|\le k$,
		\begin{align*} 
			|D^{\boldsymbol{\alpha}}h_{t,R}(x)|\lesssim  1,
			\quad	\text{for} \,\, x\in\R^d, \quad \text{and} \quad 	D^{\boldsymbol{\alpha}}h_{t,R}(x)=0, \quad \text{for}
			\,\, x\in B_R.
		\end{align*}
		Therefore, for a.e. $t\in[0,T]$,
		\begin{align} \label{eq:htR.norm.finite}
			\sum_{0\le |\boldsymbol{\alpha}|\le k}\int_{\R^d}\frac{|D^{\boldsymbol{\alpha}}h_{t,R}(x)|^2}{w(x)}\,\dd x
			\lesssim
			\int_{\R^d\setminus B_R} w(x)^{-1}\,\dd x
			<\infty.
		\end{align}
		Thus, $h_{t,R}$ has finite $\cH^k_w$-norm. On the other hand, by \eqref{eq:uniform.norm.bound.eta.n}, for each $n\in\N$, we have $\eta_t^n\in \cH_w^{-k}$ for a.e. $t\in[0,T]$, a.s. Hence, the pairing $\langle \eta_t^n,h_{t,R}\rangle_{\cH_w^{-k},\cH_w^k}$ is well-defined for a.e. $t\in[0,T]$, a.s.
		
		By the Cauchy-Schwarz inequality,
		\begin{align*}
			\E\bigg[\bigg|\int_0^T \langle \eta_t^n,h_{t,R}\rangle_{\cH_w^{-k},\cH_w^k}\,\dd t\bigg|\bigg]
			&\le
			\bigg(\E\bigg[\int_0^T \|\eta_t^n\|_{\cH_w^{-k}}^2\,\dd t\bigg]\bigg)^{1/2}
			\bigg(\int_0^T \|h_{t,R}\|_{\cH_w^k}^2\,\dd t\bigg)^{1/2}.
		\end{align*}
		The first factor on the right-hand side is bounded uniformly in $n$ by \eqref{eq:uniform.norm.bound.eta.n}. For the second factor, using \eqref{eq:htR.norm.finite}, we obtain
		\begin{align*}
			\int_0^T \|h_{t,R}\|_{\cH_w^k}^2\,\dd t
			\lesssim
			\int_{\R^d\setminus B_R} w(x)^{-1}\,\dd x \xrightarrow[R\to\infty]{} 0.
		\end{align*}
		This yields \eqref{L1.conv.R}.
		
		\medskip\noindent\textit{Step 6.}
		We next prove the corresponding tail estimate for the limit:
		\begin{align}\label{eq:eta.tail.L1}
			\lim_{R\to\infty}\E\bigg[\bigg|\int_0^T\langle \eta_t,h_{t,R}\rangle_{\cH^{-k}_w,\cH^{k}_w}\,\dd t\bigg|\bigg]=0.
		\end{align}
		To this end, we first establish \eqref{weighted.square.int}. Define
		\begin{align*}
			F(\alpha)
			=
			\int_0^T\|\alpha_t\|_{\cH_w^{-(k-1)}}^2\,\dd t,
			\text{ if }  \alpha\in L^2([0,T];\cH_w^{-(k-1)}), \quad F(\alpha) = 
			\infty,
			\, \, \text{otherwise.}
		\end{align*}
		Note that $F$ is lower semicontinuous on $C([0,T];\cH^{-k})$. Indeed, suppose that $\alpha^m\to\alpha$ in $C([0,T];\cH^{-k})$ and $\liminf_{m\to\infty}F(\alpha^m)<\infty$. Choose a subsequence $(\alpha^{m_\ell})_{\ell \in \N}$ such that $F(\alpha^{m_\ell})\to\liminf_{m\to\infty}F(\alpha^m)$, so that $(\alpha^{m_\ell})_{\ell \in \N}$ is bounded in $L^2([0,T];\cH_w^{-(k-1)})$. Hence, a further subsequence converges weakly in $L^2([0,T];\cH_w^{-(k-1)})$ to some $\beta\in L^2([0,T];\cH_w^{-(k-1)})$. By the continuous embedding  $\cH_w^{-(k-1)}\hookrightarrow \cH^{-k}$, the same subsequence converges weakly to $\beta$ in $L^2([0,T];\cH^{-k})$, while $\alpha^m\to\alpha$ strongly in $L^2([0,T];\cH^{-k})$. Therefore,  $\beta=\alpha$ in $L^2([0,T];\cH^{-k})$, and the weak lower semicontinuity of the $L^2([0,T];\cH_w^{-(k-1)})$-norm gives
		\begin{align*}
			F(\alpha)
			=
			\|\alpha\|_{L^2([0,T];\cH_w^{-(k-1)})}^2
			\le
			\liminf_{\ell \to\infty}
			\|\alpha^{m_\ell}\|_{L^2([0,T];\cH_w^{-(k-1)})}^2
			=
			\liminf_{m\to\infty}F(\alpha^m).
		\end{align*}
		Hence, by the Portmanteau Theorem and \eqref{weighted.bound.Jkminusone}, \eqref{eq:Hminus.kw.characterization},
		\begin{align*}
				\E[F(\eta)]
				\le
				\liminf_{n\to\infty}\E[F(\eta^n)]
				<\infty.
		\end{align*}
		This proves \eqref{weighted.square.int}. In particular, since $\cH_w^{-(k-1)}\hookrightarrow \cH_w^{-k}$,
		\begin{align}\label{eq:eta.weighted.finite}
				\E\bigg[\int_0^T\|\eta_t\|_{\cH_w^{-k}}^2\,\dd t\bigg]<\infty.
		\end{align}
		
		Now fix $R>0$. By Step 5, $h_{t,R}\in \cH_w^k$ for a.e. $t\in[0,T]$. Moreover, by \eqref{eq:htR.norm.finite},
		\begin{align*}
			\int_0^T\|h_{t,R}\|_{\cH_w^k}^2\,\dd t
			\lesssim
			\int_{\R^d\setminus B_R}w(x)^{-1}\,\dd x\, \xrightarrow[R\to\infty]{} 0.
		\end{align*}
		Therefore, by the Cauchy-Schwarz inequality and \eqref{eq:eta.weighted.finite},
		\begin{align*}
			\E\bigg[\bigg|\int_0^T\langle \eta_t,h_{t,R}\rangle_{\cH^{-k}_w,\cH^{k}_w}\,\dd t\bigg|\bigg]
			&\le
			\bigg(\E\bigg[\int_0^T\|\eta_t\|_{\cH_w^{-k}}^2\,\dd t\bigg]\bigg)^{1/2}
			\bigg(\int_0^T\|h_{t,R}\|_{\cH_w^k}^2\,\dd t\bigg)^{1/2} \xrightarrow[R\to\infty]{} 0.
		\end{align*}
		This  yields \eqref{eq:eta.tail.L1}.
		
		\medskip\noindent\textit{Step 7.}
		Fix $R>0$ and write $g_t=g_{t,R}+h_{t,R}$. Since $g_{t,R}\in \cH^k\cap \cH_w^k$ for a.e. $t \in [0,T]$, the weighted and unweighted pairings agree on $g_{t,R}$ for those $t$. Moreover, by \eqref{eq:compat.part.conv} of Step 1,
		\begin{align}\label{eq:eta.n.g.conv}
			\int_0^T\langle \eta_t^n,g_{t,R}\rangle_{\cH^{-k},\cH^k}\,\dd t
			\stackrel{d}{\to}
			\int_0^T\langle \eta_t,g_{t,R}\rangle_{\cH^{-k},\cH^k}\,\dd t.
		\end{align}
		Let $\varphi:\R\to\R$ be bounded and $1$-Lipschitz. Then:
		\begin{align*}
			&\bigg|\E\bigg[\varphi\bigg(\int_0^T\langle \eta_t^n,g_t\rangle_{\cH_w^{-k},\cH_w^k}\,\dd t\bigg)\bigg]
			-\E\bigg[\varphi\bigg(\int_0^T\langle \eta_t,g_t\rangle_{\cH_w^{-k},\cH_w^k}\,\dd t\bigg)\bigg]\bigg| \\
			&\le
			\bigg|\E\bigg[\varphi\bigg(\int_0^T\langle \eta_t^n,g_{t,R}\rangle_{\cH_w^{-k},\cH_w^k}\,\dd t\bigg)\bigg]
			-\E\bigg[\varphi\bigg(\int_0^T\langle \eta_t,g_{t,R}\rangle_{\cH_w^{-k},\cH_w^k}\,\dd t\bigg)\bigg]\bigg| \\
			&\quad +
			\E\bigg[\bigg|\int_0^T\langle \eta_t^n,h_{t,R}\rangle_{\cH_w^{-k},\cH_w^k}\,\dd t\bigg|\bigg]
			+
			\E\bigg[\bigg|\int_0^T\langle \eta_t,h_{t,R}\rangle_{\cH_w^{-k},\cH_w^k}\,\dd t\bigg|\bigg].
		\end{align*}
		Since the weighted and unweighted pairings agree on $g_{t,R}$, the first term on the right-hand side tends to $0$ by \eqref{eq:eta.n.g.conv}. Therefore,
		\begin{align*}
			&\limsup_{n\to\infty}
			\bigg|\E\bigg[\varphi\bigg(\int_0^T\langle \eta_t^n,g_t\rangle_{\cH_w^{-k},\cH_w^k}\,\dd t\bigg)\bigg]
			-\E\bigg[\varphi\bigg(\int_0^T\langle \eta_t,g_t\rangle_{\cH_w^{-k},\cH_w^k}\,\dd t\bigg)\bigg]\bigg| \\
			&\le
			\sup_{n\in\N}\E\bigg[\bigg|\int_0^T\langle \eta_t^n,h_{t,R}\rangle_{\cH_w^{-k},\cH_w^k}\,\dd t\bigg|\bigg]
			+
			\E\bigg[\bigg|\int_0^T\langle \eta_t,h_{t,R}\rangle_{\cH_w^{-k},\cH_w^k}\,\dd t\bigg|\bigg].
		\end{align*}
		Letting $R\to\infty$ and using \eqref{L1.conv.R} of Step 5 together with \eqref{eq:eta.tail.L1} of Step 6, we obtain
		\begin{align*}
			\lim_{n\to\infty}
			\E\bigg[\varphi\bigg(\int_0^T\langle \eta_t^n,g_t\rangle_{\cH_w^{-k},\cH_w^k}\,\dd t\bigg)\bigg]
			=
			\E\bigg[\varphi\bigg(\int_0^T\langle \eta_t,g_t\rangle_{\cH_w^{-k},\cH_w^k}\,\dd t\bigg)\bigg].
		\end{align*}
		Since this holds for every bounded $1$-Lipschitz $\varphi$, we conclude \eqref{eq: conv.int.eta}.
	\end{proof}

	\begin{proof}[\bf{Proof of Proposition \ref{prop:convergence}}]

		\medskip\noindent\textit{Step 1.}
		Proposition \ref{prop:W.tight} shows that the sequence $(W^n)_{n\in\N}$ is tight in $C([0,T];\cH^{-k-1})$. Since $\eta^n	\stackrel{d}{\to} \eta$, the sequence $(\eta^n,W^n)_{n\in\N}$ is  tight in $ 	\cC := C([0,T];\cH^{-k})\times C([0,T];\cH^{-k-1})$.
		After passing to a further subsequence, we may assume that there exists a process $W\in C([0,T];\cH^{-k-1})$ with
		\begin{align}\label{eq:joint.conv.eta.W} 
			(\eta^n,W^n)
			\stackrel{d}{\to}
			(\eta,W)
			\quad\text{in } \cC.
		\end{align}
		We realize $(\eta,W)$ on the canonical space $\cC$,
		equipped with its Borel $\sigma$-algebra $\F$, the law $\PP$ of $(\eta,W)$, and the usual augmentation of the canonical filtration $\FF=(\F_t)_{t\in[0,T]}$. On this stochastic basis, both coordinate processes are continuous and $\FF$-adapted. Item \eqref{SPDE.def.integrability} of Definition \ref{def:SPDE.limit} follows from Proposition \ref{prop.conv.int.eta}\eqref{weighted.square.int}. It remains to verify items \eqref{SPDE.def.dym} and  \eqref{SPDE.def.weak} of Definition \ref{def:SPDE.limit}. 
		
		\medskip\noindent\textit{Step 2.}
		Fix $f\in C_c^\infty(\R^d)$. By Lemma \ref{lem:eta.dym}, the process $(\langle \eta_t^n,f\rangle)_{t\in[0,T]}$ admits the semimartingale representation \eqref{eq:eta.semimartingale}. We first show that the last two drift terms in \eqref{eq:eta.semimartingale} converge to $0$ in distribution. More specifically, for every $t\in[0,T]$,
		\begin{align}\label{eq:second.drift.conv}
			\int_0^t \big\langle \wh{\eta}^n_s(\dd x,\dd y),\, b(s,x,y)\cdot \nabla f(x)\big\rangle\,\dd s \stackrel{d}{\to} 0,
		\end{align}
		and
		\begin{align}\label{eq:third.drift.conv}
			\frac{\sqrt{n}}{n-1}\int_0^t \big\langle \mu_s^n(\dd x)\mu_s^n(\dd y),\, b(s,x,y)\cdot \nabla f(x)\big\rangle\,\dd s \stackrel{d}{\to} 0.
		\end{align}
		Since $f\in C_c^\infty(\R^d)$ and $b$ has bounded first derivatives, the functions $(x,y)\mapsto b(s,x,y)\cdot \nabla f(x)$, $s\in[0,T]$, form a bounded subset of $C_b^1(\R^d\times\R^d)$. Therefore, Lemma \ref{lem:tightness.prelim}\eqref{tightness.prelim.eta.hat}, together with Assumption \ref{assum.main}\eqref{assum:mat} and Lemma \ref{lem:assum_i_implies_mat}, implies that 
		\begin{align*}
			\sup_{s\in[0,T]}
			\E\left[
			\left|
			\big\langle
			\wh{\eta}^n_s(\dd x,\dd y),
			b(s,x,y)\cdot\nabla f(x)
			\big\rangle
			\right|^2
			\right]
			\longrightarrow 0.
		\end{align*}
		The Cauchy-Schwarz inequality then gives
		\begin{align*}
			\E\bigg[\bigg|\int_0^t \big\langle \wh{\eta}^n_s(\dd x,\dd y),\, b(s,x,y)\cdot \nabla f(x)\big\rangle\,\dd s\bigg|\bigg]\to 0,
		\end{align*}
		which implies \eqref{eq:second.drift.conv}. On the other hand, since $b$ is bounded while $f\in C_c^\infty(\R^d)$, we have
		\begin{align*}
			\bigg|\frac{\sqrt{n}}{n-1}\int_0^t \Big\langle \mu_s^n(\dd x)\mu_s^n(\dd y),\, b(s,x,y)\cdot \nabla f(x)\Big\rangle\,\dd s\bigg|
			\lesssim
			\frac{\sqrt{n}}{n-1},
		\end{align*}
		which tends to $0$ as $n\to\infty$, proving \eqref{eq:third.drift.conv}.
		
		\medskip\noindent\textit{Step 3.}
		For the first drift term in \eqref{eq:eta.semimartingale}, write
		\begin{align} \label{eq:first.drift.decom}
			\int_0^t\big\langle \eta_s^n,\,\LL_{s,\mu_s^n}f\big\rangle\,\dd s
			=
			\int_0^t\big\langle \eta_s^n,\,\LL_{s,\mu_s}f\big\rangle\,\dd s
			+
			\int_0^t \big\langle \eta_s^n,\LL_{s,\mu_s^n}f-\LL_{s,\mu_s}f\big\rangle\,\dd s.
		\end{align}
		We show in this step that the first integral on the right-hand side converges in distribution:
		\begin{align} \label{eq:eta.n.L.conv}
			\int_0^t\big\langle \eta_s^n,\,\LL_{s,\mu_s}f\big\rangle\,\dd s
			\stackrel{d}{\to}
			\int_0^t\big\langle \eta_s,\,\LL_{s,\mu_s}f\big\rangle_{\cH^{-k}_w,\cH^{k}_w}\,\dd s.
		\end{align}
		Recalling \eqref{def.generator}, and using that $f\in C_c^\infty(\R^d)$ while $b_0$ and $b$ satisfy Assumption \ref{assum.main}\eqref{assum.drift}, we see that the function  $ 	(s,x)\mapsto \mathbf{1}_{[0,t]}(s)\,\LL_{s,\mu_s}f(x)$
		belongs to $L^\infty([0,T];C_b^{k}(\R^d))$. Applying Proposition \ref{prop.conv.int.eta}\eqref{determin.conv.pair} to this function, we obtain \eqref{eq:eta.n.L.conv}.

		\medskip\noindent\textit{Step 4.}
		We show that the second integral on the right-hand side of \eqref{eq:first.drift.decom} converges in distribution to $0$.
		By \eqref{def.generator},
		\begin{align*}
			(\LL_{s,\mu_s^n}f-\LL_{s,\mu_s}f)(x)
			=
			\int_{\R^d} b(s,x,x')\cdot \nabla f(x)\,(\mu_s^n-\mu_s)(\dd x').
		\end{align*}
		Since $f\in C_c^\infty(\R^d)$, there exists a compact  $K\subset \R^d$ such that  for every $s\in[0,T]$,  $x'\in\R^d$, and  $0\le |\boldsymbol{\alpha}|\le k$, the function $	x\mapsto D_x^{\boldsymbol{\alpha}}(b(s,x,x')\cdot \nabla f(x)) $, and hence $\LL_{s,\mu_s^n}f-\LL_{s,\mu_s}f$, is supported in $K$. For $s\in[0,T]$, $x\in K$, and $0\le |\boldsymbol{\alpha}|\le k$, define
		\begin{align*}
			\psi_{s,x}^{\boldsymbol{\alpha}}(x')
			=
			D_x^{\boldsymbol{\alpha}}(b(s,x,x')\cdot \nabla f(x)).
		\end{align*}
		Then,
		\begin{align}\label{eq:step4:derivative}
			D^{\boldsymbol{\alpha}}(\LL_{s,\mu_s^n}f-\LL_{s,\mu_s}f)(x)
			=
			\int_{\R^d} \psi_{s,x}^{\boldsymbol{\alpha}}(x')\,(\mu_s^n-\mu_s)(\dd x')
			=
			\frac{1}{\sqrt{n}}\langle \eta_s^n,\psi_{s,x}^{\boldsymbol{\alpha}}\rangle.
		\end{align}
		By Assumption \ref{assum.main}\eqref{assum.drift}, for each $0\le |\boldsymbol{\alpha}|\le k$, the family $(\psi_{s,x}^{\boldsymbol{\alpha}})_{s\in[0,t],\,x\in K}$ satisfies
		\begin{align*}
			\sup_{s\in[0,t]}\sup_{x\in K} M(\psi_{s,x}^{\boldsymbol{\alpha}}) \lesssim 1,
		\end{align*}
		where $ M(\varphi):= \big\|(1+|\cdot|)^{-1}\nabla\varphi\big\|_\infty$. 
		Therefore, Lemma \ref{lem:tightness.prelim}\eqref{tightness.prelim.eta} yields
		\begin{align*}
			\E\Big[\big|\langle \eta_s^n,\psi_{s,x}^{\boldsymbol{\alpha}}\rangle\big|^2\Big]
			\lesssim
			1 + 
			\sum_{i=1}^n\Big(\sum_{j=1}^n(\xi_{ij}^2+\xi_{ji}^2)\Big)^2,
		\end{align*}
		uniformly in $s\in[0,t]$ and $x\in K$. In conjunction with \eqref{eq:step4:derivative}, we obtain
		\begin{align*}
			\E\big[\big\|\LL_{s,\mu_s^n}f-\LL_{s,\mu_s}f\big\|_{\cH^k}^2\big]
			&=
			\sum_{0\le |\boldsymbol{\alpha}|\le k}\int_K \E\big[\big|D^{\boldsymbol{\alpha}}(\LL_{s,\mu_s^n}f-\LL_{s,\mu_s}f)(x)\big|^2\big]\,\dd x \\
			&\lesssim
			\frac{1}{n}
			+
			\frac{1}{n}\sum_{i=1}^n\Big(\sum_{j=1}^n(\xi_{ij}^2+\xi_{ji}^2)\Big)^2,
		\end{align*}
		uniformly in $s\in[0,t]$. Hence, by \eqref{eq:xi_row_sq} of Assumption \ref{assum.main}\eqref{assum:mat},
		\begin{align*}
			\E\bigg[\int_0^t \big\|\LL_{s,\mu_s^n}f-\LL_{s,\mu_s}f\big\|_{\cH^k}^2\,\dd s\bigg] \to 0.
		\end{align*}
		On the other hand, by \eqref{eq:uniform.norm.bound.eta.n} and the continuous embedding
		$\cH_w^{-k}\hookrightarrow \cH^{-k}$,
		\begin{align*}
			\E\bigg[\int_0^t \|\eta_s^n\|_{\cH^{-k}}^2\,\dd s\bigg] \lesssim 1.
		\end{align*}
		Therefore, by the Cauchy-Schwarz inequality,
		\begin{align} \label{eq:drift.diff.conv}
			\begin{split}
				&\E\bigg[\bigg|\int_0^t \langle \eta_s^n,\LL_{s,\mu_s^n}f-\LL_{s,\mu_s}f\rangle\,\dd s\bigg|\bigg] \\
				&\le
				\bigg(\E\bigg[\int_0^t\|\eta_s^n\|_{\cH^{-k}}^2\,\dd s\bigg]\bigg)^{1/2}
				\bigg(\E\bigg[\int_0^t\big\|\LL_{s,\mu_s^n}f-\LL_{s,\mu_s}f\big\|_{\cH^k}^2\,\dd s\bigg]\bigg)^{1/2}
				\to 0.
			\end{split}
		\end{align}

		\medskip\noindent\textit{Step 5.} We verify item \eqref{SPDE.def.weak} of Definition \ref{def:SPDE.limit} in this step. Fix $f\in C_c^\infty(\R^d)$ and $t\in[0,T]$. For $R>0$, let
		\begin{align*}
			g_s=\mathbf{1}_{[0,t]}(s)\,\LL_{s,\mu_s}f,\quad
			g_{R,s}=g_s\phi_R,\quad
			h_{R,s}=g_s(1-\phi_R),\quad s\in[0,T].
		\end{align*}
		For any family $\psi=(\psi_s)_{s\in[0,T]}$ such that the following expressions are well-defined, let
		\begin{align*}
			\J_t^n(\psi)
			&:=
			\langle \eta_t^n,f\rangle
			-\langle \eta_0^n,f\rangle
			-\int_0^t \langle \eta_s^n,\psi_s\rangle\,\dd s
			-\langle W_t^n,f\rangle,\\
			\J_t(\psi)
			&:=
			\langle \eta_t,f\rangle_{\cH^{-k},\cH^k}
			-\langle \eta_0,f\rangle_{\cH^{-k},\cH^k}
			-\int_0^t \langle \eta_s,\psi_s\rangle_{\cH^{-k}_w,\cH^k_w}\,\dd s
			-\langle W_t,f\rangle_{\cH^{-k-1},\cH^{k+1}}.
		\end{align*}
		Also,  let
		\begin{align*}
			\Lambda_t^n:=\J_t^n(g),\qquad
			\Lambda_t:=\J_t(g),\qquad
			\Gamma_{R,t}^n:=\J_t^n(g_R),\qquad
			\Gamma_{R,t}:=\J_t(g_R).
		\end{align*}
		
		For each $R>0$, Assumption \ref{assum.main}\eqref{assum.drift} implies that $g_{R,s}\in \cH^k$ for every $s\in[0,T]$ and $\sup_{s\in[0,T]}\|g_{R,s}\|_{\cH^k}<\infty$. Hence, by Lemma \ref{lem:sob}\eqref{weighted-unweighted-pair}, $\langle \eta_s,g_{R,s}\rangle_{\cH_w^{-k},\cH_w^k}
		=
		\langle \eta_s,g_{R,s}\rangle_{\cH^{-k},\cH^k}$ for a.e. $s \in [0,T]$. Therefore,
		\begin{align*}
			\Gamma_{R,t}
			=
			\langle \eta_t,f\rangle_{\cH^{-k},\cH^k}
			-\langle \eta_0,f\rangle_{\cH^{-k},\cH^k}
			-\int_0^t \langle \eta_s,g_{R,s}\rangle_{\cH^{-k},\cH^k}\,\dd s
			-\langle W_t,f\rangle_{\cH^{-k-1},\cH^{k+1}}.
		\end{align*}
		Moreover, the map
		\begin{align*}
			\cC\ni(\alpha,\beta)\mapsto
			\langle \alpha_t,f\rangle_{\cH^{-k},\cH^k}
			-\langle \alpha_0,f\rangle_{\cH^{-k},\cH^k}
			-\int_0^t \langle \alpha_s,g_{R,s}\rangle_{\cH^{-k},\cH^k}\,\dd s
			-\langle \beta_t,f\rangle_{\cH^{-k-1},\cH^{k+1}}
		\end{align*}
		is continuous on $	\cC$. Therefore, by \eqref{eq:joint.conv.eta.W}, we have $\Gamma_{R,t}^n\stackrel{d}{\to}\Gamma_{R,t}$.
		
		Now, using $g_s=g_{R,s}+h_{R,s}$, we obtain
		\begin{align*}
			\Lambda_t^n
			=
			\Gamma_{R,t}^n
			-\int_0^t\langle \eta_s^n,h_{R,s}\rangle\,\dd s,
			\qquad
			\Lambda_t
			=
			\Gamma_{R,t}
			-\int_0^t\langle \eta_s,h_{R,s}\rangle_{\cH^{-k}_w,\cH^k_w}\,\dd s.
		\end{align*}
		Let $\varphi:\R\to\R$ be bounded and $1$-Lipschitz. Then,
		\begin{align*}
			\big|\E[\varphi(\Lambda_t^n)]-\E[\varphi(\Lambda_t)]\big|
			 \le& \,
			\big|\E[\varphi(\Gamma_{R,t}^n)]-\E[\varphi(\Gamma_{R,t})]\big|
			+
			\E\bigg[\bigg|\int_0^t\langle \eta_s^n,h_{R,s}\rangle\,\dd s\bigg|\bigg]\\
			&+
			\E\bigg[\bigg|\int_0^t\langle \eta_s,h_{R,s}\rangle_{\cH^{-k}_w,\cH^k_w}\,\dd s\bigg|\bigg].
		\end{align*}
		For fixed $R>0$, the first term on the right-hand side converges to $0$ thanks to $\Gamma_{R,t}^n\stackrel{d}{\to}\Gamma_{R,t}$. Moreover, Assumption \ref{assum.main}\eqref{assum.drift} implies $g=(g_s)_{s\in[0,T]}\in L^\infty([0,T];C_b^{k}(\R^d))$. Hence the tail estimates \eqref{L1.conv.R} and \eqref{eq:eta.tail.L1}, proved in Steps 5 and 6 of Proposition \ref{prop.conv.int.eta}, apply to this choice of $g$. Letting $n\to\infty$ and then  $R\to\infty$, we obtain $\Lambda_t^n\stackrel{d}{\to}\Lambda_t$.
		
		Finally, by Lemma \ref{lem:eta.dym}, \eqref{eq:Wn.Hvalued}--\eqref{eq:derivative.delta.pairing}, \eqref{eq:second.drift.conv}--\eqref{eq:third.drift.conv} of Step 2, and \eqref{eq:drift.diff.conv} of Step 4, we already know that $\Lambda_t^n\stackrel{d}{\to}0$. Therefore, it follows that $\Lambda_t=0$ a.s. Since $t$ was arbitrary, applying the above argument to all rational $t\in[0,T]$ and using continuity of $t\mapsto \eta_t$, $t\mapsto W_t$, and $t\mapsto \int_0^t \langle \eta_s,\LL_{s,\mu_s}f\rangle_{\cH^{-k}_w,\cH^k_w}\,\dd s$, we conclude that
		\begin{align*}
			\langle \eta_t,f\rangle_{\cH^{-k},\cH^k}
			=
			\langle \eta_0,f\rangle_{\cH^{-k},\cH^k}
			+
			\int_0^t \big\langle \eta_s,\LL_{s,\mu_s}f\big\rangle_{\cH^{-k}_w,\cH^k_w}\,\dd s
			+
			\langle W_t,f\rangle_{\cH^{-k-1},\cH^{k+1}}
		\end{align*}
		for all $t\in[0,T]$, $\PP$-a.s. This is exactly item \eqref{SPDE.def.weak} of Definition \ref{def:SPDE.limit}, with initial condition $\zeta_0:=\eta_0$.
		
\noindent\medskip\noindent\textit{Step 6.}
It remains to verify item \eqref{SPDE.def.dym} of Definition \ref{def:SPDE.limit} and identify the joint law of $W$ and $\eta_0$. Fix $\ell,m\in\N$, $g_1,\dots,g_\ell\in C_c^\infty(\R^d)$, and $f_1,\dots,f_m\in C_c^\infty(\R^d)$. Define 
\begin{align*}
	&Y^{n,g}
	=
	\big(
	\langle \eta_0^n,g_1\rangle,\dots,
	\langle \eta_0^n,g_\ell\rangle
	\big),  \quad  M^{n,f}
	=
	\big(
	\langle W^n,f_1\rangle,\dots,
	\langle W^n,f_m\rangle
	\big), \\
	& Y^g
	=
	\big(
	\langle \eta_0,g_1\rangle,\dots,
	\langle \eta_0,g_\ell\rangle
	\big),
	\quad \text{and} \quad
	W^f
	=
	\big(
	\langle W,f_1\rangle,\dots,
	\langle W,f_m\rangle
	\big).
\end{align*}
Since  $(\eta^n,W^n)\stackrel{d}{\to}(\eta,W)$ by \eqref{eq:joint.conv.eta.W} of Step 1, the Continuous Mapping Theorem gives
\begin{align}  \label{eq:Yg.Wf.conv}
	(Y^{n,g},M^{n,f})
	\stackrel{d}{\to}
	(Y^g,W^f)
	\quad
	\text{in } \R^\ell\times C([0,T];\R^m).
\end{align}
For $i, j \in [m]$, the quadratic covariation of $\langle W^n,f_i\rangle$ and $\langle W^n,f_j\rangle$ is
\begin{align}\label{eq:An_ij.def}
	A_{ij}^n(t)
	:=
	\big\langle \langle W^n,f_i\rangle,\langle W^n,f_j\rangle\big\rangle_t
	=
	\sigma^2\int_0^t
	\big\langle \mu_s^n,\nabla f_i\cdot\nabla f_j\big\rangle\,\dd s,
\end{align}
and we write $A^n(t):=(A_{ij}^n(t))_{i,j\in[m]}$. Let $C_{ij}(t):=\sigma^2\int_0^{t}\big\langle \mu_r,\nabla f_i\cdot\nabla f_j\big\rangle\,\dd r$ and $C(t):=(C_{ij}(t))_{i,j\in[m]}$. Since $\nabla f_i\cdot\nabla f_j$ is bounded and Lipschitz, and $\sup_{s\in[0,T]}\E[\W_2^2(\mu_s^n,\mu_s)]\to0$ by Proposition \ref{lem:preliminaries}\eqref{lem:preliminaries:ii}, we obtain
\begin{align}\label{eq:brack.L1.conv}
	\E\Big[
	\sup_{t\in[0,T]}
	\big|A_{ij}^n(t)-C_{ij}(t)\big|
	\Big]
	\to 0.
\end{align}
We first identify the joint law of $Y^g$ and $W^f$. Let $\FF^n=(\F_t^n)_{t\in[0,T]}$ denote the augmented filtration generated by the initial positions $X^1_0, \dots, X^n_0$ and the Brownian motions $B^1, \dots, B^n$. Then $M^{n,f}$ is a continuous $\FF^n$-martingale with $M_0^{n,f}=0$, and $\eta_0^n$, hence also $Y^{n,g}$, is $\F_0^n$-measurable.
	
	Fix $q\in\N$, times $t_1,\dots,t_q\in[0,T]$, and vectors $\lambda_1,\dots,\lambda_q\in\R^m$. Let $H(s):=\sum_{r=1}^q\lambda_r\,\mathbf 1_{\{s\le t_r\}}$ and $N_t^n:=\int_0^t H(s)\cdot\dd M_s^{n,f}$. Then,
	\begin{align*}
		N_T^n
		=
		\sum_{r=1}^q\lambda_r\cdot\int_0^T\mathbf 1_{\{s\le t_r\}}\,\dd M_s^{n,f}
		=
		\sum_{r=1}^q\lambda_r\cdot M_{t_r}^{n,f}.
	\end{align*}
	Also, using $\mathbf 1_{\{s\le t_r\}}\mathbf 1_{\{s\le t_{r'}\}}=\mathbf 1_{\{s\le t_r\wedge t_{r'}\}}$ and $A_{ij}^n(0)=0$,
	\begin{align}\label{eq:Nn.bracket}
		v_n := 	\langle N^n\rangle_T
		= \int_0^T H(s)^\top\,\dd A^n(s)\,H(s) = 
		\sum_{r,r'=1}^q\lambda_r^\top A^n(t_r\wedge t_{r'})\lambda_{r'}.
	\end{align}
	Together with \eqref{eq:brack.L1.conv}, we have
	\begin{align} \label{eq:L1.conv.v_n.v}
		\E[|v_n - v|] \to 0, \quad \text{where} \quad v:=\sum_{r,r'=1}^q\lambda_r^\top C(t_r\wedge t_{r'})\lambda_{r'}.
	\end{align} 
	
	Moreover, letting $H_a(s)$ denote the $a$-th component of $H(s)$,  inserting the expression for $\dd A_{ij}^n(s)$ from \eqref{eq:An_ij.def} into \eqref{eq:Nn.bracket} gives
	\begin{align*}
		\langle N^n\rangle_T
		=
		\sigma^2\int_0^T
		\Big\langle
		\mu_s^n,\,
		\Big|\sum_{a=1}^m H_a(s)\,\nabla f_a\Big|^2
		\Big\rangle\,\dd s
		\le
		\sigma^2 T
		\sup_{s\in[0,T]}
		\Big\|\sum_{a=1}^m H_a(s)\,\nabla f_a\Big\|_{L^\infty}^2 \lesssim 1. 
	\end{align*}
	Therefore, $L_t^n:=\exp\!\big(iN_t^n+\tfrac12\langle N^n\rangle_t\big)$ is a  complex-valued continuous true $\FF^n$-martingale and $\E[L_T^n\mid\F_0^n]= L^n_0 = 1$. Since $e^{iN_T^n}=L_T^n\,e^{-v_n/2}$ and $v$ is deterministic,
	\begin{align}\label{eq:cond.diff}
		\E\big[e^{iN_T^n}\mid\F_0^n\big]-e^{-v/2}
		=
		\E\big[L_T^n\big(e^{-v_n/2}-e^{-v/2}\big)\,\big|\,\F_0^n\big].
	\end{align}
	As $x\mapsto e^{-x/2}$ is $\tfrac12$-Lipschitz on $[0,\infty)$ and $|L_T^n|$ is bounded, we have from \eqref{eq:cond.diff} and \eqref{eq:L1.conv.v_n.v},
	\begin{align}\label{eq:exp.o(1)}
		\E\big[\big|\E[e^{iN_T^n}\mid\F_0^n]-e^{-v/2}\big|\big]\lesssim\E[|v_n-v|]\to0. 
	\end{align} Let $\theta\in\R^\ell$. Since $Y^{n,g}$ is $\F_0^n$-measurable, \eqref{eq:exp.o(1)} implies
	\begin{align*}
		\E\big[e^{i\theta\cdot Y^{n,g}}e^{iN_T^n}\big]
		=
		\E\big[e^{i\theta\cdot Y^{n,g}}\,\E[e^{iN_T^n}\mid\F_0^n]\big]
		=
		e^{-v/2}\,\E\big[e^{i\theta\cdot Y^{n,g}}\big]+o(1).
	\end{align*}
Letting $n\to\infty$ and using \eqref{eq:Yg.Wf.conv}, we obtain
	\begin{align} \label{eq:Y.W.Y.M}
		\E\bigg[e^{i\theta\cdot Y^g}\exp\bigg(i\sum_{r=1}^q\lambda_r\cdot W_{t_r}^f\bigg)\bigg]
		=
		e^{-v/2}\,\E\big[e^{i\theta\cdot Y^g}\big].
\end{align}

\noindent\smallskip\noindent\textit{Step 6(a): Independence.}
	It follows from \eqref{eq:Y.W.Y.M} that $W^f$ is independent of $Y^g$. Since $\ell,m$ and the test functions were arbitrary,  $W$ is independent of $\eta_0$.

\noindent\smallskip\noindent\textit{Step 6(b): Covariance.}
	We next show that the covariance formula \eqref{def.SPDE.cov} holds for $W$. Note that setting $\theta=0$ in \eqref{eq:Y.W.Y.M} shows that $W^f$ has covariance
	\begin{align} \label{eq: cov.iden.W}
		\E\big[\langle W_t,f_i\rangle\langle W_s,f_j\rangle\big]=C_{ij}(s\wedge t), \quad i,j \in [m].
	\end{align}
	Since $m$ and $f_1,\dots,f_m\in C_c^\infty(\R^d)$ were arbitrary, \eqref{eq: cov.iden.W} gives the covariance formula for $W$ against all test functions in $C_c^\infty(\R^d)$. It remains to extend the covariance formula to $\cH^{k+1}$. By \eqref{eq:joint.conv.eta.W} of Step 1, $W\in C([0,T];\cH^{-k-1})$ a.s. Let $f_1,f_2\in\cH^{k+1}$, and choose $\{f_1^p\}_{p\in\N},\{f_2^p\}_{p\in\N}\subset C_c^\infty(\R^d)$ such that $f_i^p\to f_i$ in $\cH^{k+1}$ for $i=1,2$. Therefore, for each $t\in[0,T]$ and $i=1,2$,
	\begin{align} \label{eq:pairing.approx.as}
		\big\langle W_t,f_i^p\big\rangle_{\cH^{-k-1},\cH^{k+1}}
		\to
		\langle W_t,f_i\rangle_{\cH^{-k-1},\cH^{k+1}}
		\quad \text{a.s.}
	\end{align}
	Moreover, applying \eqref{eq: cov.iden.W} to the test function $f_i^p-f_i^q$ gives
	\begin{align} \label{eq:pairing.approx.L2}
		\E\Big[
		\big|
		\big\langle W_t,f_i^p-f_i^q\big\rangle_{\cH^{-k-1},\cH^{k+1}}
		\big|^2
		\Big]
		=
		\sigma^2\int_0^t
		\big\langle \mu_u,
		\big|\nabla\big(f_i^p-f_i^q\big)\big|^2
		\big\rangle\,\dd u.
	\end{align}
	Since $k>d/2$, the Sobolev Embedding Theorem, see, e.g., \cite[Theorem 4.12, Case A, with $\Omega=\R^d$, $n=d$, $p=2$, $j=1$, and $m=k+1$]{adams2003sobolev}, gives $\cH^{k+1}\hookrightarrow C_b^1(\R^d)$. Therefore, since $\mu_u$ is a probability measure, the right-hand side of \eqref{eq:pairing.approx.L2} is bounded by
	\begin{align*}
		\sigma^2T\|\nabla(f_i^p-f_i^q)\|_{L^\infty}^2
		\lesssim
		\sigma^2T\|f_i^p-f_i^q\|_{\cH^{k+1}}^2,
	\end{align*}
	which tends to $0$ as $p,q\to\infty$. Thus, $\{\langle W_t, f_i^p\rangle_{\cH^{-k-1},\cH^{k+1}}\}_{p\in\N}$ is a Cauchy sequence in $L^2(\PP)$. In conjunction with \eqref{eq:pairing.approx.as}, we have, for each $t\in[0,T]$ and $i=1,2$,
	\begin{align} \label{eq:pairing.approx.L2.limit}
		\big\langle W_t,f_i^p\big\rangle_{\cH^{-k-1},\cH^{k+1}}
		\to
		\langle W_t,f_i\rangle_{\cH^{-k-1},\cH^{k+1}}
		\quad \text{in } L^2(\PP).
	\end{align}
	Applying \eqref{eq: cov.iden.W} to $f_1^p,f_2^p$, and then passing to the limit as $p\to\infty$, we obtain \eqref{def.SPDE.cov}. Here the convergence of the left-hand side follows from \eqref{eq:pairing.approx.L2.limit} and the Cauchy-Schwarz inequality, while the convergence of the right-hand side follows from the Sobolev embedding $\cH^{k+1}\hookrightarrow C_b^1(\R^d)$.

\smallskip\noindent\textit{Step 6(c): Gaussianity.} It remains to show that $W$ has centered Gaussian finite-dimensional distributions when tested against functions in $\cH^{k+1}$. From \eqref{eq:Y.W.Y.M} at $\theta=0$, we already know this when the test functions belong to $C_c^\infty(\R^d)$. Fix $N\in\N$, $t_1,\dots,t_N\in[0,T]$, and $h_1,\dots,h_N\in\cH^{k+1}$. Choose $h_i^p\in C_c^\infty(\R^d)$ such that $h_i^p\to h_i$ in $\cH^{k+1}$ for each $i\in[N]$. By the same argument as in \eqref{eq:pairing.approx.L2.limit},
	\begin{align*}
		\big(
		\langle W_{t_1},h_1^p\rangle,\dots,
		\langle W_{t_N},h_N^p\rangle
		\big)
		\to
		\big(
		\langle W_{t_1},h_1\rangle,\dots,
		\langle W_{t_N},h_N\rangle
		\big) \quad \text{in } L^2(\PP),
	\end{align*}
	and hence in distribution. Since centered Gaussian laws are preserved under weak limits, the limiting vector is centered Gaussian. 
	\end{proof}

	\section{Uniqueness}\label{sec:unique}

In this section, we prove the pathwise uniqueness statement in Theorem \ref{thm:fluctuation.clt} by an energy inequality argument. Uniqueness for fluctuation SPDEs of this type is also proved in \cite[Theorem~5.19]{meleard1996asymptotic} by a duality argument, in a different setting with time-homogeneous and measure-dependent  coefficients. On the overlap with our setting, the argument requires the coefficients to lie in $C_b^{2\lambda_d+2}$, more regularity than the $b_0,b\in C_b^{k+1}$ of Assumption \ref{assum.main}\eqref{assum.drift}, which in the smallest admissible case $k=\lambda_d+2$ is $C_b^{\lambda_d+3}$, matching $C_b^{2\lambda_d+2}$ when $d=1$ and requiring less regularity when $d\ge2$.

To do so, we first collect some properties of the operator $\LL_{t,\mu_t}$ defined in \eqref{def.generator}. For simplicity of notation, we write
	\begin{align} \label{eq: operator.rewrite}
		\LL_{t,\mu_t}f
		&=
		\frac{\sigma^2}{2}\Delta f+\beta_t\cdot \nabla f+\K_t f,
	\end{align}
	where the function $\beta_t$ and the operator $\K_t$ are defined by
	\begin{align*}
		\beta_t(x)
		:=
		b_0(t,x)+\int_{\R^d} b(t,x,y)\,\mu_t(\dd y), \qquad 	\K_t f(x)
		:=
		\int_{\R^d} b(t,y,x)\cdot \nabla f(y)\,\mu_t(\dd y).
	\end{align*}

		\begin{proposition}\label{prop:L.bound}
		Suppose Assumption \ref{assum.main} holds with $k\ge \lambda_d$. Then, for each $t \in [0,T]$ and $m \in \{k-1, k+1\}$, the operator $\LL_{t,\mu_t}$, defined by \eqref{def.generator} on $C_c^\infty(\R^d)$, extends to a bounded linear operator from $\cH_w^{m+2}$ to $\cH_w^m$. Moreover, the operator norms are bounded uniformly in $t\in[0,T]$.
	\end{proposition}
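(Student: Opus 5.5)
We prove the bound $\|\LL_{t,\mu_t}f\|_{\cH^m_w}\lesssim\|f\|_{\cH^{m+2}_w}$ for $f\in C_c^\infty(\R^d)$, with a constant uniform in $t\in[0,T]$. Density of $C_c^\infty(\R^d)$ in $\cH^{m+2}_w$ (by definition of the completion) then gives the bounded extension. By \eqref{eq: operator.rewrite} we treat the three pieces $\frac{\sigma^2}{2}\Delta f$, $\beta_t\cdot\nabla f$ and $\K_t f$ separately.

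\emph{Laplacian.} The bound $\|\Delta f\|_{\cH^m_w}\le\|f\|_{\cH^{m+2}_w}$ is immediate from the definition of the norm, since every $D^{\bm\alpha}\Delta f$ with $|\bm\alpha|\le m$ is a sum of derivatives of order at most $m+2$ of $f$.

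\emph{Transport term.} For $|\bm\alpha|\le m\le k+1$, Leibniz gives $D^{\bm\alpha}(\beta_t\cdot\nabla f)$ as a sum of terms $D^{\bm\gamma}\beta_t\cdot D^{\bm\alpha-\bm\gamma}\nabla f$. Differentiating under the integral, $\|D^{\bm\gamma}\beta_t\|_\infty\le\|D^{\bm\gamma}b_0(t,\cdot)\|_\infty+\sup_{y}\|D_x^{\bm\gamma}b(t,\cdot,y)\|_\infty$, which is finite uniformly in $t$ as long as $|\bm\gamma|\le k+1$; this is exactly the regularity in Assumption \ref{assum.main}\eqref{assum.drift}. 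Each term is then bounded in $L^2(w^{-1}\dd x)$ by a constant times $\|f\|_{\cH^{m+1}_w}\le\|f\|_{\cH^{m+2}_w}$.

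\emph{Nonlocal term.} This is the main point, since $\K_t f$ does not inherit the decay of $f$. Writing $D^{\bm\alpha}\K_tf(x)=\int D_{x}^{\bm\alpha}b(t,y,x)\cdot\nabla f(y)\,\mu_t(\dd y)$ for $|\bm\alpha|\le m\le k+1$, we get
\begin{align*}
|D^{\bm\alpha}\K_tf(x)|\le \|b\|_{C^{k+1}_b}\int|\nabla f|\,\dd\mu_t\quad\text{for all }x.
\end{align*}
Since $\int w^{-1}\dd x<\infty$ (because $2\lambda_d>d$), it suffices to bound $\int|\nabla f|\,\dd\mu_t\lesssim\|f\|_{\cH^{m+2}_w}$, and the only nontrivial case is $m=k-1$. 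We plan to use a weighted Sobolev embedding. Set $g=\nabla f/\sqrt{w}$. Derivatives of $w^{-1/2}$ are bounded by $C\,w^{-1/2}$, so $\|g\|_{\cH^{m+1}}\lesssim\|f\|_{\cH^{m+2}_w}$. Since $m+1\ge k\ge\lambda_d>d/2$, the unweighted Sobolev embedding yields $\|g\|_\infty\lesssim\|f\|_{\cH^{m+2}_w}$, that is, $|\nabla f(y)|\lesssim\sqrt{w(y)}\,\|f\|_{\cH^{m+2}_w}$. Consequently
\begin{align*}
\int|\nabla f|\,\dd\mu_t\lesssim\|f\|_{\cH^{m+2}_w}\sup_{t\in[0,T]}\E\big[\sqrt{w(Y_t)}\big],
\end{align*}
and the supremum is finite by \eqref{eq:w.momet.statement} of Proposition \ref{lem:preliminaries}\eqref{lem:preliminaries:iv}. (As an alternative, Cauchy--Schwarz against $\mu_t$ would work if $\mu_t$ had a bounded density, but the embedding route avoids needing one.)

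The delicate step is the commutation of derivatives with the weight. Rigorously this amounts to verifying $\|w^{-1/2}h\|_{\cH^{j}}\asymp\|h\|_{\cH^j_w}$, which follows from $|D^{\bm\gamma}w^{-1/2}|\lesssim w^{-1/2}$ for all multi-indices (since $w$ is a polynomial of degree $2\lambda_d$ bounded below by $1$). We would record this as a short lemma before combining the three estimates. All constants depend on $t$ only through $\sup_t\|b_0(t,\cdot)\|_{C_b^{k+1}}$, $\sup_t\|b(t,\cdot,\cdot)\|_{C_b^{k+1}}$ and the uniform moment bound, which gives the uniformity in $t\in[0,T]$.
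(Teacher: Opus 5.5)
Your proposal is correct and follows the paper's proof essentially step by step. You use the same three-term split \eqref{eq: operator.rewrite}, Leibniz for the transport term, and for $\K_t$ the same device: apply the unweighted Sobolev embedding (valid since $m+1\ge k\ge\lambda_d>d/2$) to $w^{-1/2}\nabla f$, then close with the uniform moment bound \eqref{eq:w.momet.statement}. The only cosmetic difference is that you control $\int|\nabla f|\,\dd\mu_t$ by $\sup_t\E[\sqrt{w(Y_t)}]$, whereas the paper applies Jensen and controls $\int|\nabla f|^2\,\dd\mu_t$ by $\int w\,\dd\mu_t$.
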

	\begin{proof}
		Fix $m\in\{k-1,k+1\}$. Since $C_c^\infty(\R^d)$ is dense in $\cH_w^{m+2}$ (see Lemma \ref{lem:sob}\eqref{lem:sob:vii}), it suffices to prove the uniform estimate
		\begin{align*}
			\|\LL_{t,\mu_t}f\|_{\cH_w^m}
			\lesssim
			\|f\|_{\cH_w^{m+2}},
			\quad \text{for all } f\in C_c^\infty(\R^d)  \text{ and }t\in[0,T].
		\end{align*}
		
		Fix $f \in C^\infty_c(\R^d)$. For the Laplacian term on the right-hand side of \eqref{eq: operator.rewrite}, the definition of the $\cH_w^m$ norm gives $\|\Delta f\|_{\cH_w^m}
		\lesssim 
		\|f\|_{\cH_w^{m+2}}$.
		
		For the second term on the right-hand side of  \eqref{eq: operator.rewrite}, Assumption \ref{assum.main}\eqref{assum.drift} implies
		\begin{align*}
			\sup_{t\in[0,T]}\max_{0 \le |\bm{\alpha}|\le m}\sup_{x\in\R^d}
			|D^{\bm{\alpha}}\beta_t(x)|
			<
			\infty.
		\end{align*}
		Hence, by Leibniz rule,
		\begin{align*}
			\sup_{t\in[0,T]}
			\|\beta_t\cdot \nabla f\|_{\cH_w^m}
			\lesssim
			\|f\|_{\cH_w^{m+2}}.
		\end{align*}
		
		For the third term on the right-hand side of \eqref{eq: operator.rewrite}, first note that the weight $w$ in \eqref{eq:w.def} satisfies $|D^{\bm{\alpha}}w^{-1/2}|\lesssim w^{-1/2}$ for every $0\le |\bm{\alpha}|\le m+1$. Hence, for every $0\le |\bm{\alpha}|\le m+1$, 
		Leibniz's rule gives
		\begin{align}
			\sup_{i\in[d]}\big\|D^{\bm{\alpha}}(w^{-1/2}\partial_i f)\big\|_{L^2}
			&\lesssim
			\sup_{i\in[d]}\sum_{0\le \bm{\gamma}\le \bm{\alpha}}
			\big\|w^{-1/2}D^{\bm{\gamma}}\partial_i f\big\|_{L^2}  \lesssim
			\|f\|_{\cH_w^{m+2}}. \label{eq:weighted.Hm1.gradient}
		\end{align}
		Since $m\ge k-1$ and $k\ge \lambda_d$, we have $m+1\ge \lambda_d>d/2$. Therefore, the Sobolev Embedding Theorem, see, e.g., \cite[Theorem 4.12, Case A, with $\Omega=\R^d$, $n=d$, $p=2$, $j=0$, and $m$ therein taken to be $m+1$]{adams2003sobolev}, together with \eqref{eq:weighted.Hm1.gradient} summed over $0\le |\bm{\alpha}|\le m+1$, gives
		\begin{align}
			\|w^{-1/2}\nabla f\|_{L^\infty}
			&\lesssim
			\sup_{i\in[d]}\big\|w^{-1/2}\partial_i f\big\|_{\cH^{m+1}}  \lesssim
			\|f\|_{\cH_w^{m+2}}. \label{eq:weighted.linfty.gradient}
		\end{align}
		Moreover, Jensen's inequality, Assumption \ref{assum.main}\eqref{assum.drift}, and $\int_{\R^d}w(x)^{-1}\,\dd x<\infty$ give
		\begin{align*}
			\|\K_t f\|_{\cH_w^m}^2
			&=
			\sum_{0\le |\bm{\alpha}|\le m}
			\int_{\R^d}
			\bigg|\int_{\R^d}D^{\bm{\alpha}}b(t,y,\cdot)(x)\cdot \nabla f(y)\,\mu_t(\dd y)\bigg|^2 w(x)^{-1}\,\dd x
			\\
			& \lesssim 
			\int_{\R^d}|\nabla f(y)|^2\,\mu_t(\dd y).
		\end{align*}
		Hence, by Proposition \ref{lem:preliminaries}\eqref{lem:preliminaries:iv},
		\begin{align} \label{eq:Kf.bound}
			\|\K_t f\|_{\cH_w^m}^2 \lesssim 
			\|w^{-1/2}\nabla f\|_{L^\infty}^2
			\int_{\R^d}w(y)\,\mu_t(\dd y) \lesssim 		\|w^{-1/2}\nabla f\|_{L^\infty}^2.
		\end{align}
		Thus, by \eqref{eq:weighted.linfty.gradient}, $\|\K_t f\|_{\cH_w^m}\lesssim \|f\|_{\cH_w^{m+2}}$, uniformly in $t\in[0,T]$. Combining the three estimates yields
		\begin{align*}
			\|\LL_{t,\mu_t}f\|_{\cH_w^m}
			\le
			C\|f\|_{\cH_w^{m+2}},
		\end{align*}
		uniformly in $t\in[0,T]$, as desired.
	\end{proof}

	\begin{proposition}[Pathwise uniqueness]\label{prop.unique}
		Suppose Assumption \ref{assum.main} holds with $k \ge \lambda_d+2$. Let $(\eta,W)$ and $(\eta',W)$ be two $C([0,T];\cH^{-k}) \times C([0,T];\cH^{-(k+1)})$-valued solutions to the fluctuation SPDE, in the sense of Definition \ref{def:SPDE.limit}, both defined on the same filtered probability space $(\Omega,\F,\FF=(\F_t)_{t\in[0,T]},\PP)$. Assume, moreover, that $\eta_0=\eta_0'$, $\PP$-a.s. Then, $\eta=\eta'$, $\PP$-a.s.
	\end{proposition}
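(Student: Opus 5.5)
Set $\delta_t:=\eta_t-\eta'_t$. Subtracting the two instances of \eqref{eq: SPDE.integral}, the initial conditions and the common noise $W$ cancel. For every $f\in C_c^\infty(\R^d)$ and $t\in[0,T]$ we obtain, $\PP$-a.s.,
\begin{align*}
\langle \delta_t,f\rangle_{\cH^{-k},\cH^k}=\int_0^t\langle \delta_s,\LL_{s,\mu_s}f\rangle_{\cH^{-k}_w,\cH^k_w}\,\dd s .
\end{align*}
This is a deterministic linear equation, pathwise in $\omega$, with $\delta\in C([0,T];\cH^{-k})\cap L^2([0,T];\cH_w^{-(k-1)})$ by item \eqref{SPDE.def.integrability} of Definition \ref{def:SPDE.limit}. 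By Proposition \ref{prop:L.bound} with $m=k-1$, $\LL_{s,\mu_s}:\cH_w^{k+1}\to\cH_w^{k-1}$ is bounded uniformly in $s$. By density of $C_c^\infty$ (Lemma \ref{lem:sob}), the identity therefore extends to all $f\in\cH_w^{k+1}$, with the pairing understood in $\cH_w^{-(k+1)}$. Thus $\delta$ solves $\partial_t\delta=\LL_{t,\mu_t}^*\delta$ weakly in $\cH_w^{-(k+1)}$, and $\partial_t\delta\in L^2([0,T];\cH_w^{-(k+1)})$.

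The plan is to follow the energy method of \cite[Sections 2.5, 3.4]{rozovsky2018stochastic} in the Gelfand triple
\begin{align*}
\cH_w^{-(k-1)}\hookrightarrow \cH_w^{-k}\hookrightarrow \cH_w^{-(k+1)},
\end{align*}
with pivot $V:=\cH_w^{-k}$. Since $\delta\in L^2(\cH_w^{-(k-1)})$ and $\partial_t\delta\in L^2(\cH_w^{-(k+1)})$, the Lions–Magenes lemma gives that $t\mapsto\|\delta_t\|_V^2$ is absolutely continuous, with
\begin{align*}
\tfrac{\dd}{\dd t}\|\delta_t\|_V^2=2\,\langle\!\langle \LL_{t,\mu_t}^*\delta_t,\delta_t\rangle\!\rangle ,
\end{align*}
where $\langle\!\langle\cdot,\cdot\rangle\!\rangle$ is the $V$-duality between $\cH_w^{-(k+1)}$ and $\cH_w^{-(k-1)}$. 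To make this concrete, I would realize the inner product on $V$ through the Riesz isomorphism $\Lambda:\cH_w^{-k}\to\cH_w^{k}$. The energy term then becomes $\langle \delta_t,\LL_{t,\mu_t}\Lambda\delta_t\rangle$, which makes sense because $\Lambda$ maps $\cH_w^{-(k-1)}$ into $\cH_w^{k+1}$, and the weight is what makes $\Lambda$ local enough for this. Since $\delta_0=0$, the goal is the Gårding-type bound
\begin{align*}
\langle \delta,\LL_{t,\mu_t}\Lambda\delta\rangle\le C\|\delta\|_V^2,
\end{align*}
uniform in $t$. Gronwall's inequality then gives $\|\delta_t\|_V=0$ for all $t$. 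By continuity in $\cH^{-k}$, this yields $\eta=\eta'$ in $C([0,T];\cH^{-k})$, $\PP$-a.s.

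The main obstacle is the energy estimate itself. I would split $\LL_{t,\mu_t}=\frac{\sigma^2}{2}\Delta+\beta_t\cdot\nabla+\K_t$ as in \eqref{eq: operator.rewrite} and treat the three parts separately.
\begin{itemize}
\item[(a)] The Laplacian produces a nonpositive leading term $-\frac{\sigma^2}{2}\|\delta\|_{\cH_w^{-(k-1)}}^2$ plus lower-order commutators with the weight. These commutators are controlled by $\varepsilon\|\delta\|_{\cH_w^{-(k-1)}}^2+C_\varepsilon\|\delta\|_V^2$, because $|D^{\bm\alpha}w^{-1}|\lesssim w^{-1}$.
\item[(b)] The transport term $\beta_t\cdot\nabla$ is handled by integrating by parts to move half a derivative. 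The commutators between $\beta_t$ and $\Lambda$ involve at most $k+1$ derivatives of $b_0$ and $b$, and these are bounded by Assumption \ref{assum.main}\eqref{assum.drift}. This gives $|\langle\delta,\beta_t\cdot\nabla\Lambda\delta\rangle|\le C\|\delta\|_V^2+\varepsilon\|\delta\|_{\cH_w^{-(k-1)}}^2$.
\item[(c)] The nonlocal term satisfies $|\langle\delta,\K_t\Lambda\delta\rangle|\le\|\delta\|_{\cH_w^{-(k-1)}}\|\K_t\Lambda\delta\|_{\cH_w^{k-1}}$. By \eqref{eq:Kf.bound} and \eqref{eq:weighted.linfty.gradient}, applied with one derivative less, the second factor is at most $\|\Lambda\delta\|_{\cH_w^{k}}\lesssim\|\delta\|_V$. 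Young's inequality then absorbs this term as well.
\end{itemize}

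Choosing $\varepsilon<\sigma^2/4$ absorbs all the $\|\delta\|_{\cH_w^{-(k-1)}}^2$ terms into the dissipation from (a). If the weighted Riesz-map commutators prove awkward, I would fall back on working with $J^{-k}$ and the characterization \eqref{eq:Hminus.kw.characterization}, that is, estimating $\|J^{-k}\delta\|_{L^2(w\,\dd x)}^2$ directly and computing commutators $[J^{-k},\beta_t\cdot\nabla]$ via the Bessel kernel bounds of Lemma \ref{lem:bessel:weighted-L2}.
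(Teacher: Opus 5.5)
Your energy method is sound in outline, but it works at a different level from the paper's, and the step that makes your level work is asserted rather than proved. With pivot $\cH_w^{-k}$ and Riesz map $\Lambda:\cH_w^{-k}\to\cH_w^{k}$, everything depends on $\Lambda$ mapping $\cH_w^{-(k-1)}$ \emph{boundedly} into $\cH_w^{k+1}$. You need this in three places.
\begin{enumerate}[(1)]
\item It is what makes $\langle\delta,\LL_{t,\mu_t}\Lambda\delta\rangle$ defined, since $\LL_{t,\mu_t}$ only maps $\cH_w^{k+1}$ into $\cH_w^{k-1}$.
\item Lions--Magenes requires $\partial_t\delta$ in the dual of $V=\cH_w^{-(k-1)}$ taken relative to the pivot. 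Your bound $\partial_t\delta\in L^2(\cH_w^{-(k+1)})$ is a duality with $\cH_w^{k+1}$, and translating it needs $\Lambda(V)\subset\cH_w^{k+1}$ with an estimate.
\item In the G{\aa}rding step, the identity $\langle\delta,\LL_{t,\mu_t}\Lambda\delta\rangle=\langle\Lambda\delta,\LL_{t,\mu_t}\Lambda\delta\rangle_{\cH_w^{k}}$ requires $\Lambda\delta\in\cH_w^{k+2}$. So you must approximate $\delta$ by smooth functions and pass to the limit, which again needs continuity of $\Lambda$ from $\cH_w^{-(k-1)}$ to $\cH_w^{k+1}$.
\end{enumerate}
This two-derivative gain is a quantitative weighted elliptic regularity estimate for $L_w=\sum_{|\bm\alpha|\le k}(-1)^{|\bm\alpha|}D^{\bm\alpha}(w^{-1}D^{\bm\alpha}\,\cdot)$. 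It is true: conjugate to $P=w^{1/2}L_w w^{1/2}$ and use the parametrix bounds \eqref{eq:smoothing} quantitatively. But ``the weight makes $\Lambda$ local enough'' does not establish it, and Lemma \ref{lem:sob}\eqref{dual.smooth} only gives a qualitative statement for $C_c^\infty$ data.

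The paper's proof avoids exactly this by taking the energy one level lower, $\|\delta_t\|^2_{\cH_w^{-(k+1)}}$.
\begin{itemize}
\item The Riesz representative $\widetilde\delta_s$ then lies in $\cH_w^{k+1}$ by definition, so $\langle\delta_s,\LL_{s,\mu_s}\widetilde\delta_s\rangle_{\cH_w^{-(k-1)},\cH_w^{k-1}}$ is automatically defined.
\item The energy identity \eqref{eq:energy.identity} is proved by hand from an orthonormal basis of $\cH_w^{k+1}$ made of $C_c^\infty$ functions, with no Gelfand triple.
\item In the G{\aa}rding bound, the passage $\delta_t^p\to\delta_t$ is controlled by the Riesz isometry $\widetilde\delta_t^p\to\widetilde\delta_t$ in $\cH_w^{k+1}$. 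Elliptic regularity is therefore needed only qualitatively, for the smooth approximants.
\end{itemize}
The cost is that the G{\aa}rding computation happens in the $\cH_w^{k+1}$ inner product, so the transport term consumes $k+1$ derivatives of $b_0,b$. Assumption \ref{assum.main}\eqref{assum.drift} supplies these, and your level would use one fewer.

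Your item (b) is also more complicated than necessary: no commutator with the nonlocal $\Lambda$ is needed. The crude bound $|\langle\Lambda\delta,\beta_t\cdot\nabla\Lambda\delta\rangle_{\cH_w^{k}}|\lesssim\|\Lambda\delta\|_{\cH_w^{k}}\|\nabla\Lambda\delta\|_{\cH_w^{k}}$ suffices. Follow it with Young's inequality against the dissipation $-\frac{\sigma^2}{2}\|\nabla\Lambda\delta\|^2_{\cH_w^{k}}$, which is exactly what the paper does one level down.
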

	
	\begin{proof}
		Let $\delta:=\eta-\eta' \in C([0,T];\cH^{-k}) $. Then,
		Definition \ref{def:SPDE.limit} yields $\delta\in L^2([0,T];\cH_w^{-(k-1)})$ and, using \eqref{eq: lower.higher} in Lemma \ref{lem:sob}\eqref{weighted-unweighted-pair}, we obtain, for every $f\in C_c^\infty(\R^d)$ and  $t\in[0,T]$,
		\begin{align} \label{diff.SPDE.dym}
			\langle \delta_t,f\rangle_{\cH^{-k},\cH^{k}}
			=
			\int_0^t \langle \delta_s,\LL_{s,\mu_s} f\rangle_{\cH_w^{-(k-1)},\cH_w^{k-1}}\,\dd s.
		\end{align}
		We divide the proof into three steps. We will repeatedly use some facts on weighted Sobolev spaces collected in Lemma \ref{lem:sob}. 
		
		\medskip\noindent\textit{Step 1.}
		We first prove that, for a.e. $t\in[0,T]$,
		\begin{align} \label{eq:energy.identity}
			\|\delta_t\|_{\cH_w^{-(k+1)}}^2
			=
			2\int_0^t
			\langle \delta_s,\LL_{s,\mu_s}\widetilde{\delta}_s\rangle_{\cH_w^{-(k-1)},\cH_w^{k-1}}\,\dd s,
		\end{align}
		where, for a.e. $s\in[0,T]$, Lemma \ref{lem:sob}\eqref{embed.k1.k2} gives $\delta_s\in \cH_w^{-(k+1)}$, and $\widetilde{\delta}_s\in \cH_w^{k+1}$ is the unique element given by the Riesz Representation Theorem such that for all $\phi \in \cH^{k+1}_w$,
		\begin{align} \label{eq:delta.representation}
			\langle \delta_s,\phi\rangle_{\cH_w^{-(k+1)},\cH_w^{k+1}}
			=
			\langle \widetilde{\delta}_s,\phi\rangle_{\cH_w^{k+1}}, \quad \text{and} \quad
			\big\|\widetilde{\delta}_s\big\|_{\cH_w^{k+1}}
			=
			\|\delta_s\|_{\cH_w^{-(k+1)}}.
		\end{align}
		
	To prove \eqref{eq:energy.identity}, by Lemma \ref{lem:sob}\eqref{denseness.smooth.in.neg}, applied with $k+1$ in place of $k$, we may choose an orthonormal basis $\{f_j\}_{j\in\N}$ of $\cH_w^{k+1}$ with $f_j\in C_c^\infty(\R^d)$ for each $j \in \N$. For a.e. $t \in [0,T]$ such that $\delta_t\in \cH_w^{-(k-1)}$, Lemma \ref{lem:sob}\eqref{weighted-unweighted-pair}, together with \eqref{eq:delta.representation}, gives
	\begin{align} \label{eq:coordinate.riesz}
		\langle \delta_t,f_j\rangle_{\cH^{-k},\cH^k}
		=
		\langle \delta_t,f_j\rangle_{\cH_w^{-(k-1)},\cH_w^{k-1}}
		=
		\langle \delta_t,f_j\rangle_{\cH_w^{-(k+1)},\cH_w^{k+1}}
		=
		\langle \widetilde{\delta}_t,f_j\rangle_{\cH_w^{k+1}}.
	\end{align}
		By \eqref{diff.SPDE.dym} and Proposition \ref{prop:L.bound}, the map $t\mapsto \langle \delta_t,f_j\rangle_{\cH^{-k},\cH^k}$ is absolutely continuous, with
		\begin{align} \label{eq:coordinate.derivative}
			\frac{\dd}{\dd t}\langle \delta_t,f_j\rangle_{\cH^{-k},\cH^k}
			=
			\langle \delta_t,\LL_{t,\mu_t}f_j\rangle_{\cH_w^{-(k-1)},\cH_w^{k-1}}
		\end{align}
		for a.e. $t\in[0,T]$. For a.e. $t$ such that $\delta_t\in \cH_w^{-(k+1)}$, Parseval's identity in $\cH_w^{k+1}$ and \eqref{eq:coordinate.riesz} give
		\begin{align}\label{parseval}
			\big\|\widetilde{\delta}_t\big\|_{\cH_w^{k+1}}^2
			=
			\sum_{j=1}^\infty \langle \delta_t,f_j\rangle_{\cH^{-k},\cH^k}^2, \quad \text{and} \quad 
			\widetilde{\delta}_t=\sum_{j=1}^\infty \langle \delta_t,f_j\rangle_{\cH^{-k},\cH^k}f_j \quad \text{in } \cH_w^{k+1}.
		\end{align}
		
		For $N\in\N$, let
		\begin{align*}
			\widetilde{\delta}_t^N:=\sum_{j=1}^N \langle \delta_t,f_j\rangle_{\cH^{-k},\cH^k}f_j, \quad \text{and} \quad
			E_N(t):=\sum_{j=1}^N \langle \delta_t,f_j\rangle_{\cH^{-k},\cH^k}^2.
		\end{align*}
		Then  from \eqref{parseval}, for a.e. $t \in [0,T]$, 
		\begin{align} \label{Parseval.conv}
			\widetilde{\delta}_t^N \to \widetilde{\delta}_t \quad \text{in }  \cH_w^{k+1}, \quad \text{and} \quad E_N(t) \to \|\delta_t\|^2_{\cH^{-(k+1)}_w}.
		\end{align}
		Moreover, $E_N$ is absolutely continuous. Hence, by \eqref{eq:coordinate.derivative} and $E_N(0)=0$, we have
		\begin{align} \label{eq:EN.identity}
			E_N(t)
			=
			2\int_0^t
			\langle \delta_s,\LL_{s,\mu_s}\widetilde{\delta}_s^N\rangle_{\cH_w^{-(k-1)},\cH_w^{k-1}}\,\dd s.
		\end{align}
		For a.e. $s \in [0,T]$, by  Proposition \ref{prop:L.bound} and \eqref{Parseval.conv},
		\begin{align}
			\begin{split}
			\big|\langle \delta_s,\LL_{s,\mu_s}\widetilde{\delta}_s^N\rangle_{\cH_w^{-(k-1)},\cH_w^{k-1}}
			-
			\langle \delta_s,\LL_{s,\mu_s}\widetilde{\delta}_s\rangle_{\cH_w^{-(k-1)},\cH_w^{k-1}}\big| &\le
			\|\delta_s\|_{\cH_w^{-(k-1)}}
			\big\|\LL_{s,\mu_s}(\widetilde{\delta}_s^N-\widetilde{\delta}_s)\big\|_{\cH_w^{k-1}} \\
			&\lesssim
			\|\delta_s\|_{\cH_w^{-(k-1)}}\big\|\widetilde{\delta}_s^N-\widetilde{\delta}_s\big\|_{\cH_w^{k+1}}
			\to 0.
			\end{split}
			\label{eq:EN.integrand.convergence}
		\end{align}
		Moreover, Proposition \ref{prop:L.bound}, the estimate $\|\widetilde{\delta}_s^N\|_{\cH_w^{k+1}}\le \|\widetilde{\delta}_s\|_{\cH_w^{k+1}}=\|\delta_s\|_{\cH_w^{-(k+1)}}$, and Lemma \ref{lem:sob}\eqref{embed.k1.k2} give
		\begin{align}
			\big|\langle \delta_s,\LL_{s,\mu_s}\widetilde{\delta}_s^N\rangle_{\cH_w^{-(k-1)},\cH_w^{k-1}}\big|
			&\le
			\|\delta_s\|_{\cH_w^{-(k-1)}}\big\|\LL_{s,\mu_s}\widetilde{\delta}_s^N\big\|_{\cH_w^{k-1}}
			\lesssim \|\delta_s\|_{\cH_w^{-(k-1)}}\big\|\widetilde{\delta}_s^N\big\|_{\cH_w^{k+1}}
			\lesssim \|\delta_s\|_{\cH_w^{-(k-1)}}^2. \label{eq:EN.integrand.domination}
		\end{align}
		Since $\delta\in L^2([0,T];\cH_w^{-(k-1)})$, the last expression is integrable on $[0,T]$. Letting $N\to\infty$ in \eqref{eq:EN.identity}, \eqref{Parseval.conv}, the Dominated Convergence Theorem, and \eqref{eq:EN.integrand.convergence}--\eqref{eq:EN.integrand.domination} yield \eqref{eq:energy.identity}.
		
		\medskip\noindent\textit{Step 2.} 
		We next show that, for a.e. $t\in[0,T]$,
		\begin{align} \label{eq:Gronwall.inequality}
			\frac{\dd}{\dd t}
			\int_0^t
			\langle \delta_s,\LL_{s,\mu_s}\widetilde{\delta}_s\rangle_{\cH_w^{-(k-1)},\cH_w^{k-1}}\,\dd s
			\lesssim 
			\int_0^t
			\langle \delta_s,\LL_{s,\mu_s}\widetilde{\delta}_s\rangle_{\cH_w^{-(k-1)},\cH_w^{k-1}}\,\dd s.
		\end{align}

		Fix a time $t\in[0,T]$ such that $\delta_t\in\cH_w^{-(k-1)}$. In particular,  $\delta_t\in\cH_w^{-(k+1)} $ and there exists $\widetilde{\delta}_t \in \cH^{k+1}_w$ satisfying  \eqref{eq:delta.representation}. By Lemma \ref{lem:sob}\eqref{denseness.smooth.in.neg}, we can choose $\varphi_p\in C_c^\infty(\R^d)$ such that if we  define $\delta_t^p\in \cH_w^{-(k-1)}$ by
		\begin{align*}
			\langle \delta_t^p,f\rangle_{\cH_w^{-(k-1)},\cH_w^{k-1}}
			=
			\int_{\R^d}\varphi_p(x)f(x)\,\dd x,
			\qquad f\in \cH_w^{k-1},
		\end{align*}
		we have
		\begin{align} \label{eq:delta.n.to.delta}
			\delta_t^p\to\delta_t \quad \text{in } \cH_w^{-(k-1)}.
		\end{align}
		Also, Lemma \ref{lem:sob}\eqref{embed.k1.k2} gives $\delta^p_t\in \cH_w^{-(k+1)}$. Therefore, by the Riesz Representation Theorem, there exists a unique element $\widetilde{\delta}_t^p\in \cH_w^{k+1}$ such that, for all $\phi\in \cH_w^{k+1}$,
		\begin{align} \label{eq: riesz.delta.n}
			\langle \delta_t^p,\phi\rangle_{\cH^{-(k+1)}_w,\cH^{k+1}_w}
			=
			\langle \widetilde{\delta}_t^p,\phi\rangle_{\cH^{k+1}_w},
			\quad  \text{and} \quad 
			\big\|\widetilde{\delta}_t^p\big\|_{\cH^{k+1}_w}
			=
			\big\|\delta_t^p\big\|_{\cH^{-(k+1)}_w}.
		\end{align}
		Moreover, by Lemma \ref{lem:sob}\eqref{embed.k1.k2}, we know $\delta_t^p\to\delta_t$  in $\cH_w^{-(k+1)}$ as well, so the isometry in the Riesz Representation Theorem gives
		\begin{align}\label{eq: tilde.delta.n.conv}
			\widetilde{\delta}_t^p\to\widetilde{\delta}_t \quad \text{in }  \cH_w^{k+1}.
		\end{align}
		Since $\delta_t^p$ is represented by $\varphi_p\in C_c^\infty(\R^d)$, Lemma \ref{lem:sob}\eqref{dual.smooth}, applied with $k+1$ in place of $k$, gives $\widetilde{\delta}_t^p\in \cH_w^{k+3}$. The estimates in the proof of Proposition \ref{prop:L.bound}, applied with $m=k+1$ and $f=\widetilde{\delta}_t^p$, show that the three terms in \eqref{eq: operator.rewrite} all belong to $\cH_w^{k+1}$. Therefore, Lemma \ref{lem:sob}\eqref{weighted-unweighted-pair} and \eqref{eq: riesz.delta.n} give
		\begin{align}
			\begin{split}
			&	\big\langle \delta_t^p,\LL_{t,\mu_t}\widetilde{\delta}_t^p\big\rangle_{\cH_w^{-(k-1)},\cH_w^{k-1}}
			=
			\big\langle \delta_t^p,\LL_{t,\mu_t}\widetilde{\delta}_t^p\big\rangle_{\cH^{-(k+1)}_w,\cH^{k+1}_w}
			=
			\big\langle \widetilde{\delta}_t^p,\LL_{t,\mu_t}\widetilde{\delta}_t^p\big\rangle_{\cH^{k+1}_w}\\
			& = \frac{\sigma^2}{2}
			\big\langle \widetilde{\delta}_t^p,\Delta \widetilde{\delta}_t^p\big\rangle_{\cH_w^{k+1}}
			+
			\big\langle \widetilde{\delta}_t^p,\beta_t\cdot\nabla \widetilde{\delta}_t^p\big\rangle_{\cH_w^{k+1}}
			+
			\big\langle \widetilde{\delta}_t^p,\K_t\widetilde{\delta}_t^p\big\rangle_{\cH_w^{k+1}}.
			\end{split}
			\label{eq:delta.hat.decom}
		\end{align}

		We estimate each of the three terms in \eqref{eq:delta.hat.decom}.  For the diffusion term, we claim that
		\begin{align} \label{eq:uniqueness.pf.diffusion.term.bound}
			\big\langle \Delta \widetilde{\delta}_t^p, \widetilde{\delta}_t^p\big\rangle_{\cH^{k+1}_w}
			+ \big\| \nabla \widetilde{\delta}_t^p \big\|^2_{\cH^{k+1}_w}	
			\lesssim  \big\| \widetilde{\delta}_t^p \big\|_{\cH^{k+1}_w}^2.
		\end{align}
		Here and below, for $f\in \cH_w^{k+2}$, we write
			\begin{align*}
				\|\nabla f\|_{\cH_w^{k+1}}^2
				:=
				\sum_{\ell=1}^d
				\|\partial_\ell f\|_{\cH_w^{k+1}}^2.
		\end{align*} To see this,   first note that
		\begin{align}  \label{eq: delta.p.delta.ibp}
			\big\langle \Delta \widetilde{\delta}_t^p, \widetilde{\delta}_t^p\big\rangle_{\cH^{k+1}_w}
			&=
			\sum_{0 \le |\bm{\alpha}|\le k+1}
			\int_{\R^d}
			D^{\bm{\alpha}} \widetilde{\delta}_t^p(x)\,\Delta D^{\bm{\alpha}} \widetilde{\delta}_t^p(x)\, w^{-1}(x)\,\dd x.
		\end{align} 
		Fix $\bm{\alpha}$ with $0 \le |\bm{\alpha}|\le k+1$ and let $u:=D^{\bm{\alpha}}\widetilde{\delta}_t^p$. Since $\widetilde{\delta}_t^p\in \cH^{k+3}_w$, we have $u\in \cH^2_w$. By Lemma \ref{lem:sob}\eqref{denseness.smooth.in.neg}, applied with $k=2$, we can find $u_m\in C_c^\infty(\R^d)$ such that $u_m\to u$ in $\cH_w^2$. For each $m$, integration by parts  gives
		\begin{align*}
			\int_{\R^d} u_m\,\Delta u_m\,w^{-1}\,\dd x
			=
			-\int_{\R^d}|\nabla u_m|^2\,w^{-1}\,\dd x
			+\frac12\int_{\R^d}u_m^2\,\Delta w^{-1}\,\dd x.
		\end{align*}
		Letting $m \to \infty$ and using $u_m\to u$, $\nabla u_m\to \nabla u$, and $\Delta u_m\to \Delta u$ in $L^2(w^{-1}(x) \, \dd x)$, together with the fact that the weight $w$ in \eqref{eq:w.def} satisfies  $|\Delta w^{-1}(x)|\lesssim w^{-1}(x)$,  we have
		\begin{align*}
			\int_{\R^d} u\,\Delta u\,w^{-1}\,\dd x
			=
			-\int_{\R^d}|\nabla u|^2\,w^{-1}\,\dd x
			+\frac12\int_{\R^d}u^2\,\Delta w^{-1}\,\dd x.
		\end{align*}
		Summing this over  $0 \le |\bm{\alpha}|\le k+1$ and using \eqref{eq: delta.p.delta.ibp} gives
		\begin{align*} 
			\big\langle \Delta \widetilde{\delta}_t^p, \widetilde{\delta}_t^p\big\rangle_{\cH^{k+1}_w}
			=&
			-\sum_{0 \le |\bm{\alpha}|\le k+1}
			\int_{\R^d}
			|D^{\bm{\alpha}} \nabla \widetilde{\delta}_t^p|^2\,w^{-1} \,\dd x\\
			&+
			\frac{1}{2}\sum_{0 \le |\bm{\alpha}|\le k+1}
			\int_{\R^d}
			|D^{\bm{\alpha}}\widetilde{\delta}_t^p|^2\,\Delta w^{-1}\,\dd x. \notag
		\end{align*}
		Using again $|\Delta w^{-1}(x)|\lesssim w^{-1}(x)$, we obtain \eqref{eq:uniqueness.pf.diffusion.term.bound}.
		
		For the second term on the right-hand side of \eqref{eq:delta.hat.decom}, Assumption \ref{assum.main}\eqref{assum.drift} implies that $\beta_t$ and its spatial derivatives up to order $k+1$ are bounded uniformly in $t$. Therefore,
		\begin{align} \label{eq: bound.beta}
			\big|\langle \beta_t\cdot\nabla\widetilde{\delta}_t^p,\widetilde{\delta}_t^p\rangle_{\cH_w^{k+1}}\big|
			\le
			\big\|\beta_t\cdot\nabla\widetilde{\delta}_t^p\big\|_{\cH_w^{k+1}}\big\|\widetilde{\delta}_t^p\big\|_{\cH_w^{k+1}} 	\lesssim \big\|\nabla\widetilde{\delta}_t^p\big\|_{\cH_w^{k+1}}\big\|\widetilde{\delta}_t^p\big\|_{\cH_w^{k+1}}.
		\end{align}

		For the third term on the right-hand side of \eqref{eq:delta.hat.decom}, we first use the following weighted Sobolev estimate. Since $k+1\ge \lambda_d+1>d/2$, the Sobolev Embedding Theorem, see, e.g., \cite[Theorem 4.12, Case A, with $\Omega=\R^d$, $n=d$, $p=2$, $j=0$, and  $m = k+1$]{adams2003sobolev}, together with Leibniz's rule and the bounds $|D^{\bm{\alpha}}w^{-1/2}|\lesssim w^{-1/2}$ , gives
		\begin{align}\label{eq: weighted.sob.est}
			\big\|w^{-1/2}\nabla f\big\|_{L^\infty}
			\lesssim
			\|\nabla f\|_{\cH_w^{k+1}},
			\qquad f\in \cH_w^{k+2}.
		\end{align}
		Next, note that although \eqref{eq:Kf.bound} in the proof of Proposition \ref{prop:L.bound} was first proved for $f\in C_c^\infty(\R^d)$, it extends by a density argument to the present choice $f=\widetilde{\delta}_t^p\in \cH_w^{k+3}$. Indeed, if $f_\ell\in C_c^\infty(\R^d)$ and $f_\ell\to f$ in $\cH_w^{k+3}$, then \eqref{eq: weighted.sob.est} implies
		\begin{align*}
			\big\|w^{-1/2}\nabla(f_\ell-f)\big\|_{L^\infty}
			\lesssim
			\|\nabla(f_\ell-f)\|_{\cH_w^{k+1}}
			\to 0,
		\end{align*}
		which, together with the continuity of $\K_t:\cH_w^{k+3}\to\cH_w^{k+1}$ from the proof of Proposition \ref{prop:L.bound}, allows us to pass to the limit in \eqref{eq:Kf.bound}. Therefore, applying \eqref{eq:Kf.bound} with $m=k+1$ and $f=\widetilde{\delta}_t^p$, and then using \eqref{eq: weighted.sob.est}, yields
		\begin{align} \label{eq:bound.K}
			\big|\langle \K_t\widetilde{\delta}_t^p,\widetilde{\delta}_t^p\rangle_{\cH_w^{k+1}}\big|
			\le
			\big\|\K_t\widetilde{\delta}_t^p\big\|_{\cH_w^{k+1}}\big\|\widetilde{\delta}_t^p\big\|_{\cH_w^{k+1}}
			\lesssim
			\big\|\nabla\widetilde{\delta}_t^p\big\|_{\cH_w^{k+1}}\big\|\widetilde{\delta}_t^p\big\|_{\cH_w^{k+1}}.
		\end{align}
		
		Putting the three bounds \eqref{eq:uniqueness.pf.diffusion.term.bound}, \eqref{eq: bound.beta} and \eqref{eq:bound.K} into \eqref{eq:delta.hat.decom}, we find that there exists $C<\infty$, independent of $t $ and $p$,  such that
			\begin{align} \label{eq:pre.Young.ineq}
				\big\langle \delta_t^p, \LL_{t,\mu_t} \widetilde{\delta}_t^p \big\rangle_{\cH_w^{-(k-1)}, \cH_w^{k-1}}
				+
				\frac{\sigma^2}{2}\big\| \nabla \widetilde{\delta}^p_t \big\|^2_{\cH^{k+1}_w}
				\le
				C\big\| \widetilde{\delta}^p_t \big\|_{\cH^{k+1}_w}^2
				+
				C\big\|\widetilde{\delta}^p_t\big\|_{\cH^{k+1}_w}\,\big\| \nabla \widetilde{\delta}_t^p\big\|_{\cH^{k+1}_w}.
		\end{align}
		Young's inequality gives,  for some constant $C_\sigma<\infty$, independent of $p $ and $t$, 
			\begin{align}\label{eq:Young.cross.term}
				C\big\|\widetilde{\delta}_t^p\big\|_{\cH_w^{k+1}}
				\big\|\nabla\widetilde{\delta}_t^p\big\|_{\cH_w^{k+1}}
				\le
				\frac{\sigma^2}{4}
				\big\|\nabla\widetilde{\delta}_t^p\big\|_{\cH_w^{k+1}}^2
				+
				C_\sigma
				\big\|\widetilde{\delta}_t^p\big\|_{\cH_w^{k+1}}^2.
		\end{align}
		Using \eqref{eq:Young.cross.term} in \eqref{eq:pre.Young.ineq}, 
		\begin{align} \label{eq:quadratic.fonr.inequ}
			\big\langle \delta_t^p, \LL_{t,\mu_t} \widetilde{\delta}_t^p \big\rangle_{\cH_w^{-(k-1)}, \cH_w^{k-1}}
			\le
			(C + C_\sigma)
			\big\|\widetilde{\delta}_t^p\big\|_{\cH_w^{k+1}}^2
			=
			(C + C_\sigma)
			\|\delta_t^p\|_{\cH_w^{-(k+1)}}^2.
		\end{align}
		From  \eqref{eq: tilde.delta.n.conv} and Proposition \ref{prop:L.bound}, we have $\LL_{t,\mu_t}\widetilde{\delta}_t^p\to\LL_{t,\mu_t}\widetilde{\delta}_t$ in $\cH_w^{k-1}$. Combining this with \eqref{eq:delta.n.to.delta}, and using Lemma \ref{lem:sob}\eqref{embed.k1.k2}, we get
		\begin{align*}
			\big\langle \delta_t^p,\LL_{t,\mu_t}\widetilde{\delta}_t^p\big\rangle_{\cH_w^{-(k-1)},\cH_w^{k-1}}
			\to
			\big\langle \delta_t,\LL_{t,\mu_t}\widetilde{\delta}_t\big\rangle_{\cH_w^{-(k-1)},\cH_w^{k-1}},
			\quad \text{and} \quad
			\|\delta_t^p\|_{\cH^{-(k+1)}_w}
			\to
			\|\delta_t\|_{\cH^{-(k+1)}_w}.
		\end{align*}
		Therefore, letting $p \to \infty$ in \eqref{eq:quadratic.fonr.inequ}, gives
		\begin{align} \label{eq:three.term.bound}
			\big\langle \delta_t,\LL_{t,\mu_t}\widetilde{\delta}_t\big\rangle_{\cH_w^{-(k-1)},\cH_w^{k-1}}
			\le
			(C + C_\sigma)
			\|\delta_t\|_{\cH^{-(k+1)}_w}^2.
		\end{align}
		Since the constants above are independent of $t$ and $p$, and since the fixed time $t$ was arbitrary in a set of full measure, \eqref{eq:three.term.bound} holds for a.e. $t \in [0,T]$. Combining this with \eqref{eq:energy.identity} of Step 1, and using the absolute continuity of the integral in \eqref{eq:energy.identity}, yields \eqref{eq:Gronwall.inequality}.

		\medskip\noindent\textit{Step 3.} 
		By \eqref{eq:energy.identity} of Step 1, the absolutely continuous function
		\begin{align*}
			t\mapsto
			\int_0^t
			\big\langle \delta_s,\LL_{s,\mu_s}\widetilde{\delta}_s\big\rangle_{\cH_w^{-(k-1)},\cH_w^{k-1}}\,\dd s
		\end{align*}
		agrees a.e. with $\frac12\|\delta_t\|_{\cH_w^{-(k+1)}}^2$. Hence it is nonnegative, and it vanishes at $t=0$. Gronwall's inequality applied to \eqref{eq:Gronwall.inequality} of Step 2 gives
		\begin{align*}
			\int_0^t
			\big\langle \delta_s,\LL_{s,\mu_s}\widetilde{\delta}_s\big\rangle_{\cH_w^{-(k-1)},\cH_w^{k-1}}\,\dd s
			=0,
			\qquad t\in[0,T].
		\end{align*}
		It follows from \eqref{eq:energy.identity} that $\|\delta_t\|_{\cH_w^{-(k+1)}}=0$, and hence $\delta_t=0$ in $\cH_w^{-(k+1)}$, for a.e. $t\in[0,T]$. Since $\delta_t\in \cH_w^{-(k-1)}$ for a.e. $t\in[0,T]$, and Lemma \ref{lem:sob}\eqref{embed.k1.k2} gives the embeddings $\cH_w^{-(k-1)}\hookrightarrow \cH_w^{-(k+1)}$ and $\cH_w^{-(k-1)}\hookrightarrow \cH^{-k}$, the injectivity of the first embedding implies $\delta_t=0$ in $\cH_w^{-(k-1)}$, for a.e. $t\in[0,T]$. Applying the second embedding then gives $\delta_t=0$ in $\cH^{-k}$, for a.e. $t\in[0,T]$. Since $t\mapsto \delta_t$ is continuous as an $\cH^{-k}$-valued map, it follows that $\delta_t=0$ in $\cH^{-k}$ for all $t\in[0,T]$. Therefore $\eta_t=\eta_t'$ for all $t\in[0,T]$, which proves pathwise uniqueness.
	\end{proof}

	\section{Proof of Theorem \ref{thm:fluctuation.clt}}\label{sec:proof.main}
	We now combine the tightness result from Section \ref{sec:tightness}, the convergence result from Section \ref{sec:conv}, and the uniqueness result from Section \ref{sec:unique} to prove Theorem \ref{thm:fluctuation.clt}. 
	\begin{proof}[{\bf Proof of Theorem \ref{thm:fluctuation.clt}}]
		We first prove part \eqref{main.thm.part.0}. Since $k>d/2$, the Sobolev Embedding Theorem, see, e.g., \cite[Theorem 4.12, Case A, with $\Omega=\R^d$, $n=d$, $p=2$, $j=0$, and $m=k$]{adams2003sobolev}, gives $\cH^k \hookrightarrow C_b(\R^d)$. Thus, for every $x\in\R^d$, the Dirac mass $\delta_x$ belongs to $\cH^{-k}$,  and
		\begin{align*}
			\sup_{x\in\R^d}\|\delta_x\|_{\cH^{-k}}\lesssim 1.
		\end{align*} 
		Consequently, the random variables $Z_i:=\delta_{X_0^i}-\mu_0$ are centered i.i.d. square-integrable $\cH^{-k}$-valued random variables. By the Central Limit Theorem for square-integrable random variables in separable Hilbert spaces, see, e.g., \cite[Subsection 10.2]{ledoux1991probability} for a general Banach-space version, we have part \eqref{main.thm.part.0}.
		
		We next prove parts \eqref{main.thm.part.1} and \eqref{main.thm.part.2}. By Proposition \ref{prop:tight}, the sequence $(\eta^n)_{n\in\N}$ is tight in $C([0,T];\cH^{-k})$. Let $\eta$ be the limit in law of an arbitrary convergent subsequence, and let $\eta_0:=\eta(0)$. Proposition \ref{prop:convergence} then yields a process $W\in C([0,T];\cH^{-(k+1)})$ such that $(\eta,W)$ is a solution of the fluctuation SPDE in the sense of Definition \ref{def:SPDE.limit}, with initial condition $\zeta_0:=\eta_0$. Along the same subsequence, the initial values converge in law to $\eta_0$, while part \eqref{main.thm.part.0} gives their convergence in law to $\eta_0^\ast$. Hence, by uniqueness of limits in distribution, $\eta_0\stackrel{d}{=}\eta_0^\ast$. Moreover, Proposition \ref{prop:convergence} states that $W$ is independent of $\eta_0$ and is a centered Gaussian process in $C([0,T];\cH^{-(k+1)})$ satisfying the covariance formula in Definition \ref{def:SPDE.limit}\eqref{SPDE.def.dym}. Since centered Gaussian laws are determined by their covariance, it follows that, with $W^\ast$ as in the statement of part \eqref{main.thm.part.2}, we have $\Law(W)=\Law(W^\ast)$. The independence just noted then gives $\Law(\eta_0,W)=\Law(\eta_0^\ast)\otimes\Law(W^\ast)$. This gives the existence assertion in \eqref{main.thm.part.1}. The pathwise uniqueness assertion in \eqref{main.thm.part.1} is exactly Proposition \ref{prop.unique}, and the uniqueness in law assertion in \eqref{main.thm.part.1} follows from Remark \ref{rmk:unique.in.law}. Thus, \eqref{main.thm.part.1} is proved.

		Since the preceding argument applies to the limit of any convergent subsequence, every subsequential limit is the first component of a solution pair $(\eta,W)$ with initial condition $\eta_0$ such that $\Law(\eta_0,W)=\Law(\eta_0^\ast)\otimes \Law(W^\ast)$. By the uniqueness-in-law statement in part \eqref{main.thm.part.1}, once the joint law of the initial condition and the driving process is fixed, the law of the first component is fixed. Hence, all subsequential limits of $(\eta^n)_{n\in\N}$ have the same law. Together with tightness, this implies $\eta^n\stackrel{d}{\to}\eta$ in $C([0,T];\cH^{-k})$, where $\eta$ is the unique-in-law first component, in the sense of part \eqref{main.thm.part.1}, of any solution pair $(\eta,W)$ with initial condition $\eta_0$ such that $\Law(\eta_0,W)=\Law(\eta_0^\ast)\otimes \Law(W^\ast)$. This completes the proof of \eqref{main.thm.part.2}.
	\end{proof}

	\appendix

	\section{Matrix relations}
	
	\subsection{Proof of Remark \ref{remark.sufficient.matrix}} \label{sec:proof.remark}
	
	When $\xi$ is symmetric, \eqref{eq:column.sum.assump} follows from \eqref{eq:row.sum.assump}. For the remaining assertions, we do not assume symmetry. Recalling the definition of $\hxi$ in \eqref{hatxi.def}, and using the nonnegativity of $\xi$ and \eqref{eq:row.sum.assump}, we have
	\begin{align} \label{eq:sum.abs.xi.hat}
		\sum_{i,j=1}^n |\hxi_{ij}|
		\le (n-1)\sum_{i,j=1}^n \xi_{ij}+n^2
		\lesssim n^2.
	\end{align}
	Therefore, by \eqref{eq:sufficient.matrix.max},
	\begin{align*}
		\frac{1}{n^{3/2}}\Big(\sum_{i,j=1}^n|\hxi_{ij}|\Big)\max_{i,j\in[n]}\xi_{ij}
		\lesssim
		\sqrt{n}\, \max_{i,j\in[n]}\xi_{ij}
		\to 0.
	\end{align*}
	This proves \eqref{assum:mat.3}.
	
	It remains to prove \eqref{eq:xi_row_sq}. In fact, the stronger convergence to zero holds. Using the nonnegativity of $\xi$, \eqref{eq:row.sum.assump}, and the uniform column-sum bound in \eqref{eq:column.sum.assump}, we have 
	\begin{align*}
		\sum_{i=1}^n\Big(\sum_{j=1}^n(\xi_{ij}^2+\xi_{ji}^2)\Big)^2
		\lesssim 
		\max_{r,s\in[n]}\xi_{rs}^2
		\sum_{i=1}^n\Big(1+\sum_{j=1}^n \xi_{ji}\Big)^2
		\lesssim 
		n\max_{r,s\in[n]}\xi^2_{rs}
		\to 0,
	\end{align*}
	where the last step follows from \eqref{eq:sufficient.matrix.max}. Thus, the left-hand side of \eqref{eq:xi_row_sq}  tends to zero. \hfill \qed
	
	\subsection{Consequences of Assumption \ref{assum.main}\eqref{assum:mat}} 
	
	Recall the definition of $\hxi$ in \eqref{hatxi.def}.
	
	\begin{lemma}\label{lem:assum_i_implies_mat}
		Assumption \ref{assum.main}\eqref{assum:mat} implies, as $n\to\infty$,
		\begin{align}
			&\frac{1}{n}\sum_{i,j=1}^n \xi_{ij}^2 =o(n^{-1/2}), \label{eq:xi_sq_avg}\\
			&\frac{1}{n^3}\sum_{i,j=1}^n \hxi_{ij}^2  =o(n^{-1/2}), \label{eq:xi_hat_F}\\
			&	\frac{1}{n^3} \sum_{i=1}^n \bigg(\sum_{j=1}^n \hxi_{ji}\bigg)^2 =o(1), \label{eq:xi_hat_col_sum}\\
			&	\frac{1}{n^3} \max_{i,j\in[n]}\xi_{ij}
			\sum_{i=1}^n
			\bigg[
			\bigg(\sum_{j=1}^n|\hxi_{ij}|\bigg)^2
			+
			\bigg(\sum_{j=1}^n|\hxi_{ji}|\bigg)^2
			\bigg] =o(n^{-1/2}). \label{eq:xi_hat_l1} 
		\end{align}
	\end{lemma}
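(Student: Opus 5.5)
The plan is to derive all four relations from three elementary consequences of Assumption \ref{assum.main}\eqref{assum:mat}. Throughout, write $M_n:=\max_{i,j\in[n]}\xi_{ij}$, $S_n:=\sum_{i,j=1}^n|\hxi_{ij}|$ and $c_i:=\sum_{j=1}^n\xi_{ji}$ for the $i$-th column sum.

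The three facts are as follows.
\begin{enumerate}[(a)]
\item By \eqref{assum:mat.3}, $M_nS_n=o(n^{3/2})$, which is the same as $M_nS_n/n^2=o(n^{-1/2})$.
\item By \eqref{eq:sum.abs.xi.hat}, which uses only nonnegativity and \eqref{eq:row.sum.assump}, $S_n\lesssim n^2$.
\item The row and column $\ell^1$ norms of $\hxi$ are $O(n)$ uniformly:
\begin{align*}
\sum_{j=1}^n|\hxi_{ij}|\le (n-1)\sum_{j=1}^n\xi_{ij}+n\lesssim n,
\qquad
\sum_{j=1}^n|\hxi_{ji}|\le (n-1)c_i+n\lesssim n,
\end{align*}
where the first uses \eqref{eq:row.sum.assump} and the second uses the first assertion of \eqref{eq:column.sum.assump}.
\end{enumerate}
I do not try to show $M_n=o(n^{-1/2})$ directly, since the assumption does not obviously give it. Instead, every bound is arranged so that $M_n$ only appears multiplied by $S_n$.

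First I prove \eqref{eq:xi_hat_F}. The entrywise bound $|\hxi_{ij}|\le (n-1)M_n+1$ gives
\begin{align*}
\sum_{i,j=1}^n\hxi_{ij}^2\le \big((n-1)M_n+1\big)S_n .
\end{align*}
Dividing by $n^3$ leaves at most $M_nS_n/n^2+S_n/n^3$. The first term is $o(n^{-1/2})$ by (a), and the second is $O(1/n)$ by (b).

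Next, \eqref{eq:xi_sq_avg} follows from \eqref{eq:xi_hat_F}. For $i\neq j$ we have $\xi_{ij}=(\hxi_{ij}+1)/(n-1)$, so $\xi_{ij}^2\le 2(\hxi_{ij}^2+1)/(n-1)^2$, and the diagonal of $\xi$ vanishes. Hence
\begin{align*}
\frac1n\sum_{i,j=1}^n\xi_{ij}^2\lesssim \frac{1}{n^3}\sum_{i,j=1}^n\hxi_{ij}^2+\frac1n=o(n^{-1/2}).
\end{align*}

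For \eqref{eq:xi_hat_l1}, use (c) to write $\sum_i\big(\sum_j|\hxi_{ij}|\big)^2\le \max_i\big(\sum_j|\hxi_{ij}|\big)\,S_n\lesssim nS_n$. The same bound holds for the column version. The whole expression is then $\lesssim M_nS_n/n^2$, which is $o(n^{-1/2})$ by (a).

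For \eqref{eq:xi_hat_col_sum} an exact expansion is needed rather than crude bounds. Since $\sum_j\hxi_{ji}=(n-1)c_i-n$ and $\sum_ic_i=\sum_{i,j}\xi_{ij}=n$ by \eqref{eq:row.sum.assump}, expanding the square gives
\begin{align*}
\frac{1}{n^3}\sum_{i=1}^n\big((n-1)c_i-n\big)^2
=\frac{(n-1)^2}{n^3}\sum_{i=1}^n c_i^2-\frac{2(n-1)}{n}+1 .
\end{align*}
By the second assertion of \eqref{eq:column.sum.assump}, the $\limsup$ of the right-hand side is at most $1-2+1=0$. The left-hand side is nonnegative, so it tends to $0$.

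The main obstacle is the last relation, \eqref{eq:xi_hat_col_sum}. A bound through (c) alone only gives $O(1)$, so the exact cancellation in the expansion together with the sharp constant $1$ in \eqref{eq:column.sum.assump} is essential there. The other three relations reduce to (a) and (b) once $M_n$ is kept paired with $S_n$.
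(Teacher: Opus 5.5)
Your proposal is correct and follows essentially the same route as the paper. It uses the entrywise bound $|\hxi_{ij}|\le (n-1)\max_{k,\ell}\xi_{k\ell}+1$ together with \eqref{assum:mat.3} and \eqref{eq:sum.abs.xi.hat} for \eqref{eq:xi_hat_F}, the $O(n)$ row and column $\ell^1$ bounds for \eqref{eq:xi_hat_l1}, and the exact expansion with the sharp constant in \eqref{eq:column.sum.assump} for \eqref{eq:xi_hat_col_sum}. The only cosmetic difference is in \eqref{eq:xi_sq_avg}: you bound $(\hxi_{ij}+1)^2\le 2(\hxi_{ij}^2+1)$, whereas the paper expands the square exactly using $\sum_{i,j}\hxi_{ij}=-n$, and the two give the same conclusion.
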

	
	\begin{proof}
		We first prove \eqref{eq:xi_hat_F}. From the nonnegativity of $\xi$,
		\begin{align*}
			|\hxi_{ij}|
			\le (n-1)\xi_{ij}+1
			\le n\max_{k,\ell\in[n]}\xi_{k\ell}+1.
		\end{align*}
		Therefore,
		\begin{align*}
			\frac{1}{n^3}\sum_{i,j=1}^n \hxi_{ij}^2
			\le
			\frac{1}{n^2} \max_{k,\ell\in[n]}\xi_{k\ell} \sum_{i,j=1}^n|\hxi_{ij}|
			+
			\frac{1}{n^3}\sum_{i,j=1}^n|\hxi_{ij}|.
		\end{align*}
		The right-hand side is $o(n^{-1/2})$ by \eqref{assum:mat.3} and \eqref{eq:sum.abs.xi.hat}. 
			This proves \eqref{eq:xi_hat_F}.
		
		We now prove \eqref{eq:xi_sq_avg}. Since $\hxi_{ij}=(n-1)\xi_{ij}-1$ and, by \eqref{eq:row.sum.assump}, $\sum_{i,j=1}^n\hxi_{ij}=-n$,
		\begin{align*}
			\frac{1}{n}\sum_{i,j=1}^n \xi_{ij}^2
			&=
			\frac{1}{n(n-1)^2}
			\sum_{i,j=1}^n(\hxi_{ij}+1)^2=
			\frac{1}{n(n-1)^2}\sum_{i,j=1}^n\hxi_{ij}^2
			+
			\frac{n^2-2n}{n(n-1)^2}.
		\end{align*}
		By \eqref{eq:xi_hat_F}, the first term on the right-hand side is $o(n^{-1/2})$, while the second term is $O(n^{-1})=o(n^{-1/2})$. This proves \eqref{eq:xi_sq_avg}.
		
		We now prove \eqref{eq:xi_hat_col_sum}. Using \eqref{eq:row.sum.assump} and the second assertion of \eqref{eq:column.sum.assump}, we obtain
		\begin{align*}
			0&\le
			\limsup_{n\to\infty}
			\frac{1}{n^3}\sum_{i=1}^n\bigg(\sum_{j=1}^n\hxi_{ji}\bigg)^2
			\\&=
			\limsup_{n\to\infty}
			\bigg[
			\Big(1-\frac1n\Big)^2
			\frac{1}{n}\sum_{i=1}^n\bigg(\sum_{j=1}^n\xi_{ji}\bigg)^2
			-
			2\Big(1-\frac1n\Big)
			\frac{1}{n}\sum_{i,j=1}^n\xi_{ji}
			+1
			\bigg]
			\le 1-2+1
			=0.
		\end{align*}
		This proves \eqref{eq:xi_hat_col_sum}.
		
		It remains to prove \eqref{eq:xi_hat_l1}. For each $i \in [n]$, using the nonnegativity of $\xi$ and \eqref{eq:row.sum.assump},
		\begin{align*}
			\sum_{j=1}^n|\hxi_{ij}|
			\le
			(n-1)\sum_{j=1}^n\xi_{ij}+n
			\lesssim n.
		\end{align*}
		Similarly, for each $i \in [n]$, using the nonnegativity of $\xi$ and \eqref{eq:column.sum.assump},
		\begin{align*}
			\sum_{j=1}^n|\hxi_{ji}|
			\le
			(n-1)\sum_{j=1}^n\xi_{ji}+n
			\lesssim n.
		\end{align*}
		Therefore,
		\begin{align*}
			\sum_{i=1}^n
			\bigg[
			\bigg(\sum_{j=1}^n|\hxi_{ij}|\bigg)^2
			+
			\bigg(\sum_{j=1}^n|\hxi_{ji}|\bigg)^2
			\bigg]
			\lesssim 
			n\sum_{i,j=1}^n|\hxi_{ij}|.
		\end{align*}
		Consequently, by \eqref{assum:mat.3},
		\begin{align*}
			\frac{1}{n^3}\max_{i,j\in[n]}\xi_{ij}
			\sum_{i=1}^n
			\bigg[
			\bigg(\sum_{j=1}^n|\hxi_{ij}|\bigg)^2
			+
			\bigg(\sum_{j=1}^n|\hxi_{ji}|\bigg)^2
			\bigg]
			\lesssim 
			\frac{1}{n^{2}}
			\bigg(\sum_{i,j=1}^n|\hxi_{ij}|\bigg)\max_{i,j\in[n]}\xi_{ij}
			=o(n^{-1/2}).
		\end{align*}
		This proves \eqref{eq:xi_hat_l1}.
	\end{proof}

	\section{Proof of \eqref{eq:Hminus.kw.characterization}} \label{sec:pf.hminus-kw}
	
	\begin{proof}

		Recall the definitions of Fourier transforms and Bessel potentials  in Sections \ref{sec:Fourier}--\ref{subsubsec:bessel}.
		
		\medskip\noindent\textit{Step 1.} 
		Let $m\in C^\infty(\R^d; \mathbb{C})$ be such that $D^{\bm{\beta}}m$ is bounded for every multi-index $\bm{\beta}$ with $0\le |\bm{\beta}|\le \lambda_d$.  Let  
		\begin{align*}
			T_m f:=\F^{-1}[m\F f], \quad f \in \SS(\R^d).
		\end{align*}
		We first prove the estimate
		\begin{align}\label{eq:multiplier.minus.N.one.sided}
			\|T_m f\|_{L^2((1+|x|^2)^{-\lambda_d}\,\dd x)}
			\lesssim 
			\|f\|_{L^2((1+|x|^2)^{-\lambda_d}\,\dd x)},
			\quad f\in\SS(\R^d).
		\end{align}
		To see this,  note that
		\begin{align}\label{eq:plus.N.polynomial.equivalence.one.sided}
			\|f\|_{L^2((1+|x|^2)^{\lambda_d}\,\dd x)}^2
			\asymp
			\sum_{0\le |\bm{\beta}|\le \lambda_d}
			\|x^{\bm{\beta}}f\|_{L^2}^2.
		\end{align}
		For every $\bm{\beta}$ with $0\le |\bm{\beta}|\le \lambda_d$, using $\F[x^{\bm{\beta}}g]=(-2\pi i)^{-|\bm{\beta}|}D^{\bm{\beta}}\F g$ and  Leibniz rule, we have
		\begin{align*}
			\F[x^{\bm{\beta}}T_m f](u)
			=
			\sum_{ 0\le \bm{\gamma}\le\bm{\beta}}
			\binom{\bm{\beta}}{\bm{\gamma}}
			(-2\pi i)^{-|\bm{\gamma}|}
			D^{\bm{\gamma}}m(u)
			\F[x^{\bm{\beta}-\bm{\gamma}}f](u), 
		\end{align*}
		where $\binom{\bm{\beta}}{\bm{\gamma}}:=\prod_{j=1}^d\binom{\beta_j}{\gamma_j}$. 
		Taking the inverse Fourier transform gives
		\begin{align}\label{eq:commutator.multiplier.one.sided}
			x^{\bm{\beta}}T_m f
			=
			\sum_{ 0\le \bm{\gamma}\le\bm{\beta}}
			\binom{\bm{\beta}}{\bm{\gamma}}
			(-2\pi i)^{-|\bm{\gamma}|}
			T_{D^{\bm{\gamma}}m}
			(x^{\bm{\beta}-\bm{\gamma}}f).
		\end{align}
		Since $D^{\bm{\gamma}}m$ is bounded, by Plancherel's Theorem,
		\begin{align}  \label{eq:plancheral.bound}
			\|T_{D^{\bm{\gamma}}m}f\|_{L^2}
			=
			\big\|D^{\bm{\gamma}}m\,\F f\big\|_{L^2} \lesssim 
			\|\F f\|_{L^2} = \|f\|_{L^2}.
		\end{align}
		Hence, applying \eqref{eq:plancheral.bound} to \eqref{eq:commutator.multiplier.one.sided}, we have
		\begin{align}\label{eq:commutator.bound.one.sided}
			\|x^{\bm{\beta}}T_m f\|_{L^2}
			\lesssim 
			\sum_{0\le \bm{\gamma}\le\bm{\beta}}
			\|x^{\bm{\beta}-\bm{\gamma}}f\|_{L^2}
			\lesssim 
			\|f\|_{L^2((1+|x|^2)^{\lambda_d}\,\dd x)}.
		\end{align}
		Using \eqref{eq:plus.N.polynomial.equivalence.one.sided} and  \eqref{eq:commutator.bound.one.sided}, we get
		\begin{align}\label{eq:multiplier.plus.N.one.sided}
			\|T_m f\|^2_{L^2((1+|x|^2)^{\lambda_d}\,\dd x)}
			\lesssim  \sum_{0\le |\bm{\beta}|\le \lambda_d}
			\|x^{\bm{\beta}}T_m f\|_{L^2}^2 \lesssim 
			\|f\|^2_{L^2((1+|x|^2)^{\lambda_d}\,\dd x)}.
		\end{align}
		For $f,g\in\SS(\R^d)$, Plancherel's Theorem gives
		\begin{align*}
			\int_{\R^d} T_m f(x) \, \overline{g(x)}\,\dd x
			&=
			\int_{\R^d} m(u) \,  \F f(u) \,  \overline{\F g(u)}\,\dd u \\
			&=
			\int_{\R^d} \F f(u) \,  \overline{\overline{m}(u) \,  \F g(u)}\,\dd u = 
			\int_{\R^d} f(x) \,  \overline{T_{\overline{m}}g(x)}\,\dd x.
		\end{align*}
		Therefore,  by the Cauchy-Schwarz inequality and applying  \eqref{eq:multiplier.plus.N.one.sided} with $\overline{m}$ in place of $m$, 
		\begin{align}\label{eq: Tmfg.bound}
			\begin{split}
				\left|\int_{\R^d} T_m f(x) \,  \overline{g(x)}\,\dd x\right|
				&\le 	\|f\|_{L^2((1+|x|^2)^{-\lambda_d}\,\dd x)}
				\|T_{\overline{m}}g\|_{L^2((1+|x|^2)^{\lambda_d}\,\dd x)} \\
				&\lesssim
				\|f\|_{L^2((1+|x|^2)^{-\lambda_d}\,\dd x)}
				\|g\|_{L^2((1+|x|^2)^{\lambda_d}\,\dd x)}.
			\end{split}
		\end{align}
		Since $\SS(\R^d)$ is dense in $L^2(\R^d)$, applying \eqref{eq: Tmfg.bound} with $g(x)=(1+|x|^2)^{-\lambda_d/2}h(x)$ gives
		\begin{align*}
				\|T_m f\|_{L^2((1+|x|^2)^{-\lambda_d}\,\dd x)}
			&
				=
				\big\|(1+|x|^2)^{-\lambda_d/2}T_m f\big\|_{L^2}
			 \\
			&
				=
				\sup_{\substack{h\in\SS(\R^d)\\ \|h\|_{L^2}\le 1}}
				\left|
				\int_{\R^d}(1+|x|^2)^{-\lambda_d/2}T_m f(x)\,
				\overline{h(x)}
				\,\dd x
				\right|
			 \\
			&
				=
				\sup_{\substack{h\in\SS(\R^d)\\ \|h\|_{L^2}\le 1}}
				\left|
				\int_{\R^d}T_m f(x)\,
				\overline{(1+|x|^2)^{-\lambda_d/2}h(x)}
				\,\dd x
				\right|
			 \\
			&
				\lesssim
				\|f\|_{L^2((1+|x|^2)^{-\lambda_d}\,\dd x)}
				\sup_{\substack{h\in\SS(\R^d)\\ \|h\|_{L^2}\le 1}}
				\big\|(1+|x|^2)^{-\lambda_d/2}h\big\|_{L^2((1+|x|^2)^{\lambda_d}\,\dd x)}
			 \\
			&
				=
				\|f\|_{L^2((1+|x|^2)^{-\lambda_d}\,\dd x)}.
		\end{align*}
		This proves \eqref{eq:multiplier.minus.N.one.sided}.
		
		\medskip\noindent\textit{Step 2.}
		We next prove the estimate
		\begin{align}\label{eq:positive.order.one.sided}
			\|J^k\varphi\|_{L^2(w^{-1}(x)\,\dd x)}
			\lesssim 
			\|\varphi\|_{\cH_w^k},
			\quad \varphi\in C_c^\infty(\R^d).
		\end{align}
		For $\bm{\alpha}$ with $0\le |\bm{\alpha}|\le k$, let
		\begin{align*}
			M_{\bm{\alpha}}(u)
			:=
			(1+|u|^2)^{k/2}
			\frac{\overline{(2\pi iu)^{\bm{\alpha}}}}
			{\sum_{0\le |\bm{\beta}|\le k}|(2\pi iu)^{\bm{\beta}}|^2},
			\quad u\in\R^d.
		\end{align*}
		The denominator is bounded below by $1$, so $M_{\bm{\alpha}}$ is smooth on $\R^d$. Let $0 \le |\bm{\ell}| \le \lambda_d$.  For $|u|\ge 1$, the denominator $\sum_{0\le |\bm{\beta}|\le k}|(2\pi iu)^{\bm{\beta}}|^2 \asymp |u|^{2k}$, while its derivatives of order $|\bm{\ell}|$ are $O(|u|^{2k-|\bm{\ell}|})$. The numerator's derivatives of order $|\bm{\ell}|$ are $O(|u|^{k+|\bm{\alpha}|-|\bm{\ell}|})$. Therefore, 
		\begin{align*}
			|D^{\bm{\ell}}M_{\bm{\alpha}}(u)|
			\lesssim  |u|^{|\bm{\alpha}|-k-|\bm{\ell}|},
			\quad |u|\ge 1.
		\end{align*}
		Since $0 \le |\bm{\alpha}|\le k$, these derivatives are bounded for $|u|\ge 1$, and boundedness  for $|u| \le 1$ follows from smoothness. Moreover, for every $u\in\R^d$,
		\begin{align}\label{eq:ak.resolution.one.sided}
			(1+|u|^2)^{k/2}
			=
			\sum_{0\le |\bm{\alpha}|\le k}
			M_{\bm{\alpha}}(u)(2\pi iu)^{\bm{\alpha}}.
		\end{align}
		Multiplying \eqref{eq:ak.resolution.one.sided} by $\F\varphi(u)$,  using $ \F[D^{\bm{\alpha}}\varphi](u) = (2\pi iu)^{\bm{\alpha}}\F\varphi(u)$ and the definition of $T_{M_{\bm{\alpha}}}$, we get
		\begin{align*}
			(1+|u|^2)^{k/2}\F\varphi(u)
			&=
			\sum_{0\le |\bm{\alpha}|\le k}
			M_{\bm{\alpha}}(u)\F[D^{\bm{\alpha}}\varphi](u) = \sum_{0\le |\bm{\alpha}|\le k}
			\F[T_{M_{\bm{\alpha}}}D^{\bm{\alpha}}\varphi](u).
		\end{align*}
		The left-hand side is $\F[J^k\varphi](u)$,
		thus taking the inverse Fourier transforms gives
		\begin{align*}
			J^k\varphi
			=
			\sum_{0\le |\bm{\alpha}|\le k}
			T_{M_{\bm{\alpha}}}D^{\bm{\alpha}}\varphi.
		\end{align*}
		Using \eqref{eq:multiplier.minus.N.one.sided} from Step 1, together with $ 	\|\cdot \|_{L^2((1+|x|^2)^{-\lambda_d}\,\dd x)}
		\asymp
		\|\cdot\|_{L^2(w^{-1}(x)\,\dd x)}$,
		we obtain
		\begin{align*}
			\|J^k\varphi\|_{L^2(w^{-1}(x)\,\dd x)}
			\lesssim
			\sum_{0\le |\bm{\alpha}|\le k}
			\|D^{\bm{\alpha}}\varphi\|_{L^2(w^{-1}(x)\,\dd x)}
			\lesssim
			C\|\varphi\|_{\cH_w^k}.
		\end{align*}
		This proves \eqref{eq:positive.order.one.sided}.
		
		\medskip\noindent\textit{Step 3.}
		We now finish the proof. Let $h\in\SS'(\R^d)$ be such that $	J^{-k}h\in L^2(w(x)\,\dd x) $. 
		Since $w\ge 1$, we have $J^{-k}h\in L^2$, and hence $h\in\cH^{-k}$. Thus, for every $\varphi\in C_c^\infty(\R^d)$, \eqref{eq:Hk.duality.bessel} gives $ 	\langle h,\varphi\rangle_{\cH^{-k},\cH^k}
		=
		\langle J^{-k}h,J^k\varphi\rangle_{L^2}$.
		By the Cauchy-Schwarz inequality and \eqref{eq:positive.order.one.sided} of Step 2,
		\begin{align} \label{eq:dual.pre.extension}
			\left|\langle h,\varphi\rangle_{\cH^{-k},\cH^k}\right|
			\le
			\|J^{-k}h\|_{L^2(w(x)\,\dd x)}
			\|J^k\varphi\|_{L^2(w^{-1}(x)\,\dd x)}
			\lesssim
			\|J^{-k}h\|_{L^2(w(x)\,\dd x)}
			\|\varphi\|_{\cH_w^k}.
		\end{align}
		Since $\cH_w^k$ is the completion of $C_c^\infty(\R^d)$ with respect to $\|\cdot\|_{\cH_w^k}$,  the functional $\varphi\mapsto \langle h,\varphi\rangle_{\cH^{-k},\cH^k}$
		extends from $C^\infty_c(\R^d)$ uniquely to a bounded linear functional on $\cH_w^k$. Therefore, $h\in\cH_w^{-k}$, and taking the supremum in \eqref{eq:dual.pre.extension} over all $\varphi\in C_c^\infty(\R^d)$ with $\|\varphi\|_{\cH_w^k}\le 1$ gives \eqref{eq:Hminus.kw.characterization}.
	\end{proof}
	
	\section{Proof of Proposition \ref{lem:preliminaries}}
	\label{sec:proof-lem-preliminaries}
	\noindent\textit{Proof of Proposition~\ref{lem:preliminaries} \hyperref[lem:preliminaries:i]{(i)}.}
	We use Assumption~\ref{assum.main}\eqref{assum.drift} here only for the boundedness of the spatial derivatives of $b_0$ and $b$ up to order one, so $k \ge 0$ is enough. For each $n\in\N$, the drift of the system \eqref{eq:X} is Lipschitz in the spatial variables, uniformly in time. Standard SDE theory yields existence and pathwise uniqueness of a strong solution.
	
	For the McKean--Vlasov equation \eqref{eq:MKV}, the same regularity implies that its drift is Lipschitz in the spatial variable and $\W_2$-Lipschitz in the measure variable, uniformly in time. Hence, standard theory for McKean--Vlasov equations, see, e.g., \cite[Theorem 4.21]{carmona2018probabilistic}, implies that \eqref{eq:MKV} has a unique strong solution.

	{\noindent \it \hyperref[lem:preliminaries:iv]{(ii)}.}
	Fix $p\in[1,\infty)$. We first show that
	\begin{align} \label{eq:p.moment.mu0}
		\int_{\R^d} |x|^p \, \mu_0(\dd x )
		< \infty.
	\end{align}
	By Assumption \ref{assum.main}\eqref{assum.init.transport}, $\mu_0$ satisfies the quadratic transport inequality. Since $\W_1\le \W_2$, where $\W_1$ denotes the $1$-Wasserstein distance, the Bobkov-G\"otze characterization \cite[Theorem 3.1]{bobkov1999exponential} implies that there exists $C<\infty$ such that, for every $1$-Lipschitz $F:\R^d\to\R$,
	\begin{align*}
		\int_{\R^d}\exp\bigg(\lambda\bigg(F-\int_{\R^d}F\,\dd \mu_0\bigg)\!\bigg)\,\dd \mu_0
		\le
		e^{C\lambda^2/2}, \quad  \lambda \in \R.
	\end{align*}
	Applying this with $F(x)=|x|$ gives the sub-Gaussianity of $\mu_0$, and hence $\mu_0$ has finite moments of all orders. In particular, this proves \eqref{eq:p.moment.mu0}.
	
	By Assumption \ref{assum.main}\eqref{assum.drift} and \eqref{eq:row.sum.assump}, 
	\begin{align*}
		\sup_{t\in[0,T]}|X_t^i|
		\lesssim 1+  
		|X_0^i| +\sup_{t\in[0,T]}|B_t^i|.
	\end{align*}
	Therefore, using $X_0^i\sim\mu_0$, \eqref{eq:p.moment.mu0}, and the fact that the supremum of a Brownian motion has finite moments of all orders,
	\begin{align*}
			\E\big[\sup_{t\in[0,T]}|X_t^i|^p\big]
			\lesssim 1 + 
			\E\big[|X_0^i|^p\big]
			+\E\big[\sup_{t\in[0,T]}|B_t^i|^p\big] \lesssim 1.
	\end{align*}
	Since $p\in[1,\infty)$ was arbitrary, this proves the first bound in \eqref{eq:uniform.moment.bound.X.Y}. The same argument applied to the McKean--Vlasov equation proves the second bound in \eqref{eq:uniform.moment.bound.X.Y}. Taking $p=2\lambda_d$ gives \eqref{eq:w.momet.statement}.
	
	{\noindent \it \hyperref[lem:preliminaries:ii]{(iii)}.} We use the synchronous coupling. Let $Y^1,\dots,Y^n$ be i.i.d. copies of  $Y$ from \eqref{eq:MKV}, where $Y^i$ is driven by the same Brownian motion $B^i$ as $X^i$, and $Y_0^i=X_0^i$. Using  \eqref{eq:row.sum.assump}, subtracting the two equations, adding and subtracting $b(s,Y_s^i,Y_s^j)$, and the Cauchy-Schwarz inequality gives
	\begin{align*} 
		\begin{split}
			&	\sum_{i=1}^n\E\big[|X_t^i-Y_t^i|^2\big] \\
			&\lesssim
			\int_0^t
			\sum_{i=1}^n
			\E\bigg[
			\Big|
			b_0(s,X_s^i)-b_0(s,Y_s^i)
			+\sum_{j=1}^n\xi_{ij}\big(b(s,X_s^i,X_s^j)-b(s,Y_s^i,Y_s^j)\big)
			\Big|^2
			\bigg]\,\dd s \\
			&\quad +
			\int_0^t
			\sum_{i=1}^n
			\E\bigg[
			\Big|
			\sum_{j=1}^n \xi_{ij}
			\big(
			b(s,Y_s^i,Y_s^j)-\langle \mu_s, b(s,Y_s^i,\cdot)\rangle
			\big)
			\Big|^2
			\bigg]\,\dd s .
		\end{split}
	\end{align*}
	By Assumption \ref{assum.main}\eqref{assum.drift}, Jensen's inequality, \eqref{eq:row.sum.assump}, and \eqref{eq:column.sum.assump}, the first integral is bounded by a uniform multiple of
	\begin{align*}
		\int_0^t\sum_{i=1}^n\E[|X_s^i-Y_s^i|^2]\,\dd s.
	\end{align*}
	For the second integral, Assumption \ref{assum.main}\eqref{assum:mat} gives $\xi_{ii}=0$. Thus, conditional on $Y_s^i$, the inner summands are independent and centered, so that upon squaring, the off-diagonal terms vanish. Since $b$ is bounded by Assumption \ref{assum.main}\eqref{assum.drift},
	\begin{align*}
		\sum_{i=1}^n
		\E\bigg[
		\bigg|
		\sum_{j=1}^n\xi_{ij}
		\big(b(s,Y_s^i,Y_s^j)-\langle \mu_s, b(s,Y_s^i,\cdot)\rangle\big)
		\bigg|^2
		\bigg]
		\lesssim
		\sum_{i,j=1}^n\xi_{ij}^2.
	\end{align*}
	Hence,  Gronwall's inequality and the Cauchy-Schwarz inequality give
	\begin{align} \label{eq: conv.emp.first.leg}
			\sup_{t\in[0,T]}
			\E\bigg[
			\W_2^2
			\Big(
			\frac1n\sum_{i=1}^n\delta_{X_t^i},
			\frac1n\sum_{i=1}^n\delta_{Y_t^i}
			\Big)
			\bigg] \le \frac{1}{n} \sup_{t\in[0,T]}\sum_{i=1}^n\E[|X_t^i-Y_t^i|^2]
			\lesssim
			\frac{1}{n}	\sum_{i,j=1}^n\xi_{ij}^2  \to 0
		\end{align}
		where the last step follows from  \eqref{eq:xi_sq_avg}.
	
	It remains to control the empirical measure of the i.i.d.~copies $Y^1,\dots,Y^n$. Equip $C([0,T];\R^d)$ with the uniform norm, and let $\overline{\W}_{2}$ denote the corresponding quadratic Wasserstein distance:
	\begin{align}\label{eq:path.wass}
		\overline{\W}_{2}(\nu,\nu')
		:=
		\inf_\pi
		\bigg(
		\int_{C([0,T];\R^d)^2}
		\sup_{t\in[0,T]}|\omega_t-\omega_t'|^2\,\pi(\dd \omega,\dd \omega')
		\bigg)^{1/2},
	\end{align}
	where the infimum is over all couplings $\pi$ of $\nu$ and $\nu'$. Since $Y^1,\dots,Y^n$ are i.i.d. copies of $Y$, the empirical path measures converge weakly a.s.~to $\Law(Y)$ by the Law of Large Numbers for Empirical Measures (see, e.g., \cite[Theorem 11.4.1]{dudley2018real}). Moreover, by the  Law of Large Numbers and Proposition \ref{lem:preliminaries}\eqref{lem:preliminaries:iv},
	\begin{align*}
		\frac1n\sum_{i=1}^n\sup_{t\in[0,T]}|Y_t^i|^2
		\to
		\E\bigg[\sup_{t\in[0,T]}|Y_t|^2\bigg], \quad \text{a.s.}
	\end{align*}
	Therefore, by \cite[Theorem 7.12, (iii)$\Rightarrow$(ii)]{villani2003topics},
	\begin{align*}
		\overline{\W}_{2}
		\bigg(
		\frac1n\sum_{i=1}^n\delta_{Y^i},
		\Law(Y)
		\bigg)
		\to0, \quad \text{a.s.}
	\end{align*}
	To upgrade this to convergence in expectation, note that  by the triangle inequality,
	\begin{align*}
		\overline{\W}_{2}^2
		\bigg(
		\frac1n\sum_{i=1}^n\delta_{Y^i},
		\Law(Y)
		\bigg)
		\lesssim
		\frac1n\sum_{i=1}^n\sup_{t\in[0,T]}|Y_t^i|^2
		+
		\E\big[\sup_{t\in[0,T]}|Y_t|^2\big].
	\end{align*}
	The right-hand side is uniformly integrable by Proposition \ref{lem:preliminaries}\eqref{lem:preliminaries:iv} with $p = 4$. Hence,
	\begin{align} \label{eq: path.wass.conv}
		\E\bigg[
		\overline{\W}_{2}^2
		\bigg(
		\frac1n\sum_{i=1}^n\delta_{Y^i},
		\Law(Y)
		\bigg)
		\bigg]\to0.
	\end{align}
	To pass from path space to fixed-time marginals, let $e_t:C([0,T];\R^d)\to\R^d$ denote the coordinate map $e_t(\omega):=\omega_t$, for $t\in[0,T]$. Since $e_t$ is $1$-Lipschitz, the pushforward estimate for Wasserstein distances, see, e.g., \cite[equation (7.1.6)]{ambrosio2005gradient}, gives
	\begin{align*}
		\W_2
		\bigg(
		\frac1n\sum_{i=1}^n\delta_{Y_t^i},
		\mu_t
		\bigg)
		&=
		\W_2
		\bigg(
		(e_t)_\#\frac1n\sum_{i=1}^n\delta_{Y^i},
		(e_t)_\#\Law(Y)
		\bigg) \notag \le
		\overline{\W}_{2}
		\bigg(
		\frac1n\sum_{i=1}^n\delta_{Y^i},
		\Law(Y)
		\bigg).
	\end{align*}
	Together with \eqref{eq: path.wass.conv}, we have
	\begin{align} \label{eq: conv.emp.second.leg}
		\sup_{t\in[0,T]}
		\E\bigg[
		\W_2^2
		\bigg(
		\frac1n\sum_{i=1}^n\delta_{Y_t^i},
		\mu_t
		\bigg)
		\bigg]\to0.
	\end{align}
	The conclusion follows from \eqref{eq: conv.emp.first.leg}, \eqref{eq: conv.emp.second.leg}, and the triangle inequality for $\W_2$.

	{\noindent \it \hyperref[lem:preliminaries:iii]{(iv)}.}  
	By Assumption~\ref{assum.main}\eqref{assum.drift}, $b_0$ and $b$ are Lipschitz, uniformly in $t$. Together with Jensen's inequality, \eqref{eq:row.sum.assump} and \eqref{eq:column.sum.assump}, it is straightforward to show that the drift of $X=(X^1,\dots,X^n)$ is Lipschitz on $(\R^d)^n$, uniformly in $t$, with Lipschitz constant independent of $n$. Moreover, by tensorization of the quadratic transport inequality, see \cite[Proposition 1.9]{gozlan2010transport}, Assumption~\ref{assum.main}\eqref{assum.init.transport} implies that $P_0=\mu_0^{\otimes n}$ satisfies
		\begin{align*}
			\W_2^2(\nu,P_0)
			\le
			\gamma_0 H(\nu\,|\,P_0),
			\quad
			\nu\in\P((\R^d)^n).
		\end{align*}
		By \cite[Proposition 8.11]{gozlan2010transport}, this quadratic transport inequality implies that $P_0$ satisfies a Poincar\'e inequality with constant $\gamma_0/2$, which is independent of $n$. The Poincaré inequality for $P_t$ then follows from \cite[Theorem 4.2]{cattiaux2014semi}.

	{\noindent \it \hyperref[lem:preliminaries:v]{(v)}.}  
	Let
	\begin{align*}
		\widetilde{b}(t,x)
		:=
		b_0(t,x)+\int_{\R^d} b(t,x,y)\,\mu_t(\dd y),
		\qquad (t,x)\in[0,T]\times\R^d.
	\end{align*}
	By Assumption~\ref{assum.main}\eqref{assum.init.transport}, $\mu_0$ satisfies the quadratic transport inequality \eqref{eq:init.transport}. Since $k\in\N$, Assumption~\ref{assum.main}\eqref{assum.drift} implies that $\widetilde{b}(t,\cdot)$ is Lipschitz, uniformly in $t\in[0,T]$, and that $\int_0^T|\widetilde{b}(t,0)|^2\,\dd t<\infty$. Hence, \cite[Proposition C.1]{lacker2023hierarchies} implies that the path measure $\mu_{[T]}:=\Law(Y)$ satisfies a quadratic transport inequality on $C([0,T];\R^d)$. In other words, recalling \eqref{eq:path.wass}, there exists $\gamma_T<\infty$ such that
	\begin{align} \label{eq:path.transport.ineq}
		\overline{\W}_{2}^2(\nu_{[T]},\mu_{[T]})
		\le
		\gamma_T H(\nu_{[T]} \,|\, \mu_{[T]}), \quad  \nu_{[T]}\in\P(C([0,T];\R^d)).
	\end{align}
	
	Fix $t\in[0,T]$ and $\nu\in\P(\R^d)$. If $H(\nu \,|\, \mu_t)=\infty$, \eqref{eq:mu_t_transport} holds trivially. Otherwise, define
	\begin{align*}
		\nu_{[T]}(\dd \omega)
		=
		\frac{\dd \nu}{\dd \mu_t}(\omega_t)\,\mu_{[T]}(\dd \omega).
	\end{align*}
	Then, with $e_t:C([0,T];\R^d)\to\R^d$ denoting the coordinate map $e_t(\omega):=\omega_t$, we have $(e_t)_\#\nu_{[T]}=\nu$, $(e_t)_\#\mu_{[T]}=\mu_t$, $H(\nu_{[T]} \,|\, \mu_{[T]})=H(\nu \,|\, \mu_t)$, and $e_t$ is $1$-Lipschitz. Hence, the pushforward estimate for Wasserstein distances, see, e.g., \cite[equation (7.1.6)]{ambrosio2005gradient}, together with \eqref{eq:path.transport.ineq}, gives
	\begin{align*}
		\W_2^2(\nu,\mu_t)
		\le
		\overline{\W}_{2}^2(\nu_{[T]},\mu_{[T]})
		\le
		\gamma_T H(\nu \,|\, \mu_t).
	\end{align*}
	This proves \eqref{eq:mu_t_transport}.

	\section{Proof of Lemma \ref{lem:max.ent.POC}}\label{sec.poc.pf}
	\medskip\noindent\textit{Proof. Step 1.}  	
	In this step, we  verify the assumptions needed to apply \cite[Theorems 2.8 and 2.11]{lacker2024quantitative}. In the notation of \cite{lacker2024quantitative}, we take $b_0^i=b_0$, $b^{ij}=b$, and $P_0=Q_0=\mu_0^{\otimes n}$. The hypotheses of \cite[Assumption A]{lacker2024quantitative} are satisfied on the time interval $[0,T]$. Indeed, since $T<\infty$, Assumption \ref{assum.main}\eqref{assum.drift}--\eqref{assum.init.transport} puts us in the setting of \cite[Example 2.3]{lacker2024quantitative}, because the coefficients $b_0^i$ and $b^{ij}$ are Lipschitz uniformly in $i,j$, the initial laws have finite second moments, and $Q_0^i=\mu_0$ satisfies the required quadratic transport inequality.  Hence, \cite[Assumption A(i)--(iii)]{lacker2024quantitative} holds. Finally, Assumption \ref{assum.main}\eqref{assum:mat} gives an interaction matrix $\xi$ with nonnegative entries, zero diagonal entries, and row sums equal to one. Thus \cite[Assumption A(iv)]{lacker2024quantitative}, namely the condition \cite[(rows)]{lacker2024quantitative}, also holds.

	Moreover, since the initial conditions are i.i.d. with law $\mu_0$, we have $P_0^v=Q_0^v=\mu_0^{\otimes |v|}$ for every $v\subset[n]$. Hence, the initial chaoticity assumptions in \cite[Theorems 2.8 and 2.11]{lacker2024quantitative} hold with $C_0=0$. The maximum entropy estimate \cite[Theorem 2.8]{lacker2024quantitative} therefore applies. The average entropy estimate \cite[Theorem 2.11]{lacker2024quantitative} additionally assumes that the column sums are bounded by $1$, as in \cite[(columns)]{lacker2024quantitative}. In our setting, the column sum bound \eqref{eq:column.sum.assump} in Assumption~\ref{assum.main}\eqref{assum:mat} holds with constant $C$. As in \cite[Remark 2.1]{lacker2024quantitative}, this only changes the constants in the estimates. Thus, \cite[Theorem 2.11]{lacker2024quantitative} applies, with the implicit constants in $\lesssim$ allowed to depend on $C$.

	\medskip\noindent\textit{Step 2.}
	Next, we identify the independent projection of \eqref{eq:X}. Since the initial positions are i.i.d. with common distribution $\mu_0$, and since \eqref{eq:row.sum.assump} in Assumption \ref{assum.main}\eqref{assum:mat} gives row sums equal to one, \cite[Remark 2.3]{jabin2025mean} or \cite[Remark 2.9]{lacker2026independent} implies that the independent projection is given by $n$ i.i.d. copies of the McKean-Vlasov equation \eqref{eq:MKV}. Therefore, $Q_t^i=\mu_t$ for every $i\in[n]$ and $t\in[0,T]$. Consequently, in the entropy estimates of \cite{lacker2024quantitative}, the reference law is
	$Q_t^v=\bigotimes_{i\in v}Q_t^i=\mu_t^{\otimes |v|}$
	for every $v\subset[n]$.

	\medskip\noindent\textit{Step 3.}
	We now prove part \eqref{lem:max.ent.POC:i} of Lemma \ref{lem:max.ent.POC}. Let $\delta:=\max_{i,j\in[n]}\xi_{ij}$. Since $\xi$ is nonnegative and its row sums are equal to one by Assumption \ref{assum.main}\eqref{assum:mat}, we have $\delta\le 1$. Hence, by \cite[Theorem 2.8]{lacker2024quantitative}, applied with $k=3$, and then by the data processing inequality from path space to the time-$t$ marginal,
	\begin{align*}
		\max_{\substack{v\subset[n]\\ |v|=3}} H(P_t^v \,|\,  Q_t^v)
		\lesssim
		(3\delta+1)(3\delta)^2
		\lesssim
		\delta^2 = 	\max_{i,j\in[n]}\xi_{ij}^2,
	\end{align*}
	where the last step follows from $\delta \lesssim 1$. Using $Q_t^v=\mu_t^{\otimes 3}$ from Step 2, we obtain
	\eqref{lem:max.ent.POC:i}.

	\medskip\noindent\textit{Step 4.}   
	We now prove part \eqref{lem:max.ent.POC:ii}. By \cite[Theorem 2.11]{lacker2024quantitative}, applied with $k=2$, and again using the data processing inequality from path space to time $t$,
	\begin{align*}
		\frac{1}{\binom n2}\sum_{\substack{v\subset[n]\\ |v|=2}}
		H(P_t^v\,|\, Q_t^v)
		\lesssim
		(1+2\delta)
		\bigg(
		\frac{4}{n^2}\sum_{i,j=1}^n \xi_{ij}^2
		+
		\frac{2}{n}\sum_{i=1}^n
		\Big(
		\sum_{j=1}^n(\xi_{ij}^2+\xi_{ji}^2)
		\Big)^2
		\bigg).
	\end{align*}
	Since $\delta\le1$, $1+2\delta \lesssim 1$. Also, Assumption \ref{assum.main}\eqref{assum:mat} gives row sums equal to one and zero diagonal entries. Hence, for each $i \in [n]$, the Cauchy-Schwarz inequality gives
	\begin{align*}
		\sum_{j=1}^n \xi_{ij}^2
		\ge
		\frac{1}{n-1}
		\Big(\sum_{j=1}^n \xi_{ij}\Big)^2
		=
		\frac{1}{n-1}.
	\end{align*}
	Therefore, 
	\begin{align*}
		\frac{1}{n-1} \sum_{j=1}^n(\xi_{ij}^2+\xi_{ji}^2)
		\le
		\Big(\sum_{j=1}^n \xi_{ij}^2\Big) \sum_{j=1}^n(\xi_{ij}^2+\xi_{ji}^2) \le 
		\Big(
		\sum_{j=1}^n(\xi_{ij}^2+\xi_{ji}^2)
		\Big)^2.
	\end{align*}
	Thus, 
	\begin{align*}
		\frac{1}{n^2}\sum_{i,j=1}^n \xi_{ij}^2
		&\le
		\frac{1}{n^2}\sum_{i=1}^n
		\sum_{j=1}^n(\xi_{ij}^2+\xi_{ji}^2) \\& \le
		\frac{n-1}{n^2}\sum_{i=1}^n
		\Big(
		\sum_{j=1}^n(\xi_{ij}^2+\xi_{ji}^2)
		\Big)^2 \le
		\frac{1}{n}\sum_{i=1}^n
		\Big(
		\sum_{j=1}^n(\xi_{ij}^2+\xi_{ji}^2)
		\Big)^2.
	\end{align*}
	It follows that
	\begin{align*}
		\frac{1}{\binom n2}\sum_{\substack{v\subset[n]\\ |v|=2}}
		H(P_t^v\,|\, Q_t^v)
		\lesssim
		\frac{1}{n}\sum_{i=1}^n
		\Big(
		\sum_{j=1}^n(\xi_{ij}^2+\xi_{ji}^2)
		\Big)^2.
	\end{align*}
	Finally, using $Q_t^v=\mu_t^{\otimes2}$ from Step 2, we obtain \eqref{lem:max.ent.POC:ii}. \qed

	\section{A compact embedding result and facts about Sobolev spaces}
	The following  embedding result is a consequence of \cite[Theorem 1.2]{benci1978second} if $d>2$. For a general $d\in\N$, we give a proof for completeness.
	
	\begin{lemma}\label{lem:compact.embed}
		Let $\ell\ge 1$ and $M>0$. Define
		\begin{align*}
			\K_{M,\ell}
			=
			\bigg\{
			f\in L^2_{\mathrm{loc}}(\R^d):
			\int_{\R^d}(1+|u|^2)^{-\ell+1}|f(u)|^2\,\dd u
			+
			\int_{\R^d}(1+|u|^2)^{-\ell}|\nabla f(u)|^2\,\dd u
			\le M
			\bigg\}.
		\end{align*} Then, $\K_{M,\ell}$ is compact in $ 	L^2\big((1+|u|^2)^{-\ell}\,\dd u\big)$.
	\end{lemma}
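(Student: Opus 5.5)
The plan is to show that $\K_{M,\ell}$ is closed in $L^2((1+|u|^2)^{-\ell}\,\dd u)$ and totally bounded there. Both the local part and the tail part of the compactness will come from the two weighted integrals in the definition.

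\emph{Closedness.} Let $f_m\in\K_{M,\ell}$ with $f_m\to f$ in $L^2((1+|u|^2)^{-\ell}\,\dd u)$. The sequences $(1+|u|^2)^{(-\ell+1)/2}f_m$ and $(1+|u|^2)^{-\ell/2}\nabla f_m$ are bounded in $L^2$, so along a subsequence they converge weakly in $L^2$. On each ball the weights are bounded above and below, so the weak limits are identified distributionally as the corresponding weighted versions of $f$ and $\nabla f$. In particular $f\in H^1_{\mathrm{loc}}$. Weak lower semicontinuity of the $L^2$ norm then gives that the defining sum for $f$ is at most $M$, so $f\in\K_{M,\ell}$.

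\emph{Total boundedness.} I will use a tail/local split.
\begin{enumerate}[(a)]
\item \emph{Tail.} For $f\in\K_{M,\ell}$,
\begin{align*}
\int_{|u|>R}(1+|u|^2)^{-\ell}|f|^2\,\dd u \le (1+R^2)^{-1}\int_{\R^d}(1+|u|^2)^{-\ell+1}|f|^2\,\dd u\le \frac{M}{1+R^2}.
\end{align*}
This is uniformly small once $R$ is large. Only the extra power of weight in the first term is used here; no decay hypothesis on the gradient is needed.
\item \emph{Local part.} On $B_R$ the weights are comparable to $1$ with constants depending on $R$. Hence $\{f|_{B_R}: f\in\K_{M,\ell}\}$ is bounded in $H^1(B_R)$, with a bound depending on $M,R,\ell$. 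By Rellich--Kondrachov on the Lipschitz domain $B_R$, valid in every dimension $d\ge1$, this family is precompact in $L^2(B_R)$. It is therefore precompact in $L^2(B_R,(1+|u|^2)^{-\ell}\,\dd u)$.
\end{enumerate}

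\emph{Conclusion.} Given $\varepsilon>0$, first choose $R$ so that the tail bound in (a) is below $\varepsilon^2/4$. Then cover the restrictions to $B_R$ by finitely many $\varepsilon/2$-balls in the weighted $L^2(B_R)$ norm. Extending the centres by zero outside $B_R$ gives a finite $\varepsilon$-net of $\K_{M,\ell}$ in $L^2((1+|u|^2)^{-\ell}\,\dd u)$. Total boundedness together with closedness in a complete space gives compactness.

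\emph{Main obstacle.} The only delicate step is closedness: identifying the weak limits with $f$ and $\nabla f$. I would handle it by testing against $C_c^\infty$ functions on balls, where weak $L^2$ convergence and strong $L^2_{\mathrm{loc}}$ convergence of $f_m$ are compatible. A diagonal subsequence over $R\in\N$ then upgrades the local information to all of $\R^d$. Since the paper uses this lemma for complex-valued Fourier transforms, I would apply the argument to real and imaginary parts separately, or note that it works verbatim for complex $f$.
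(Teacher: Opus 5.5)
Your proof is correct and uses the same ingredients as the paper's: the uniform $\cH^1(B_R)$ bound with Rellich--Kondrachov on balls, the tail estimate $\int_{|u|>R}(1+|u|^2)^{-\ell}|f|^2\,\dd u\le M/(1+R^2)$ from the extra power of the weight, and weak lower semicontinuity to keep limits in $\K_{M,\ell}$. The only difference is packaging: you prove closedness plus total boundedness, whereas the paper extracts a diagonal subsequence from an arbitrary sequence, converging in every $L^2(B_m)$, shows its limit lies in $\K_{M,\ell}$, and then uses the tail bound to upgrade to convergence in $L^2((1+|u|^2)^{-\ell}\,\dd u)$. In your closedness step the diagonal extraction is unnecessary, since $f_m\to f$ in $L^2(B_R)$ for every $R$ already.
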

	
	\begin{proof}
		Let $\{f_n\}_{n\in\N}\subset \K_{M,\ell}$. For each $R>0$, let $B_R$ denote the open ball of radius $R$ in $\R^d$. For  $u\in B_R$, the bounds $	(1+|u|^2)^{-\ell+1}\ge (1+R^2)^{-\ell+1}$ and $		(1+|u|^2)^{-\ell}\ge (1+R^2)^{-\ell} $ imply 
		\begin{align*}
			\int_{B_R}|f_n(u)|^2\,\dd u
			\le
			(1+R^2)^{\ell-1}M,
			\qquad
			\int_{B_R}|\nabla f_n(u)|^2\,\dd u
			\le
			(1+R^2)^\ell M.
		\end{align*}
		Thus $\{f_n\}_{n\in\N}$ is bounded in $\cH^1(B_R)$ for every $R>0$. By weak compactness in $\cH^1(B_m)$, the Rellich-Kondrachov Theorem, and a diagonal extraction over $m\in\N$, there exist a subsequence, still denoted by $\{f_n\}_{n\in\N}$, and a function $f\in \cH^1_{\mathrm{loc}}(\R^d)$ such that, for every $m\in\N$, $f_n\to f$ strongly in $L^2(B_m)$ and weakly in $\cH^1(B_m)$. By restriction, the same convergences hold on $B_R$ for every $R>0$.
		
		Since $f_n \to  f$ weakly in $\cH^1(B_R)$, we have $f_n \to f$ weakly in $L^2(B_R)$ and $\nabla f_n \to  \nabla f$ weakly in $L^2(B_R;\R^d)$. Multiplication by the bounded functions $(1+|u|^2)^{(-\ell+1)/2}$ and $(1+|u|^2)^{-\ell/2}$ preserves these weak convergences. Therefore, by the weak lower semicontinuity of the $L^2(B_R) \times L^2(B_R; \R^d)$ norm, we have
		\begin{align*}
			&\int_{B_R}(1+|u|^2)^{-\ell+1}|f(u)|^2\,\dd u
			+
			\int_{B_R}(1+|u|^2)^{-\ell}|\nabla f(u)|^2\,\dd u \\
			&\le
			\liminf_{n\to\infty}
			\bigg[
			\int_{B_R}(1+|u|^2)^{-\ell+1}|f_n(u)|^2\,\dd u
			+
			\int_{B_R}(1+|u|^2)^{-\ell}|\nabla f_n(u)|^2\,\dd u
			\bigg]
			\le M.
		\end{align*}
		Letting $R\to\infty$ and using the Monotone Convergence Theorem gives $f\in\K_{M,\ell}$.
		
		It remains to prove that $f_n\to f$ in $L^2((1+|u|^2)^{-\ell} \, \dd u)$. For every $g\in\K_{M,\ell}$ and every $R>0$, the bound $	(1+|u|^2)^{-\ell}
		\le
		(1+R^2)^{-1}(1+|u|^2)^{-\ell+1} $, valid for $|u| > R$, implies that
		\begin{align*}
			\int_{\{|u|>R\}}(1+|u|^2)^{-\ell}|g(u)|^2\,\dd u
			\le
			\frac{1}{1+R^2}
			\int_{\R^d}(1+|u|^2)^{-\ell+1}|g(u)|^2\,\dd u
			\le
			\frac{M}{1+R^2}.
		\end{align*}
		Applying this estimate to $g=f_n$ and $g=f$, and using $(a-b)^2\le 2a^2+2b^2$, we obtain
		\begin{align*}
			\int_{\R^d}(1+|u|^2)^{-\ell}|f_n(u)-f(u)|^2\,\dd u
			\le
			\int_{B_R}(1+|u|^2)^{-\ell}|f_n(u)-f(u)|^2\,\dd u
			+
			\frac{4M}{1+R^2}.
		\end{align*}
		For fixed $R>0$, the convergence $f_n\to f$ in $L^2(B_R)$ implies that the first term tends to $0$ as $n\to\infty$. Therefore,
		\begin{align*}
			\limsup_{n\to\infty}
			\int_{\R^d}(1+|u|^2)^{-\ell}|f_n(u)-f(u)|^2\,\dd u
			\le
			\frac{4M}{1+R^2}.
		\end{align*}
		Letting $R\to\infty$, we conclude that $f_n\to f$ in $L^2((1+|u|^2)^{-\ell} \, \dd u)$, as desired.
	\end{proof}
	
	\subsection{Facts about  Sobolev spaces}
	Recall the definitions of the Sobolev spaces given in Section \ref{sec:sob.space}.
	\begin{lemma} \label{lem:sob}
		\begin{enumerate}[(i)]
			\item \label{embed.k1.k2}For any $k, k_1, k_2 \in \N $ with $k_1 \le k_2 $, we have the continuous embeddings
			\begin{align*}
				&L^2\hookrightarrow L^2(w^{-1}(x)\,\dd x),\quad
				L^2(w(x)\,\dd x)\hookrightarrow L^2,\quad
				\cH^{k}\hookrightarrow \cH^{k}_w,\\
				&\cH_w^{k_2}\hookrightarrow \cH_w^{k_1},\quad
				\cH^{-k}_w\hookrightarrow \cH^{-k},\quad
				\cH^{k_2}\hookrightarrow \cH^{k_1},\quad
				\cH^{-k_1}_w\hookrightarrow \cH^{-k_2}_w.
			\end{align*}
			\item \label{weighted-unweighted-pair}
			For any $k_1, k_2 \in \N$ with $k_1\le k_2$, $u\in \cH_w^{-k_1}$, and $f\in \cH_w^{k_2}$, we have
			\begin{align} \label{eq: lower.higher}
				\langle u,f\rangle_{\cH^{-k_1}_w,\cH_w^{k_1}}
				=
				\langle u,f\rangle_{\cH^{-k_2}_w,\cH_w^{k_2}}.
			\end{align}
			If in addition, $f\in \cH^{k_2}$, then
			\begin{align} \label{eq:weighted.unweighted}
				\langle u,f\rangle_{\cH^{-k_1}_w,\cH_w^{k_1}}
				=
				\langle u,f\rangle_{\cH^{-k_1},\cH^{k_1}}
				=
				\langle u,f\rangle_{\cH^{-k_2},\cH^{k_2}}
				=
				\langle u,f\rangle_{\cH^{-k_2}_w,\cH_w^{k_2}}.
			\end{align}
			\item\label{lem:sob:vii} For any $k \in \N$, $C_c^\infty(\R^d)$ is dense in  $\cH^{k}_w$ and 
			\begin{align} \label{eq: dual.set}
				\bigg\{f \mapsto \int_{\R^d} \varphi (x) f(x) \, \dd x : \varphi \in C^\infty_c(\R^d)\bigg\}
			\end{align}
			is dense in  $\cH^{-k}_w$. \label{denseness.smooth.in.neg}
			\item\label{dual.smooth} For any $k\in\N$ and $u\in C_c^\infty(\R^d)$, let $u\in \cH_w^{-k}$ denote the continuous linear functional
			\begin{align*}
				\varphi\mapsto \int_{\R^d}u(x)\varphi(x)\,\dd x, \quad \varphi \in \cH^k_w.
			\end{align*}
			Let $\widetilde{u}\in \cH_w^k$ be the element given by the Riesz Representation Theorem, namely
			\begin{align} \label{eq:riesz.r.t.}
				\langle u,\varphi\rangle_{\cH_w^{-k},\cH_w^k}
				=
				\langle \widetilde{u},\varphi\rangle_{\cH_w^k},
				\quad \varphi\in \cH_w^k.
			\end{align}
			Then $\widetilde{u}\in \cH_w^m$ for every  $\N \ni m\ge k$.
		\end{enumerate}
	\end{lemma}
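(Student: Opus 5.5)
The plan is to identify each weighted space concretely as a function space first, and then derive (i)–(iii) from abstract duality. For this, I would first show that $\cH^k_w$ coincides with the set of $f\in L^2(w^{-1}(x)\,\dd x)$ whose weak derivatives $D^{\bm\alpha}f$, $|\bm\alpha|\le k$, lie in $L^2(w^{-1}(x)\,\dd x)$. Since $w$ is locally bounded, convergence in $L^2(w^{-1}\,\dd x)$ implies convergence in $L^2_{\mathrm{loc}}$, hence in distributions. Therefore, if $\varphi_n\in C_c^\infty(\R^d)$ is $\|\cdot\|_{\cH^k_w}$-Cauchy, then $D^{\bm\alpha}\varphi_n\to g_{\bm\alpha}$ in $L^2(w^{-1}\,\dd x)$ with $g_{\bm\alpha}=D^{\bm\alpha}g_0$ weakly. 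This gives a well-defined injective map from the completion into functions.

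For (i), the embeddings $L^2\hookrightarrow L^2(w^{-1}\,\dd x)$ and $L^2(w\,\dd x)\hookrightarrow L^2$ follow from $w\ge1$. The norm inequalities $\|f\|_{\cH^k_w}\le\|f\|_{\cH^k}$ and $\|f\|_{\cH^{k_1}_w}\le\|f\|_{\cH^{k_2}_w}$ on $C_c^\infty(\R^d)$ extend to the completions. Injectivity of these extended maps is exactly the closedness argument above: a limit that vanishes in the weaker space has all its derivatives equal to zero. For the negative spaces, $\cH^{-k}_w\hookrightarrow\cH^{-k}$ and $\cH^{-k_1}_w\hookrightarrow\cH^{-k_2}_w$ are obtained by restricting a functional along the corresponding positive embedding. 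Both positive embeddings are bounded and contain $C_c^\infty(\R^d)$ in their image, so the image is dense. Hence restriction is bounded and injective.

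For (ii), fix $u$. Both sides of \eqref{eq: lower.higher} are continuous linear functionals of $f\in\cH^{k_2}_w$, the left-hand one through the embedding $\cH^{k_2}_w\hookrightarrow\cH^{k_1}_w$. By construction of the embeddings in (i), they agree on $C_c^\infty(\R^d)$, so they agree everywhere by density. The chain \eqref{eq:weighted.unweighted} follows in the same way: each pairing is continuous in $f\in\cH^{k_2}$ by (i), and all of them coincide on $C_c^\infty(\R^d)$.

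For (iii), density of $C_c^\infty(\R^d)$ in $\cH^k_w$ holds by definition of the completion. For the dual set \eqref{eq: dual.set}, $\cH^k_w$ is a Hilbert space and hence reflexive, so it suffices to show that its pre-annihilator is trivial. Suppose $f\in\cH^k_w$ satisfies $\int\varphi f\,\dd x=0$ for all $\varphi\in C_c^\infty(\R^d)$. Since $f\in L^1_{\mathrm{loc}}$ by the identification above, $f=0$ a.e., and then $f=0$ in $\cH^k_w$.

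Part (iv) is where I expect the main difficulty. The Riesz element $\widetilde u\in\cH^k_w$ is a weak solution of the divergence-form equation
\begin{align*}
\sum_{0\le|\bm\alpha|\le k}(-1)^{|\bm\alpha|}D^{\bm\alpha}\big(w^{-1}D^{\bm\alpha}\widetilde u\big)=u,
\end{align*}
and I need global weighted regularity rather than the merely interior regularity that standard elliptic theory provides. I would argue by induction on $m$, using Nirenberg difference quotients $\Delta_h^{j}g(x)=(g(x+he_j)-g(x))/h$ with $|h|\le1$.

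In the inductive step, I would test \eqref{eq:riesz.r.t.} with $\varphi=-\Delta_{-h}^{j}\Delta_h^{j}\widetilde u$, regularized by a density argument from (iii). After summation by parts, the principal part is bounded below by $\|\Delta^j_h\widetilde u\|_{\cH^k_w}^2$. The commutator terms come from $\Delta_h^j(w^{-1})$, and I would control them with two weight bounds: $w(x+h)\asymp w(x)$ uniformly for $|h|\le1$, and $|\nabla w^{-1}|\lesssim w^{-1}$. Together these make the commutators bounded by $\|\widetilde u\|_{\cH^k_w}\|\Delta_h^j\widetilde u\|_{\cH^k_w}$. The right-hand side contributes $\int u\,\Delta_{-h}^j\Delta_h^j\widetilde u\,\dd x$, which I would bound by $\|\partial_j u\|_{\cH^{-(k-1)}_w}\,\|\Delta_h^j\widetilde u\|_{\cH^{k-1}_w}$; this is finite because $u\in C_c^\infty(\R^d)$. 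These estimates give $\|\Delta_h^j\widetilde u\|_{\cH^k_w}$ bounded uniformly in $h$, so $\widetilde u\in\cH^{k+1}_w$.

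Iterating this step, and differentiating the equation as $u$ is replaced by its derivatives, which remain in $C_c^\infty(\R^d)$, should give $\widetilde u\in\cH^m_w$ for every $m\ge k$. The delicate points are checking the commutator bounds at every derivative order and justifying the test function through approximation in $\cH^k_w$.
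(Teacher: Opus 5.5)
\textbf{Comparison with the paper's proof.} Parts (i)--(ii) follow the paper's restriction-of-functionals argument, and you are more careful than the paper: you prove that the completion injects into $L^2_{\mathrm{loc}}$ functions, which the paper dismisses as clear. For the density of the integration functionals in (iii), you use reflexivity and Hahn--Banach. The pre-annihilator is trivial because elements of $\cH^k_w$ are $L^1_{\mathrm{loc}}$ functions. The paper instead pushes $C_c^\infty(\R^d)$ through the Riesz isomorphism $R$ and notes that $R\varphi$ is integration against $L_w\varphi\in C_c^\infty(\R^d)$. Both arguments work.

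The real difference is (iv). The paper removes the weight by conjugation. It sets $v=w^{-1/2}\widetilde u\in\cH^k$, which solves $Pv=w^{1/2}u$ with $P=w^{1/2}L_w w^{1/2}$, a uniformly elliptic operator of order $2k$ with $C_b^\infty$ coefficients. A pseudodifferential parametrix then gives global unweighted regularity $v\in\cH^m$, hence $\widetilde u\in\cH^m_w$. You instead run Nirenberg's difference-quotient energy method directly in the weighted spaces. You need only $w(x+h)\asymp w(x)$ for $|h|\le 1$ and $|D^{\bm\gamma}w^{-1}|\lesssim w^{-1}$. Your route is elementary and self-contained but needs commutator bookkeeping. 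The paper's route is short but imports the $OPS^{m}_{1,0}$ calculus, and it must verify the symbol estimates and ellipticity of $P$.

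\textbf{A step to fix in your iteration.} One statement is wrong as written: differentiating the equation does not simply replace $u$ by $D^{\bm\beta}u$. Take $|\bm\beta|=s$ and $\widetilde u\in\cH^{k+s}_w$. Integrating by parts gives, for $\varphi\in C_c^\infty(\R^d)$,
\begin{align*}
\langle D^{\bm\beta}\widetilde u,\varphi\rangle_{\cH^k_w}
=\int_{\R^d} D^{\bm\beta}u\,\varphi\,\dd x-\sum_{0\le|\bm\alpha|\le k}\ \sum_{0\ne\bm\gamma\le\bm\beta}\binom{\bm\beta}{\bm\gamma}\int_{\R^d} D^{\bm\gamma}(w^{-1})\,D^{\bm\alpha+\bm\beta-\bm\gamma}\widetilde u\,D^{\bm\alpha}\varphi\,\dd x .
\end{align*}
The commutator terms are neither smooth nor compactly supported.

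The fix stays inside your framework. Run the difference-quotient step with a general forcing $F\in\cH^{-(k-1)}_w$. Your right-hand-side estimate already permits this, since
\begin{align*}
|F(\Delta^j_{-h}\Delta^j_hV)|\lesssim\|F\|_{\cH^{-(k-1)}_w}\,\|\Delta^j_hV\|_{\cH^k_w}.
\end{align*}
Then check that the commutators lie in $\cH^{-(k-1)}_w$ with norm $\lesssim\|\widetilde u\|_{\cH^{k+s}_w}$:
\begin{enumerate}[(a)]
\item for $|\bm\alpha|\le k-1$ this is direct;
\item for $|\bm\alpha|=k$, move one derivative off $\varphi$; this puts total order at most $k+s$ on $\widetilde u$.
\end{enumerate}

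Separately, your sketch only proves one direction of the identification of $\cH^k_w$ with a function space. To conclude $\widetilde u\in\cH^{k+1}_w$ from weak derivatives in $L^2(w^{-1}(x)\,\dd x)$, you also need $C_c^\infty(\R^d)$ to be dense in the weighted $W^{k+1}$ space. This follows by cutoff and mollification. The paper uses the same identification tacitly in its final step.
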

	\noindent\textit{Proof.} {\it \hyperref[embed.k1.k2]{(i)}.}	 This is clear from the definitions of the spaces.
	
	{\noindent \it \hyperref[eq: lower.higher]{(ii)}.}
	By part \eqref{embed.k1.k2}, we have the continuous embeddings $\cH_w^{k_2}\hookrightarrow \cH_w^{k_1}$ and $\cH_w^{-k_1}\hookrightarrow \cH_w^{-k_2}$. Thus, both pairings in \eqref{eq: lower.higher} are well-defined. Moreover, the functional induced by $u$ on $\cH_w^{k_2}$ is just the restriction of the original functional on $\cH_w^{k_1}$. Since $f$ is the same underlying function in both spaces, \eqref{eq: lower.higher} follows.
	
	Similarly, if, in addition, $f\in \cH^{k_2}$, then by part \eqref{embed.k1.k2}, we have the continuous embeddings $\cH^{k_2}\hookrightarrow \cH^{k_1}\hookrightarrow \cH_w^{k_1}$ and $\cH^{k_2}\hookrightarrow \cH_w^{k_2}\hookrightarrow \cH_w^{k_1}$. By duality, these imply $\cH^{-k_1}_w\hookrightarrow \cH^{-k_1}\hookrightarrow \cH^{-k_2}$ and $\cH^{-k_1}_w\hookrightarrow \cH^{-k_2}_w$. Hence, all four pairings in \eqref{eq:weighted.unweighted} are well-defined. The functionals induced by $u$ on $\cH^{k_1}$, $\cH^{k_2}$, and $\cH_w^{k_2}$ are restrictions of the same original functional on $\cH_w^{k_1}$. Since $f$ is the same underlying function in each of these spaces, all four pairings in \eqref{eq:weighted.unweighted} are equal.

	{\noindent \it \hyperref[lem:sob:vii]{(iii)}.}  The density of $C_c^\infty(\R^d)$ in $\cH_w^k$ holds by definition, since $\cH_w^k$ is the completion of $C_c^\infty(\R^d)$. Next, by the Riesz Representation Theorem, the Riesz map $R:\cH_w^k\to \cH_w^{-k}$, defined by
	\begin{align*}
		\langle R\varphi,f\rangle_{\cH_w^{-k},\cH_w^k}
		=
		\langle \varphi,f\rangle_{\cH_w^k},
		\qquad \varphi,f\in \cH_w^k,
	\end{align*}
	is an isometric isomorphism. Therefore, $R(C_c^\infty(\R^d))$ is dense in $\cH_w^{-k}$. For $\varphi, f \in C_c^\infty(\R^d)$, integration by parts gives
	\begin{align} \label{eq:riesz.i.b.p}
		\langle R\varphi,f\rangle_{\cH_w^{-k},\cH_w^k}
		=
		\langle \varphi,f\rangle_{\cH_w^k}
		=
		\int_{\R^d} L_w\varphi(x)f(x)\,\dd x,
	\end{align}
	where the operator $L_w : C^\infty_c(\R^d) \to C^\infty_c(\R^d)$ is defined by 
	\begin{align} \label{eq:Lw.def}
		L_w\varphi
		=
		\sum_{0 \le |\boldsymbol{\alpha}|\le k}
		(-1)^{|\boldsymbol{\alpha}|}
		D^{\boldsymbol{\alpha}}
		\big(w^{-1}D^{\boldsymbol{\alpha}}\varphi\big)
		\in C_c^\infty(\R^d).
	\end{align}
	Since both sides of \eqref{eq:riesz.i.b.p} define continuous linear functionals of $f$ on $\cH_w^k$, the identity \eqref{eq:riesz.i.b.p} extends to all $f\in \cH_w^k$. Hence, every element of $R(C_c^\infty(\R^d))$ belongs to the set \eqref{eq: dual.set}.
	Since $R(C_c^\infty(\R^d))$ is dense in $\cH_w^{-k}$, the latter set is dense in $\cH_w^{-k}$.
	
	{\noindent \it \hyperref[denseness.smooth.in.neg]{(iv)}.}
	We need some additional notation. Let $\D(\R^d):=C_c^\infty(\R^d)$, and let $\D'(\R^d):=\D(\R^d)'$ be its continuous dual, the space of distributions. For $S\in\D'(\R^d)$ and $\varphi\in\D(\R^d)$, we write $\langle S,\varphi\rangle_{\D',\D}$ for the duality pairing. For $\varphi\in\cH_w^k$, define $L_w\varphi\in\D'(\R^d)$ by
	\begin{align} \label{eq:Lw.def.general}
		\langle L_w \varphi ,f\rangle_{\D',\D}
		=
		\langle \varphi,f\rangle_{\cH_w^k},
		\quad f\in C_c^\infty(\R^d).
	\end{align}
	Note that this definition is consistent with \eqref{eq:Lw.def} when $\varphi\in C_c^\infty(\R^d)$. 
	
	\medskip\noindent\textit{Step 1.}
	We first show that $L_w\widetilde{u}=u$ in $\D'(\R^d)$.  Taking $\varphi=\widetilde{u}$ in \eqref{eq:Lw.def.general} and using \eqref{eq:riesz.r.t.}, we get
	\begin{align*}
		\langle L_w\widetilde{u},f\rangle_{\D',\D}
		=
		\langle \widetilde{u},f\rangle_{\cH_w^k}
		=
		\langle u,f\rangle_{\cH_w^{-k},\cH_w^k}
		=
		\int_{\R^d}u(x)f(x)\,\dd x,
		\quad f\in C_c^\infty(\R^d).
	\end{align*}
	Hence, $L_w\widetilde{u}=u$ in $\D'(\R^d)$.
	
	\medskip\noindent\textit{Step 2.}
	Next, let $v:=w^{-1/2}\widetilde{u}$. We claim that $v\in \cH^k$, that is, $D^{\boldsymbol{\alpha}}v\in L^2(\R^d)$ for every multi-index $\boldsymbol{\alpha}$ with $0\le |\boldsymbol{\alpha}|\le k$. We prove this by induction over $|\boldsymbol{\alpha}|$. For $|\boldsymbol{\alpha}|=0$, this follows from $\widetilde{u}\in \cH_w^k$. Suppose now that $1\le m\le k$, and that $D^{\boldsymbol{\beta}}v\in L^2(\R^d)$ for all multi-indices $\boldsymbol{\beta}$ with $0\le |\boldsymbol{\beta}|\le m-1$. Let $\boldsymbol{\alpha}$ be such that $|\boldsymbol{\alpha}|=m$. Since $\widetilde{u}=w^{1/2}v$, the Leibniz rule gives, in the sense of distributions and for some constants $c_{\boldsymbol{\alpha},\boldsymbol{\gamma}}$,
	\begin{align*}
		D^{\boldsymbol{\alpha}}v
		=
		w^{-1/2}D^{\boldsymbol{\alpha}}\widetilde{u}
		-
		\sum_{0 \neq \boldsymbol{\gamma}\le \boldsymbol{\alpha}}
		c_{\boldsymbol{\alpha},\boldsymbol{\gamma}}
		\big(w^{-1/2}D^{\boldsymbol{\gamma}}w^{1/2}\big)
		D^{\boldsymbol{\alpha}-\boldsymbol{\gamma}}v.
	\end{align*}
	The first term on the right-hand side belongs to $L^2(\R^d)$, because $\widetilde{u}\in \cH_w^k$ and $|\boldsymbol{\alpha}|\le k$. For each term in the sum, we have $|\boldsymbol{\alpha}-\boldsymbol{\gamma}|\le m-1$, so $D^{\boldsymbol{\alpha}-\boldsymbol{\gamma}}v\in L^2(\R^d)$ by the induction hypothesis. Moreover, $w^{-1/2}D^{\boldsymbol{\gamma}}w^{1/2}$ is bounded on $\R^d$. Therefore, every term on the right-hand side belongs to $L^2(\R^d)$. Hence, $D^{\boldsymbol{\alpha}}v \in L^2(\R^d)$. This completes the induction and proves $v\in \cH^k$.
	
	\medskip\noindent\textit{Step 3.} Recalling the operator $L_w$ in \eqref{eq:Lw.def},  define $P : C^\infty_c (\R^d) \to C^\infty_c(\R^d)$ by
	\begin{align} \label{eq:P.operator.def}
		P\varphi
		=
		w^{1/2}L_w(w^{1/2}\varphi) = \sum_{0 \le |\boldsymbol{\alpha}|\le k}
		(-1)^{|\boldsymbol{\alpha}|}
		w^{1/2}
		D^{\boldsymbol{\alpha}}
		\big(
		w^{-1}D^{\boldsymbol{\alpha}}(w^{1/2}\varphi)
		\big).
	\end{align}
	Expanding the right-hand side by the Leibniz rule shows that $P$ is a differential operator of order $2k$. More precisely, there exist coefficients $a_{\boldsymbol{\gamma}}\in C_b^\infty(\R^d)$ such that
	\begin{align} \label{eq: P.varphi}
		(P\varphi)(x)
		=
		\sum_{0\le |\boldsymbol{\gamma}|\le 2k}
		a_{\boldsymbol{\gamma}}(x)D^{\boldsymbol{\gamma}}\varphi(x),
		\quad x\in\R^d,\quad \varphi\in C_c^\infty(\R^d).
	\end{align}
	\medskip\noindent\textit{Step 4.} 
	We next show that $P$ is elliptic in the sense of \cite[equation (4.1), p.~17]{taylor2023partial2}. First, we show that $P\in OPS^{2k}_{1,0}(\R^d)$, as defined on \cite[p.~3]{taylor2023partial2}. From \eqref{eq: P.varphi}, we see that the total symbol of the differential operator $P$ is given by
	\begin{align} \label{eq:P.expansion}
		p(x, \zeta) := \sum_{0 \le |\bm{\gamma}|\le 2k} a_{\bm{\gamma}} (x) \zeta^{\bm{\gamma}}, \quad x, \zeta \in \R^d,
	\end{align}
	where $\zeta^{\bm{\gamma}} := \zeta_1^{\gamma_1} \cdots \zeta_d^{\gamma_d}$ for $\zeta = (\zeta_1, \dots, \zeta_d) $ and $\bm{\gamma} = (\gamma_1, \dots, \gamma_d)$. For any multi-indices $\bm{\alpha}$ and ${\bm{\nu}}$, since $D_x^{\bm{\nu}} a_{\bm{\gamma}}$ is bounded and $D_\zeta^{\bm{\alpha}}\zeta^{\bm{\gamma}}$ is either zero or a polynomial of degree at most $|\bm{\gamma}|-|\bm{\alpha}|$, we have
	\begin{align*}
		\big|D^{\bm{\nu}}_x D^{\bm{\alpha}}_\zeta p(x, \zeta)\big|
		\lesssim
		(\sqrt{1+|\zeta|^2})^{2k-|\bm{\alpha}|}.
	\end{align*}
	Hence, $P\in OPS^{2k}_{1,0}(\R^d)$. Moreover, from \eqref{eq:P.operator.def}, its principal highest-order part amounts to $\sum_{|\bm{\alpha}|=k}(-1)^{|\bm{\alpha}|}D^{2\bm{\alpha}}$, and, with the symbol convention in \eqref{eq:P.expansion}, the corresponding principal symbol is $ 	(-1)^k\sum_{|\bm{\alpha}| = k} \zeta^{2 \bm{\alpha}}$. 
	This is the degree $2k$ part of the right-hand side of \eqref{eq:P.expansion}. All remaining terms have degree at most $2k-1$ in $\zeta$. Writing
	\begin{align*}
		\wh{p}(x,\zeta)
		:=
		p(x,\zeta)-(-1)^k\sum_{|\bm{\alpha}|=k}\zeta^{2\bm{\alpha}},
	\end{align*}
	and using that the coefficients $a_{\bm{\gamma}}(x)$ are bounded uniformly in $x\in\R^d$, we have $|\wh{p}(x,\zeta)|\lesssim 1+|\zeta|^{2k-1}$.
	Together with the Multinomial Theorem, which implies $|\zeta|^{2k}\lesssim\sum_{|\bm{\alpha}|=k}\zeta^{2\bm{\alpha}}$,  we see that there exist $C, R<\infty$ such that, for all $|\zeta|\ge R$,
	\begin{align*}
		|p(x,\zeta)| \ge \sum_{|\bm{\alpha}| = k } \zeta^{2 \bm{\alpha}} - |\wh{p}(x, \zeta)| 
		&\ge
		\frac{1}{C}|\zeta|^{2k}
		-
		C(1+|\zeta|^{2k-1})
		\gtrsim
		|\zeta|^{2k}
		\gtrsim
		(\sqrt{1+|\zeta|^2})^{2k}.
	\end{align*}
	Consequently, $|p(x,\zeta)^{-1}|\lesssim(\sqrt{1+|\zeta|^2})^{-2k}$ for all $|\zeta|\ge R$. Thus, $P$ is elliptic.

	\medskip\noindent\textit{Step 5.} 
	We next show that $v\in \cH^m$ for every $m \in \N$. Since $L_w\widetilde{u}=u$ in $\D'(\R^d)$ by Step 1 and $\widetilde{u}=w^{1/2}v$, we have $Pv=w^{1/2}u$ in $\D'(\R^d)$. Let $f=w^{1/2}u$. Since $u\in C_c^\infty(\R^d)$, we have $f\in \cH^s$ for every $s\in\R$.
	
	Because $P$ is elliptic by Step 4, the parametrix construction in \cite[equations (4.11)--(4.12)]{taylor2023partial2} provides $q(x,D)\in OPS^{-2k}_{1,0}(\R^d)$ and a smoothing operator $r(x,D)\in OPS^{-\infty}(\R^d)$ such that this distributional solution satisfies
	\begin{align*} 
		v=q(x,D)f-r(x,D)v.
	\end{align*}
	By \cite[Proposition 5.5, p.~20]{taylor2023partial2}, these operators satisfy, for every $s\in\R$ and $N\in\N$,
	\begin{align}\label{eq:smoothing}
		q(x,D):\cH^{s}\to \cH^{s+2k},
		\quad
		r(x,D):\cH^{s}\to \cH^{s+N}.
	\end{align}
	
	By Step 2, $v\in \cH^k$. Suppose that $v\in \cH^{k+j}$ for some $j\ge 0$. Then, taking $s=k+j$ and $N=1$ in \eqref{eq:smoothing}, $r(x,D)v\in \cH^{k+j+1}$. Also, since $f\in \cH^{k+j+1-2k}$, \eqref{eq:smoothing} gives $q(x,D)f\in \cH^{k+j+1}$. Therefore, $v=q(x,D)f-r(x,D)v\in \cH^{k+j+1}$. By induction, $v\in \cH^m$ for every $m\in\N$.

	\medskip\noindent\textit{Step 6.} 
	Finally, using $\widetilde{u}=w^{1/2}v$,  for any $m\in\N$ and $0 \le |\bm{\alpha}|\le m$, Leibniz rule gives
	\begin{align*}
		w^{-1/2}D^{\bm{\alpha}}\widetilde{u}
		=
		w^{-1/2}D^{\bm{\alpha}}(w^{1/2}v)
		=
		\sum_{{\bm{0}} \le \bm{\gamma} \le \bm{\alpha}} c_{{\bm{\alpha}},{\bm{\gamma}}}\,(w^{-1/2}D^{\bm{\gamma}} w^{1/2})\,D^{\bm{\alpha-\gamma}}v.
	\end{align*}
	The coefficients $w^{-1/2}D^{\bm{\gamma}} w^{1/2}$ are bounded and $D^{\bm{\alpha-\gamma}}v\in L^2(\R^d)$ since $v\in \cH^{m}$ by Step 5, hence $w^{-1/2}D^{\bm{\alpha}}\widetilde{u}\in L^2$ for all $\bm{\alpha}$ with $0 \le |\bm{\alpha}|\le m$. This is exactly $\widetilde{u}\in \cH_w^m$. \qed

	\section{A  mollification result}
	Recall the definition of  $w$ in \eqref{eq:w.def}. Let $(\rho_\varepsilon)_{\varepsilon>0}$ be an even symmetric mollifier on $\R^d$ with $\rho_\varepsilon\ge0$, $\int_{\R^d}\rho_\varepsilon(x)\,\dd x=1$, and $\operatorname{supp}(\rho_\varepsilon)\subset B_\varepsilon$, where we recall $B_\varepsilon$ is the open ball with radius $\varepsilon$ in $\R^d$.

	\begin{lemma}\label{lem:weighted-L2-mollification}
		Let  $h : \R^d \to \R$ be such that  $h\in\cH^1$ and $h,\nabla h\in L^2(w(x)\,\dd x)$. Then:
		\begin{align}\label{eq:weighted.L2.mollification:h}
			h*\rho_\varepsilon\to h
			 \text{ and } 	\nabla(h*\rho_\varepsilon)\to\nabla h
			\quad\text{in }L^2(w(x)\,\dd x).
		\end{align}
	\end{lemma}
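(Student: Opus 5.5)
Since $\nabla(h*\rho_\varepsilon)=(\nabla h)*\rho_\varepsilon$ for $h\in\cH^1$ (weak derivatives commute with convolution), both claims in \eqref{eq:weighted.L2.mollification:h} reduce to one statement: for every $g\in L^2(w(x)\,\dd x)$ we have $g*\rho_\varepsilon\to g$ in $L^2(w(x)\,\dd x)$. This is applied with $g=h$ and with $g=\partial_\ell h$, $\ell=1,\dots,d$. I will prove it by the usual density argument. The key inputs are a uniform bound on the convolution operators in the weighted space and convergence on a dense class.

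\emph{Step 1 (uniform boundedness).} I will show that $\|g*\rho_\varepsilon\|_{L^2(w\,\dd x)}\le C\|g\|_{L^2(w\,\dd x)}$ for all $\varepsilon\in(0,1]$, with $C$ independent of $\varepsilon$. Jensen's inequality with respect to the probability measure $\rho_\varepsilon(y)\,\dd y$ gives $|g*\rho_\varepsilon(x)|^2\le\int|g(x-y)|^2\rho_\varepsilon(y)\,\dd y$. Integrating against $w(x)\,\dd x$ and using Tonelli with the change of variables $z=x-y$, the right-hand side becomes $\int\rho_\varepsilon(y)\int|g(z)|^2w(z+y)\,\dd z\,\dd y$. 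The weight satisfies $w(z+y)\lesssim w(z)w(y)$, which the paper already uses. For $|y|\le\varepsilon\le1$ we have $w(y)\le 2$, so $w(z+y)\lesssim w(z)$ uniformly, and the bound follows.

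\emph{Step 2 (dense class).} For $\varphi\in C_c^\infty(\R^d)$, all the functions $\varphi*\rho_\varepsilon$ with $\varepsilon\le1$ are supported in a fixed compact set $K$, and $\varphi*\rho_\varepsilon\to\varphi$ uniformly. Since $w$ is bounded on $K$, it follows that $\varphi*\rho_\varepsilon\to\varphi$ in $L^2(w\,\dd x)$. The space $C_c^\infty$ is dense in $L^2(w\,\dd x)$, because $w\,\dd x$ is a Radon measure on $\R^d$; alternatively, truncate to a large ball and then mollify in unweighted $L^2$ on a bounded set.

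\emph{Step 3 (conclusion).} Write $\|g*\rho_\varepsilon-g\|\le\|(g-\varphi)*\rho_\varepsilon\|+\|\varphi*\rho_\varepsilon-\varphi\|+\|\varphi-g\|$, with all norms in $L^2(w\,\dd x)$. Step 1 gives $\|(g-\varphi)*\rho_\varepsilon\|\le C\|g-\varphi\|$. Choosing $\varphi$ close to $g$ makes the first and third terms small uniformly in $\varepsilon$, and Step 2 then makes the middle term small as $\varepsilon\downarrow0$.

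The only step needing care is Step 1. It depends on the submultiplicativity of $w$ and on the supports of the $\rho_\varepsilon$ being uniformly bounded. The rest is routine.
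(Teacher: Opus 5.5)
Your proposal is correct and takes essentially the same route as the paper. Both proofs rest on the local comparability $w(x+z)\lesssim w(x)$ for $|z|\le 1$, the density of $C_c^\infty(\R^d)$ in $L^2(w(x)\,\dd x)$, and a three-term approximation argument, applied to $h$ and to each $\partial_\ell h$. The difference is only organizational: the paper first proves continuity of translations in $L^2(w(x)\,\dd x)$ and then applies Minkowski's integral inequality, whereas you run the density argument directly on the convolution operators, after bounding them uniformly in $\varepsilon$ via Jensen.
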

	
	\begin{proof}
		Since $w(x)=1+|x|^{2\lambda_d}$, for every $x,z\in\R^d$ with $|z|\le1$,
		\begin{align*}
			w(x+z)
			=
			1+|x+z|^{2\lambda_d}
			\lesssim
			1+|x|^{2\lambda_d}
			=
			w(x).
		\end{align*}
		Thus, for every $|z|\le1$,
		\begin{align}\label{eq:weighted.L2.mollification:translation.bound}
			\int_{\R^d}|h(x-z)|^2w(x)\,\dd x
			&=
			\int_{\R^d}|h(y)|^2w(y+z)\,\dd y
			\lesssim
			\|h\|_{L^2(w(x)\,\dd x)}^2.
		\end{align}
		
		We next prove that
		\begin{align}\label{eq:weighted.L2.mollification:translation.continuity}
			\|h(\cdot-z)-h\|_{L^2(w(x)\,\dd x)}
			\to0
			\quad\text{as }z\to0.
		\end{align}
		First, we claim that $C_c^\infty(\R^d)$ is dense in $L^2(w(x)\,\dd x)$. Let $f\in L^2(w(x)\,\dd x)$. Since $f\mathbf{1}_{B_R}\to f$ in $L^2(w(x)\,\dd x)$ as $R\to\infty$, it is enough to approximate $f\mathbf{1}_{B_R}$ for fixed $R$. Since $w\ge1$, we have $f\mathbf{1}_{B_R}\in L^2(B_R)$. Since $C_c^\infty(B_R)$ is dense in $L^2(B_R)$, there exists a sequence $g_m\in C_c^\infty(B_R)$ such that $g_m\to f\mathbf{1}_{B_R}$ in $L^2(B_R)$. Since $w$ is bounded above on $B_R$, the same convergence holds in $L^2(w(x)\,\dd x)$. This proves that $C_c^\infty(\R^d)$ is dense in $L^2(w(x)\,\dd x)$. Returning to the proof of \eqref{eq:weighted.L2.mollification:translation.continuity}, let $g\in C_c^\infty(\R^d)$. By \eqref{eq:weighted.L2.mollification:translation.bound}, for $|z|\le1$,
		\begin{align}\label{eq:weighted.L2.mollification:translation.approx}
			\begin{split}
				&\, \|h(\cdot-z)-h\|_{L^2(w(x)\,\dd x)} \\
				&\le
				\|(h-g)(\cdot-z)\|_{L^2(w(x)\,\dd x)}
				+
				\|g(\cdot-z)-g\|_{L^2(w(x)\,\dd x)}
				+
				\|g-h\|_{L^2(w(x)\,\dd x)}
				\\
				&\lesssim
				\|h-g\|_{L^2(w(x)\,\dd x)}
				+
				\|g(\cdot-z)-g\|_{L^2(w(x)\,\dd x)}.
			\end{split}
		\end{align}
		Since $g\in C_c^\infty(\R^d)$, as $z \to 0$,
		$\|g(\cdot-z)-g\|_{L^2(w(x)\,\dd x)}\to0$.
		Combining this with the density of $C_c^\infty(\R^d)$ in $L^2(w(x)\,\dd x)$ and \eqref{eq:weighted.L2.mollification:translation.approx}, we obtain \eqref{eq:weighted.L2.mollification:translation.continuity}.
		
		We now prove \eqref{eq:weighted.L2.mollification:h}. By Minkowski's integral inequality, 
		\begin{align}\label{eq:weighted.L2.mollification:minkowski}
			\|h*\rho_\varepsilon-h\|_{L^2(w(x)\,\dd x)}
			\le
			\int_{\R^d}\rho_\varepsilon(z)
			\|h(\cdot-z)-h\|_{L^2(w(x)\,\dd x)}
			\,\dd z.
		\end{align}
		Since $\operatorname{supp}(\rho_\varepsilon)\subset B_\varepsilon$ and $\int_{\R^d}\rho_\varepsilon(z)\,\dd z=1$, \eqref{eq:weighted.L2.mollification:minkowski} implies
		\begin{align*}
			\|h*\rho_\varepsilon-h\|_{L^2(w(x)\,\dd x)}
			\le
			\sup_{|z|<\varepsilon}
			\|h(\cdot-z)-h\|_{L^2(w(x)\,\dd x)}.
		\end{align*}
		The right-hand side  tends to $0$ by \eqref{eq:weighted.L2.mollification:translation.continuity}. This proves the first convergence in \eqref{eq:weighted.L2.mollification:h}.  The proof of the second convergence in \eqref{eq:weighted.L2.mollification:h} proceeds along the same lines by applying the argument to each $\partial_\ell h$, $\ell=1,\dots,d$, and noting that differentiation commutes with convolution. 
	\end{proof}

	\section{A weighted Pinsker inequality}
	
	We will use the following weighted Pinsker-type estimate, in the spirit of \cite{bolley2005weighted}.
	
	\begin{lemma}\label{lem:weightedPinsker}
		Let $P$ and $Q$ be probability measures on a measurable space $\mathcal{X}$. Let $\psi\ge0$ be such that $\int_{\mathcal{X}} \psi^2\,\dd(P+Q)<\infty$. Then, with $|P-Q|$ denoting the total variation measure of $P-Q$, we have
		\begin{align} \label{eq:weightedPinsker}
			\bigg(\int_{\mathcal{X}} \psi\,\dd|P-Q|\bigg)^2
			\le
			2\,H(P\,|\,Q)\int_{\mathcal{X}} \psi^2\,\dd(P+Q).
		\end{align}
	\end{lemma}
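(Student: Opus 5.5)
The plan is to reduce everything to the density $f:=\dd P/\dd Q$ and apply Cauchy--Schwarz with a well-chosen splitting. If $H(P\,|\,Q)=\infty$ there is nothing to prove. Otherwise $P\ll Q$, so $|P-Q|=|f-1|\,Q$ and
\begin{align*}
	\int_{\mathcal{X}}\psi\,\dd|P-Q|=\int_{\mathcal{X}}\psi\,|f-1|\,\dd Q .
\end{align*}
I will factor the integrand as $|f-1| = \frac{|f-1|}{\sqrt{1+f}}\cdot\sqrt{1+f}$. Cauchy--Schwarz under $Q$ then gives
\begin{align*}
	\bigg(\int_{\mathcal{X}}\psi\,|f-1|\,\dd Q\bigg)^2
	\le
	\int_{\mathcal{X}}\frac{(f-1)^2}{1+f}\,\dd Q\;\int_{\mathcal{X}}\psi^2(1+f)\,\dd Q
	=
	\int_{\mathcal{X}}\frac{(f-1)^2}{1+f}\,\dd Q\;\int_{\mathcal{X}}\psi^2\,\dd(P+Q).
\end{align*}
The second factor is exactly the weight term in \eqref{eq:weightedPinsker}, and it is finite by hypothesis.

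It remains to bound the first factor by $2H(P\,|\,Q)$, which is the key step. I will use the pointwise inequality
\begin{align*}
	\frac{(x-1)^2}{1+x}\le 2\big(x\log x - x + 1\big),\qquad x\ge 0.
\end{align*}
Given this, integrating against $Q$ and using $\int (f-1)\,\dd Q=0$ yields $\int \frac{(f-1)^2}{1+f}\,\dd Q\le 2\int f\log f\,\dd Q = 2H(P\,|\,Q)$, and the lemma follows.

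To prove the pointwise inequality, set $g(x):=2(x\log x-x+1)-\frac{(x-1)^2}{1+x}$. At $x=1$ we have $g(1)=0$ and $g'(1)=0$. Writing $\frac{(x-1)^2}{1+x}=x-3+\frac{4}{1+x}$, the second derivative is
\begin{align*}
	g''(x)=\frac{2}{x}-\frac{8}{(1+x)^3}.
\end{align*}
This is nonnegative because $(1+x)^3\ge 4x$ for $x>0$, by AM--GM: $1+x\ge 2\sqrt{x}$, hence $(1+x)^3\ge 8x^{3/2}$, and $8x^{3/2}\ge 4x$ for $x\ge 1/4$, while for $x<1/4$ we have $(1+x)^3\ge 1>4x$. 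So $g$ is convex with a critical point at $1$, hence $g\ge0$ on $(0,\infty)$; at $x=0$ the inequality holds by continuity ($1\le 2$).

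The only genuine obstacle is choosing the Cauchy--Schwarz splitting so that the weight appears as $P+Q$. After that, everything reduces to the elementary convexity check above.
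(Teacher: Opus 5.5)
Your proof is correct and gives the same constant $2$ as the paper. The difference is in the Cauchy--Schwarz splitting and in where the factor $2$ is spent.

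The paper writes $|r-1|=(\sqrt r+1)\,|\sqrt r-1|$. Cauchy--Schwarz then produces the squared-Hellinger integrand $(\sqrt r-1)^2$, which is bounded by $r\log r-r+1$ with no constant. The weight factor $\int\psi^2(\sqrt r+1)^2\,\dd Q$ is bounded by $2\int\psi^2\,\dd(P+Q)$, using $(\sqrt r+1)^2\le 2(1+r)$.

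You split with $\sqrt{1+f}$ instead. Your weight factor is exactly $\int\psi^2\,\dd(P+Q)$, and the other factor is the triangular (Le Cam) discrimination $\int (f-1)^2/(1+f)\,\dd Q$. The factor $2$ is then spent in your pointwise inequality.

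The two routes are equivalent. Since $(x-1)^2=(\sqrt x-1)^2(\sqrt x+1)^2$ and $(\sqrt x+1)^2\le 2(1+x)$, one has
\[
\frac{(x-1)^2}{1+x}\le 2(\sqrt x-1)^2\le 2\big(x\log x-x+1\big),\qquad x\ge 0 .
\]
So your key inequality is just the composition of the paper's two elementary bounds. This one-line derivation could replace your second-derivative and AM--GM check. That check is nonetheless correct: $g(1)=g'(1)=0$, $g''(x)=2/x-8/(1+x)^3\ge 0$ on $(0,\infty)$, and the case $x=0$ (where $f$ may vanish on a set of positive $Q$-measure) holds by direct evaluation.

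Your version has the mild advantage of isolating the weight term exactly. The paper's version needs no calculus beyond $y\log y-y+1\ge 0$.
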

	
	\begin{proof}
		If $H(P\,|\,Q)=\infty$, \eqref{eq:weightedPinsker} holds trivially. We may therefore assume that $P\ll Q$, and write $r=\dd P/\dd Q$. Then, $\dd|P-Q|=|r-1|\,\dd Q$. Using  $|r-1|=(\sqrt{r}+1)|\sqrt{r}-1|$ and the Cauchy-Schwarz inequality,
		\begin{align} \label{eq:weighted.0}
			\int_{\mathcal{X}} \psi\,\dd|P-Q|
			&\le
			\bigg(\int_{\mathcal{X}} \psi^2(\sqrt{r}+1)^2\,\dd Q\bigg)^{1/2}
			\bigg(\int_{\mathcal{X}}(\sqrt{r}-1)^2\,\dd Q\bigg)^{1/2}.
		\end{align}
		For the first term on the right-hand side, since $(\sqrt{r}+1)^2\le 2(r+1)$,
		\begin{align} \label{eq:weighted.1}
			\int_{\mathcal{X}} \psi^2(\sqrt{r}+1)^2\,\dd Q
			\le
			2\int_{\mathcal{X}} \psi^2(r+1)\,\dd Q
			=
			2\int_{\mathcal{X}} \psi^2\,\dd(P+Q).
		\end{align}
		For the second term on the right-hand side of \eqref{eq:weighted.0}, we use the elementary inequality
		\begin{align}\label{eq:elem.ineq}
			(\sqrt{r}-1)^2\le r\log r-r+1,
			\qquad r\ge0,
		\end{align}
		with the convention $0\log0:=0$.   Indeed, for $r>0$, setting $y=\sqrt{r}$ shows that the difference between the right-hand side and the left-hand side of \eqref{eq:elem.ineq} is $2y(y\log y-y+1)\ge0$, and for $r=0$, both sides are equal to $1$. 
		Hence,
		\begin{align} \label{eq:weighted.2}
			\int_{\mathcal{X}}(\sqrt{r}-1)^2\,\dd Q
			\le
			\int_{\mathcal{X}}(r\log r-r+1)\,\dd Q
			=
			H(P\,|\,Q),
		\end{align}
		because $\int_{\mathcal{X}}(r-1)\,\dd Q=0$. Putting \eqref{eq:weighted.1} and  \eqref{eq:weighted.2} back into \eqref{eq:weighted.0} gives \eqref{eq:weightedPinsker}.
	\end{proof}
	
	\bibliographystyle{amsplain} 
	\bibliography{biblio}
	
\end{document}